\documentclass{amsart}
\usepackage{amssymb,amsmath,amsfonts,amsthm,epsfig,amscd,hyperref,xcolor}
\usepackage{stmaryrd,comment}
\usepackage[all,cmtip,poly]{xy}
\usepackage{svn, url}
\usepackage{mathrsfs}
\usepackage{xr}

\newtheorem{theorem}[subsection]{Theorem}
\newtheorem{thm}[subsubsection]{Theorem}
\newtheorem{lemma}[subsubsection]{Lemma}

\newtheorem{cor}[subsubsection]{Corollary}

\newtheorem{prop}[subsubsection]{Proposition}

\newtheorem{defn}[subsubsection]{Definition}

\theoremstyle{remark}
\newtheorem{remark}[subsubsection]{Remark}

\newtheorem{example}[subsubsection]{Example}

\numberwithin{equation}{subsection}
\def\nummultline{\addtocounter{subsubsubsection}{1}\begin{multline}}
\def\anumequation{\addtocounter{subsection}{1}\begin{equation}}

\newif\iffinalrun

\iffinalrun
\else
\fi

\iffinalrun
  \newcommand{\need}[1]{}
  \newcommand{\mar}[1]{}
\else
  \newcommand{\need}[1]{{\tiny *** #1}}
  \newcommand{\mar}[1]{\marginpar{\raggedright\tiny #1}}
\fi

\newcommand{\A}{\AA}
\newcommand{\C}{\CC}
\newcommand{\F}{\FF}

\newcommand{\Q}{\QQ}
\newcommand{\R}{\RR}
\newcommand{\Z}{\ZZ}

\newcommand{\e}{\frake}
\newcommand{\m}{\frakm}

\renewcommand{\AA}{{\mathbb A}}

\newcommand{\CC}{{\mathbb C}}

\newcommand{\FF}{{\mathbb F}}
\newcommand{\GG}{{\mathbb G}}

\newcommand{\QQ}{{\mathbb Q}}
\newcommand{\RR}{{\mathbb R}}

\newcommand{\TT}{{\mathbb T}}

\newcommand{\ZZ}{{\mathbb Z}}

\renewcommand{\bf}{\ensuremath{\mathbf{f}}}

\newcommand{\cA}{{\mathcal A}}

\newcommand{\cC}{{\mathcal C}}

\newcommand{\cE}{{\mathcal E}}
\newcommand{\cF}{{\mathcal F}}

\newcommand{\cH}{{\mathcal H}}

\newcommand{\cL}{{\mathcal L}}

\newcommand{\cO}{{\mathcal O}}

\newcommand{\cS}{{\mathcal S}}

\newcommand{\frake}{\mathfrak{e}}
\newcommand{\frakm}{\mathfrak{m}}

\newcommand{\frakp}{\mathfrak{p}}

\newcommand{\Fbar}{\overline{\F}}
\newcommand{\Qbar}{\overline{\Q}}
\newcommand{\Zbar}{\overline{\Z}}

\newcommand{\Fpbar}{\Fbar_p}

\newcommand{\Flbar}{\Fbar_{\ell}}

\newcommand{\Zlbar}{\Zbar_{\ell}}

\newcommand{\Qlbar}{\Qbar_{\ell}}
\newcommand{\Qpbar}{\Qbar_p}

\DeclareMathOperator{\Aut}{Aut}

\DeclareMathOperator{\disc}{disc}

\DeclareMathOperator{\Gal}{Gal}
\DeclareMathOperator{\GL}{GL}

\DeclareMathOperator{\GSO}{GSO}
\DeclareMathOperator{\GSp}{GSp}
\DeclareMathOperator{\GSpin}{GSpin}

\DeclareMathOperator{\Hom}{Hom}

\DeclareMathOperator{\Lie}{Lie}

\DeclareMathOperator{\Res}{Res}
\DeclareMathOperator{\SL}{SL}
\DeclareMathOperator{\SO}{SO}
\DeclareMathOperator{\Sp}{Sp}

\DeclareMathOperator{\tr}{tr}

\newcommand{\alg}{\mathrm{alg}}

\newcommand{\el}{\mathrm{ell}}
\newcommand{\fin}{\mathrm{fin}}

\newcommand{\Frob}{\mathrm{Frob}}

\newcommand{\Groth}{\mathrm{Groth}}
\newcommand{\hel}{\mathrm{h.ell}}

\newcommand{\Ig}{\mathrm{Ig}}

\newcommand{\ralg}{\textup{r-alg}}

\newcommand{\ur}{\mathrm{ur}}

\newcommand{\toisom}{\buildrel\sim\over\to}

\newcommand{\ol}{\overline}

\newcommand{\Gm}{\GG_m}

\newcommand{\Fl}{\mathscr{F}\!\ell}

\DeclareMathOperator{\Std}{Std}

\numberwithin{equation}{subsection}

\begin{document}

\title{Torsion-vanishing for Siegel modular varieties via supercuspidal congruences}

\author{Ana Caraiani} 
\address{Department of
  Mathematics, Imperial College London,
  London SW7 2AZ, UK}
\email{caraiani.ana@gmail.com}

\author{Sug Woo Shin}
\address{Department of Mathematics, UC Berkeley, Berkeley, CA 94720, USA / Korea Institute for Advanced Study, Seoul 02455, Republic of Korea}
\email{sug.woo.shin@berkeley.edu}


\begin{abstract}
We prove that the generic part of the cohomology of Siegel modular varieties with torsion coefficients is concentrated above the middle degree, under a suitable notion of genericity that is optimal in the unramified case. Our method relies on the semi-perversity of the relative cohomology of the Igusa stack and on a trace formula computation that is made possible by a congruence technique introduced by Scholze and Fintzen--Shin. Compared with the work of Yang--Zhu, which handles Shimura varieties of abelian type via categorical local Langlands, our method is adapted to Siegel modular varieties but handles coefficients of arbitrarily small characteristic.  
\end{abstract}

\maketitle

\setcounter{tocdepth}{2}
\tableofcontents

\section{Introduction}

Over the past decade, a number of conjectures and results have emerged concerning the cohomology of Shimura varieties (and more general locally symmetric spaces) with torsion coefficients. In particular, a number of authors have proved increasingly general \emph{torsion-vanishing} results of the following form:
\begin{itemize}
\item a sufficiently generic system of Hecke eigenvalues should occur in the cohomology with $\F_{\ell}$-coefficients of a given Shimura variety only in the middle degree and above.
\end{itemize}
Dually, such a system of Hecke eigenvalues should occur in the compactly supported cohomology with $\F_{\ell}$-coefficients only in the middle degree and below.  See the main theorems of~\cite{CaraianiScholzeGeneric, CaraianiScholzeNonCompact, Koshikawa, Hamann-Lee, Yang-Zhu} and see~\cite{Koshikawa-Shin} for the emerging conjectural framework, including a discussion of various genericity conditions and expectations beyond the generic case. 

In this paper, we establish such a result in the case of Siegel modular varieties, under a genericity condition that is optimal from the point of view of the global Langlands correspondence. Let $G:=\mathrm{GSp}_{2n}/\Q$ for some integer $n\geq 2$, and let $K\subset G(\A_f)$ be a sufficiently small compact open subgroup. We denote by $\mathrm{Sh}_K$ the corresponding Siegel modular variety over $\C$. Its dimension is $d = \frac{n(n+1)}{2}$. 

For any prime $\ell$, the Betti cohomology groups $H^*_{(c)}(\mathrm{Sh}_K, \F_{\ell})$ are supported, a priori, in the range of degrees $[0,2d]$ and have an action of an abstract global Hecke algebra $\mathbb{T}^S$. For any maximal ideal $\m\subset \mathbb{T}^S$ in the support of $H^*_{(c)}(\mathrm{Sh}_K, \F_{\ell})$, there exists an associated (continuous, semi-simple) residual Galois representation
\begin{equation}\label{eq: global Gal intro}
\bar{\rho}_{\m}: \Gal(\overline{\Q}/\Q) \to \mathrm{SO}_{2n+1}(\overline{\F}_{\ell}),    
\end{equation}
characterized by the usual compatibility between Frobenius eigenvalues and Satake parameters at all but finitely many places of $\Q$ (after restricting the Satake parameters to $\mathrm{Sp}_{2n}\hookrightarrow\mathrm{GSp}_{2n}$, see \S~\ref{ss:main-thm} for the precise formulation of the compatibility). The existence of $\bar{\rho}_{\m}$, at least as a $\mathrm{GL}_{2n+1}(\overline{\F}_{\ell})$-valued representation follows from the results of~\cite{ScholzeTorsion} (specifically from Theorem IV.3.1 of \emph{loc. cit.} together with the existence of Galois representations for characteristic $0$ cusp forms). With more care, one can show that it factors through $\mathrm{SO}_{2n+1}(\overline{\F}_{\ell})$; for example, this is a consequence of our Theorem~\ref{thm:Langlands parameter}.

We say that $\bar{\rho}_{\m}$ is of generic principal series type (\textbf{of generic ps type}) at an auxiliary prime $p\not =\ell$ if the representation $\bar{\rho}:=(\bar{\rho}_{\m}\mid _{\mathrm{Gal}(\overline{\Q}_p/\Q_p)})^{\mathrm{ss}}$ satisfies the conditions of Definition~\ref{def:generic-ps}. Essentially, the conditions are that $\bar{\rho}$ has image contained in a maximal torus of $\mathrm{SO}_{2n+1}$ and that its composition with any root of  $\mathrm{SO}_{2n+1}$ is not congruent modulo $\ell$ to the cyclotomic character. 

\begin{theorem}\label{thm: main theorem intro} Assume that there exists a prime $p\not= \ell$ such that $\bar{\rho}_{\m}$ is unramified and of generic ps type at $p$. Then the following hold true. 
\begin{enumerate}
    \item For any integer $i\in [0,2d]$, we have $H^i(\mathrm{Sh}_K, \F_{\ell})_{\m}\not = 0$ only if $i\geq d$. 
    \item For any integer $i\in [0,2d]$, we have $H^i_c(\mathrm{Sh}_K, \F_{\ell})_{\m}\not = 0$ only if $i\leq d$.  
\end{enumerate}
    In particular, if $\bar{\rho}_{\m}$ is $\mathrm{SO}_{2n+1}$-irreducible, then $H^i_{(c)}(\mathrm{Sh}_K, \F_{\ell})_{\m}\not = 0$ only if $i=d$.  
\end{theorem}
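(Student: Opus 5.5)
We follow the strategy of Caraiani--Scholze, but replace the Galois-theoretic local analysis at $p$ with the Igusa stack and a trace formula input. First we reduce to part (1). Poincaré duality, applied to the Hecke-equivariant pairing between $H^i_c$ and $H^{2d-i}$ with $\F_\ell$-coefficients, exchanges $\m$ with its dual maximal ideal $\m^\vee$. The generic ps condition is stable under duality, because it is phrased in terms of roots and the cyclotomic character, and $\bar{\rho}_{\m^\vee}$ is a twist of the dual of $\bar\rho_\m$. Hence (2) follows from (1), and the final assertion follows from (1) and (2) together with the boundary analysis: the cohomology of the Borel--Serre boundary, localized at $\m$, vanishes when $\bar{\rho}_\m$ is irreducible, because boundary classes have Galois representations factoring through proper Levi subgroups of $\SO_{2n+1}$.

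For part (1) we work at the auxiliary prime $p$ and take $K=K^pK_p$ with $K_p$ hyperspecial. We use the Hodge--Tate period map or, equivalently, the Igusa stack fibration $\mathrm{Sh}_K \to \mathrm{Igs}_{K^p}$ over $\mathrm{Bun}_G$. This writes $R\Gamma(\mathrm{Sh}_K,\F_\ell)$ as the cohomology of $\mathrm{Bun}_G$, or of the flag variety $\Fl_{G,\mu}$ stratified by Newton strata $b\in B(G,\mu)$, with coefficients in $\cF=R\pi_*\F_\ell$. The earlier semi-perversity result says that $\cF$ is semi-perverse, meaning it lies in ${}^pD^{\ge 0}$ up to the normalization shift $d$. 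Its stalks at $b$ are $R\Gamma(\Ig^b,\F_\ell)$, viewed as smooth $J_b(\Q_p)$-representations. Localizing at $\m$ commutes with all of this. Using the excision spectral sequence over strata ordered by dimension $\langle 2\rho,\nu_b\rangle$, it therefore suffices to show the following: for every non-basic $b$, the $\m$-localized Igusa cohomology $R\Gamma_c(\Ig^b,\F_\ell)_\m$, or its partially compactly supported version, is concentrated in the degree range dictated by perversity. Ideally it vanishes outside a single degree. The basic/$\mu$-ordinary interplay then yields concentration in degrees $\ge d$, as in Caraiani--Scholze.

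The central step is the vanishing of the non-generic contributions, and we expect it to be the main obstacle. Following Caraiani--Scholze, one shows that a Hecke eigensystem appearing in $R\Gamma(\Ig^b)_\m$ comes from a $J_b(\Q_p)$-representation. Its transfer to $G(\Q_p)$ is a parabolically induced representation from the Levi $M_b$, whose mod $\ell$ $L$-parameter is forced to be non-generic, i.e.\ to contain a root congruent to the cyclotomic character. That contradicts the generic ps hypothesis for strata on which the Jacquet-module degree shift would violate the bound. Since $\ell$ may be small, we cannot argue purely in characteristic $0$. Instead we use the supercuspidal congruence method of Scholze and Fintzen--Shin. We choose an auxiliary prime $q$ and a type at $q$ such that every $\F_\ell$-eigensystem lifts to a characteristic $0$ automorphic representation that is supercuspidal at $q$. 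This makes the trace formula for Igusa varieties simple enough to compute the $J_b$-representations occurring, via Mantovan's product formula and Shin's stable trace formula for Igusa varieties. We then read off their Langlands parameters through Arthur's classification for $\GSp_{2n}$, which is available thanks to the supercuspidal place.

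This is also how we would establish the $\SO_{2n+1}$-valued local-global compatibility at $p$ for the torsion eigensystems (Theorem~\ref{thm:Langlands parameter}). That compatibility lets us match the generic ps condition with the mod $\ell$ reduction of the Satake parameters and thereby conclude the required vanishing on each non-basic stratum. Assembling these vanishing results through the stratification gives $H^i(\mathrm{Sh}_K,\F_\ell)_\m=0$ for $i<d$.
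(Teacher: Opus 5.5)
Your outline names the right ingredients: the Igusa stack, semi-perversity, a supercuspidal congruence and the Igusa trace formula. However, the steps that carry the proof are either missing or misplaced.

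\textbf{The reduction and the passage to characteristic $0$.} What you must show is that $R\Gamma(\Ig^b_{K^p},\F_\ell)_\m$ and $R\Gamma_{c-\partial}(\Ig^b_{K^p},\F_\ell)_\m$ vanish for every \emph{non-ordinary} $b$, not every non-basic $b$. Then $(R\bar\pi^\circ_{K^p*}\F_\ell)_\m$ is supported on $\Fl(\Q_p)$, and lower semi-perversity gives degrees $\ge d$. Stalkwise concentration ``in the perverse range'' is not enough, since a non-ordinary stratum with $d_b<d$ could contribute below degree $d$.

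To prove the vanishing, take a contributing $b_0$ with $d_{b_0}$ minimal. You must lift from $\F_\ell$ to characteristic $0$ \emph{inside} $R\Gamma(\Ig^{b_0})$, not merely in automorphic forms on $G$. This requires $R\Gamma(\Ig^{b_0}_{K^p},\F_\ell)_\m$ to be concentrated in the single degree $d_{b_0}$, for two reasons. You need torsion-freeness in order to apply Deligne--Serre. And the trace formula only sees the alternating sum in the Grothendieck group, which could otherwise cancel.

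In the non-compact Siegel case, ${}^pD^{\ge0}$ for $R\bar\pi^\circ_{K^p*}\F_\ell$ gives only the lower bound. The upper bound comes from ${}^pD^{\le0}$ for $R\bar\pi_{K^p!}\F_\ell$, via Artin vanishing on the affine $\Ig^{b,*}_{K^p}$. Combining the two requires $R\Gamma_{c-\partial}(\Ig^b)_\m\toisom R\Gamma(\Ig^b)_\m$.

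That isomorphism is where the induction on $n$ enters. If it fails, $\m^\vee$ occurs in a boundary stratum $\Ig^b_{[P]}$. Its cohomology is parabolically induced from $\GL_r\times\GSp_{2(n-r)}$ data. The resulting $\bar\rho_{\m_2}$ is a summand of $\bar\rho_\m$, so it is still of generic ps type, and induction forces $b_P$, hence $b$, to be ordinary. Your ``ideally it vanishes outside a single degree'' leaves all of this open.

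\textbf{Where the congruence must sit.} The congruence has to be at $\ell$ itself, not at an auxiliary $q$. The argument uses characters $\lambda\neq1$ of a deep $U_k\subset G(\Q_\ell)$ that factor through $\psi_k:U_k\to A^\times$, with $A=\Z_\ell[T]/(1+T+\cdots+T^{\ell-1})$, and are trivial modulo $T-1$. This triviality makes $H^{d_{b_0}}(\Ig^{b_0}_K,\mathcal{A})_\m\to H^{d_{b_0}}(\Ig^{b_0}_K,\F_\ell)_\m$ surjective. At the same time $\mathcal{A}\otimes\Qlbar=\bigoplus_{\lambda\neq1}\mathcal{L}_\lambda$ has supercuspidal type.

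At $q\neq\ell$ this cannot work. A character of $\ell$-power order of a compact open $U$ kills the pro-$q$ groups $U\cap gN(\Q_q)g^{-1}$, where $N$ is the unipotent radical of a Borel. By Frobenius reciprocity it then occurs in a principal series, so it is never a supercuspidal type. Your ``every eigensystem lifts to something supercuspidal at $q$'' would therefore be a level-raising statement that is not available.

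\textbf{What the type buys.} The type is not what makes Arthur's classification available; that comes from $\Sp_{2n}$, $\SO_{2n}$ and Xu's work. Its role, together with a stable cuspidal function at $\infty$, is to give $ST_{\el}=S_{\disc}$. This kills the cuspidal-subgroup terms, which would otherwise survive because $\Q$ has a single infinite place.

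\textbf{The contradiction at $p$.} It is not that a transferred mod $\ell$ parameter becomes non-generic. Lemma~\ref{lem:lifting-Galois-rep} makes the characteristic $0$ lift of generic ps type at $p$. Then the $p$-component of each $S^{\tilde H^\e}_{\disc}$ is a combination of traces of full principal series, whose characters are supported on conjugates of the split torus. For non-ordinary $b$, $J_b$ is not quasi-split (Lemma~\ref{lem:Jb-non-quasi-split}). Hence $f^\e_p$ has vanishing stable orbital integrals on exactly those elements, and the contribution is zero.
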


\begin{remark} 
    In the case of Siegel modular threefolds, i.e. when $n=2$, this result was obtained in~\cite{Hamann-Lee}. More precisely, the main result of \emph{loc. cit.} imposes ``of Langlands--Shahidi type'' as its local genericity condition, whereas the condition of Definition~\ref{def:generic-ps} corresponds to what Hamann--Lee call ``of weakly Langlands--Shahidi type'' in the principal series case. However, in Remark 4.2.8 of \emph{loc. cit.}, the authors explain that the result can be improved to the case ``of weakly Langlands--Shahidi type'' for a group that is split over $\Q_p$, such as $\mathrm{GSp}_{4}$. The method of Hamann--Lee uses the categorical local Langlands program in the setting of~\cite{FarguesScholze} and, in particular, relies on the compatibility between the semi-simple local Langlands parameters constructed by Fargues--Scholze and those obtained by more classical and explicit methods. This compatibility is particularly difficult to establish for $\mathrm{GSp}_{2n}$ when $n\geq 3$, which limits the results of Hamann--Lee to Siegel threefolds; see \cite[Rem.~1.3.6]{DvHKZ2} for more explanation.
    
    For general $n$, our genericity condition is identical with the condition in part (1) of~\cite[Definition 1.1]{Yang-Zhu}. However, our Theorem~\ref{thm: main theorem intro} handles arbitrary $\ell$, whereas Theorem 1.5 of \emph{loc. cit.} requires $\ell> 2n$ in the Siegel case.  The method of Yang--Zhu relies on the tame categorical local Langlands correspondence in the setting of~\cite{Zhu} and the restrictions on $\ell$ come from this work; it is likely that these restrictions could be removed with additional work on the tame Bezrukavnikov equivalence. 
    The result of Yang--Zhu is also stronger than that of Hamann--Lee because it applies at quasi-parahoric level at $p$. However, because Theorem~\ref{thm: main theorem intro} is stated for a global Galois representation, the Chebotarev density theorem makes this point less relevant for us. (The assumption in Theorem~\ref{thm: main theorem intro} that $\bar{\rho}_{\m}$ is unramified at $p$ is only used to apply Lemma~\ref{lem:lifting-Galois-rep}. This hypothesis could plausibly be removed from the lemma, as has been suggested by experimentation with ChatGPT.) 

    In contrast to both Hamann--Lee and Yang--Zhu, our method does not use any form of categorical local Langlands. Rather, our method could be combined with categorical local Langlands to establish aspects of local-global compatibility in the setting of Fargues's conjecture~\cite[\S 7]{Fargues-conj}. As a by-product of our method, we also construct global residual Galois representations attached to systems of Hecke eigenvalues that occur in the relative cohomology of the Igusa stack with torsion coefficients (Theorem~\ref{thm:Langlands parameter}). Following a question of Zhu, it would be interesting to know whether these can be lifted to Hecke-algebra valued Galois representations. 
\end{remark}

The method of proof of Theorem~\ref{thm: main theorem intro} is reminiscent of the argument used in~\cite{CaraianiScholzeNonCompact} for Shimura varieties attached to quasi-split unitary groups, with two major differences. The first is that we use the Igusa stack diagrams constructed in~\cite{Zhang,Kim} to establish two key semi-perversity results (Theorem~\ref{thm:semi-perversity}). The bound on cohomology with $\F_{\ell}$-coefficients essentially follows from semi-perversity, as long as the cohomology of non-ordinary Igusa varieties can be controlled in some way. The use of the Igusa stack diagram has become a standard conceptual simplification, though we are not aware of a reference that establishes both semi-perversity results in the non-compact generic fiber setting. 

The second major difference is that we introduce a novel technique, based on the supercuspidal congruences constructed in~\cite{FintzenShin}, to simplify the computation of the cohomology of Igusa varieties using the trace formula.\footnote{In fact, one of our main motivations for this paper is to advertise the use of congruences to simplify computations with the trace formula.} Indeed, groups such as $\mathrm{GSp}_{2n}/\Q$ admit so-called \emph{cuspidal subgroups} (cf.~\cite[Definition 5.5.2]{CaraianiScholzeNonCompact}), which make it difficult to directly compare the stable trace formula for Igusa varieties to the geometric side of the trace formula. In the presence of cuspidal subgroups, there are additional contributions to the geometric side of the trace formula, which likely come from the boundary of partial minimal compactifications of Igusa varieties.  However, if one can use a test function which is cuspidal at one place and stable cuspidal at another place, the contributions from cuspidal subgroups vanish (Lemma~\ref{lem:simpleTF-stable}). Test functions at the infinite places can be chosen to be stable cuspidal; however, in the case of a group defined over $\Q$, there is only one infinite place. Via the supercuspidal congruences of~\cite{FintzenShin}, we can detect our modulo $\ell$ system of Hecke eigenvalues $\m \subset \mathbb{T}^S$ using a stable cuspidal function at infinity and a cuspidal test function at the prime $\ell$.  

In~\cite{FintzenShin}, supercuspidal congruences were constructed for algebraic automorphic representations, using the fact that the underlying locally symmetric space is a zero-dimensional Shimura set. The same technique could work for a higher-dimensional Shimura variety, as long as the relevant part of the cohomology was concentrated in one degree, e.g., once Theorem~\ref{thm: main theorem intro} is known. In this paper, we work with a higher-dimensional Shimura variety and we employ the semi-perversity of the relative cohomology of the Igusa stack as a substitute for knowing that cohomology is concentrated in one degree. The congruences are then fed into the proof of Theorem~\ref{thm: main theorem intro}. To make everything work with semi-perversity rather than perversity, we also need a delicate inductive argument.

\begin{remark}\label{rem: unitary case} The same congruence method could be used to remove the assumption $[F^+:\Q]\geq 2$ from the main result of~\cite{CaraianiScholzeNonCompact}, that is, for quasi-split unitary similitude  groups. However, in the unitary setting, at a prime that splits in the imaginary quadratic field, the compatibility of Fargues--Scholze with classical local Langlands is already known. The vanishing result, without any assumption on $\ell$ and under the optimal local genericity condition at $p\not = \ell$, was already established in~\cite{Koshikawa} using categorical local Langlands. 
\end{remark}

Theorem~\ref{thm: main theorem intro} could be used to study the torsion automorphic Galois representations constructed in~\cite{ScholzeTorsion} for $\mathrm{GL}_n/\Q$ following the template pioneered in~\cite{10author}. We also expect that Theorem~\ref{thm: main theorem intro}, including the method of proof, could be generalized to $\mathrm{GSp}_{2n}$ over a totally real field $F$. However, this would require some additional results for Shimura varieties of abelian type. Despite the recent results of~\cite{DvHKZ2}, one would still need compactified Igusa stacks to establish the analogue of the semi-perversity results of Theorem~\ref{thm:semi-perversity}, and one would still need the stable trace formula for Igusa varieties in this setting to carry out the key computation. However, the key computation would be simpler, in principle, as there would be more than one infinite place so the supercuspidal congruence technique would not be required. 

\subsection*{Acknowledgments} 
We are very grateful to Jessica Fintzen for insightful discussions during the early stages of this project, which were instrumental in initiating it and shaping its direction.
We are also grateful to George Boxer, Toby Gee, Linus Hamann, and Mingjia Zhang for useful conversations. 
AC was partially supported by ERC Starting Grant 804176, by a Royal Society University Research Fellowship and by a Leverhulme Prize. SWS was partially supported by NSF grant DMS-2401353, NSF RTG grant DMS-2342225, and a Simons Travel Grant. We thank the Hausdorff Research Institute for Mathematics for 
hosting the special trimester program ``The Arithmetic of the Langlands Program'' in summer 2023, where this project was initiated. We thank Linus Hamann, Mingjia Zhang and Xinwen Zhu for comments on an earlier version of this manuscript. 

We used ChatGPT for proofreading, experimenting with strengthening Lemma~\ref{lem:lifting-Galois-rep} and finding the reducible example in Remark \ref{rem:irreducibility}; there was no further use of AI in this paper.

\section{Notation}\label{s:Notation}

Given $n\in \Z_{\ge1}$, let $\GSp_{2n}$ (resp.~$\GSO_{2n}$) denote the split symplectic (resp.~special orthogonal) similitude group; they are realized as matrix groups as in \cite[2.1.1]{XuGSpGSO} and equipped with similitude characters $\GSp_{2n}\to \GG_m$ and $\GSO_{2n}\to \GG_m$. The kernels are $\Sp_{2n}$ and $\SO_{2n}$. When we have finite families $(\GSp_{2m_i})_{i \in I}$ and $(\GSO_{2n_j})_{j\in J}$ for finite index sets $I,J$, we then write $G(\prod_{i\in I} \Sp_{2m_i}\times\prod_{j\in J} \SO_{2n_j})$ for their fiber product over $\GG_m$ via similitude characters.

When $A_i$ are $n_i\times n_i$ square matrices (scalars if $n_i=1$) for $i=1,\ldots,r$, we write $\textup{diag}(A_1,...,A_r)$ for the $n\times n$ block diagonal matrix, where $n=\sum_{i=1}^r n_i$.

Let $k$ be a field. Write $\ol k$ for an algebraic closure of $k$, and $\Gal_k$ for the full Galois group over $k$. 
When $\ell$ is a prime such that $\ell\neq 0$ in $k$, write $\omega_\ell: \Gal_k\to \Z_\ell^\times$ and $\overline{\omega}_\ell:\Gal_k\to \F_\ell^\times$ for the $\ell$-adic and mod $\ell$ cyclotomic characters
(considered mostly when $k$ is finite over $\Q_p$ in this paper).
When $k'/k$ is a field extension and $X$ is a mathematical object (e.g., algebraic group) over $k$, write $X_{k'}$ for the base change of $X$ to $k'$. Given a torus $T$ over $k$, define the cocharacter group $X_*(T):=\Hom_{\ol k}(\GG_m,T_{\ol k})$. 

Let $\C_p$ denote the completion of $\Qpbar$. 
We fix field isomorphisms $\iota:\C\cong \Qlbar$ and $\C\cong \C_p$ throughout the article. 

When $G$ is a connected reductive group over $k$, write $\hat G$ for the Langlands dual group over $\C$. Implicitly we will always fix a $\Gal_k$-invariant pinning for $\hat G$, which leads to an $L$-action of $\Gal_k$ on $\hat G$. Using this action, we form the $L$-group $^L G=\hat G\rtimes W_k$ when $k$ is a local or global field. When $G$ is $\Sp_{2n}$ or $\SO_{2n}$, then $\hat G$ is $\SO_{2n+1}$ or $\SO_{2n}$ with the standard pinning, respectively. For Galois representations valued in $\hat G(\Qlbar)$, we view $\hat G$ over $\Qlbar$ via $\iota$. We will also consider Galois representations into $\widehat{\Sp_{2n}}=\SO_{2n+1}$ over $\Flbar$. For this, we understand $\SO_{2n+1}$ as the group scheme of $\SO_{2n+1}$ over $\Z$ as in \cite[Def.~C.1.2, C.2.10]{ConradReductive}. (We also consider $\GL_n$ over $\Z$ but that does not require explanation.)

Suppose $k$ is algebraically closed and $G/k$ is a reductive group. When $\rho$ is a continuous $G(k)$-valued representation of a topological group, let $\rho^{\textup{ss}}$ denote its $G$-semisimplification in the sense of $G$-complete reducibility. When $G = \mathrm{GL}_n$, this agrees with the usual notion of semi-simplification of a linear representation. Given $g\in G(k)$ for a reductive group $G$ over a perfect field $k$, write $g_{\textup{ss}}$ for the semisimple part of $g$ in the Jordan decomposition.

\subsection{Over a local field}\label{ss:notation-local}

Let $F$ be a non-archimedean local field of characteristic zero
with uniformizer $\varpi$. Let $G$ be a connected reductive group over $F$.
Define $G(F)_{\textup{reg}}$ (resp.~$G(F)_{\textup{sr}}$) to be the subset of regular (resp.~strongly regular) semisimple elements. Put $T(F)_{\textup{sr}}:=T(F)\cap G(F)_{\textup{sr}}$, i.e., $t\in T(F)$ belongs to $T(F)_{\textup{sr}}$ if the centralizer of $t$ in $G$ equals $T$.
Write $\cH(G)$ or $C^\infty_c(G(F))$ for the Hecke algebra of locally constant compactly supported functions on $G(F)$ (with respect to a fixed Haar measure on $G(F)$). When $G$ is an unramified group, write $\cH^{\textup{ur}}(G)\subset \cH(G)$ for the subalgebra of functions bi-invariant under a fixed hyperspecial subgroup of $G(F)$. Both $\cH(G)$ and $\cH^{\textup{ur}}(G)$ have coefficient rings $\C$ or $\Qlbar$ (to be clear from the context) unless otherwise noted, in which case we write $\cH^{\textup{ur}}_R(G)$ to specify the coefficient ring $R$.
For an irreducible smooth representation $\pi$ of $G(F)$, its Harish-Chandra character is denoted by $\Theta_\pi$.

For $\Lambda\in\{\F_\ell,\Q_\ell, \overline{\Q}_{\ell}\}$, we denote by $D(\Lambda)$ the derived category of $\Lambda$-modules and by $D(G(F),\Lambda)$ the derived category of smooth representations of $G(F)$ with $\Lambda$-coefficients. 

Now we recall Kottwitz isocrystals and related notions.
Denote by $\breve F$ the completion of a maximal unramified extension of $F$, equipped with a Frobenius automorphism $\sigma\in \Aut(\breve F/F)$. Let $B(G)$ denote the set of $\sigma$-conjugacy classes in $G(\breve F)$.
Let $b\in G(\breve F)$ and write $[b]\in B(G)$ for the $\sigma$-conjugacy class of $b$. Often we abuse the notation to write $b$ in place of $[b]$. Given $b$ we define the following (see \cite[III.5.1]{FarguesScholze} for more details on $J_b,\mathcal{J}_b$ and their relationship; they are denoted by $G_b,\widetilde G_b$ in \emph{loc.~cit.}):
\begin{itemize}
    \item $J_b$ is the connected reductive group over $F$ defined to be the $\sigma$-centralizer of $b$ in $G_{\breve F}$ as in \cite[1.12]{RapoportZink}. 
    \item $\mathcal{J}_b$ is the $v$-sheaf of automorphisms of a $G$-bundle over the Fargues--Fontaine curve whose isomorphism class is determined by $b$.
    \item $d_b:=\langle 2\rho,\nu_b\rangle\in \Z_{\ge 0}$, where $\rho$ is the half-sum of positive roots, and $\nu_b$ is the dominant representative of the Newton cocharacter for $b$, for a choice of a Borel pair for $G$. (The number $d_b$ is independent of the choice.)
\end{itemize}
Let $\mu:\GG_m\to G_{\ol F}$ be a cocharacter. The finite subset $B(G,\mu)$ of $B(G)$ defined in \cite[\S6]{KottwitzIsocrystal2} is equipped with a partial order $\preceq$ following \cite[\S2.3]{RapoportRichartz}. The unique maximal element in $B(G,\mu)$ is denoted by $[b_\mu]$ and said to be $\mu$-\emph{ordinary}, or just \emph{ordinary} when $G$ is a split reductive group. If $\mu$ is defined over an unramified extension of $F$, then $[b_\mu]$ is represented by the $\sigma$-conjugacy class of $\mu(\varpi)\in G(\breve F)$. In the special case $G= \mathrm{GSp}_{2n}$ and $F = \Q_p$, we note that $d_{[b_{\mu}]} = d = \frac{n(n+1)}{2}$. 

The following simple lemma will play a key role in the proof of Theorem \ref{thm:Hc(Ig)-char0-vanishing}.

\begin{lemma}\label{lem:Jb-non-quasi-split} 
Let $G$ be a split group over $F$ and $\mu:\GG_m\to G$ be a minuscule cocharacter. Let $[b]\in B(G,\mu)$. Then $J_b$ is quasi-split over $F$ if and only if $[b]$ is $\mu$-ordinary. 
\end{lemma}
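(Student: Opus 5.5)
The plan is to reduce both directions to the standard description of $J_b$ as an inner form of a Levi subgroup of $G$. Let $M_b$ be the centralizer in $G$ of the (quasi-split, $F$-rational since $G$ is split) Newton cocharacter $\nu_b$. This is a Levi subgroup of $G$ defined over $F$; it is split because $G$ is split and $\nu_b$ may be conjugated into a split maximal torus. By \cite[1.12]{RapoportZink} (or Kottwitz), $J_b$ is an inner form of $M_b$. Its class is determined by the basic element $[b]\in B(M_b)_{\textup{basic}}$ via $B(M_b)_{\textup{basic}}\cong \pi_1(M_b)_{\Gal_F}$ and the map to $H^1(F,M_b^{\textup{ad}})$. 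Since $M_b$ is split, hence quasi-split, $J_b$ is quasi-split if and only if it is isomorphic to $M_b$ as an inner twist. Equivalently, the image of $\kappa_{M_b}(b)$ in $\pi_1(M_b^{\textup{ad}})_{\Gal_F}=\pi_1(M_b^{\textup{ad}})$ is trivial, which means $b$ is, up to central elements, $\sigma$-conjugate in $M_b$ to an element of a maximal split torus $T$.

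For the ``if'' direction, take $b=\mu(\varpi)$, after conjugating $\mu$ to be defined over $F$ with image in $T$. Then $\nu_b$ is the Galois average of $\mu$, and $b\in T(F)$ is $\sigma$-invariant, so $J_b$ is the centralizer of $b$ with its $F$-structure unchanged, namely $M_b$ itself. This gives the quasi-split case directly.

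For the ``only if'' direction, assume $J_b$ is quasi-split. Then $[b]$ comes from some $[t]\in B(T)$, so $b$ is $\sigma$-conjugate to $\lambda(\varpi)$ for some cocharacter $\lambda\in X_*(T)$. Since $T$ is split, $\nu_b$ is then the dominant conjugate of $\lambda$, which is integral. Next we use $[b]\in B(G,\mu)$. The Mazur inequality gives $\nu_b\preceq\mu$ in the dominant order, and the Kottwitz invariants agree, so $\lambda_{\textup{dom}}-\mu$ lies in the coroot lattice. Hence $\lambda_{\textup{dom}}$ is a dominant integral cocharacter with $\lambda_{\textup{dom}}\le\mu$ in the dominance order on the same coset. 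Since $\mu$ is minuscule, it is minimal in its coset among dominant cocharacters, so $\lambda_{\textup{dom}}=\mu$. This gives $\nu_b=\mu=\nu_{b_\mu}$, and since elements of $B(G)$ are determined by $(\nu,\kappa)$, we get $[b]=[b_\mu]$.

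The main obstacle is the step showing that quasi-splitness of $J_b$ forces $[b]$ to come from $B(T)$, i.e.\ that the basic class in $B(M_b)$ has trivial image in $\pi_1(M_b^{\textup{ad}})$. First, the inner twist $J_b$ of $M_b$ is classified by the image of $\kappa_{M_b}(b)$ under Kottwitz's isomorphism $B(M_b)_{\textup{basic}}\to H^1(F,M_b)$ followed by $H^1(F,M_b)\to H^1(F,M_b^{\textup{ad}})$. Second, over a $p$-adic field, $H^1(F,M_b^{\textup{ad}})\cong\pi_1(M_b^{\textup{ad}})_{\textup{tors}}$ injectively, and inner forms of a quasi-split group are quasi-split only for the trivial class. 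It follows that $\kappa_{M_b}(b)$ lies in the image of $\pi_1(Z_{M_b}^\circ)\to\pi_1(M_b)$ modulo the coroot lattice of $M_b$. This lets us lift $b$ to $B(T)$ using a cocharacter of the center. As an alternative check, integrality of $\nu_b$ alone, via the slope decomposition for $\GSp_{2n}$ and $\GL_n$, would also suffice in the cases actually used in the paper.
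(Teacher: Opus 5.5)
Your argument is correct and has the same two-step structure as the paper's proof. First, $J_b$ is quasi-split if and only if $[b]$ lies in the image of $B(T)\to B(G)$ (the ``unramified'' elements of Xiao--Zhu); second, for split $G$ and minuscule $\mu$, the only such element of $B(G,\mu)$ is $[b_\mu]$. The paper cites Hamann's Lemma 2.12 and Corollary 2.6 for these two facts, whereas you prove them directly: the first via the classification of the inner form $J_b$ of $M_b$ by the image of $\kappa_{M_b}(b)$ in $\pi_1(M_b^{\mathrm{ad}})$, and the second via the Mazur inequality together with the minimality of a minuscule coweight in its coset modulo the coroot lattice.
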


\begin{proof} When $b$ is $\mu$-ordinary, it is represented by the $\sigma$-conjugacy class of $\mu(\varpi)$. Then $J_b$ is the Levi subgroup of $G$ given by the centralizer of $\mu$ and it is already split over $F$. The lemma will follow from the following two claims:
\begin{enumerate}
\item  Assume that $G/F$ is quasi-split and $\mu$ is an arbitrary cocharacter. Then the subset of elements of $B(G,\mu)$ for which $J_b$ is quasi-split over $F$ coincides with the subset $B(G,\mu)_{\mathrm{un}}$ of so-called ``unramified elements''  in the sense of Xiao--Zhu - see~\cite[\S4.2.1]{Xiao-Zhu}. 
\item The above subset is a singleton if $G$ is split over $F$ and $\mu$ is minuscule. 
\end{enumerate}
The first claim follows from~\cite[Lem.~2.12]{Hamann}. The second claim follows from the explicit description of the set $B(G,\mu)_{\mathrm{un}}$ in~\cite[Cor.~2.6]{Hamann}. 
\end{proof}

\begin{example} Let $G=\GSp_{2n}$ and $\mu:\GG_m\to \GSp_{2n}$ be given by $z\mapsto \textup{diag}(z^{-1} I_n, I_n)$ in the notation of \S~\ref{s:Notation}. This is the only case we will need in this paper. (The inverse appears due to our sign convention.) In this case, $B(G,\mu)$ is explicitly classified by symmetric Newton polygons bounded by the Hodge polygon corresponding to $\mu$. Lemma~\ref{lem:Jb-non-quasi-split} can be proved by hand, ultimately from the fact that non-ordinary Newton polygons must have fractional slopes (not in $\Z$).
\end{example}

From here, allow the local field $F$ to be either non-archimedean or archimedean.
Define the local Langlands group $\mathcal L_{F}:=W_{F}\times \SL^D_2(\C)$ if $F$ is non-archimedean and $\mathcal L_{F}:=W_F$ otherwise. A (local) $L$-parameter $\phi$ is a continuous morphism $\mathcal L_{F}\to {}^L G$ compatible with the projection maps onto $W_F$ such that the restriction $\phi|_{\SL^D_2(\C)}$ is an algebraic morphism and also that $\phi|_{W_F}$ is semisimple. An $A$-parameter is a continuous morphism $\psi:\mathcal L_{F}\times \SL_2^A(\C)\to {}^L G$ compatible with the projection maps onto $W_F$ such that (i) both $\psi|_{\SL_2^D(\C)}$ and $\psi|_{\SL_2^A(\C)}$ are algebraic, (ii) $\psi|_{W_F}$ is semisimple, and (iii) the 1-cocycle defined by $\psi|_{W_F}$ is valued in a bounded subgroup of $\hat G$. We say that $\psi|_{W_F}$ has bounded image to refer to condition (iii). 
We define the notion of generalized $A$-parameters by repeating the definition for an $A$-parameter but dropping condition (iii). 
A (generalized) $A$-parameter $\psi$ gives rise to an $L$-parameter 
\[
\phi_\psi:=\psi\circ i_{\cL_{F}},\quad \mbox{where}\quad i_{\cL_{F}}:\mathcal L_{F}\to \mathcal L_{F}\times \SL^A_2(\C),\qquad x\mapsto (x,\textup{diag}(|w|^{1/2},|w|^{-1/2})).
\]
An isomorphism between two $L$-parameters or $A$-parameters is given by $\hat G$-conjugation. We write 
\[
\Phi(G)\quad (\mbox{resp.}~\Psi(G),~\Psi^+(G))
\]
for the set of isomorphism classes of $L$-parameters (resp.~$A$-parameters, generalized $A$-parameters). By abuse of terminology, a parameter will usually mean an isomorphism class of parameters or a representative thereof. This will not lead to confusion as we will only be concerned with properties that are invariant under $\hat G$-conjugation.

Let $\phi\in \Phi(G)$ over an archimedean $F$. Let $(\hat T,\hat B)$ be a Borel pair for $\hat G$. Let $\rho\in X_*(\hat T)$ denote the half sum of positive coroots of $\hat T$ in $\hat G$. Up to $\hat G$-conjugation we can write $\phi|_{W_{\C}}$ as $z\mapsto \lambda_1(z)\lambda_2(\ol z)$ for $\lambda_1,\lambda_2\in X_*(\hat T)\otimes_{\Z}\C$ such that $\lambda_1-\lambda_2\in X_*(\hat T)$. If the stabilizers of $\lambda_1$ and $\lambda_2$ in $\hat G$ are both $\hat T$, we consider $\phi$ to be regular. We say that $\phi$ is $L$-algebraic, resp.~$C$-algebraic, if $\lambda_1\in X_*(\hat T)$, resp.~if $\lambda_1-\rho\in X_*(\hat T)$. (One can equivalently state the condition in terms of $\lambda_2$.) Since $\rho\in X_*(\hat T)$ for $G\in \{\Sp_{2n},\SO_{2n}\}$, the two algebraicity conditions are equivalent for these groups. We consider $\psi\in \Psi^+(G)$ to be regular, $L$-algebraic, or $C$-algebraic if $\phi_{\psi}$ is. 

Now let $F$ be non-archimedean. We assume that $G$ is a split group for simplicity.
In the preceding paragraph, $\hat G$ is a $\C$-group with the usual complex topology. If we fix an isomorphism $\iota: \C\cong \Qlbar$ then the $L$-parameters with $\C$-coefficients as above are in bijection with (isomorphism classes of) continuous morphisms $W_F \to \hat G(\Qlbar)$ via $\iota$ and the $\ell$-adic monodromy theorem. Here the codomain is equipped with the $\ell$-adic topology. A continuous representation $\rho:\Gal_F \to \hat G(\Qlbar)$ determines an $L$-parameter $\phi_{\rho}$ by restricting from $\Gal_F$ to $W_F$ and taking the Frobenius semisimplification.

\subsection{Over a number field}\label{ss:notation-global}

In this subsection $F$ is a number field. Write $\A_F$ for the ring of ad\`eles over $F$. Given a finite set $S$ of places of $F$, set $\A_{F,S}:=\prod_{v\in S} F_v$ and $\A_F^S:=\prod'_{v\notin S} F_v$ (the restricted product away from $S$). The symbol $\infty$ means the set of all infinite places of $F$ in this context; e.g., $\A_F^\infty$ is the ring of finite ad\`eles. If $F=\Q$ we usually omit $F$ from the subscript.

Let $G$ be a connected reductive group over $F$.
Write $A_G$ for the maximal $\Q$-split torus in the center of $\Res_{F/\Q}G$, and put $A_{G,\infty}:=A_G(\R)^0$. 
For invariant and stable distributions to be considered, we fix a central character datum \cite[p.122]{Arthur} for $G$ to be $(\mathfrak X_G,\chi)=(A_{G,\infty},\mathbf{1})$. Informally this means that we restrict ourselves to the automorphic spectrum on which $A_{G,\infty}$ acts trivially. The central character datum determines a central character datum for all Levi and endoscopic groups of $G$. 
Write $[G]:=G(F)\backslash G(\A_F)/A_{G,\infty}$ for the automorphic quotient. By $(R^G_{\disc},L^2_{\disc}([G]))$ we mean the regular representation of $G(\A_F)$ (via right translation) on the $L^2$-space of $[G]$ restricted to the discrete spectrum. The set of irreducible summands up to isomorphism, namely discrete automorphic representations of $G(\A_F)$, is denoted by $\cA_{2}(G)$.
Write $\cH(G)$ for the adelic Hecke algebra of functions which are invariant under the translation by $A_{G,\infty}$; compare with $\cH(G,\chi)$ in \cite[p.123]{Arthur}. We will suppress the central character datum from $\cA_{2}(G)$, $\cH(G)$, and so on, in favor of simplicity. 

Let $S$ be a finite set of places of $F$ containing the archimedean places and all finite places where $G$ is ramified. (We allow $G$ to be unramified at some places in $S$.) We fix a hyperspecial subgroup $K^{\textup{h}}_v\subset G(F_v)$ at every finite place $v\notin S$ arising from a reductive model for $G$ away from $S$.
Define $\cA^S_{2}(G)$ to be the subset of $\pi\in\cA_{2}(G)$ which are unramified outside $S$ (with respect to $K^{\textup{h}}_v$'s). Similarly $\cH^S(G)$ is the adelic Hecke algebra unramified away from $S$.

Define the set $\cC^S_{\fin}(G)$ of families $(c_v)_{v\notin S}$, where each $c_v\subset \hat G\rtimes \Frob_v$ is a semisimple $\hat G$-conjugacy class in the $\Frob_v$-coset, where $\Frob_v$ denotes the geometric Frobenius acting on $\hat G$ via the $L$-action. When $G$ is a split group, the $\Frob_v$ acts trivially on $\hat G$, so we identify $c_v$ with a $\hat G$-conjugacy class in $\hat G$ by projection. 
Write $\mathfrak{Z}_\infty$ for the center of the universal enveloping algebra of $G_{\infty,\C}:=(\Res_{F/\Q}G)\times_{\Q}\C$, and let $\cC_\infty(G)$ stand for the set of $\C$-algebra morphisms $\mathfrak{Z}_\infty\to \C$. If $T_\infty$ is a maximal torus of $G_{\infty,\C}$ with Weyl group $\Omega_\infty$, then we may identify $\cC_\infty(G)=(X_*(\hat T_\infty)\otimes_{\Z}\C)/\Omega_\infty$ via the Harish-Chandra isomorphism, thereby defining the \emph{$C$-algebraic} subset $\cC_{\infty,\alg}(G)$ to be the image of $X_*(\hat T_\infty)+\rho$, where $\rho\in X_*(\hat T_\infty)$ is as in \S~\ref{ss:notation-local}. 
We say $\zeta\in \cC_\infty(G)$ is \emph{regular} if its stabilizer in $\Omega_\infty$ is trivial. 
Put $\cC^S(G):=\cC_\infty(G)\times \cC^S_{\fin}(G)$ and $\cC^S_{\alg}(G):=\cC_{\infty,\alg}(G)\times \cC^S_{\fin}(G)$. There is a natural map
\[
\cA^S_{2}(G)\to \cC^S(G)=\cC_\infty(G)\times \cC^S_{\fin}(G) ,\qquad \pi\mapsto (\zeta(\pi_\infty),\, c^S(\pi)=(c_v(\pi_v))_{v\notin S})
\]
assigning the infinitesimal character at $\infty$ and the Satake parameters away from $S$. 
Write $\cC^S_{2}(G)$ for the image of $\cA^S_{2}(G)$. Set $\cC^S_{2,\alg}(G):=\cC^S_{2}(G)\cap \cC^S_{\alg}(G)$.
Let $\cA^S_{2,\zeta,c^S}(G)$ denote the fiber over $(\zeta,c^S)\in \cC^S(G)$.

Let $S$ be as above and further assume $\ell\in S$. Define the global Hecke algebra as a double coset $\Z_{\ell}$-algebra $\mathbb T^S(G):=\Z_{\ell}[K^S\backslash G(\A_F^{S})/K^S]$ for $K^S=\prod_{v\notin S}K^{\textup{h}}_v$. We omit $G$ from the notation when it is clear from the context. When $G$ is a split group, each maximal ideal $\frakm\subset \mathbb T^S$, together with an embedding $\mathbb{T}^S/\m\hookrightarrow\overline{\F}_{\ell}$, determines a Satake parameter $c_v(\frakm)\subset \hat G(\ol{\F}_{\ell})$ for every $v\notin S$, which is a $\hat G(\ol{\F}_{\ell})$-conjugacy class. 
We similarly define $c_v(\m)$ for a maximal ideal $\m\subset \mathbb{T}^S[\tfrac{1}{\ell}]$, obtaining a $\hat{G}(\ol{\Q}_{\ell})$-conjugacy class. (In both cases, the $c_v(\m)$ are defined using the normalized Satake isomorphism, so they are compatible with $L$-normalized rather than $C$-normalized Langlands parameters.) 

Consider the (anti-)involution $\iota: \mathbb{T}^S\to \mathbb{T}^S$ that sends the double coset operator $[K^SgK^S]$ to $[K^Sg^{-1}K^S]$. Given a maximal ideal $\m\subset \mathbb{T}^S$, we define $\m^\vee:=\iota(\m)$. Assume that $G = \mathrm{GSp}_{2n}/\Q$, so that we have an associated inverse system of Shimura varieties, namely the Siegel modular varieties. By the definition of the action of $\mathbb{T}^S$ on the cohomology of Shimura varieties via Hecke correspondence, one can check that Verdier duality exchanges $\m$ and $\m^\vee$. For more details, see the discussion in~\cite[\S 2.2.19]{10author} and~\cite[Prop. 3.7]{Newton-Thorne}, which proves the  statement about Verdier duality for Betti cohomology for general $G$; the proof for \'etale cohomology is similar.   

\begin{lemma}\label{lem:Verdier duality Langlands parameter}
    Assume that $G = \mathrm{Sp}_{2n}/\Q$ and that $\m\subset \mathbb{T}^S$ is a maximal ideal. Then, for each $v\not\in S$, we have an identification 
    $c_v(\m) = c_v(\m^\vee)$ of $\mathrm{SO}_{2n+1}(\ol{\F}_{\ell})$-conjugacy classes. The analogous result holds for a maximal ideal $\m\subset \mathbb{T}^S[\tfrac{1}{\ell}]$. 
\end{lemma}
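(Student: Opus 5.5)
Via the Satake isomorphism, the involution $\iota$ on $\mathbb T^S$ corresponds locally at each $v\notin S$ to $g\mapsto g^{-1}$ on $\cH^{\ur}(G(\Q_v))$. We will compute its effect on Satake parameters and then check that, for $\Sp_{2n}$, the resulting class is conjugate to the original one.

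\textbf{Step 1: reduce to a single place.} The algebra $\mathbb T^S$ is the restricted tensor product of the local spherical Hecke algebras $\cH^{\ur}_{\Z_\ell}(G(\Q_v))$ for $v\notin S$, and $\iota$ acts factorwise. The parameter $c_v(\m)$ is determined by the character $\theta_v\colon \cH^{\ur}_{\Z_\ell}(G(\Q_v))\to \mathbb T^S/\m\hookrightarrow \Flbar$. By definition of $\m^\vee=\iota(\m)$, the character attached to $\m^\vee$ at $v$ is $\theta_v\circ\iota_v$. So it suffices to compare the Satake parameters of $\theta_v$ and $\theta_v\circ\iota_v$.

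\textbf{Step 2: compute $\iota_v$ on the Satake side.} Take the Cartan basis $[K\lambda(\varpi)K]$ for dominant $\lambda\in X_*(T)$. Then
\[
[K\lambda(\varpi)K]^{-1}=[K\lambda(\varpi)^{-1}K]=[K(-w_0\lambda)(\varpi)K].
\]
Hence $\iota_v$ is induced by the automorphism $\lambda\mapsto -w_0\lambda$ of $X_*(T)$. This equals $-1$ on $X_*(T)$ composed with the Weyl element $w_0$.

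The normalized Satake transform $f\mapsto \delta^{1/2}\int_N f$ intertwines $g\mapsto g^{-1}$ with the inversion $t\mapsto t^{-1}$ on $\Z[\tfrac{1}{\sqrt q}][X_*(T)]^W$. The modulus factor $\delta^{1/2}(t)$ becomes $\delta^{-1/2}(t^{-1})$, which is consistent, so the normalization is preserved. It is cleanest to check this over $\Z[\tfrac{1}{\sqrt q}]$ directly, using the integrality statement for the Satake transform. In any case one can verify that inversion commutes with the Satake map on the dense algebra and is defined over $\Z_\ell$ on both sides.

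Dually, if $\theta_v$ corresponds to the semisimple class $c\in \hat T(\Flbar)/W$, then $\theta_v\circ\iota_v$ corresponds to $c^{-1}$. This gives $c_v(\m^\vee)=c_v(\m)^{-1}$.

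\textbf{Step 3: the dual group $\SO_{2n+1}$.} It remains to show that every semisimple $c\in \SO_{2n+1}(\Flbar)$ is conjugate to $c^{-1}$. Equivalently, $-1\in W$ for type $B_n$. The Weyl group of $B_n$ is $(\Z/2)^n\rtimes S_n$ and contains the element negating all coordinates of $\hat T$, namely $\textup{diag}(t_1,\dots,t_n,1,t_n^{-1},\dots,t_1^{-1})\mapsto$ its inverse. This element is realized by a Weyl group representative in $\SO_{2n+1}(\Z)$, which makes sense over $\Flbar$ including $\ell=2$ via the Chevalley model of \cite{ConradReductive}. Conjugacy of semisimple classes is detected in $\hat T/W$, so $c^{-1}\sim c$. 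The $\Qlbar$-version is identical.

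\textbf{Main obstacle.} The main subtlety is Step 2 with $\Z_\ell$-coefficients: making sure inversion on $\cH^{\ur}$ really corresponds to $t\mapsto t^{-1}$ on the dual torus under the normalized Satake isomorphism, including the $\delta^{1/2}$ twist and any choice of $\sqrt q$. I would argue this first over $\C$ via the integral formula, and then descend using the integral Satake isomorphism.
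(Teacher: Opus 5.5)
Your proposal is correct and is essentially the paper's argument: the paper observes that $\iota$ acts on unramified parameters through the Chevalley involution, and that this involution is inner for $\SO_{2n+1}$. On semisimple classes the Chevalley involution is your $c\mapsto c^{-1}$, derived from the normalized Satake transform, and its being inner is exactly your observation $-1\in W(B_n)$. As a small shortcut, $-w_0=1$ on $X_*(T)$ in type $C_n$ too, so your Step~2 already shows that $\iota_v$ fixes every $[K\lambda(\varpi)K]$ in $\cH^{\ur}(\Sp_{2n}(\Q_v))$; this makes the Satake computation unnecessary for the lemma as literally stated.
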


\begin{proof}
    The effect of the (anti-)involution $\iota$ on the level of the $L$-group valued unramified Langlands parameter is given by composition with the Chevalley involution. This can be seen by inspection from the normalized Satake isomorphism. In the case of $\mathrm{SO}_{2n+1}$ over $\ol{\F}_{\ell}$ or $\ol{\Q}_{\ell}$, the Chevalley involution is an inner automorphism, giving the desired identification of conjugacy classes. 
\end{proof}

\section{Siegel Shimura varieties} \label{s:recollection-Shimura}

In this section we recall the main players and tools in the $p$-adic geometry of Siegel Shimura varieties and prove the main theorem modulo an assertion on the cohomology of Igusa varieties with $\overline{\Q}_{\ell}$-coefficients, whose proof is postponed to \S~\ref{s:cohomology-Igusa}. In fact, many, but not all, of the geometric ingredients in \S~\ref{ss:Shimura-Igusa} and \S~\ref{ss:Igs-semi-perversity} are available for Shimura varieties of abelian type (see \cite{DvHKZ,DvHKZ2} and references therein). The statements regarding compactifications of the Igusa stack, respectively of Igusa varieties, are, for the moment, only known to hold for more limited classes of Shimura varieties. Since our discussion is specific to $\GSp_{2n}$ from \S~\ref{ss:inductive-lemmas}, we restrict ourselves to the Siegel case to simplify the exposition.

\subsection{Shimura varieties and Igusa varieties in the Siegel case}\label{ss:Shimura-Igusa}

Write $G:=\GSp_{2n}$ for the symplectic similitude group in $2n$ variables over $\Q$ as in \S~\ref{s:Notation}. Let $\mathfrak{S}^{\pm}$ denote the Siegel double space of genus $n$ so that $(G,\mathfrak{S}^{\pm})$ is a Siegel Shimura datum as in \cite[(2.1.5)]{KisinIntegralModels}.
Write $\mu:\GG_m\to G_{\C}$ for the \emph{inverse} of the associated Hodge cocharacter, well-defined up to $G(\C)$-conjugacy.\footnote{This convention for $\mu$ conforms to the notation of \cite{Zhang} and allows us to avoid too many occurrences of $\mu^{-1}$.}
We will often assume that $n\ge 2$; this ensures that the codimension of the boundary in the minimal compactification of the Shimura variety is at least $2$\footnote{This is the condition under which the minimal compactification of the Igusa stack has been constructed by~\cite{Zhang} for Shimura varieties of PEL type A/C.}. 

Fix distinct primes $p$ and $\ell$. For a finite place $v$ of $\Q$, write $K_v^{\textup{h}}:=G(\Z_v)$. Consider a sufficiently small open compact subgroup that is decomposable:
\[
K^p=\prod_{v\neq p,\infty} K_v\subset G(\mathbb{A}^{\infty,p}).\]
Denote by $\textup{Sh}_{K^{\textup{h}}_p K^p}$ the Siegel Shimura variety over $\C$ and $S_{K^{\textup{h}}_p K^p}$ the mod $p$ Shimura variety over $\Fpbar$, both with level $K_p^{\textup{h}} K^p$; they are obtained via base change from a canonical integral model over $\Z_p$, cf. \cite{KisinIntegralModels}.

We recall the following from 
\cite[\S2.3, \S3.3]{CaraianiScholzeNonCompact} (adapted to $G=\GSp_{2n}$ in the evident manner) and \cite[\S\S9.2.1--9.2.3]{Zhang}.
We fix an isomorphism $\C\cong \C_p$, and view $\mu$ as a $G(\Qpbar)$-conjugacy class. Fix $b\in B(G,\mu)$, which represents an isogeny class of polarized $p$-divisible groups of height $2n$. Fix a $p$-divisible group $\mathbb{X}_b$ over $\overline{\mathbb{F}}_p$, in the isogeny class determined by $b$ and also assumed to be compatible with $\mu$, in the sense of satisfying the Kottwitz determinant condition. We consider the formal  group scheme $\widetilde{J}_{b,\overline{\mathbb{F}}_p}$ over $\overline{\mathbb{F}}_p$ of quasi-self isogenies of $\mathbb{X}_b$ that preserve the polarization up to a similitude factor in $\underline{\mathbb{Q}_p^\times}$ - see~\cite[Definition 4.4]{Caraiani-Hamann-Zhang}. 
We have a decomposition 
\[
\widetilde{J}_{b,\overline{\mathbb{F}}_p} = \underline{J_b(\Q_p)}\ltimes \widetilde{\mathcal{U}}_b, 
\]
where the $p$-adic group $J_b(\Q_p)$ can be identified with the group of quasi-self isogenies of $\mathbb{X}_b$ over $\overline{\mathbb{F}}_p$ that respect the polarization up to an element in $\Q_p^\times$, and where $\widetilde{\mathcal{U}}_b$ is a unipotent formal group scheme. By~\cite[Cor.~9.46]{Zhang}, there is an isomorphism 
\[
\widetilde{J}^{\diamond}_{b,\overline{\mathbb{F}}_p}\simeq\mathcal{J}_b
\] 
of $v$-sheaves over $\mathrm{Spd}\ \overline{\mathbb{F}}_p$,
where the LHS denotes the small diamond functor applied to $\widetilde{J}_{b,\overline{\mathbb{F}}_p}$ (see also the discussion in~\cite[\S 4.2.3]{Caraiani-Hamann-Zhang}). 

We have the Igusa variety $\Ig_{K^p}^b$ over the central leaf inside $S_{K_p^{\textup{h}}K^p}$ determined by $\mathbb{X}_b$, together with its partial minimal compactification $j_b:\Ig^b_{K^p}\hookrightarrow \Ig^{b,*}_{K^p}$. We know that $\Ig^{b,*}_{K^p}$ is affine by~\cite[Thm.~3.3.2, Lem.~3.3.8]{CaraianiScholzeNonCompact}. Both $\Ig_{K^p}^{b}$ and $\Ig^{b,*}_{K^p}$ have dimension $d_b$ (defined in \S~\ref{ss:notation-local}). There is an explicit, moduli-theoretic action of $\widetilde{J}_{b,\overline{\mathbb{F}}_p}$ on $\mathrm{Ig}^b_{K^p}$ - see~\cite[\S 4.1]{Caraiani-Hamann-Zhang}. When $n\geq 2$, by~\cite[Lem.~9.44]{Zhang}, the $\widetilde{J}_{b,\overline{\mathbb{F}}_p}$-action on $\mathrm{Ig}^b_{K^p}$ extends uniquely to an action on $\mathrm{Ig}^{b,*}_{K^p}$. Let $S$ be a finite set of places of $\Q$ containing $p,\ell,\infty$ such that $K_v=K_v^{\textup{h}}$ for all $v\notin S$. For $\Lambda\in\{\F_\ell,\Q_\ell, \overline{\Q}_{\ell}\}$, the cohomology complexes
\begin{equation}\label{eq:RGamma-Ig}
    R\Gamma(\Ig^b_{K^p},\Lambda),\quad R\Gamma_c(\Ig^b_{K^p},\Lambda), \quad R\Gamma_{c-\partial}(\Ig^b_{K^p},\Lambda):=R\Gamma(\Ig^{b,*}_{K^p},j_{b,!}\Lambda)
\end{equation}
are objects in $D(J_b(\Q_p),\Lambda)$ equipped with an equivariant action of $\mathbb{T}^S$. Similarly, the cohomology complex $R\Gamma(\textup{Sh}_{K_p^{\textup{h}}K^p},\Lambda)$ is an object in $D(\Lambda)$ equipped with an equivariant action of $\mathbb{T}^S$.

\subsection{Igusa stacks and semi-perversity}\label{ss:Igs-semi-perversity}

We are going to recall some geometric input from~\cite{CaraianiScholzeNonCompact,Zhang,FarguesScholze,Hamann-Lee}. For a sufficiently small compact open subgroup $K^p\subset G(\A_f^p)$, we have the associated Shimura variety $\cS_{K^p}$ with infinite level at $p$, considered as an adic space over $\mathrm{Spa}(\mathbb{C}_p, \cO_{\mathbb{C}_p})$. After applying the diamond functor, we obtain $\cS^{\Diamond}_{K^p}$, a $v$-stack over $\mathrm{Spd}\ \mathbb{C}_p^{\flat}$. This is a (Zariski) open subset inside the minimal compactification $\cS_{K^p}^{*,(\Diamond)}$, also considered as an adic space over $\mathrm{Spa}(\mathbb{C}_p, \cO_{\mathbb{C}_p})$ or as a $v$-stack over $\mathrm{Spd}\ \mathbb{C}_p^{\flat}$. We also have the ``good reduction locus'' $\cS^{\circ, (\Diamond)}_{K^p}\subset \cS^{(\Diamond)}_{K^p}$. This is defined as (the preimage from hyperspecial level of) the adic generic fiber of the formal completion of the integral model of the Shimura variety along its special fiber. 

Let $\mathscr{F}\ell^{(\Diamond)}$ denote the partial flag variety arising from $G$ and $\mu$ defined as in \cite[p.661]{CaraianiScholzeGeneric}, cf.~\cite[\S6.2]{Zhang}, viewed as an adic space over $\mathrm{Spa}(\mathbb{C}_p, \cO_{\mathbb{C}_p})$ or as a $v$-stack over $\mathrm{Spd}\ \overline{\F}_p$. We let $\pi_{\mathrm{min}}: \cS_{K^p}^*\to \mathscr{F}\ell$ denote the Hodge--Tate period morphism introduced in~\cite{ScholzeTorsion}. We denote by $\pi$ its restriction to the open Shimura variety $\cS_{K^p}\subset \cS^*_{K^p}$, respectively by $\pi^\circ$ its restriction to the good reduction locus $\cS^\circ_{K^p}\subset \cS^*_{K^p}$. 

Let $\mathrm{Bun}_G$ denote the moduli stack of $G$-bundles on the Fargues--Fontaine curve. This is a $v$-stack over $\mathrm{Spd}\ \overline{\F}_p$. It admits the locally closed Newton stratification 
\[
\mathrm{Bun}_G = \bigsqcup_{b\in B(G)} \mathrm{Bun}_G^b,
\]
where $\mathrm{Bun}_G^b \simeq [*/\mathcal{J}_b]$ is $\ell$-cohomologically smooth of dimension $-d_b$. We denote by $i_b: \mathrm{Bun}^b_G\hookrightarrow \mathrm{Bun}_G$ the locally closed immersion. There is a Beauville--Laszlo morphism $\mathscr{F}\ell\to \mathrm{Bun}_G$, which factors through the open substack $\mathrm{Bun}_G^{\leq \mu}$, which consists of those Newton strata for which $b\in B(G,\mu)$. We denote by $i_{\mu}$ the open immersion $\mathrm{Bun}_G^{\leq \mu}\hookrightarrow \mathrm{Bun}_G$.  We let $D(\mathrm{Bun}_G, \F_{\ell})$ be the derived category of sheaves with $\F_{\ell}$-coefficients constructed in~\cite{FarguesScholze} and we have an implicit identification 
\[
D(\mathrm{Bun}_G^b, \F_{\ell})\simeq D(J_b(\Q_p), \F_{\ell})
\]
via the results of \emph{loc. cit.}. 

Assume that $n\in \Z_{\geq 2}$. By~\cite[Thm.~9.38]{Zhang}, there is a Cartesian diagram of small $v$-stacks over $\mathrm{Spd}\ \mathbb{C}_p^{\flat}$ (in fact, the $v$-stacks are already defined over $\mathrm{Spd}\ \overline{\F}_p$, but we will not need this): 
\begin{equation}\label{eq:product formula}
\xymatrix{\cS^{*,\Diamond}_{K^p}\ar[r]^{\pi^{\Diamond}_{K^p, \mathrm{min}}}\ar[d] & \mathscr{F}\ell^{\Diamond}\ar[d] \\ \mathrm{Igs}^*_{K^p}\ar[r]^{\bar{\pi}_{K^p, \mathrm{min}}} & \mathrm{Bun}^{\leq \mu}_G.
}
\end{equation}
This restricts to the following Cartesian diagrams for the good reduction locus and, respectively, for the open Shimura variety:
\begin{equation}\label{eq:product formula-goodred / open}
\xymatrix{\cS^{\circ, \Diamond}_{K^p}\ar[r]^{\pi^{\circ,\Diamond}_{K^p}}\ar[d] & \mathscr{F}\ell^{\Diamond}\ar[d] \\ \mathrm{Igs}^\circ_{K^p}\ar[r]^{\bar{\pi}^\circ_{K^p}} & \mathrm{Bun}^{\leq \mu}_G
}
\ \ \mathrm{and} \ \ 
\xymatrix{\cS_{K^p}^{\Diamond}\ar[r]^{\pi^{\Diamond}_{K^p}}\ar[d] & \mathscr{F}\ell^{\Diamond}\ar[d] \\ \mathrm{Igs}_{K^p}\ar[r]^{\bar{\pi}_{K^p}} & \mathrm{Bun}^{\leq \mu}_G.}
\end{equation}
The Cartesian diagram for the good reduction locus was obtained in~\cite[Thm.~8.13]{Zhang}. The Cartesian diagram for the open Shimura variety was constructed in~\cite[Thm.~D]{Kim}. The Cartesian diagrams~\eqref{eq:product formula-goodred / open} are available even in the case $n=1$, as the cited references do not have any restriction coming from the codimension of the boundary. 

There is a natural perverse $t$-structure $\left(^pD^{\leq 0}, {}^pD^{\geq 0}\right)$ on $D(\mathrm{Bun}_G, \F_{\ell})$ induced by the Harder--Narasimhan stratification, cf.~\cite[Def.~4.11]{Hamann-Lee}. By~\cite[Prop.~8.1.5]{DvHKZ}, this perverse $t$-structure is characterized as follows: a complex $A\in D(\mathrm{Bun}_G, \F_{\ell})$ lies in $^pD^{\leq 0}(\mathrm{Bun}_G, \F_{\ell})$, resp.~in $^pD^{\geq 0}(\mathrm{Bun}_G, \F_{\ell})$, if and only if $i_b^*(A)$ is concentrated in cohomological degrees $\leq d_b$, resp.~if and only if $i_b^!(A)$ is concentrated in cohomological degrees $\geq d_b$. In other words, this perverse $t$-structure comes from gluing the shifted standard $t$-structures on each $\mathrm{Bun}_G^b$ given by $\left(D^{\leq d_b}_{\textup{st}}, D^{\geq d_b}_{\textup{st}}\right)$. This restricts to a perverse $t$-structure for the corresponding category of sheaves on any open substack of $\mathrm{Bun}_G$; in particular, this induces a perverse $t$-structure on $D(\mathrm{Bun}_G^{\leq \mu}, \F_{\ell})$. 

We note that Verdier duality on $\mathrm{Bun}_G$ (or on any open substack, such as $\mathrm{Bun}_G^{\leq\mu}$) takes $^pD^{\leq 0}$ to $^pD^{\geq 0}$ and, at least on the subcategory 
of universally locally acyclic objects,
it takes $^pD^{\geq 0}$ to $^pD^{\leq 0}$. We will only use the former statement, which can be checked on each Newton stratum and follows from the fact that $\mathrm{Bun}^b_G$ is $\ell$-cohomologically smooth of dimension $-d_b$. See~\cite[Remark 2.17]{Caraiani-Hamann-Zhang} for more details. 

\begin{thm}\label{thm:semi-perversity}\leavevmode
Assume that $n\in \Z_{\geq 2}$ and that $K^p = K^{p,\ell}K_{\ell}$ is sufficiently small. The following are true.
\begin{enumerate}
\item The sheaf $R\bar{\pi}_{K^p!}\F_{\ell}$ belongs to $^p D^{\leq 0}(\mathrm{Bun}^{\leq \mu}_G, \F_{\ell})$. Furthermore, if $x: \mathrm{Spd}(C,\cO_C)\to \mathrm{Bun}_G^b\hookrightarrow \mathrm{Bun}_G$ is a geometric point for some $b\in B(G,\mu)$, we have a $\mathbb{T}^S$- and $J_b(\Q_p)$-equivariant isomorphism 
\[
(R\bar{\pi}_{K^p!}\F_{\ell})_x\simeq R\Gamma_{c-\partial}(\mathrm{Ig}^b_{K^p}, \F_{\ell}).
\] 
\item The sheaf $R\bar{\pi}^\circ_{K^p*}\F_{\ell}$ belongs to $^p D^{\geq 0}(\mathrm{Bun}^{\leq \mu}_G, \F_{\ell})$. Furthermore, if $x: \mathrm{Spd}(C,\cO_C)\to \mathrm{Bun}_G^b\hookrightarrow \mathrm{Bun}_G$ is a geometric point for some $b\in B(G,\mu)$, we have a $\mathbb{T}^S$- and $J_b(\Q_p)$-equivariant isomorphism 
\[
(R\bar{\pi}^\circ_{K^p*}\F_{\ell})_x\simeq R\Gamma(\mathrm{Ig}^b_{K^p}, \F_{\ell}).
\]
\end{enumerate}
\end{thm}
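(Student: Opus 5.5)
Both parts will be proved by the same two-step strategy: compute the stalks of the pushforward via base change in the Cartesian diagrams \eqref{eq:product formula} and \eqref{eq:product formula-goodred / open}, then read off the perversity bounds from the stalk description together with the geometry of Igusa varieties (affineness of $\Ig^{b,*}_{K^p}$ and dimension $d_b$). Recall that membership in $^pD^{\leq 0}$ is tested by $i_b^*$ landing in degrees $\leq d_b$, and membership in $^pD^{\geq 0}$ by $i_b^!$ landing in degrees $\geq d_b$. Under $D(\mathrm{Bun}_G^b,\F_\ell)\simeq D(J_b(\Q_p),\F_\ell)$, the stalk at a geometric point $x$ of $\mathrm{Bun}_G^b$ computes the underlying complex of $i_b^*$.

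\textbf{Part (1).} First, write $R\bar\pi_{K^p!}\F_\ell = R\bar\pi_{K^p,\min *}\, j_!\F_\ell$ with $j:\mathrm{Igs}_{K^p}\hookrightarrow \mathrm{Igs}^*_{K^p}$. Here $\bar\pi_{K^p,\min}$ is proper, being the descent of the proper map $\cS^*_{K^p}\to \Fl$ along the $v$-cover $\Fl^\Diamond\to\mathrm{Bun}^{\le\mu}_G$. Next, by proper base change, $(R\bar\pi_{K^p!}\F_\ell)_x$ is the cohomology of the fibre of $\bar\pi_{K^p,\min}$ over $x$ with coefficients in $j_!\F_\ell$. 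Following \cite{Zhang}, I would identify this fibre with $\Ig^{b,*,\Diamond}_{K^p}$, or more precisely with its perfection up to the $\mathcal J_b$-torsor structure; the open part $\Ig^{b,\Diamond}_{K^p}$ corresponds to the open Igusa stack. Since the diamond functor and perfection do not change étale cohomology, this gives $(R\bar\pi_{K^p!}\F_\ell)_x\simeq R\Gamma(\Ig^{b,*}_{K^p},j_{b,!}\F_\ell)=R\Gamma_{c-\partial}$. Equivariance for $J_b(\Q_p)$ and $\mathbb T^S$ follows from the functoriality of Zhang's construction, using the extension of the $\widetilde J_b$-action to $\Ig^{b,*}$ from \cite[Lem.~9.44]{Zhang}. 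Finally, since $\Ig^{b,*}_{K^p}$ is affine of dimension $d_b$, Artin vanishing puts $R\Gamma(\Ig^{b,*}_{K^p},j_{b,!}\F_\ell)$ in degrees $\le d_b$. This is exactly the bound on $i_b^*$, so $R\bar\pi_{K^p!}\F_\ell\in{}^pD^{\le0}$.

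\textbf{Part (2).} The stalk computation is analogous, using the good-reduction diagram and \cite[Thm.~8.13]{Zhang}. The fibre over $x$ is the (perfect) Igusa variety $\Ig^b_{K^p}$. The pushforward $R\bar\pi^\circ_*$ is not proper, so its stalks must be handled differently. One route is to use the structure of $\mathrm{Igs}^\circ\to\mathrm{Bun}_G$ as pro-étale locally a product: $\mathrm{Igs}^\circ\times_{\mathrm{Bun}_G}\mathrm{Bun}_G^b\simeq[\Ig^{b,\Diamond}_{K^p}/\mathcal J_b]$. The other is to reduce to the statement for $i_b^*$ of the $*$-pushforward by passing to the Shimura variety and invoking the corresponding result of \cite{CaraianiScholzeNonCompact} via $\pi^\circ$. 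For the perversity bound, I would use Verdier duality rather than a stalk argument. The good reduction locus is the tube over the smooth special fibre, so $\mathrm{Igs}^\circ$ is $\ell$-cohomologically smooth of dimension $0$ (as is $\Fl$ relative to $\mathrm{Bun}_G$ up to shift). Hence $\mathbb D(R\bar\pi^\circ_*\F_\ell)\simeq R\bar\pi^\circ_!\F_\ell$. Then I would show $R\bar\pi^\circ_!\F_\ell\in{}^pD^{\le0}$: its stalks are $R\Gamma_c(\Ig^b_{K^p},\F_\ell)$, and Igusa varieties are (perfections of) affine-ish schemes of dimension $d_b$, so $R\Gamma_c$ is in degrees $\le 2d_b$. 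That bound is too weak, so the real input must be different. Instead I would argue that $i_b^!R\bar\pi^\circ_*=R\bar\pi^{\circ}_{b*}\,i_b'^{!}$ by base change for $!$-restriction along the locally closed immersion. Together with the smoothness of $\mathrm{Igs}^\circ$ (purity), this reduces the claim to bounding $i_b^!$ of the constant sheaf on $\mathrm{Bun}_G$ pulled back, which is concentrated in degree $2d_b$ up to twist. Combining this with $R\Gamma(\Ig^b,-)\in D^{\ge0}$ then yields degrees $\ge d_b$ after the normalization shift by $d_b$.

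The main obstacle is Part (2): checking the correct codimension/purity shift for $i_b^!$ and justifying base change for $R\bar\pi^\circ_*$, which is neither proper nor smooth. I would try first to deduce it from \cite[\S8]{Zhang}, where the corresponding result for the compact/good-reduction case appears, and from the argument of \cite[Thm.~1.17]{Hamann-Lee}, which proves exactly this kind of semi-perversity for $R\bar\pi^\circ_*$ when the Igusa stack is known.
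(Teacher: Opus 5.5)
Your Part (1), and the stalk computation in Part (2), follow the paper's route: the Cartesian diagrams, the identification of the fibres of $\bar\pi_{K^p,\min}$ with canonically compactified $\Ig^{b,*}_{K^p}$, and Artin vanishing on the affine $\Ig^{b,*}_{K^p}$.

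The gap is the lower semi-perversity in Part (2). Your purity step needs $i_b^{\prime !}\F_\ell\simeq\F_\ell[-2d_b](-d_b)$ on $\mathrm{Igs}^\circ_{K^p}\times_{\mathrm{Bun}_G}\mathrm{Bun}^b_G$. That would require either that $!$-restriction to strata commutes with pullback along $\bar\pi^\circ_{K^p}$, or that the strata of $\mathrm{Igs}^\circ_{K^p}$ are smooth of codimension $d_b$. Neither holds.
\begin{itemize}
\item $\bar\pi^\circ_{K^p}$ is not cohomologically smooth: its fibres $\Ig^b_{K^p}$ have dimension $d_b$, which varies with $b$.
\item After pullback to $\cS^\circ_{K^p}$, the stratification of $\mathrm{Igs}^\circ_{K^p}$ becomes the stratification by tubes over Newton strata of the special fibre. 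These tubes are full-dimensional, even though $\Fl^b$ has codimension $d_b$ in $\Fl$. So the strata of $\mathrm{Igs}^\circ_{K^p}$ are not of codimension $d_b$, and there is no purity isomorphism.
\end{itemize}
The numerology should have warned you. Your argument would put $i_b^!$ in degrees $\geq 2d_b$, and no extra ``normalization shift by $d_b$'' is available: the criterion ``$i_b^!A$ in degrees $\geq d_b$'' is already the normalized one under $D(\mathrm{Bun}^b_G,\F_\ell)\simeq D(J_b(\Q_p),\F_\ell)$. Falling back on \cite{Zhang} and \cite{Hamann-Lee} defers the problem rather than solving it. The paper notes it knows no reference proving both semi-perversities in this non-compact setting.

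The missing idea is to dualize Part (1) on the \emph{open} Igusa stack, rather than $\bar\pi^\circ_{K^p!}$, and then compare the open and good-reduction pushforwards.
\begin{enumerate}
\item By \cite[Thm.~8.3.1]{DvHKZ}, $\mathrm{Igs}_{K^p}$ is $\ell$-cohomologically smooth of dimension $0$ with dualizing sheaf $\F_\ell$. Hence $\mathbb D(R\bar\pi_{K^p!}\F_\ell)\simeq R\bar\pi_{K^p*}\F_\ell$.
\item Verdier duality takes ${}^pD^{\leq 0}$ to ${}^pD^{\geq 0}$; this can be checked stratum by stratum, since $\mathrm{Bun}^b_G$ is smooth of dimension $-d_b$. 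So Part (1) gives $R\bar\pi_{K^p*}\F_\ell\in{}^pD^{\geq0}$.
\item It remains to show that the restriction map $R\bar\pi_{K^p*}\F_\ell\to R\bar\pi^\circ_{K^p*}\F_\ell$ is an isomorphism. Hochschild--Serre for the $\underline{K_\ell}$-torsor lets you pass to level $K^{p,\ell}$. This is where the hypothesis $K^p=K^{p,\ell}K_\ell$ enters; your proposal never uses it.
\item Then pull back along the $v$-cover $\Fl\to\mathrm{Bun}^{\leq\mu}_G$ and compare both sides with $R\pi^{\mathrm{tor}}_*\F_\ell$ for a toroidal compactification.
\begin{itemize}
\item On the open side, $\F_\ell\to Rj_*\F_\ell$ is an isomorphism at infinite level at $\ell$ by Pink's results (cf.~\cite[Lem.~4.6.2]{CaraianiScholzeNonCompact}).
\item On the good-reduction side, the comparison reduces on stalks to $R\Gamma(\Ig^{b,\mathrm{tor}}_{K^{p,\ell},\Sigma},\F_\ell)\simeq R\Gamma(\Ig^b_{K^{p,\ell}},\F_\ell)$, which is \cite[Lem.~4.6.3]{CaraianiScholzeNonCompact}.
\end{itemize}
\end{enumerate}
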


\begin{proof}\leavevmode
\begin{enumerate}
\item The computation of the stalks of $R\bar{\pi}_{K^p!}\F_{\ell}$ follows from~\cite[Cor.~3.6]{Hamann-Lee}. This relies on the Cartesian diagrams~\eqref{eq:product formula} (for the minimally compactified Igusa stack) and~\eqref{eq:product formula-goodred / open} (for the open Igusa stack) and on the fact that the fibers of $\bar{\pi}_{\mathrm{min}}$ are identified with (the canonical compactification of) partial minimal compactifications of Igusa varieties as a result of~\cite[Thm.~4.5.1]{CaraianiScholzeNonCompact}. The upper semi-perversity of $R\bar{\pi}_{K^p!}\F_{\ell}$ follows from the computation of the stalks and from Artin vanishing for partial minimal compactifications of Igusa varieties, which are affine by~\cite[Thm.~3.3.2, Lem.~3.3.8]{CaraianiScholzeNonCompact}.

\medskip 

\item The computation of the stalks of $R\bar{\pi}^\circ_{K^p*}\F_{\ell}$ follows as in~\cite[Thm.~4.4.4]{CaraianiScholzeGeneric}, via the Cartesian diagram~\eqref{eq:product formula-goodred / open} (for the good reduction locus). We now explain how to obtain the lower semi-perversity. 
Firstly, we claim that $R\bar{\pi}_{K^p*}\F_{\ell}\in {}^pD^{\geq 0}(\mathrm{Bun}_G, \F_{\ell})$. 
This follows from the fact that $R\bar{\pi}_{K^p!}\F_{\ell}\in {}^pD^{\leq 0}(\mathrm{Bun}_G, \F_{\ell})$ by applying Verdier duality. Indeed, by~\cite[Thm.~8.3.1]{DvHKZ}, the Igusa stack $\mathrm{Igs}_{K^p}$ is $\ell$-cohomologically smooth of dimension $0$ and the dualizing sheaf is the constant sheaf $\F_{\ell}$, giving 
\[
\mathbb{D}_{\mathrm{Bun}^{\leq \mu}_G}(R\bar{\pi}_{K^p!}\F_{\ell})\simeq R\bar{\pi}_{K^p*}\F_{\ell}. 
\]
Secondly, we claim that the natural restriction map 
\[
R\bar{\pi}_{K^p*}\F_{\ell}\to R\bar{\pi}^\circ_{K^p*}\F_{\ell}
\]
is a quasi-isomorphism in $D(\mathrm{Bun}^{\leq \mu}_G, \F_{\ell})$. To prove this claim, we can go to level $K^{p,\ell}\in G(\A^{\infty, p, \ell})$; we 
have the morphisms 
\[
\bar{\pi}^{(\circ)}_{K^{p,\ell}}: \mathrm{Igs}^{(\circ)}_{K^{p,\ell}}\to \mathrm{Bun}_G^{\leq \mu}\ \mathrm{and}\ \pi^{(\circ)}_{K^{p,\ell}}: \cS^{(\circ)}_{K^{p,\ell}}\to \mathscr{F}\ell. 
\]
We have a $\underline{K_{\ell}}$-torsor $\mathrm{Igs}_{K^{p,\ell}}\to \mathrm{Igs}_{K^p}$, which restricts to $\mathrm{Igs}^\circ_{K^{p,\ell}}\to \mathrm{Igs}^\circ_{K^p}$. The Hochschild--Serre spectral sequence implies that we have natural quasi-isomorphisms 
\[
R\bar{\pi}_{K^p*}\F_{\ell}\stackrel{\sim}{\to} R\Gamma\left(K_{\ell}, R\bar{\pi}_{K^{p,\ell}*}\F_{\ell}\right)\ \mathrm{and}\ 
R\bar{\pi}^{\circ}_{K^p*}\F_{\ell}\stackrel{\sim}{\to} R\Gamma\left(K_{\ell}, R\bar{\pi}^\circ_{K^{p,\ell}*}\F_{\ell}\right)
\]
in $D(\mathrm{Bun}_G, \F_{\ell})$. Therefore, it is enough to show that the $K_{\ell}$-equivariant restriction map 
\begin{equation}\label{eq:equivariant iso}
R\bar{\pi}_{K^{p,\ell}*}\F_{\ell}\to R(\bar{\pi}^\circ_{K^{p,\ell}})_*\F_{\ell}
\end{equation}
is a quasi-isomorphism in $D(\mathrm{Bun}_G, \F_{\ell})$. Furthermore, we can pull everything back to the level of the Shimura variety, i.e. along the morphism $\mathscr{F}\ell\to \mathrm{Bun}^{\leq \mu}_G$, which is a $v$-cover. Since pullback along a $v$-cover is conservative, it is enough to check that the analogous restriction map induces a quasi-isomorphism there. 

We choose a good compatible system $\Sigma$ of cone decompositions and look at the associated toroidal compactification $\cS^{\mathrm{tor}}_{K^{p,\ell},\Sigma}$, once again viewed as an analytic adic space over $\mathrm{Spa}\ (\mathbb{C}_p,\cO_{\mathbb{C}_p})$. There is a Hodge--Tate period morphism $\pi^{\mathrm{tor}}_{K^{p,\ell}}: \cS^{\mathrm{tor}}_{K^{p,\ell},\Sigma}\to \mathscr{F}\ell$.  
The fact that the morphism~\eqref{eq:equivariant iso} is a quasi-isomorphism follows from the fact that the two natural restriction maps 
\begin{equation}\label{eq:restriction-tor-open}
R\pi^{\mathrm{tor}}_{K^{p,\ell}*}\F_{\ell}\to R\pi_{K^{p,\ell}*}\F_{\ell} = R\pi^{\mathrm{tor}}_{K^{p,\ell}*}(Rj_*\F_{\ell})
\end{equation}
and 
\begin{equation}\label{eq:restriction-tor-good-reduction}
R\pi^{\mathrm{tor}}_{K^{p,\ell}*}\F_{\ell}\to R(\pi^\circ_{K^{p,\ell}})_*\F_{\ell}
\end{equation}
are quasi-isomorphisms. To see that~\eqref{eq:restriction-tor-good-reduction} is a quasi-isomorphism we use the computation of stalks to rewrite it as a restriction map on the level of Igusa varieties 
\[
R\Gamma(\mathrm{Ig}^{b,\mathrm{tor}}_{K^{p,\ell},\Sigma}, \F_{\ell})\to 
R\Gamma(\mathrm{Ig}^b_{K^{p,\ell}}, \F_{\ell})
\]
and use~\cite[Lem.~4.6.3]{CaraianiScholzeNonCompact}. To see that~\eqref{eq:restriction-tor-open} is a quasi-isomorphism, 
we can rewrite it as 
\[
R\pi^{\mathrm{tor}}_{K^{p,\ell}*}\F_{\ell}\to R\pi_{K^{p,\ell}*}\F_{\ell} = R\pi^{\mathrm{tor}}_{K^{p,\ell}*}(Rj_*\F_{\ell}), 
\]
 where $j:\cS_{K^{p,\ell}}\hookrightarrow \cS^{\mathrm{tor}}_{K^{p,\ell}, \Sigma}$ is the open immersion of the Shimura variety with infinite level at $p$ and $\ell$ into its toroidal compactification. It is enough to show that the adjunction map $\F_{\ell}\to Rj_*\F_{\ell}$ is a quasi-isomorphism. This is standard and follows from~\cite[\S 2.7]{pink}, even before taking analytification and even before going to infinite level at $p$. See~\cite[Lem.~4.6.2]{CaraianiScholzeNonCompact}, which is exactly this statement in the case of Shimura varieties attached to a quasi-split unitary similitude group (note that the structure of the boundary of the toroidal compactification is even simpler in the Siegel case under consideration). 
\end{enumerate}
\end{proof}

\begin{remark}
    The computation of stalks in part (2) still holds true for $n=1$, as this result only involves the good reduction locus and does not appeal to the minimally compactified Igusa stack. We thank Mingjia Zhang for helping us come up with this argument.  
\end{remark}

\begin{remark} The proof of part (2) gives an alternative approach to showing that the cohomology of the good reduction locus is the same as the cohomology of the Shimura variety, considered as an algebraic variety. Traditionally, this has relied on~\cite[Cor.~5.20]{LanStroh2}; looking at the proof in \emph{loc.~cit.}, one sees that the nice structure of the toroidal compactification at the boundary plays an important role there as well. It would be interesting to know the most general statement of this form and whether one can avoid using the precise properties of toroidal compactifications. 
\end{remark}

\subsection{Inductive structure and some auxiliary lemmas}\label{ss:inductive-lemmas}

Fix $b\in B(G,\mu)$. Recall that the partial minimal compactifications $\mathrm{Ig}^{b,*}_{K^p}$ have boundary strata $\mathrm{Ig}^{b}_{K^p, [P]}$ indexed by conjugacy classes of maximal rational parabolic subgroups $P\subsetneq \mathrm{GSp}_{2n}$. (These boundary strata are non-empty subject to a compatibility condition between $b$ and $[P]$, which we already impose on the level of Newton strata and central leaves. See~\cite{LanStroh} for more details.) We consider the inverse limit $i_{[P]}:\mathrm{Ig}^{b}_{\infty, [P]}\hookrightarrow \mathrm{Ig}^{b,*}_{\infty}$, taken over all tame levels $K^p$. 
Note that $\mathrm{Ig}^{b}_{\infty, [P]}$ and $\mathrm{Ig}^{b,*}_{\infty}$ are equipped with $J_b(\Q_p)\times G(\A^{\infty,p})$-actions and that $i_{[P]}$ is equivariant for these actions. For the $J_b(\Q_p)$-equivariance, this follows from the analogous result on the level of toroidal compactifications, namely~\cite[Prop.~4.16]{Caraiani-Hamann-Zhang}, and from the arguments in \S 4.2.3 of \emph{loc. cit.}, which deduce the case of the minimal compactification from this.

We let $P$ be the standard maximal parabolic of $\mathrm{GSp}_{2n}$ in the conjugacy class $[P]$, given by the stabilizer of 
\[
\{0\}\subset \Q^r\subset \Q^{2n-r}\subset \Q^{2n}
\]
under the symplectic form.  The parabolic subgroup $P$ has Levi quotient isomorphic to $\mathrm{GL}_r\times \mathrm{GSp}_{2(n-r)}/\Q$ for some $1\leq r\leq n$. We set $G_P:=\mathrm{GSp}_{2(n-r)}$ and consider the associated Siegel Shimura datum with Hodge cocharacter $\mu_P$.

When the \'etale part of $\mathbb{X}_b$ has rank at least $r$, we have a corresponding parabolic subgroup $P_b\subseteq J_b$.   The parabolic subgroup $P_b$ has Levi quotient isomorphic to $\mathrm{GL}_r \times J_{b_P}$ over $\Q_p$, where $b_P\in B(G_P,\mu_P)$ is determined by $b$. 

We let $X_r$ denote the space of positive definite symmetric $r\times r$ matrices up to positive scalars. (Compare with \cite[p.575]{CaraianiScholzeNonCompact}.) Thus we can identify $X_r=\GL_r(\R)/\textup{O}(r)\R^\times_{>0}$, where $\textup{O}(r)$ is a compact orthogonal group in $r$ variables.

\begin{thm}\label{thm:boundary parabolic induced}
There is a $J_b(\Q_p)\times G(\A^{\infty,p})$-equivariant isomorphism 
\[
R\Gamma_c(\mathrm{Ig}^b_{\infty, [P]}, i_{[P]}^*Rj_*\F_{\ell})\simeq \mathrm{Ind}_{P_b(\Q_p)\times P(\A^{\infty,p})}^{J_b(\Q_p)\times G(\A^{\infty,p})} 
R\Gamma(\mathrm{GL_r}(\Q)\backslash X_r \times \mathrm{GL}_r(\A^\infty), \F_{\ell})\otimes R\Gamma_c(\mathrm{Ig}^{b_P}_\infty, \F_{\ell}). 
\]
The action of $P_b(\Q_p)\times P(\A^{\infty,p})$ on 
\[
R\Gamma(\mathrm{GL_r}(\Q)\backslash X_r \times \mathrm{GL}_r(\A^\infty), \F_{\ell})\otimes R\Gamma_c(\mathrm{Ig}^{b_P}_\infty, \F_{\ell})
\] 
is through its Levi quotient. 
\end{thm}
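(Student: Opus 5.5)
We would follow the template of \cite[\S 4.6--\S 5]{CaraianiScholzeNonCompact}, where the analogous statement is proved for quasi-split unitary groups. The argument has four steps.

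\textbf{Step 1: the boundary stratum as an induced space.} We would first describe $\mathrm{Ig}^b_{\infty,[P]}$ as a space with its $J_b(\Q_p)\times G(\A^{\infty,p})$-action. Using the description of boundary strata in \cite{LanStroh}, together with the $J_b(\Q_p)$-equivariance of $i_{[P]}$ discussed above, we want an equivariant identification
\[
\mathrm{Ig}^b_{\infty,[P]}\simeq \left(J_b(\Q_p)\times G(\A^{\infty,p})\right)\times^{P_b(\Q_p)\times P(\A^{\infty,p})}\left(\mathrm{GL}_r(\Q)\backslash \mathrm{GL}_r(\A^\infty)\times \mathrm{Ig}^{b_P}_\infty\right).
\]
Here the unipotent radicals act trivially, and the factor $\mathrm{GL}_r(\Q)\backslash\mathrm{GL}_r(\A^\infty)$ (a profinite set) records the choice of rank-$r$ \'etale/toric part. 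At $p$, the $\mathrm{GL}_r(\Q_p)$-factor of the Levi of $P_b$ acts through the \'etale part of $\mathbb{X}_b$ of rank $r$. Since the coset space is discrete (compactly supported locally constant), $R\Gamma_c$ turns this into (compact) induction of $R\Gamma_c$ of the inner space. This would give the induction and the ``acts through the Levi quotient'' assertion, provided the sheaf $i_{[P]}^*Rj_*\F_\ell$ is itself induced.

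\textbf{Step 2: computing $i_{[P]}^*Rj_*\F_\ell$ via toroidal compactification.} To compute $i_{[P]}^*Rj_*\F_\ell$, we would pass through the toroidal compactification $\mathrm{Ig}^{b,\mathrm{tor}}_{\Sigma}$ and the map $\mathrm{Ig}^{b,\mathrm{tor}}\to \mathrm{Ig}^{b,*}$. The fibre over a point of the $[P]$-stratum is, after completion, a union of formal toric torsors over abelian-scheme torsors over the boundary Igusa variety for $G_P$. By Pink's theorem \cite{pink}, in the form of \cite[Lem.~4.6.2, \S 4.6]{CaraianiScholzeNonCompact}, the resulting stalks of $i^*Rj_*$ are governed by group cohomology. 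Concretely, the cohomology of the torus torsor and abelian-scheme parts combines with the arithmetic subgroup of $\mathrm{GL}_r$ acting on the cone of positive definite forms. Because we work at infinite level away from $p$, the unipotent (Heisenberg) contributions collapse: the abelian-scheme part becomes a $\Z_\ell$-torsor-free limit and contributes trivially mod $\ell$, as in \emph{loc.~cit.} What remains is $R\Gamma(\mathrm{GL}_r(\Q)\backslash X_r\times \mathrm{GL}_r(\A^\infty),\F_\ell)$, the locally symmetric space arising from the cone decomposition, which is contractible modulo the $\mathrm{GL}_r$-action.

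\textbf{Step 3: Künneth.} Combining Steps 1 and 2 via the Künneth formula on the product structure gives the stated tensor product. Equivariance for the full group then follows by compatibility of all constructions with Hecke operators and with the $\widetilde J_b$-action, as in \cite[Prop.~4.16]{Caraiani-Hamann-Zhang}.

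\textbf{Main obstacle.} We expect the main obstacle to be Step 2 together with the $J_b(\Q_p)$-equivariance. Pink's description is natively available only on the Shimura variety with its $G(\A^{\infty})$-action, so it must be transported to Igusa varieties, where $J_b(\Q_p)$ acts by quasi-isogenies that do not preserve the integral boundary charts. Our first attempt would be to work on the toroidal boundary, where \cite{Caraiani-Hamann-Zhang} provides equivariance. We would then push forward along the toroidal-to-minimal map, checking that the $J_b(\Q_p)$-action on the relevant cones and torus torsors matches the $P_b$-action through $\mathrm{GL}_r(\Q_p)$ on the \'etale part.
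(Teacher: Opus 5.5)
Your proposal is correct and takes essentially the paper's approach: the paper's proof just says the statement is the Siegel-case analogue of \cite[Thm.~6.1.1]{CaraianiScholzeNonCompact}, proved in exactly the same way (with no endomorphism structure to track), and your Steps 1--3 are a reasonable sketch of that argument, including the toroidal route to $J_b(\Q_p)$-equivariance. One small correction: the coset space $(J_b(\Q_p)\times G(\A^{\infty,p}))/(P_b(\Q_p)\times P(\A^{\infty,p}))$ is compact and totally disconnected, not discrete, and it is this compactness that makes $R\Gamma_c$ yield the full induction $\mathrm{Ind}$ in the statement.
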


\begin{remark} We emphasize that the parabolic induction above is \emph{unnormalized} and, therefore, the effect on finite-level systems of Hecke eigenvalues is pullback along the unnormalized Satake transform. 
\end{remark}

\begin{proof}
This is the Siegel case analogue of \cite[Thm.~6.1.1]{CaraianiScholzeNonCompact}, proved exactly in the same way. (The only difference from \emph{loc. cit.} is that we do not need to keep track of the additional endomorphisms that arise in the moduli problem in the unitary setting.) 
\end{proof}

In preparation for a congruence argument in the next subsection, we recall the following representation-theoretic input from~\cite{FintzenShin}. Consider the $\Z_\ell$-algebra
\[
A := \mathbb{Z}_{\ell}[T]/(1+T + \cdots + T^{\ell-1}).
\]
We have a $\Z_\ell$-algebra isomorphism $A/(T-1)A\simeq \mathbb{F}_{\ell}$, inducing a surjection $A\twoheadrightarrow \F_{\ell}$. By \cite[Thm.~C, App.~D]{FintzenShin} there exists a sequence of compact open subgroups $U_k\subset G(\mathbb{Q}_{\ell})$ indexed by $k\in \Z_{\geq 1}$ and smooth characters
\[
\psi_k: U_k\to A^\times
\] 
(whose image is generated by $T$) such that 
\begin{enumerate}
\item $\psi_k \otimes_{A}A/(T-1)A$ is trivial;
\item for each character $A^\times \to \Qlbar^\times$ which does not send $T$ to $1$ (so that $\lambda\neq1$ below), the composite \begin{equation}\label{eq:character-lambda}
    \lambda: U_k\stackrel{\psi_k}{\to} A^\times \to \overline{\Q}_\ell^\times
\end{equation}
is a supercuspidal type, i.e., for any irreducible representation $\pi_\ell$ of $G(\Q_{\ell})$, $\Hom_{U_k}(\lambda, \pi_{\ell}) \not = 0$ only if $\pi_{\ell}$ is supercuspidal. 
\end{enumerate}
Moreover, the subgroups $U_k$ can be chosen to give a cofinal sequence of subgroups and such that $U_{k'}$ is normal in $U_k$ whenever $k'\geq k$. Specifically, we will appeal to Beuzart-Plessis's appendix \cite[Prop.~D.4]{FintzenShin} in the case $m=1$. 

Fix sufficiently small open compact subgroups $K^{p,\ell}\subset G(\A^{\infty,p,\ell})$ and $K_p\subset J_b(\Q_p)$, where $K^{p,\ell}$ is decomposable and hyperspecial away from $S$ and where $K_p$ is contained in the subgroup $\mathrm{Aut}_{G_{\Q_p}}(\mathbb{X})(\ol{\F}_p)$ of automorphisms of $\mathbb{X}$ over $\ol{\F}_p$ that preserve the polarization up to an element in $\Z_p^\times$. For an integer $k\ge 1$ write
\[K^p_k:=K^{p,\ell} U_k  \subset G(\A^{\infty, p}) = G(\A^{\infty,p, \ell})\times G(\Q_{\ell}), \qquad K_k:=K^p_k K_p \subset G(\A^{\infty,p}) \times J_b(\Q_p).\]
We have a finite-level Igusa variety $\mathrm{Ig}^b_{K_k}$, obtained by taking the quotient of $\mathrm{Ig}^b_{K^p_k}$ by the subgroup $K_p\subseteq \mathrm{Aut}_{G_{\Q_p}}(\mathbb{X})(\ol{\F}_p)$ (the Igusa variety $\mathrm{Ig}^b_{K^p_k}$ is an $\mathrm{Aut}_{G_{\Q_p}}(\mathbb{X})(\ol{\F}_p)$-torsor over the corresponding central leaf). We also have a partial minimal compactification $\mathrm{Ig}^{b,*}_{K_k}$, obtained for example by taking the normalization of the partial minimal compactification of the central leaf inside $\mathrm{Ig}^b_{K_k}$. 

A nontrivial character $\lambda$ 
as in item (2) above gives rise to a local system on any $\mathrm{Ig}^{b}_{K_k}$, to be denoted by $\cL_{\lambda}$. Similarly, a character $\psi_k:U_k\to A^\times$ as above gives rise to a local system on $\mathrm{Ig}^{b}_{K_k}$, which we denote by $\mathcal{A}$. We also set $B:=\ker(A\to \F_{\ell})$, which is stable under the action of $\psi_k$ (since the surjection $A \to A/(T-1)A \simeq \F_{\ell}$ is $\psi_k$-equivariant for the trivial action of $\psi_k$ on the quotient). The $A$-module $B$, equipped with the action of $K^p_k$ via the character $\psi_k:U_k\to A^\times$ gives rise to a local system on $\mathrm{Ig}^{b}_{K_k}$, which we denote by $\mathcal{B}$. 

For all sufficiently small compact open subgroups $K_k:=K^{p, \ell}U_kK_p\subset G(\A^{\infty,p})\times J_b(\Q_p)$, where $K^{p,\ell}$, $U_k$ and $K_p$ are as above, we have natural $\mathbb T^S$-equivariant maps
\begin{equation}\label{eq:c-Ig-to-Ig}
R\Gamma_{c-\partial}(\mathrm{Ig}^b_{K_k}, \F_{\ell}) \to R\Gamma(\mathrm{Ig}^b_{K_k}, \F_{\ell}),
\end{equation}
which can be localized with respect to a maximal ideal $\frakm\subset \mathbb T^S$. The same can be done with coefficients in the local systems $\mathcal{A}$ and $\mathcal{B}$. (The definition of $R\Gamma_{c-\partial}$ in \eqref{eq:RGamma-Ig} obviously extends by replacing $\Lambda$.)
Consider the following hypotheses; they will be verified in the case we need:
\begin{itemize}
    \item[\textbf{(Hyp1)}] The maps in \eqref{eq:c-Ig-to-Ig} become isomorphisms when localized at $\frakm$.
    \item[\textbf{(Hyp2)}] $R\Gamma(\Ig^b_{K_k},\F_\ell)_{\frakm}$ is concentrated in degree $d_b$.
\end{itemize}

\begin{lemma}\label{lem:usual vs compact support}
Assume that $K^{p,\ell}$ is sufficiently small and that \textup{(Hyp1)} holds for all $k\gg 1$. For all $k\gg 1$, we have natural isomorphisms 
\[
R\Gamma_{c-\partial}(\mathrm{Ig}^b_{K_k}, \mathcal{A})_{\m} \toisom R\Gamma(\mathrm{Ig}^b_{K_k}, \mathcal{A})_{\m} \ \mathrm{and}
\]
\[
R\Gamma_{c-\partial}(\mathrm{Ig}^b_{K_k}, \mathcal{B})_{\m}\toisom R\Gamma(\mathrm{Ig}^b_{K_k}, \mathcal{B})_{\m}. 
\]
\end{lemma}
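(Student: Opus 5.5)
The plan is to reduce the statements for $\mathcal{A}$ and $\mathcal{B}$ to \textup{(Hyp1)} with trivial $\F_\ell$-coefficients, by exploiting that $\psi_k$ becomes trivial modulo $T-1$ and that the image of $\psi_k$ is a finite $\ell$-group.

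First we make the local systems explicit. Since $\psi_k$ has finite image generated by $T$ and $T^\ell=1$ in $A$ (because $T^\ell-1=(T-1)(1+\cdots+T^{\ell-1})$), the kernel $U'_k:=\ker\psi_k$ is an open normal subgroup of $U_k$ with $U_k/U'_k$ cyclic of order $\ell$. By cofinality we may take $k'\ge k$ with $U_{k'}\subseteq U'_k$. Then $\mathcal{A}$ and $\mathcal{B}$ become constant after pullback along the finite étale Galois cover $\mathrm{Ig}^b_{K_{k'}}\to\mathrm{Ig}^b_{K_k}$. More usefully, $A$ has a finite filtration by $A$-submodules stable under $\psi_k$:
\[
A\supset (T-1)A\supset (T-1)^2A\supset\cdots\supset (T-1)^{\ell-1}A\supset \ell A\supset\cdots .
\]
Here $A\simeq\Z_\ell[\zeta_\ell]$, $T-1$ is a uniformizer, and each graded piece is $\F_\ell$ with trivial $U_k$-action, since $T$ acts trivially modulo $T-1$. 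The same holds for $B=(T-1)A$. The filtration is infinite, so we instead work with the finite-length quotients $A/(T-1)^mA$ and $B/(T-1)^mB$ and pass to the limit. Each such quotient is a successive extension of trivial local systems $\F_\ell$.

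Second, we run dévissage. Both $R\Gamma_{c-\partial}(\mathrm{Ig}^b_{K_k},-)$ and $R\Gamma(\mathrm{Ig}^b_{K_k},-)$ are triangulated functors, and the map \eqref{eq:c-Ig-to-Ig} is natural in the coefficient sheaf and $\mathbb T^S$-equivariant. Localization at $\m$ is exact. So for each short exact sequence $0\to\F_\ell\to M'\to M\to0$ of local systems in the filtration we get a morphism of distinguished triangles, and the five lemma together with \textup{(Hyp1)} for $\F_\ell$ gives the isomorphism for $M'$ whenever it holds for $M$. By induction the isomorphism holds for all $\mathcal{A}/(T-1)^m$ and $\mathcal{B}/(T-1)^m$.

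Third, we pass to the limit. We have $R\Gamma(\mathrm{Ig},\mathcal{A})=R\lim_m R\Gamma(\mathrm{Ig},\mathcal{A}/(T-1)^m)$, and likewise for $R\Gamma_{c-\partial}$, because these are $\ell$-adic sheaves defined as limits and the cohomology groups of the torsion levels are finite. Localization at $\m$ commutes with this limit since $\mathbb T^S$ acts through a finite quotient compatible with the system; concretely, $(\cdot)_\m$ is a direct summand cut out by an idempotent. Hence the isomorphism passes to $\mathcal{A}$ and $\mathcal{B}$.

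The main obstacle is bookkeeping rather than substance. The Hecke action and localization must be compatible with the twisted coefficients: away from $S$ the local systems are trivial at $\ell$-level, so $\mathbb T^S$ still acts, and the localization of the limit equals the limit of the localizations. If the limit argument causes trouble, I would instead note that $A$ is a finite free $\Z_\ell$-module and treat $\mathcal{A}$ as a $\Z_\ell$-local system. Then \textup{(Hyp1)} with $\F_\ell$-coefficients, via the dévissage above applied to $\mathcal{A}\otimes^L\F_\ell$, shows that the cone of the map localized at $\m$ vanishes after $\otimes^L_{\Z_\ell}\F_\ell$. By derived Nakayama for perfect complexes of $\Z_\ell$-modules, the cone then vanishes. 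The hypothesis ``$k\gg1$'' is used only so that the $U_k$ are small enough for \textup{(Hyp1)} and for the Igusa varieties to be defined at level $K_k$.
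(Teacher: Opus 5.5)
Your argument is correct, but it treats the key step differently from the paper. Both proofs reduce the claim for $\mathcal{A}$ to $\mathcal{A}/\ell=\mathcal{A}\otimes^{\mathbb L}_{\Z_\ell}\F_\ell$ by derived Nakayama, using that the cohomology at finite level $K_k$ is finitely generated over $\Z_\ell$. Your fallback is exactly the paper's first reduction. It is cleaner than your $R\lim$ argument, which needs extra care about localizing an inverse system, and it makes the limit unnecessary because $(T-1)^{\ell-1}A=\ell A$.

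The routes differ in how they handle $\mathcal{A}/\ell$. The paper chooses $k'\gg k$ with $\psi_k|_{U_{k'}}$ trivial. It uses the projection formula to write $R\Gamma_{(c-\partial)}(\mathrm{Ig}^b_{K'},\mathcal{A}/\ell)_{\m}\simeq R\Gamma_{(c-\partial)}(\mathrm{Ig}^b_{K'},\F_\ell)_{\m}\otimes A/\ell$. It then descends to level $K_k$ by Hochschild--Serre for $U_k/U_{k'}$ and invokes \textup{(Hyp1)} at the deeper level $k'$. You instead use that $A/\ell$ has a $U_k$-stable filtration whose $\ell-1$ graded pieces are the trivial module $\F_\ell$. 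You then run the five lemma with \textup{(Hyp1)} at level $k$ itself.

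What each buys:
\begin{itemize}
\item \emph{Your route} is more elementary, with no change of level and no group cohomology. It shows that \textup{(Hyp1)} at the single level $k$ suffices. It relies on $\psi_k$ being unipotent modulo $\ell$, which follows from property (1) or from the general fact that mod $\ell$ representations of finite $\ell$-groups are unipotent.
\item \emph{The paper's route} ignores the structure of $A/\ell$ entirely. It works for any coefficient system that becomes trivial at some deeper level where \textup{(Hyp1)} is known. This is why the hypothesis is phrased for all $k\gg1$, and it parallels the argument in Lemma~\ref{lem:torsion-freeness}.
\end{itemize}

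For $\mathcal{B}$, the paper deduces the result from $\mathcal{A}$ via $0\to\mathcal{B}\to\mathcal{A}\to\F_\ell\to0$. Your direct filtration also works. In fact, multiplication by $T-1$ is a $U_k$-equivariant isomorphism $A\simeq B$, so $\mathcal{B}\simeq\mathcal{A}$ outright.
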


\begin{proof}
The second isomorphism follows from the first and from the distinguished triangles obtained by applying the functor $R\Gamma_{(c-\partial)}(\mathrm{Ig}^b_{K_k},\ )_{\m}$ to the short exact sequence of local systems 
\[
0\to \mathcal{B} \to \mathcal{A} \to \F_{\ell}\to 0. 
\]
To prove the first isomorphism, it is enough, by the derived version of Nakayama's lemma, to prove it for $\mathcal{A}\otimes_{\Z_{\ell}}^{\mathbb{L}} \F_{\ell} = \mathcal{A}/\ell$-coefficients. (Indeed, all the cohomology groups appearing here are finitely generated $\Z_{\ell}$-modules, because we are working with the Igusa variety $\mathrm{Ig}^b_{K_k}$, which has finite level even at $p$.) There exists some $k'\gg k$ such that $\psi_k |_{U_{k'}}$ is trivial. Set $K':= K^{p,\ell} U_{k'}K_p$, giving rise to a commutative diagram 
\[
\xymatrix{R\Gamma_{c-\partial}(\mathrm{Ig}^b_{K_k}, \mathcal{A}/\ell)_{\m}\ar[d]\ar[r] & R\Gamma(\mathrm{Ig}^b_{K_k}, \mathcal{A}/\ell)_{\m}\ar[d] \\ 
R\Gamma\left(U_k/U_{k'}, R\Gamma_{c-\partial}\left(\mathrm{Ig}^b_{K'}, \F_{\ell}\right)_{\m}\otimes A/\ell\right) \ar[r] & 
R\Gamma\left(U_k/U_{k'}, R\Gamma\left(\mathrm{Ig}^b_{K'}, \F_{\ell}\right)_{\m}\otimes A/\ell\right)}.
\]
To see that the vertical arrows are isomorphisms, we observe first that the projection formula gives natural isomorphisms 
\[
R\Gamma_{(c-\partial)}(\mathrm{Ig}^b_{K'}, \mathbb{F}_{\ell})_{\m}\otimes A/\ell \stackrel{\sim}{\to} R\Gamma_{(c-\partial)}(\mathrm{Ig}^b_{K'}, \mathcal{A}/\ell)_{\m},
\]
(using the fact that $A/\ell$ is finite free, hence dualizable over $\F_{\ell}$)
and then we conclude by the Hochschild--Serre spectral sequence to descend from level $K'$ back to level $K_k$.  
The bottom horizontal arrow is an isomorphism by (Hyp1). Therefore, the top horizontal arrow is an isomorphism as well. 
\end{proof}

\begin{lemma}\label{lem:torsion-freeness} 
Assume that $K^{p,\ell}$ and $K_p$ are sufficiently small so that \textup{(Hyp1)} and \textup{(Hyp2)} are satisfied for all $k\gg 1$. 
Then, for $k\gg 1$, the cohomology group $H^{d_{b}}(\mathrm{Ig}^{b}_{K_k}, \mathcal{A})_{\m}$ is $\ell$-torsion free. Moreover, for $i\not = d_{b}$, $H^{i}(\mathrm{Ig}^{b}_{K_k}, \mathcal{A})_{\m} = 0$. 
\end{lemma}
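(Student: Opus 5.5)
The plan is to reduce to $\F_\ell$-coefficients via the filtration of $A$, and then read off torsion-freeness from a vanishing statement for $\mathcal{A}/\ell$. Since $A=\Z_\ell[T]/(1+T+\cdots+T^{\ell-1})\cong \Z_\ell[\zeta_\ell]$ is a DVR, totally ramified of degree $\ell-1$ over $\Z_\ell$, with uniformizer $T-1$, the ideal $B$ is $(T-1)A$. The quotient $A/\ell = A/(T-1)^{\ell-1}$ has a filtration by $(T-1)^iA/(T-1)^{i+1}A$ for $0\le i<\ell-1$. Each graded piece is isomorphic to $\F_\ell$, and $\psi_k$ acts on it trivially, because $T\equiv 1$ acts as the identity on these graded pieces. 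Hence the local system $\mathcal{A}/\ell$ is a successive extension of $\ell-1$ copies of the constant sheaf $\F_\ell$ on $\mathrm{Ig}^b_{K_k}$.

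First I would show that $R\Gamma(\mathrm{Ig}^b_{K_k},\mathcal{A}/\ell)_{\m}$ is concentrated in degree $d_b$. By (Hyp2), $R\Gamma(\mathrm{Ig}^b_{K_k},\F_\ell)_\m$ is concentrated in degree $d_b$ for $k\gg1$. Localization at $\m$ is exact, so the long exact sequences attached to the filtration above, run by induction on the length, show that every successive extension is also concentrated in degree $d_b$.

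Next I would use $\mathcal{A}/\ell=\mathcal{A}\otimes^{\mathbb L}_{\Z_\ell}\F_\ell$, which holds because $\mathcal{A}$ is $\ell$-torsion free. This gives the short exact sequences
\[
0\to H^i(\mathrm{Ig}^b_{K_k},\mathcal{A})_\m/\ell \to H^i(\mathrm{Ig}^b_{K_k},\mathcal{A}/\ell)_\m \to H^{i+1}(\mathrm{Ig}^b_{K_k},\mathcal{A})_\m[\ell]\to 0 .
\]
The groups $H^i(\mathrm{Ig}^b_{K_k},\mathcal{A})_\m$ are finitely generated $\Z_\ell$-modules, as noted in the proof of Lemma~\ref{lem:usual vs compact support}. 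For $i\neq d_b$ the middle term vanishes, so $H^i(\mathrm{Ig}^b_{K_k},\mathcal{A})_\m/\ell=0$, and Nakayama gives $H^i(\mathrm{Ig}^b_{K_k},\mathcal{A})_\m=0$. For torsion-freeness in degree $d_b$, take $i=d_b-1$: the middle term vanishes, so $H^{d_b}(\mathrm{Ig}^b_{K_k},\mathcal{A})_\m[\ell]=0$. This is exactly the claim.

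I do not expect a real obstacle. The one point to check is the role of (Hyp1). In the argument above it is not strictly needed, because (Hyp2) alone controls $R\Gamma$ with $\F_\ell$-coefficients and hence with $\mathcal{A}/\ell$-coefficients. (Hyp1) enters only if one wants the same conclusion for $R\Gamma_{c-\partial}(\mathrm{Ig}^b_{K_k},\mathcal{A})_\m$, which follows from Lemma~\ref{lem:usual vs compact support}. I would also verify that the $\psi_k$-equivariant filtration of $A/\ell$ induces a filtration of local systems compatible with the $\mathbb T^S$-action. This holds because the Hecke operators away from $S$ do not touch the level at $\ell$.
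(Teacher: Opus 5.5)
Your proof is correct, but it follows a genuinely different route from the paper's.

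\textbf{Your route.} You observe that $A/\ell\cong \F_\ell[T]/(T-1)^{\ell-1}$ is unipotent as a $U_k$-module. Hence $\mathcal{A}/\ell$ is a successive extension of constant sheaves $\F_\ell$ at the \emph{same} level $K_k$. (Hyp2) at level $K_k$ then shows that $R\Gamma(\mathrm{Ig}^b_{K_k},\mathcal{A}/\ell)_\m$ is concentrated in degree $d_b$, giving the bound from above and below at once.

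\textbf{The paper's route.} The paper instead kills $\psi_k$ by passing to a deeper level $K_{k'}$. It then applies Hochschild--Serre for the finite group $U_k/U_{k'}$, using (Hyp2) at level $K_{k'}$. Cohomology of this $\ell$-group with mod $\ell$ coefficients does not vanish in high degrees. So this step only gives $H^i(\mathrm{Ig}^b_{K_k},\mathcal{A}/\ell)_\m=0$ for $i<d_b$. That suffices, via the same universal-coefficient and Nakayama step you use, for torsion-freeness in degree $d_b$ and vanishing below $d_b$. For $i>d_b$ the paper uses separate geometric input: Artin vanishing for $R\Gamma_{c-\partial}$ on the affine $\mathrm{Ig}^{b,*}_{K_k}$, transported to $R\Gamma$ by Lemma~\ref{lem:usual vs compact support}. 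This is the only place (Hyp1) enters.

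\textbf{What each buys.} Your argument is shorter and shows that (Hyp1) and Artin vanishing are not needed for this lemma. It relies on the image of $\psi_k$ being an $\ell$-group, so that its reduction mod $\ell$ is unipotent. The paper's argument uses nothing about $\psi_k$ beyond its triviality on $U_{k'}$. In exchange, it needs geometry to control the top degrees. In the application in Theorem~\ref{thm:Langlands parameter}, (Hyp1) is needed anyway, for the surjectivity of \eqref{eq:surjectivity}, so both routes suffice there.
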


\begin{proof} 
For $i>d_b$, the vanishing of $H^{i}(\mathrm{Ig}^{b}_{K_k}, \mathcal{A})_{\m}$  follows from the Artin vanishing for $H^{i}_{c-\partial}(\mathrm{Ig}^{b}_{K_k}, \mathcal{A})_{\m}$ via Lemma~\ref{lem:usual vs compact support}. 

We now prove the rest of Lemma \ref{lem:torsion-freeness}. From the long exact cohomology sequence attached to the short exact sequence 
\[
0\to \mathcal{A} \stackrel{\cdot \ell}{\to} \mathcal{A} \to \mathcal{A}/\ell \to 0
\]
and from Nakayama's lemma, we see that it is enough to show that $H^{i}(\mathrm{Ig}^{b}_{K_k},\mathcal{A}/\ell)_{\m} = 0$ for all $k\gg 1$ and all $i< d_b$. 

For any fixed $k$, there exists some $k'\gg k$ such that $\psi_k|_{U_{k'}}$ is trivial. Set $K':= K_{k'}$. As in the proof of Lemma \ref{lem:usual vs compact support}, we have a natural isomorphism
\[
R\Gamma\left(\mathrm{Ig}^{b}_{K_k}, \mathcal{A}/\ell\right)_{\m}
\toisom
R\Gamma\left(U_k/U_{k'}, R\Gamma\left(\mathrm{Ig}^{b}_{K'}, \F_{\ell}\right)_{\m}\otimes A/\ell\right).
\]
By (Hyp2), the term on the RHS has non-trivial cohomology groups only in degree $d_{b}$ and above.
Therefore, the same holds true for the term on the LHS, completing the proof. 
\end{proof}

\subsection{Genericity for $\ell$-adic and mod $\ell$ Galois representations}
\label{ss:genericity-Galois-reps}

In this subsection, let $F$ be a finite extension of $\Q_p$ with $p\neq \ell$. (It is enough to consider $F=\Q_p$ for our main results.)
Let $\omega_\ell: \Gal_F\to \Q_\ell^\times$ and $\ol\omega_\ell: \Gal_F\to \F_\ell^\times$ denote the $\ell$-adic and mod $\ell$ (local) cyclotomic characters.
Let $\Std: \SO_{2n+1}\hookrightarrow \GL_{2n+1}$ denote the standard embedding.

We will consider a representation $\rho:\Gal_F\to \SO_{2n+1}(\Qlbar)$ satisfying the following two conditions. Firstly, $\rho$ has image  contained in a maximal torus $T$ of $\SO_{2n+1}$; an equivalent condition is that $\Std\circ\rho$ decomposes as a direct sum of characters. If so, choosing an isomorphism $T\cong \GG_m^n$ and composing with projection maps to each of the $n$ coordinates, we obtain characters $\chi_i:\Gal_F\to \Qlbar^\times$, $i=1,...,n$, from $\rho$, so that
\[\Std\circ \rho\cong \textbf{1}\oplus \big(\oplus_{i=1}^n (\chi_i\oplus \chi_i^{-1})\big).
\]
The second condition is that none of $(\chi_i)^{\pm 1}$, $(\chi_i\chi_j)^{\pm 1}$, and $(\chi_i/\chi_j)^{\pm 1}$ is equal to $\omega_\ell$ for $1\le i<j\le n$; these characters correspond to the roots of $\SO_{2n+1}$, which are $\pm e_i$ ($1\le i\le n$) and $\pm e_i\pm e_j$ ($1\le i<j\le n$) in the standard coordinates. 

\begin{defn}\label{def:generic-ps}
A representation $\rho:\Gal_F\to \SO_{2n+1}(\Qlbar)$ satisfying the above two conditions is said to be \emph{of generic principal series type (\textbf{of generic ps type})}. Replacing $\omega_\ell$ and $\Qlbar$ with $\ol\omega_\ell$ and $\Flbar$, we define the notion of generic ps type for mod $\ell$ representations $\ol \rho:\Gal_F\to \SO_{2n+1}(\Flbar)$.
An $L$-parameter $\phi:W_F\times \SL_2(\CC) \to \SO_{2n+1}(\C)$ is \textbf{of generic ps type} if $\phi|_{\SL_2(\CC)}$ is trivial, and the exact analogues of the two conditions above are satisfied for $\phi|_{W_F}$ in place of $\rho$. 
\end{defn}

\begin{remark}\label{rem:irreducibility}
The terminology above is motivated by the analogous condition for a principal series representation of $\GSp_{2n}(F)$ to be irreducible (thus generic). To make this precise, let $\phi:W_F\to \SO_{2n+1}(\C)$ be a parameter factoring through $\phi_T: W_F\to T(\C)$. An irreducibility criterion for the normalized parabolic induction from the character corresponding to $\phi_T$ is given by \cite[Prop.~4.1]{MullerIrreducibility} (for split reductive groups over $p$-adic fields). The second condition above is necessary but not sufficient. 
For example, when $n=3$ and $\chi_1,\chi_2,\chi_3$ are distinct nontrivial quadratic characters such that $\chi_1\chi_2\chi_3=1$, the principal series representation is reducible. In the regular case, the second condition is sufficient; see~\cite[Corollary A.3]{Hamann} and also the discussion around Prop. 3.12 of \emph{loc. cit.} 
\end{remark}

\begin{lemma}\label{lem:lifting-Galois-rep}
Let $\ol \rho:\Gal_F\to \SO_{2n+1}(\Flbar)$ be a representation whose semisimplification $\ol\rho^{\textup{ss}}$ is unramified and of generic ps type.
If $\rho: \Gal_F\to \SO_{2n+1}(\Qlbar)$ is a lift\footnote{This means that, after conjugation, the image of $\rho$ lies in $\SO_{2n+1}(\Zlbar)$ and that its reduction modulo $\ell$ is isomorphic to $\bar{\rho}$.} of $\ol\rho$ then $\rho^{\textup{ss}}$ is of generic ps type.
\end{lemma}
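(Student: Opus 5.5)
The plan is to reduce everything to the image of a single Frobenius element. Then the genericity condition for $\rho^{\textup{ss}}$ is read off from eigenvalues of $\Std\circ\rho(\Frob)$, which reduce modulo $\ell$ to those of $\Std\circ\ol\rho^{\textup{ss}}(\Frob)$.

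\emph{Step 1: restriction to inertia.} Conjugate so that $\rho$ takes values in $\SO_{2n+1}(\Zlbar)$. Let $I_F$ be the inertia group and $P_F\subset I_F$ wild inertia. Let $I_F\to \prod_{p'\neq p}\Z_{p'}(1)$ be the tame quotient, and let $I_F^{(\ell)}$ be the kernel of the further projection to $\Z_\ell(1)$.

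By the usual $\ell$-adic monodromy argument, $\rho(I_F^{(\ell)})$ is finite of order prime to $\ell$. The kernel of reduction $\SO_{2n+1}(\Zlbar)\to\SO_{2n+1}(\Flbar)$ is pro-$\ell$, so reduction is injective on this finite group. Since the order is prime to $\ell$, $\ol\rho|_{I_F^{(\ell)}}$ is semisimple (complete reducibility holds, applied e.g. to $\Std$). As $I_F^{(\ell)}$ is normal in $\Gal_F$, this gives $\ol\rho|_{I_F^{(\ell)}}=\ol\rho^{\textup{ss}}|_{I_F^{(\ell)}}$, which is trivial because $\ol\rho^{\textup{ss}}$ is unramified. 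Hence $\rho$ is trivial on $I_F^{(\ell)}$.

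It follows that $\rho|_{I_F}$ factors through $\Z_\ell(1)$. By Grothendieck's argument it is quasi-unipotent, and after passing to the finite-index subgroup it is $\exp(t_\ell(\sigma)N)$ for a nilpotent $N$. Passing to $\rho^{\textup{ss}}$ kills the unipotent part: indeed $\rho^{\textup{ss}}$ restricted to $I_F$ is semisimple with unipotent image on a finite-index subgroup, and it has finite prime-to-$\ell$ part coming from the reduction. Running the same reduction argument again shows that $\rho^{\textup{ss}}$ is unramified.

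\emph{Step 2: the torus condition.} Since $\rho^{\textup{ss}}$ is unramified, it is determined by $s:=\rho^{\textup{ss}}(\Frob)$. Moreover $s$ is semisimple, being the semisimple part of $\rho(\Frob)$ up to conjugation. A semisimple element of $\SO_{2n+1}(\Qlbar)$ lies in a maximal torus $T$, and then the Zariski closure of $\langle s\rangle$ lies in $T$. Hence the image of $\rho^{\textup{ss}}$ is contained in $T$, which is the first condition of Definition~\ref{def:generic-ps}. Write $\Std(s)$ with eigenvalues $1,\alpha_1^{\pm1},\dots,\alpha_n^{\pm1}$; then $\chi_i$ is the unramified character with $\chi_i(\Frob)=\alpha_i$.

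\emph{Step 3: reduction of eigenvalues.} The characteristic polynomial of $\Std\circ\rho(\Frob)$ has $\Zlbar$-coefficients and reduces to that of $\Std\circ\ol\rho(\Frob)$. That in turn equals the characteristic polynomial of $\ol t:=\Std\circ\ol\rho^{\textup{ss}}(\Frob)$, with eigenvalues $1,\bar\beta_1^{\pm1},\dots,\bar\beta_n^{\pm1}$. Consequently the multiset $\{\alpha_i^{\pm1}\}$ reduces to $\{\bar\beta_i^{\pm1}\}$, and we may reindex so that $\bar\alpha_i=\bar\beta_i^{\pm1}$ pair by pair. Some care is needed if some $\bar\beta_i=\pm1$ or if pairs collide, but any matching of the multiset of pairs works for what follows.

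Now suppose that some root value $\chi_i^{\pm1}$, $(\chi_i\chi_j)^{\pm1}$ or $(\chi_i/\chi_j)^{\pm1}$ equals $\omega_\ell$. All of these characters are unramified, so this is equivalent to an identity such as $\alpha_i^{\pm1}\alpha_j^{\pm1}=q^{-1}$ (with the appropriate sign convention for $\omega_\ell(\Frob)$). Reducing modulo $\ell$ gives the same identity for the $\bar\beta$'s. This says that the corresponding root composed with $\ol\rho^{\textup{ss}}$ equals $\ol\omega_\ell$, contradicting genericity of $\ol\rho^{\textup{ss}}$. Note that the set of root values is symmetric under $\alpha_i\mapsto\alpha_i^{-1}$, so the ambiguity in the matching is harmless.

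I expect the main obstacle to be Step 1, namely making rigorous that $\rho^{\textup{ss}}$ is unramified using only that $\ol\rho^{\textup{ss}}$ is unramified. The point that needs care is that semisimplicity of $\ol\rho$ on the prime-to-$\ell$ part of inertia forces that part to act trivially. This is where the hypothesis that $\ol\rho^{\textup{ss}}$ is unramified is used essentially. The other steps are eigenvalue bookkeeping via $\Std$.
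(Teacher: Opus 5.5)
Step~1 has a genuine gap: its conclusion that $\rho^{\textup{ss}}$ is unramified is false in general. Since Steps~2 and~3 reduce everything to the single element $\rho^{\textup{ss}}(\Frob)$, they do not apply as written.

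Your argument correctly kills $P_F$ and the prime-to-$\ell$ tame inertia. After that, however, $\rho^{\textup{ss}}(I_F)$ is a finite quotient of $\Z_\ell(1)$, i.e.\ a cyclic $\ell$-group. Such a group can lie entirely inside the pro-$\ell$ kernel of $\SO_{2n+1}(\Zlbar)\to \SO_{2n+1}(\Flbar)$. The ``same reduction argument'' only works for finite groups of order prime to $\ell$, so it does not apply here.

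Here is a concrete counterexample. Suppose $q\equiv 1\pmod{\ell}$ and let $\eta:\Gal_F\to \mu_\ell(\Zlbar)$ be a nontrivial tamely ramified character, which exists because $\ell\mid q-1$. Let $\chi_i^0$ be unramified lifts of generic unramified $\bar\chi_i$. Let $\rho$ be the $T$-valued representation with coordinates $\chi_1^0\eta,\chi_2^0,\dots,\chi_n^0$. Since $\eta\equiv 1$ modulo the maximal ideal, $\rho=\rho^{\textup{ss}}$ lifts $\bar\rho=\bar\rho^{\textup{ss}}$, yet it is ramified.

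Note also that Step~1 never uses genericity, although genericity is exactly what is needed even when $q\not\equiv 1\pmod{\ell}$. For example, take $q\equiv -1\pmod{\ell}$ with $\ell$ odd, $F_2/F$ unramified quadratic, and $\theta$ a character of $\Gal_{F_2}$ of order $\ell$ on inertia. Then $\mathrm{ad}^0(\mathrm{Ind}_{F_2}^F\theta)$ is a ramified semisimple $\SO_3$-valued lift of the unramified, non-generic $\epsilon\oplus\mathbf{1}\oplus\epsilon$.

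This is why the paper splits into two cases.

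\textbf{Case $q\not\equiv 1\pmod{\ell}$.} Genericity, together with $q\not\equiv 1$, shows that geometric Frobenius never acts by $q^{-1}$ on a root space or on the toral part of $\mathrm{ad}\,\bar\rho^{\textup{ss}}(\mathfrak g)$. Hence $\bar\rho$ is unramified and $H^1(I_F,\mathrm{ad}\,\bar\rho(\mathfrak g))$ has no Frobenius-invariant classes, so every lift of $\bar\rho$, not just its semisimplification, is unramified. From that point on, your Steps~2 and~3 are essentially the paper's argument.

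\textbf{Case $q\equiv 1\pmod{\ell}$.} Here $\bar\omega_\ell$ is trivial, so genericity gives $\alpha\circ\bar\rho^{\textup{ss}}\neq 1=\bar\omega_\ell$ for every root $\alpha$. The local Euler characteristic formula then gives $H^1(\Gal_F,\mathfrak g_\alpha)=0$. This forces $\bar\rho$ to be semisimple and every lift to be conjugate into $T$. Thus $\rho$ is a sum of possibly ramified characters lifting those of $\bar\rho^{\textup{ss}}$. Genericity must then be checked by reducing the characters $\alpha\circ\rho$ themselves modulo $\ell$, not just their Frobenius values.

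Without some such input, your argument does not even establish the torus condition for $\rho^{\textup{ss}}$ in the ramified case.
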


\begin{proof} A variant of this lemma is proved as part of~\cite[Cor.~5.1.3]{CaraianiScholzeNonCompact}, for a representation of $\mathrm{Gal}_F$ valued in $\mathrm{GL}_{n}(\overline{\F}_{\ell})$. To adapt this to an $\mathrm{SO}_{2n+1}(\overline{\F}_{\ell})$-valued representation, we use the fact that the deformation theory for the latter is controlled by the Galois cohomology $R\Gamma(\mathrm{Gal}_F, \mathrm{ad}\, \bar{\rho}(\mathfrak{g}))$, where $\mathrm{ad}\, \bar{\rho}(\mathfrak{g})$ denotes the adjoint Galois representation on the Lie algebra $\mathfrak{g}$ induced by $\bar{\rho}$. (More precisely, obstructions are controlled by $H^2(\mathrm{Gal}_F, \mathrm{ad}\, \bar{\rho}(\mathfrak{g}))$, deformations are controlled by $H^1(\mathrm{Gal}_F, \mathrm{ad}\, \bar{\rho}(\mathfrak{g}))$ and automorphisms of deformations are controlled by $H^0(\mathrm{Gal}_F, \mathrm{ad}\, \bar{\rho}(\mathfrak{g}))$. See~\cite[\S 3]{patrikis}, for example, for background on deformation theory in the setting of general connected reductive groups.)

We distinguish two cases, based on the residue field cardinality $q$ of $F$. If $q\not\equiv 1\pmod{\ell}$, then we claim that $\bar{\rho}$ and all its deformations are unramified, though possibly non-Frobenius semi-simple. Recall that geometric Frobenius acts by $q^{-1}$ on the $\ell$-part of the tame inertia in $I_F$. Since $\bar{\rho}^{\mathrm{ss}}$ is unramified, $I_F$ acts unipotently and the action factors through the $\ell$-part of tame inertia. Now, the fact that $\bar{\rho}$ itself is unramified follows from the fact that,  by the generic ps type condition, geometric Frobenius cannot act by $q^{-1}$ on any of the root spaces in $\mathrm{ad}\, \ol{\rho}^{\mathrm{ss}}(\mathfrak{g})$. The fact that all the deformations of $\bar{\rho}$ are unramified can be seen from the Hochschild--Serre spectral sequence for group cohomology, using the fact that there are no geometric Frobenius-invariant classes in $H^1(I_F, \mathrm{ad}\, \ol\rho(\mathfrak{g}))$ or, equivalently, in $H^1(I_F, \mathrm{ad}\, \ol\rho^{\mathrm{ss}}(\mathfrak{g}))$. Indeed, we have already seen that, by the generic ps type condition, geometric Frobenius cannot act by $q^{-1}$ on any of the root spaces in $\mathrm{ad}\, \ol{\rho}^{\mathrm{ss}}(\mathfrak{g})$. By the assumption that $q\not\equiv 1\pmod{\ell}$, geometric Frobenius also cannot act by $q^{-1}$ on the toral part of  $\mathrm{ad}\, \ol{\rho}^{\mathrm{ss}}(\mathfrak{g})$. (In fact, in this case, the deformation theory of $\ol\rho$ is the same, whether viewed as a $\mathrm{Gal}_F$-representation or as a representation of the absolute Galois group of the residue field of $F$.) The condition that $\rho^{\textup{ss}}$ is of generic ps type can now be verified directly, as it is a direct sum of unramified characters lifting the characters in $\ol\rho^{\textup{ss}}$.

If $q\equiv 1\pmod{\ell}$, the generic ps condition on $\ol{\rho}^{\textup{ss}}$ guarantees that $\ol\rho$ and all its deformations to characteristic $0$ are semi-simple. Indeed, by the generic ps condition, we can assume that $\ol{\rho}$ is semi-simple and factors through $T(\ol{\F}_{\ell})$. On the level of Lie algebras, we have a decomposition $\mathfrak{g} = \mathfrak{t}\oplus \bigoplus_{\alpha} \mathfrak{g}_{\alpha}$, where $\alpha$ runs over the roots of $\mathrm{SO}_{2n+1}$. We have that $H^1(\mathrm{Gal}_F, \mathfrak{g}_{\alpha}) = 0$ for each root $\alpha$, where the Galois action is through the adjoint action of $\bar{\rho}$ and is non-trivial by the generic ps condition. This implies that $\rho = \rho^{\textup{ss}}$ itself is a direct sum of possibly ramified characters lifting the characters in $\ol\rho^{\textup{ss}}$. The fact that $\rho^{\textup{ss}}$ is of generic ps type can again be checked directly.  
\end{proof}

\begin{remark}\label{rem:ell is 2} The lemma is valid, in particular, when $\ell =2$. In this case, we automatically have $q\equiv 1\pmod{\ell}$. Therefore, the generic ps type condition implies that $\bar{\rho}$ and all its deformations are semi-simple. 
\end{remark}

\subsection{The main theorems}\label{ss:main-thm}

The goal of this subsection is to state and prove the main theorems. Our strategy is to employ a congruence argument to reduce to the following proposition, whose proof will be completed in subsequent sections, on the cohomology of Igusa varieties with a ``supercuspidal type'' imposed at $\ell$.

Let $\tilde c^S\in \cC^S_{\fin}(\GSp_{2n})$. Write $c^S=(c_v)_{v\notin S}\in \cC^S_{\fin}(\Sp_{2n})$ for the image of $\tilde c^S$ under the natural map $\textup{pr}_{\Sp}: \GSpin_{2n+1} \to \SO_{2n+1}$ dual to $\Sp_{2n}\hookrightarrow \GSp_{2n}$. On the other hand, $\tilde c^S$ determines a maximal ideal of $\mathbb T^S_{\Qlbar}$. Denote by $H^i_c(\Ig^b_{U_{\ell, k}},\cL_\lambda)_{\tilde c^S}$ the localization at the maximal ideal. Then
\[
[H_c(\Ig^b_{U_{k}},\cL_{\lambda^{-1}})_{\tilde c^S}]:=\sum_{i\ge 0} (-1)^i H^i_c(\Ig^b_{U_{k}},\cL_{\lambda^{-1}})_{\tilde c^S}
\in \Groth(\GSp_{2n}(\A_{S\backslash\{p,\ell,\infty\}})\times J_b(\Q_p)),
\]
where $\Groth(\cdot)$ stands for the Grothendieck group as defined in \cite[\S1.2]{HT01}.

\begin{prop}\label{prop:Hc(Ig)-char0-automorphic-statement} 
Let $n\in \Z_{\ge2}$, $b\in B(G,\mu)$, and $\tilde c^S$, $c^S$ be as above. For $\lambda$ as in \eqref{eq:character-lambda} and the corresponding compact open subgroup $U_{k}\subset G(\Q_{\ell})$, assume that 
\[
[H_c(\Ig^b_{U_{k}},\cL_{\lambda^{-1}})_{\tilde c^S}]\neq 0\qquad \mbox{in}\quad \Groth(\GSp_{2n}(\A_{S\backslash\{p,\ell,\infty\}})\times J_b(\Q_p)).
\]
Then there exists a semisimple Galois representation 
\[
\rho_{c^S}:\Gal_{\Q}\to \SO_{2n+1}(\Qlbar)
\]
such that $\rho_{c^S}(\Frob_v)_{\textup{ss}}$ is conjugate to $\iota c_v$ for all finite places $v\notin S$. 
Moreover, if $\rho_{c^S}$ is of generic ps type at $p$ (see \S~\ref{ss:genericity-char0}) then $b$ must be ordinary.
\end{prop}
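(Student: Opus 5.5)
The plan is to compute the alternating sum $[H_c(\Ig^b_{U_k},\cL_{\lambda^{-1}})_{\tilde c^S}]$ via the stable trace formula for Igusa varieties and then read off automorphic information. First, express the trace of $f^{S\setminus\{p,\ell,\infty\}}\times \phi_p$ (with $\phi_p\in\cH(J_b(\Q_p))$ acyclic or arbitrary) on $\sum_i(-1)^iH^i_c(\Ig^b_{U_k},\cL_{\lambda^{-1}})$ as a sum over elliptic endoscopic data $(H,s,\eta)$ of $\GSp_{2n}$ of stable orbital integral expressions $\iota(G,H)\,ST^H_{\el}(f^H)$. The test function $f$ has three special components: at $\infty$ a stable cuspidal (Euler–Poincaré/pseudo-coefficient type) function for the trivial coefficient system, at $\ell$ the function $\lambda^{-1}\mathbf 1_{U_k}$ (suitably normalized), which is a cuspidal function because $\lambda$ is a supercuspidal type, and at $p$ the function $f_p$ obtained from $\phi_p$ via the Jacquet-module/transfer construction for Igusa varieties. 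Since $f$ is cuspidal at $\ell$ and stable cuspidal at $\infty$, Lemma~\ref{lem:simpleTF-stable} kills the contributions of cuspidal subgroups and non-elliptic terms, so each $ST^H_{\el}(f^H)$ equals the stable discrete spectral side $S^H_{\disc}(f^H)$, and after stabilizing back we get that the Igusa cohomology trace is expressed through $I^G_{\disc}$ and its endoscopic analogues, all with $f_\ell$ supercuspidal-type.

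Second, I would decompose spectrally along $\tilde c^S$: the nonvanishing hypothesis forces some discrete automorphic representation $\pi$ of $\GSp_{2n}(\A)$ (or of an endoscopic group $H=G(\Sp_{2a}\times\SO_{2b})$, transferred back) with $\pi^S$ having Satake parameters $\tilde c^S$, $\pi_\infty$ cohomological for trivial weight (regular $C$-algebraic infinitesimal character), and $\pi_\ell$ supercuspidal with $\Hom_{U_k}(\lambda,\pi_\ell)\ne0$. Restricting to $\Sp_{2n}$ and using Arthur's classification, $\pi$ lies in an $A$-packet $\psi$ for $\Sp_{2n}$; the supercuspidality at $\ell$ plus regular algebraicity at $\infty$ lets us attach via known results (Arthur + Galois representations for regular algebraic self-dual cuspidal $\GL_m$, e.g.\ Harris–Taylor/Shin/Chenevier–Harris) the Galois representation $\rho_{c^S}:=\bigoplus_i \rho_{\pi_i}\otimes\mathrm{Sym}^{m_i-1}$ with the right Frobenius conjugacy classes; self-duality/orthogonality of the $\GL$ pieces gives the factorization through $\SO_{2n+1}(\Qlbar)$ (using the sign of the Arthur parameter and, if needed, Bellaïche–Chenevier-type results to pin down the orthogonal form). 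Semisimplicity is by construction.

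Third, for the final assertion: if $\rho_{c^S}$ is of generic ps type at $p$, then by local–global compatibility at $p$ for the $\GL$ pieces, the local $A$-parameter $\psi_p$ has trivial $\SL_2$'s and is of generic ps type, so $\pi_p$ is an irreducible generic unramified-type principal series and the packet is a singleton. The $p$-component $f_p$ of the Igusa trace formula is obtained from $\phi_p$ by transfer from $J_b$ (the Mantovan/Shin formula), and in character terms the Igusa cohomology's $J_b$-part is $\mathrm{Red}_b(\pi_p)$, built from transferring the character of $\pi_p$ to the inner form $J_b$ of a Levi. By Lemma~\ref{lem:Jb-non-quasi-split}, if $b$ is not ordinary then $J_b$ is not quasi-split, and a tempered-type principal series $L$-packet of a quasi-split Levi contains no members on a non-quasi-split inner form (the parameter is not relevant, as it factors through a torus, i.e.\ through a Levi not relevant for $J_b$). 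Hence $\mathrm{Red}_b(\pi_p)=0$ and the class vanishes, contradiction; so $b$ is ordinary. The main obstacle I expect is the first step: justifying the stabilization with the cuspidal-at-$\ell$, stable-cuspidal-at-$\infty$ function, and controlling the non-principal endoscopic contributions and signs so that nonvanishing of the total really forces a single automorphic $\pi$ with the stated properties rather than cancellation between endoscopic terms; I would handle this by working with $\phi_p$ varying and using linear independence of characters.
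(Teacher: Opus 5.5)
Your first two steps follow the paper's route. The paper's proof combines Proposition~\ref{prop:Hc(Ig)-char0-automorphic} and Theorem~\ref{thm:Hc(Ig)-char0-vanishing}, applied to $(U_k,\lambda^{-1})$. A cuspidal function at $\ell$ and a stable cuspidal function at $\infty$ turn each $ST^{H^{\e}}_{\el}$ into $S^{H^{\e}}_{\disc}$. Nonvanishing of one term then gives, via Lemma~\ref{lem:from-nonvanishingS-to-automorphic-spectrum}, a discrete automorphic representation on some \emph{hyper}-endoscopic group. One restricts it to the kernel of the similitude, takes its Arthur parameter there, and pushes that parameter forward to $\Sp_{2n}$. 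Two small corrections:
\begin{itemize}
\item Nothing is ``transferred back'' to $\GSp_{2n}$.
\item The $\ell$-component of that representation is only known to pair nontrivially with a \emph{transferred} cuspidal function, so supercuspidality at $\ell$ is neither available nor needed.
\end{itemize}

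The genuine gap is in your ordinariness step, which rests on $\pi_p$ being an irreducible principal series with a singleton packet. Generic ps type does not imply this. Remark~\ref{rem:irreducibility} gives a counterexample: for $n=3$, take distinct nontrivial quadratic $\chi_1,\chi_2,\chi_3$ with $\chi_1\chi_2\chi_3=1$. When the unitary principal series is reducible, the characters of its individual constituents are not supported on conjugates of the split torus. So no argument about a single $\pi_p$ can force vanishing. Nor can you reassemble full principal series on the invariant side, because members of a local packet generally occur in the discrete spectrum with different multiplicities (Arthur's multiplicity formula).

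Relatedly, a formula of the shape $\sum_\pi \pi^{p,\infty}\otimes\mathrm{Red}_b(\pi_p)$ would require destabilizing $\sum_{\e}\iota(G,H^{\e})S^{H^{\e}}_{\disc}(f^{\e})$ to $I^G_{\disc}(f)$ for a single $f_p$ on $G(\Q_p)$. The $f^{\e}_p$ produced from $\varphi_p$ on $J_b(\Q_p)$ are not of that form for $\GSp_{2n}$. Also, for non-$\GL$ Levis only stable virtual characters, not individual ones, transfer to the inner form $J_b$.

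The missing idea is to argue term by term at the stable level:
\begin{itemize}
\item $\psi$ is generic, bounded at every place (by Caraiani's local-global compatibility and Clozel's purity) and discrete at $\infty$ (Lemma~\ref{lem:consequence-of-gen-at-p}).
\item Hence Xu's stable multiplicity formula expresses $h_p\mapsto S^{\tilde H^{\e}}_{\disc,\tilde\zeta_{\mathbf 1},\tilde c^S}(h^ph_p)$ as a combination of traces of \emph{full}, possibly reducible, unitary principal series. These are sums over whole packets (Proposition~\ref{prop:sum-of-irred-ps}, Lemma~\ref{lem:irred-coarse-packet}), with stable characters supported on conjugates of the split torus (Lemma~\ref{lem:principal-series-stable}).
\item If $J_b$ is not quasi-split (Lemma~\ref{lem:Jb-non-quasi-split}), a $\Q_p$-rank comparison shows the stable orbital integrals of $f^{\e}_p$ vanish on $\tilde H^{\e}(\Q_p)_{\textup{reg},s}$ (Lemma~\ref{lem:SO=0-if-non-ordinary}).
\item The Weyl integration formula then kills every term (Corollary~\ref{cor:SO=0-then-tr=0}).
\end{itemize}
Your ``non-relevance'' heuristic is the spectral shadow of this orbital-integral statement, but it must be applied to entire packets. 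Since every term vanishes separately, the cancellation problem you flagged never arises. It does not arise in the existence part either, where nonvanishing of the sum simply yields one nonvanishing term.
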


\begin{proof}
The proof will be completed by Proposition \ref{prop:Hc(Ig)-char0-automorphic} and Theorem \ref{thm:Hc(Ig)-char0-vanishing}, taking $(U_{\ell, k},\lambda)$ there to be $(U_k,\lambda^{-1})$. Note that, if $\lambda$ is a supercuspidal type, then so is $\lambda^{-1}$. 
\end{proof}

Given a maximal ideal $\frakm\subset \mathbb T^S=\mathbb T^S(G)$, fix an embedding $\mathbb T^S/\frakm\hookrightarrow \Flbar$. Then we obtain a family of $\Z_\ell$-algebra maps $\cH^{\textup{ur}}_{\Z_{\ell}}(G)\to \Flbar$ for $v\notin S$. There is a map of unramified Hecke algebras 
$\cH^{\textup{ur}}_{\Z_{\ell}}(\Sp_{2n})\to \cH^{\textup{ur}}_{\Z_{\ell}}(G)$, determined by the injection $\mathrm{Sp}_{2n}(\Q_v)\hookrightarrow\mathrm{GSp}_{2n}(\Q_v)$. 
By restriction along this map of Hecke algebras, we obtain $\Z_\ell$-algebra maps $\cH^{\textup{ur}}_{\Z_{\ell}}(\Sp_{2n})\to \Flbar$. Write $c(\frakm)_v$ for the corresponding conjugacy class in $\SO_{2n+1}(\Flbar)$. We note that, by Lemma~\ref{lem:Verdier duality Langlands parameter} above, we have $c(\m)_v = c(\m^\vee)_v$ for each $v\not\in S$. 

\begin{thm}\label{thm:Langlands parameter}
Let $n\in \Z_{\ge1}$. Given a maximal ideal $\mathfrak{m} \subset \mathbb{T}^S$ in the support of $R\bar{\pi}_{K^p!}\F_{\ell}$ or $R(\bar{\pi}_{K^p}^\circ)_{*}\F_{\ell}$ together with an embedding $\mathbb T^S/\frakm\hookrightarrow \Flbar$, there exists a continuous, semi-simple representation
\[
\bar{\rho}_{\m}: \Gal_{\Q}\to \mathrm{SO}_{2n+1}(\overline{\F}_{\ell})
\] 
such that $\bar{\rho}_{\m}$ is unramified at places $v\not\in S$ and such that $\bar{\rho}_{\m}(\Frob_v)_{\textup{ss}}$ is conjugate to $c(\frakm)_v$ at these places.
\end{thm}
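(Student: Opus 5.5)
We construct $\bar{\rho}_{\m}$ by a congruence argument that reduces everything to Proposition~\ref{prop:Hc(Ig)-char0-automorphic-statement}, i.e.\ to characteristic zero cohomology of Igusa varieties twisted by the supercuspidal type $\cL_{\lambda}$ at $\ell$.

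\emph{Step 1: reduce to one Igusa variety.} By Theorem~\ref{thm:semi-perversity}, the stalks of $R\bar{\pi}_{K^p!}\F_{\ell}$ (resp.\ $R\bar{\pi}^\circ_{K^p*}\F_{\ell}$) at points of $\mathrm{Bun}_G^b$ are $R\Gamma_{c-\partial}(\mathrm{Ig}^b_{K^p},\F_\ell)$ (resp.\ $R\Gamma(\mathrm{Ig}^b_{K^p},\F_\ell)$). Since $\mathrm{Bun}_G^{\le\mu}$ has finitely many strata, $\m$ lies in the support of one of these complexes for some $b\in B(G,\mu)$. Taking $J_b$-invariants under a small pro-$p$ $K_p$ (exact with $\F_\ell$-coefficients), $\m$ lies in the support of $R\Gamma_{(c-\partial)}(\mathrm{Ig}^b_{K_k},\F_\ell)$ for $k\gg1$. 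Using the triangles relating $R\Gamma_{c-\partial}$, $R\Gamma_c$ and the boundary strata (Theorem~\ref{thm:boundary parabolic induced}), we argue by induction on $n$. The boundary contributes cohomology of $\GL_r$-locally symmetric spaces tensored with $R\Gamma_c(\mathrm{Ig}^{b_P})$ for $\GSp_{2(n-r)}$. For these we use Scholze's $\GL_r$ Galois representations~\cite{ScholzeTorsion} and the inductive hypothesis, noting that $n=1$ and $n-r=0$ are handled directly, e.g.\ by modular curves or a point. The unnormalized Satake transform then glues these into an $\SO_{2n+1}$-valued representation through the Levi $\GL_r\times\SO_{2(n-r)+1}$. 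Dually, Verdier duality with $\m^\vee$ and Lemma~\ref{lem:Verdier duality Langlands parameter} reduces ordinary cohomology to compactly supported cohomology. We are left with $\m$ in the support of $R\Gamma_c(\mathrm{Ig}^b_{K_k},\F_\ell)$.

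\emph{Step 2: lift to characteristic zero via the congruence.} Use the short exact sequence $0\to\mathcal{B}\to\mathcal{A}\to\F_\ell\to0$ with $\mathcal{A}$ attached to $\psi_k$. Since $U_k/U_{k'}$ is pro-$\ell$ after shrinking, the Hochschild--Serre argument from the proof of Lemma~\ref{lem:usual vs compact support} shows that $\m$ lies in the support of $R\Gamma_c(\mathrm{Ig}^b_{K_k},\mathcal{A})$ as well. The latter is a perfect complex of $A$-modules, with $A$ finite flat over $\Z_\ell$. Hence it has nonzero cohomology localized at $\m$ after inverting $\ell$. Its $\Qlbar$-components are the complexes with $\cL_\lambda$ for nontrivial $\lambda$, together with the trivial character. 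We then reduce modulo $\m$ the Hecke eigensystems occurring there.

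\emph{Step 3: produce the representation.} For a nontrivial-$\lambda$ eigensystem $\tilde c^S$ lifting $\m$, Proposition~\ref{prop:Hc(Ig)-char0-automorphic-statement} gives $\rho_{c^S}:\Gal_\Q\to\SO_{2n+1}(\Qlbar)$. We reduce a stable lattice modulo $\ell$ and take the $\SO_{2n+1}$-semisimplification. Chebotarev together with the compatibility $c(\m)_v$ via restriction to $\Sp_{2n}$ then yields $\bar{\rho}_\m$, unramified outside $S$.

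\emph{Main obstacle.} The hardest point is ensuring that some nontrivial-$\lambda$ contribution survives in the Grothendieck group, because a characteristic zero Euler characteristic can cancel. Plan: pass from $\F_\ell$ to $\mathcal{B}$ rather than $\mathcal{A}$. Since $B\otimes\Qlbar$ is exactly the sum of the nontrivial $\lambda$, the trivial character is excluded. Then use the semi-perversity bounds, namely Artin vanishing in Lemma~\ref{lem:torsion-freeness} style together with an induction on $b$ along the Newton stratification, to control the degrees in which cohomology appears. Choosing the minimal degree in which $\m$ appears makes the torsion-free part nonzero there and prevents cancellation.
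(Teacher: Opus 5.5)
\textbf{There is a genuine gap: the passage from $\F_\ell$ to characteristic zero (your Step 2) fails as written.} Your Steps 1 and 3 are on the right track (boundary induction, reduction mod $\ell$ and semisimplification), but Step 3 then has nothing to apply to.

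\textbf{The failing step.} You argue that $R\Gamma_c(\mathrm{Ig}^b_{K_k},\mathcal{A})_\m$ is a nonzero perfect complex over $A$, ``hence'' has nonzero cohomology after inverting $\ell$. This is false. A nonzero perfect complex over the discrete valuation ring $A=\Z_\ell[T]/(1+T+\cdots+T^{\ell-1})\cong\Z_\ell[\zeta_\ell]$ can be entirely torsion, e.g.\ $A\xrightarrow{\ell}A$; torsion classes are exactly what the theorem must overcome. Moreover, Proposition~\ref{prop:Hc(Ig)-char0-automorphic-statement} needs nonvanishing in the Grothendieck group, and an alternating sum can cancel even when some rational cohomology survives.

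\textbf{Your remedy does not close it.} The lowest (or highest) degree in which $\m$ occurs can carry purely torsion cohomology, as the cone of $\ell$ again shows. Isolating one degree also does not kill the other degrees rationally. There is also a factual slip: $A\otimes_{\Z_\ell}\Qlbar\cong\prod_{\zeta^\ell=1,\,\zeta\neq1}\Qlbar$, so the trivial character never occurs in $\mathcal{A}\otimes\Qlbar$. Moreover $B=(T-1)A$ is free of rank one over $A$, so $\mathcal{B}\cong\mathcal{A}$, and switching to $\mathcal{B}$ changes nothing rationally. In the paper, $\mathcal{B}$ is used only to get surjectivity in \eqref{eq:surjectivity}.

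\textbf{What is missing: concentration in a single degree at a well-chosen stratum.} The paper obtains this as follows.
\begin{enumerate}
\item The boundary/induction argument is used not merely to move between $R\Gamma_{c-\partial}$, $R\Gamma$ and $R\Gamma_c$. It reduces to the case where $(R\bar{\pi}_{K^p!}\F_\ell)_\m\to(R\bar{\pi}^\circ_{K^p*}\F_\ell)_\m$ is an isomorphism for every level $K^{p,\ell}U_k$. By Theorem~\ref{thm:semi-perversity}, the localized sheaf is then perverse, with both semi-perversities holding simultaneously.
\item One chooses $b_0$ with $d_{b_0}$ minimal among all strata in the support, over all $k$. Then $\mathrm{Bun}^{b_0}_G$ is open in the support, so $i^*_{b_0}=i^!_{b_0}$. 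Hence $R\Gamma(\mathrm{Ig}^{b_0}_{K^p},\F_\ell)_\m$ sits in degree $d_{b_0}$ only, which gives (Hyp1) and (Hyp2).
\item Lemma~\ref{lem:torsion-freeness} (Artin vanishing for the upper bound, Hochschild--Serre for the lower bound) then shows $R\Gamma(\mathrm{Ig}^{b_0}_K,\mathcal{A})_\m$ is a nonzero free $\Z_\ell$-module concentrated in degree $d_{b_0}$.
\item Deligne--Serre produces $\tilde c^S$ lifting $\m$ whose $\cL_\lambda$-cohomology, with $\lambda\neq 1$, lives only in degree $d_{b_0}$, so its class in the Grothendieck group is nonzero.
\item Poincar\'e--Verdier duality passes to $R\Gamma_c$ and $(\tilde c^S)^\vee$ before the Proposition is applied.
\end{enumerate}
Without an argument of this kind, Step~3 cannot start.
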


\begin{proof}
We will prove this theorem by induction on $n$. The case $n=1$ is the case of the modular curve. In this case, the Cartesian diagrams~\eqref{eq:product formula-goodred / open} for the good reduction and open Igusa stacks are still available to us, even though the minimally compactified Igusa stack has not been constructed. Therefore, the statement of the theorem makes sense. 

We explain the case when $\m\subset \mathbb{T}^S$ is in the support of $R(\pi_{K^p}^\circ)_{*}\F_{\ell}$. We use excision with respect to the Newton stratification. Assume first that $\m\subset \mathbb{T}^S$ is in the support of $R\Gamma(\mathrm{Ig}^b_{K^p}, \F_{\ell})$ for $b = b_{\mathrm{ss}}$, the supersingular element. In this case, the Igusa varieties are zero-dimensional and already equal to their partial compactifications. Furthermore, let $D/\Q$ be the unique quaternion algebra that is ramified precisely at $p$ and at $\infty$. By Rapoport--Zink uniformization, $\mathrm{Ig}^b_{K^p}$ can be identified with the Shimura set at level $K^p$ attached to $D^{\times}$. The cohomology group $H^0(\mathrm{Ig}^b_{K^p}, \Z_{\ell})$ is torsion-free and the existence of $\bar{\rho}_{\m}$ is standard, by reduction modulo $\ell$ from the characteristic $0$ case. 

Assume next that $\m\subset \mathbb{T}^S$ is not in the support of $R\Gamma(\mathrm{Ig}^b_{K^p}, \F_{\ell})$ for $b = b_{\mathrm{ss}}$, the supersingular element. Then $\m$ is in the support of $R\Gamma(\mathrm{Ig}^b_{K^p}, \F_{\ell})$ for $b = b_{\mathrm{ord}}$, the ordinary element. In the case of $R\Gamma$, the cohomology of $\mathrm{Ig}^b_{K^p}$ with $\Z_{\ell}$-coefficients is torsion-free and concentrated in degrees $0$ and $1$. The existence of $\bar{\rho}_{\m}$ follows from the computation of cohomology with $\overline{\Q}_{\ell}$-coefficients in~\cite[Thm.~6.1]{SW-curve}. Indeed, this is standard possibly except when $\frakm$ lies in the support of the error terms in that theorem. In the latter case, the trace formula argument therein implies that the $\GL_2(\A^S)$-modules in the error terms come from automorphic characters on $\GL_1(\A)\times \GL_1(\A)$ (whose real components are not unitary but shifted by the modulus character of a Borel) via 
normalized parabolic induction; this is enough for the existence of $\bar{\rho}_{\m}$. 

The case of $R\bar{\pi}_{K^p!}\F_{\ell}$ when $n=1$ can be reduced to the case of $R(\bar{\pi}_{K^p}^\circ)_{*}\F_{\ell}$ for $\m^\vee$ by applying Verdier duality, as in the proof of Theorem~\ref{thm:semi-perversity}. 

Note that, in all these cases when $n=1$, it is standard to obtain a $\mathrm{GL}_2(\ol{\F}_{\ell})$-valued Galois representation. We obtain the desired $\bar{\rho}_{\m}$, which is valued in $\mathrm{SO}_3(\ol{\F}_{\ell})$, by composing this with the canonical morphism of split reductive group schemes 
$\mathrm{GL}_2 \to \mathrm{PGL}_2\simeq \mathrm{SO}_3$. 

We now assume that $n\geq 2$ and that the theorem holds true for all $G' = \mathrm{GSp}_{2n'}$ with $n'<n$. For the first part of the proof, we claim that we can further assume that, for any choice of sufficiently small tame level $K^p$,  
the natural map 
\[
R\bar{\pi}_{K^p!}\F_{\ell} \to R(\bar{\pi}_{K^p}^\circ)_{*}\F_{\ell}
\]
becomes an isomorphism after localization at $\mathfrak{m}$. Assume that this map does not become an isomorphism. Then, there exists some $b\in B(G,\mu)$ such that the map 
\[
R\Gamma_{c-\partial}(\mathrm{Ig}^b_{K^p}, \F_{\ell})_{\m}\to 
R\Gamma(\mathrm{Ig}^b_{K^p}, \F_{\ell})_{\m}
\]
is not an isomorphism. By Poincar\'e duality, the natural map 
\[ 
R\Gamma_c(\mathrm{Ig}^b_{K^p}, \F_{\ell})_{\m^\vee}\to 
R\Gamma_c(\mathrm{Ig}^b_{K^p}, Rj_*\F_{\ell})_{\m^\vee}
\]
for the dual system of eigenvalues $\m^\vee\subset \mathbb{T}^S$ is not an isomorphism either. The excision sequence corresponding to the open and closed immersions
\[
\xymatrix{\mathrm{Ig}^b_{K^p}\ar@{^{(}->}[r]^{j} & \mathrm{Ig}^{b,*}_{K^p} & \partial\mathrm{Ig}^b_{K^p}\ar@{_{(}->}[l]_i}
\]
and the sheaf $Rj_*\F_{\ell}$ implies that $\m^{\vee}$ is in the support of $R\Gamma_c(\partial \mathrm{Ig}^b_{K^p}, i^*Rj_*\F_{\ell})$. By considering the boundary stratification by rational conjugacy classes $[P]$, we further deduce that $\m^{\vee}$ is in the support of $R\Gamma_c(\mathrm{Ig}^b_{K^p, [P]}, i^*_{[P]}Rj_*\F_{\ell})$ for some $[P]$.

We now follow verbatim the proof of~\cite[Theorem 6.4.1]{CaraianiScholzeNonCompact}. By Theorem~\ref{thm:boundary parabolic induced} (the Siegel case version of Theorem 6.1.1 of \emph{loc. cit.}), the maximal ideal $\m^\vee\subset \mathbb{T}^S$ arises by pullback under the unnormalized Satake transform from a maximal ideal $\m_M=\m_1 \otimes \m_2$ of the corresponding Hecke algebra for the Levi quotient $M:=\mathrm{GL}_r\times \mathrm{GSp}_{2(n-r)}$ of the standard parabolic $P$. The existence of $\bar{\rho}_{\m^{\vee}}$ follows from the following two steps:
\begin{itemize}
    \item in the case of $\mathrm{GL}_r/\Q$ and $\m_1\subset \mathbb{T}^S_{\mathrm{GL}_r}$, \cite[Cor.~5.4.3]{ScholzeTorsion} produces a ($C$-normalized) Galois representation 
    \[
   \bar{\rho}_{\m_1}:\mathrm{Gal}_{\Q}\to \mathrm{GL}_r(\ol{\F}_{\ell}), 
    \]
    characterized by its ($C$-normalized) compatibility with the Satake parameters of $\m_1$. 
    \item in the case of $\mathrm{GSp}_{2(n-r)}/\Q$ and $\m_2\subset \mathbb{T}^S_{\mathrm{GSp}_{2(n-r)}}$, the induction hypothesis produces a Galois representation 
    \[
   \bar{\rho}_{\m_2}:\mathrm{Gal}_{\Q}\to \mathrm{SO}_{2(n-r)+1}(\ol{\F}_{\ell}), 
    \]
    characterized by the usual ($L$-normalized) compatibility with the Satake parameters of $\m_2$. 
\end{itemize}
  Consider the  embedding $\mathrm{GL}_r\times \SO_{2(n-r)+1}\hookrightarrow \mathrm{SO}_{2n+1}$ dual to the unnormalized Satake transform between the Hecke algebras of $G$ and of $M$. The image of the resulting Galois representation
\begin{equation}\label{eq:reducible Gal rep}
 \bar{\rho}_{\m^{\vee}}:=\bar{\rho}_{\m_1}(r-n-1)\oplus \bar{\rho}_{\m_2} \oplus \bar{\rho}_{\m_1}^{\vee}(n-r+1): \mathrm{Gal}_{\Q}\to \mathrm{GL}_{2n+1}(\ol{\F}_{\ell})
\end{equation}
factors through this embedding. In light of Lemma~\ref{lem:Verdier duality Langlands parameter}, we can define $\bar{\rho}_{\m}:=\bar{\rho}_{\m^\vee}$. 

Therefore, we assume that we have natural isomorphisms 
\[
\cF_{K^p, \m}:= 
(R\bar{\pi}_{K^p!}\F_{\ell})_{\m} \toisom (R(\bar{\pi}_{K^p}^\circ)_{*}\F_{\ell})_{\m}.
\]
In particular, by the computation of stalks in Theorem~\ref{thm:semi-perversity}, (Hyp1) holds true for all $b\in B(G,\mu)$.
Theorem~\ref{thm:semi-perversity} implies that $\cF_{K^p, \m}$ is a perverse sheaf on $\mathrm{Bun}^{\leq \mu}_G$. This holds for any choice of sufficiently small tame level $K^p$, so in 
particular for $K^p = K^{p,\ell}K_{\ell}$ with $K_{\ell}\subset G(\Q_{\ell})$ equal to $U_k$ for any $k\in \Z_{\geq 1}$. 

Consider the subset of elements $b\in B(G,\mu)$ such that $\mathrm{Bun}_G^b$ is in the support of $\cF_{K^p,\m}$ for some choice of tame level $K^p$ of the form $K^{p,\ell}U_k$ (i.e. such that $R\Gamma(\mathrm{Ig}^b_{K^p}, \F_{\ell})_{\m}\not = 0$). Among these, choose an element $b_0$ such that $d_{b_0}$ is minimal. Fix $k\in \Z_{\geq 1}$ such that $\mathrm{Bun}^{b_0}_{G}$ is contained in the support of $\cF_{K^{p,\ell}U_k,\m}$; by our minimality assumption on $b_0$, $\mathrm{Bun}^{b_0}_{G}$ is even open in the support. Set $K^p:=K^{p,\ell}U_k$. This implies that 
\[
i^*_{b_0}\cF_{K^{p},\m}\simeq i^{!}_{b_0}\cF_{K^{p},\m}. 
\]
By the choice of $b_0$ and by the perversity of $\cF_{K^{p},\m}$, we deduce that the complex $R\Gamma(\mathrm{Ig}^{b_0}_{K^p},\F_{\ell})_{\m}$ is concentrated in degree $d_{b_0} = \dim_{\overline{\F}_p}\mathrm{Ig}^{b_0}_{K^p}$. That is, (Hyp2) holds true for $b=b_0$.

For any sufficiently small, hence pro-$p$, compact open subgroup $K_p\subset J_{b_0}(\Q_p)$, the functor taking $K_p$-invariants is exact, so the complex $R\Gamma(\mathrm{Ig}^{b_0}_{K^pK_p}, \F_{\ell})_{\m}$, if non-zero, is also concentrated in degree $d_{b_0}$. Choose such a $K_p$ such that $R\Gamma(\mathrm{Ig}^{b_0}_{K^pK_p}, \F_{\ell})_{\m}$ is non-zero and set $K:=K^pK_p$.

Now it follows from Lemma \ref{lem:usual vs compact support} and Artin vanishing that $R\Gamma(\mathrm{Ig}^{b_0}_{K}, \mathcal{B})_{\m}$ is concentrated in degree $d_{b_0}$ and below and, therefore, the map
\begin{equation}\label{eq:surjectivity}
H^{d_{b_0}}(\mathrm{Ig}^{b_0}_{K}, \mathcal{A})_{\m} \to H^{d_{b_0}}(\mathrm{Ig}^{b_0}_{K}, \F_{\ell})_{\m}
\end{equation}
is surjective. In particular, $H^{d_{b_0}}(\mathrm{Ig}^{b_0}_{K}, \mathcal{A})_{\m}$ is non-zero. Moreover, Lemma \ref{lem:torsion-freeness} implies that $H^{d_{b_0}}(\mathrm{Ig}^{b_0}_{K}, \mathcal{A})_{\m}$ is a finite free $\Z_{\ell}$-module and that $H^{i}(\mathrm{Ig}^{b_0}_{K}, \mathcal{A})_{\m}=0$ for $i\neq d_{b_0}$. 

  We apply the Deligne--Serre lifting lemma~\cite[Lemme 6.11]{DeligneSerre} to deduce that there exists a $\mathbb{T}^S$-eigensystem $\tilde{c}^S\in \cC^S_{\fin}(G)$ lifting $\frakm$ such that $H^{d_{b_0}}(\mathrm{Ig}^{b_0}_K, \mathcal{A}\otimes_{\Z_\ell}\Qlbar)_{\tilde{c}^S}\neq 0$. Moreover $H^{i}(\mathrm{Ig}^{b_0}_{K}, \mathcal{A})_{\m}=0$ for $i\neq d_{b_0}$ by the preceding paragraph. Note that $\mathcal{A}\otimes_{\Z_{\ell}}\ol{\Q}_{\ell}=\oplus_{\lambda} \mathcal{L}_{\lambda}$, where the sum runs over finitely many characters $\lambda: U_k\to \ol{\Q}_{\ell}^\times$, which all satisfy $\lambda\not = 1$. 
Hence, $H^{d_{b_0}}(\mathrm{Ig}^{b_0}_K, \mathcal{L}_{\lambda})_{\tilde{c}^S}\neq 0$ and $H^{i}(\mathrm{Ig}^{b_0}_K, \mathcal{L}_{\lambda})_{\tilde{c}^S}= 0$ for $i\not = d_{b_0}$. 

Leaving $k$ fixed and allowing $K^{\ell}_S = K^{p,\ell}_SK_p$ to vary among sufficiently small compact open subgroups of $G(\A_{S\backslash\{p,\ell,\infty\}})\times J_b(\Q_p)$, the same argument implies that $H^{d_{b_0}}(\mathrm{Ig}^{b_0}_{U_k}, \mathcal{L}_{\lambda})_{\tilde{c}^S}\neq 0$ and $H^{i}(\mathrm{Ig}^{b_0}_{U_k}, \mathcal{L}_{\lambda})_{\tilde{c}^S}= 0$ for $i\not = d_{b_0}$. By another application of Poincar\'e--Verdier duality to pass to compactly supported cohomology, we see that the assumption of Proposition \ref{prop:Hc(Ig)-char0-automorphic-statement} is satisfied for $(\tilde{c}^S)^\vee$.  Therefore, we obtain the desired $\bar{\rho}_{\frakm}$ by taking the $\mathrm{SO}_{2n+1}$-semi-simplification of the modulo $\ell$ reduction of $\rho_{c^S}$. 
\end{proof}

\begin{cor}\label{cor:Langlands parameter} Let $n\in \Z_{\geq 1}$. 
Let $K\subset G(\A^{\infty})$ be a sufficiently small compact open subgroup. Given a maximal ideal $\m\subset \mathbb{T}^S$ in the support of $H^*_{(c)}(\mathrm{Sh}_K, \F_{\ell})$ together with an embedding  $\mathbb T^S/\frakm\hookrightarrow \Flbar$, there exists a continuous, semi-simple representation
\[
\bar{\rho}_{\m}: \Gal_{\Q}\to \mathrm{SO}_{2n+1}(\overline{\F}_{\ell})
\] 
such that $\bar{\rho}_{\m}$ is unramified at places $v\not\in S$ and such that $\bar{\rho}_{\m}(\Frob_v)_{\textup{ss}}$ is conjugate to $c(\frakm)_v$ at these places.
\end{cor}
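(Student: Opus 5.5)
The plan is to deduce the corollary from Theorem~\ref{thm:Langlands parameter}. The key step is to show that any $\m$ in the support of $H^*_{(c)}(\mathrm{Sh}_K,\F_\ell)$ lies in the support of $R\bar{\pi}_{K^p!}\F_{\ell}$ or $R(\bar{\pi}^\circ_{K^p})_*\F_\ell$ for a suitable tame level $K^p$.

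First I reduce to a convenient level. Choose a prime $p\notin S$, $p\neq \ell$, so that $K_p=K_p^{\textup{h}}$. Enlarging $S$ to contain $p$ does not change the Satake parameters $c(\m)_v$ at places $v\notin S\cup\{p\}$. By Chebotarev, a semisimple representation is determined by Frobenius traces on a density-one set, and unramifiedness at $p$ can be recovered by choosing a second auxiliary prime $p'$. So it suffices to treat $\mathbb T^{S\cup\{p\}}$ and the maximal ideal obtained by restricting $\m$. Since $K$ is sufficiently small, we may also shrink $K_\ell$ (Hochschild--Serre with respect to a finite group $K_\ell/K'_\ell$ keeps $\m$ in the support), so we may assume $K^p=K^{p,\ell}K_\ell$ is as in Theorem~\ref{thm:semi-perversity}.

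Next, by comparison of Betti and étale cohomology and the comparison with the adic space, $H^*(\mathrm{Sh}_{K^{\textup{h}}_pK^p},\F_\ell)$ is computed as $R\Gamma(\mathscr{F}\ell, R\pi_{K^p*}\F_\ell)$, with $K_p^{\textup{h}}$-invariants taken, and similarly for compact support via $\pi_{\min}$ and $j_!$. By the Cartesian diagrams \eqref{eq:product formula}--\eqref{eq:product formula-goodred / open} and proper/smooth base change along $\mathscr{F}\ell\to\mathrm{Bun}^{\le\mu}_G$, the pushforward $R\pi_{K^p*}\F_\ell$ is the pullback of $R\bar\pi_{K^p*}\F_\ell$. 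Part (2) of the proof of Theorem~\ref{thm:semi-perversity} identifies $R\bar\pi_{K^p*}\F_\ell$ with $R\bar\pi^\circ_{K^p*}\F_\ell$. Likewise $R\pi_{\min !}$ of $j_!\F_\ell$ is the pullback of $R\bar\pi_{K^p!}\F_\ell$. All of these identifications are $\mathbb T^S$-equivariant. Consequently, if $\m$ is outside the support of the relevant Igusa-stack sheaf, localization kills it and hence kills $H^*_{(c)}(\mathrm{Sh}_K,\F_\ell)_\m$, which is a contradiction.

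Theorem~\ref{thm:Langlands parameter} then supplies $\bar\rho_\m$. The case $n=1$ is allowed there, and for $n=1$ only the open and good-reduction diagrams are needed. In that case I handle $H^*_c$ by Poincaré duality, passing to $\m^\vee$ and using Lemma~\ref{lem:Verdier duality Langlands parameter}.

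The main obstacle is making the second step precise. One has to check that Hecke equivariance survives the passage through $v$-stacks, and that the infinite-level-at-$p$ cohomology, after taking $K_p^{\textup{h}}$-invariants (which are exact only up to a pro-$p$ subgroup), still detects the support. I would address the latter by shrinking to a pro-$p$ subgroup of $K_p^{\textup{h}}$ and using Hochschild--Serre for the finite quotient.
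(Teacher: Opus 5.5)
Your proposal is correct and follows essentially the same route as the paper: choose an auxiliary prime $p$ with $K_p=K_p^{\textup{h}}$, express $R\Gamma_{(c)}(\mathrm{Sh}_K,\F_\ell)$ $\mathbb{T}^S$-equivariantly in terms of $R(\bar{\pi}^\circ_{K^p})_*\F_\ell$ and $R\bar{\pi}_{K^p!}\F_\ell$, apply Theorem~\ref{thm:Langlands parameter}, and vary $p$ using Chebotarev. The only difference is that the paper gets the key identification in one step from the Hecke-operator formula $R\Gamma_{(c)}(\mathrm{Sh}_K,\F_\ell)\simeq \bigl(i_1^*T_\mu i_{\mu!}(\cdot)\bigr)^{K_p^{\textup{h}}}[-d]$ of \cite{DvHKZ2}, whereas you assemble the same statement by hand from base change along $\mathscr{F}\ell\to\mathrm{Bun}_G^{\le\mu}$, the quasi-isomorphism $R\bar\pi_{K^p*}\F_\ell\simeq R\bar\pi^\circ_{K^p*}\F_\ell$, and Hochschild--Serre; that formula is the packaged form of your argument.
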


\begin{proof}
    Choose an auxiliary prime $p$ such that $K = K^pK_p^h$. There are $\mathbb{T}^S$-equivariant isomorphisms 
    \[
    R\Gamma(\mathrm{Sh}_K, \F_{\ell})\simeq \left(i_1^*T_{\mu}i_{\mu!}(R(\bar{\pi}^\circ_{K^p})_*\F_{\ell})\right)^{K^h_p}[-d]
    \ \mathrm{and}\ 
      R\Gamma_c(\mathrm{Sh}_K, \F_{\ell})\simeq \left(i_1^*T_{\mu}i_{\mu!}(R\bar{\pi}_{K^p!}\F_{\ell})\right)^{K^h_p}[-d], 
    \]
    where $T_\mu$ is the Hecke operator (in the sense of geometric Langlands) attached to $\mu$, normalized as in~\cite{DvHKZ2}. After applying standard Hecke-equivariant comparisons between the Betti cohomology of $\mathrm{Sh}_K$ and the \'etale cohomology of $\mathcal{S}^{\Diamond}_{K}$, and using Hochschild--Serre spectral sequences, these follow from the Hecke operator formulae established in~\cite[Theorem 5.3.11, Remark 5.3.12]{DvHKZ2} for all Shimura varieties of abelian type.  
    The corollary follows from Theorem~\ref{thm:Langlands parameter}, possibly after replacing $S$ by $S\cup \{p\}$. By varying the auxiliary prime $p$, we can ensure that $\bar{\rho}_{\m}$ (whose semi-simplification after composition with the standard embedding $\mathrm{SO}_{2n+1}(\ol{\F}_{\ell})\hookrightarrow\mathrm{GL}_{2n+1}(\ol{\F}_{\ell})$ is uniquely determined) satisfies the desired compatibility at all $v$ not contained in the original finite set $S$. 
\end{proof}

\begin{thm}\label{thm:generic-vanishing}
Let $n\in \Z_{\ge2}$, $K$, $\m$, and $\bar{\rho}_{\m}$ be as in Corollary~\ref{cor:Langlands parameter}. If $\bar{\rho}_{\m}$ is unramified at some prime $p\not= \ell$ and if 
$(\bar{\rho}_{\frakm}|_{\Gal_{\Q_p}})^{\textup{ss}}$ is of generic ps type, then
\[
H^i(\textup{Sh}_K,\F_\ell)_{\frakm}=0,\quad i<d,\qquad \mbox{and}\qquad H^i_{c}(\textup{Sh}_K,\F_\ell)_{\frakm}=0, \quad i>d.
\]
If, furthermore, $\bar{\rho}_{\m}$ is $\mathrm{SO}_{2n+1}$-irreducible, then $H^i_{(c)}(\mathrm{Sh}_K, \F_{\ell})_{\frakm}\not = 0$ only if $i = d$. 
\end{thm}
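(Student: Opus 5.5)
We will prove the statement about $H^i_c$ and deduce the one about $H^i$ by Verdier/Poincaré duality. Duality exchanges $\m$ and $\m^\vee$, and by Lemma~\ref{lem:Verdier duality Langlands parameter} we have $\bar\rho_{\m^\vee}\cong\bar\rho_{\m}$, so the hypotheses on $\bar\rho_{\m}$ are preserved.

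The first step is to pass to the Igusa stack. Using the auxiliary prime $p$ from the hypothesis (enlarging $S$ to contain $p$, and writing $K=K^pK_p^{\textup{h}}$ after shrinking if necessary), the formula from the proof of Corollary~\ref{cor:Langlands parameter} gives
\[
R\Gamma_c(\mathrm{Sh}_K,\F_\ell)_{\m}\simeq \bigl(i_1^*T_\mu i_{\mu!}(R\bar\pi_{K^p!}\F_\ell)_{\m}\bigr)^{K_p^{\textup{h}}}[-d].
\]
We then filter $\mathcal F:=(R\bar\pi_{K^p!}\F_\ell)_{\m}$ by the Newton strata, using excision along $\mathrm{Bun}_G^{\le\mu}$. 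The key claim is that, under the genericity hypothesis, the stalks $i_b^*\mathcal F$ vanish for every non-ordinary $b\in B(G,\mu)$. Granting this, $\mathcal F=i_{b_\mu!}i_{b_\mu}^*\mathcal F$ is supported on the ordinary stratum. By Theorem~\ref{thm:semi-perversity}(1), $i_{b_\mu}^*\mathcal F\simeq R\Gamma_{c-\partial}(\mathrm{Ig}^{b_\mu}_{K^p},\F_\ell)_{\m}$ is concentrated in degrees $\le d_{b_\mu}=d$. For $b_\mu$, which is basic in a Levi and $\mu$-ordinary, the operator $i_1^*T_\mu i_{b_\mu!}$ is a normalized parabolic induction/Jacquet-type functor followed by a shift. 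Combined with the exactness of $K_p^{\textup{h}}$-invariants (pro-$p$ versus $\ell$, after a Hochschild--Serre argument), the degree count yields $H^i_c=0$ for $i>d$. We would carry out this bookkeeping following \cite{CaraianiScholzeNonCompact}.

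The main obstacle is the vanishing of the non-ordinary strata. We argue by contradiction with the same minimality argument as in the proof of Theorem~\ref{thm:Langlands parameter}. If some non-ordinary stratum occurs, first treat the boundary. If $R\bar\pi_!\to R\bar\pi^\circ_*$ fails to be an isomorphism after localizing at $\m$, then $\bar\rho_{\m}$ is reducible of the form \eqref{eq:reducible Gal rep} coming from a Levi. In that case we instead argue inductively on $n$: the factor for $\mathrm{GSp}_{2(n-r)}$ inherits generic ps type, and the $\mathrm{GL}_r$ factor is controlled by known results. This inductive part is delicate, and we would follow \cite[\S6]{CaraianiScholzeNonCompact}.

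Otherwise $\mathcal F_{K^p,\m}$ is perverse. Choose $b_0$ with $d_{b_0}$ minimal in the support; note that $b_0$ is non-ordinary if any non-ordinary stratum occurs, since the ordinary $b$ has maximal $d_b$. As before, we obtain (Hyp1) and (Hyp2), torsion-freeness via Lemma~\ref{lem:torsion-freeness}, and a characteristic-zero lift $\tilde c^S$ with $[H_c(\Ig^{b_0}_{U_k},\cL_{\lambda^{-1}})_{\tilde c^S}]\ne0$. Proposition~\ref{prop:Hc(Ig)-char0-automorphic-statement} then produces $\rho_{c^S}$ lifting $\bar\rho_{\m}$. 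By Lemma~\ref{lem:lifting-Galois-rep}, whose hypotheses hold because $\bar\rho_{\m}$ is unramified and of generic ps type at $p$, $\rho_{c^S}|_{\Gal_{\Q_p}}^{\textup{ss}}$ is of generic ps type. The Proposition then forces $b_0$ to be ordinary, a contradiction. Hence only the ordinary stratum survives.

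Finally, if $\bar\rho_{\m}$ is $\SO_{2n+1}$-irreducible, then the boundary alternative above cannot occur. Indeed, irreducibility is incompatible with the Levi-type shape \eqref{eq:reducible Gal rep}, so $\mathcal F$ is also identified with $(R\bar\pi^\circ_*\F_\ell)_{\m}$, and $H^*_c\simeq H^*$ after localization at $\m$. Combining the two vanishing ranges then gives concentration in degree $d$.
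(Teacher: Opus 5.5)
Your overall architecture matches the paper: vanishing over non-ordinary strata via the minimal-$d_{b_0}$ argument, the supercuspidal lift, Lemma~\ref{lem:lifting-Galois-rep} and Theorem~\ref{thm:Hc(Ig)-char0-vanishing}. However, there are two genuine problems.

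\textbf{The case split in the key step leaves the main case untreated.} You split according to whether $(R\bar\pi_{K^p!}\F_\ell)_\m\to(R\bar\pi^\circ_{K^p*}\F_\ell)_\m$ is an isomorphism. In the failure case you only deduce that $\bar\rho_\m$ has the Levi shape \eqref{eq:reducible Gal rep}, and then ``argue inductively''. But a reducible $\bar\rho_\m$ of generic ps type at $p$ is fully compatible with the hypotheses; handling such $\m$ is what the first assertion adds beyond the irreducible case. Nothing in your sketch shows that the non-ordinary stalks vanish then. In particular, if the map fails only on the ordinary stratum, the boundary tells you nothing about non-ordinary strata.

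The paper instead splits according to whether the map fails at some \emph{non-ordinary} $b$. If it does, $\m^\vee$ lives on a boundary stratum of $\mathrm{Ig}^{b,*}$, which is built from Igusa varieties $\mathrm{Ig}^{b_P}$ for $\GSp_{2n'}$ with $n'<n$. The induction hypothesis is the ordinarity claim itself, applied to $\m_2$. This $\m_2$ is of generic ps type because $\Std\circ\bar\rho_{\m_2}$ is a summand of $\Std\circ\bar\rho_\m$. The hypothesis forces $b_P$ to be ordinary, hence $b$ ordinary, since $b_P$ determines $b$ \cite[Prop.~3.3.9]{LanStroh}. The $\GL_r$ factor plays no role.

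If the map is an isomorphism at all non-ordinary strata, $\mathcal F$ need not be perverse, and you do not need it to be. The minimal $b_0$ is open in the support. Upper semi-perversity of $R\bar\pi_!$ and lower semi-perversity of $R\bar\pi^\circ_*$, combined through the isomorphism at $b_0$, already give (Hyp1) and (Hyp2).

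\textbf{The degree bookkeeping for $H_c$ fails at the descent to level $K_p^{\mathrm h}$.} The group $K_p^{\mathrm h}=\GSp_{2n}(\Z_p)$ is not pro-$p$. So taking invariants is not exact on $\F_\ell$-modules when $\ell$ divides $|\GSp_{2n}(\F_p)|$, which always happens if $\ell=2$ or $p\equiv 1\pmod{\ell}$. Hochschild--Serre only preserves \emph{lower} bounds on degrees. Hence ``concentrated in degrees $\le d$'' at infinite level at $p$ does not descend to $R\Gamma_c(\mathrm{Sh}_K,\F_\ell)_\m$.

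This is why the paper runs the argument in the other direction:
\begin{enumerate}
\item $(R\bar\pi^\circ_{K^p*}\F_\ell)_\m$ is supported on the ordinary locus $\Fl(\Q_p)$, with stalks in degrees $\ge d$ by Theorem~\ref{thm:semi-perversity}(2);
\item Leray gives the lower bound at infinite level;
\item Hochschild--Serre descends it to level $K$;
\item Poincar\'e duality at level $K$ then gives the $H_c$ statement.
\end{enumerate}
You could also repair your version by dualizing at a pro-$p$ level first and descending only the lower bound.

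Your Igusa-stack argument for the irreducible case is a reasonable alternative to the paper's Borel--Serre argument. It requires checking that the isomorphisms in the proof of Corollary~\ref{cor:Langlands parameter} are compatible with the forget-supports map.
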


\begin{remark}
The condition that $(\bar{\rho}_{\frakm}|_{\Gal_{\Q_p}})^{\textup{ss}}$ is of generic ps type is independent of the choice of $\mathbb T^S/\frakm\hookrightarrow \Flbar$. By varying the auxiliary prime $p$ and using the Chebotarev density theorem, we can ensure that the level $K$ is of the form $K^pK_p^h$. 
\end{remark}

\begin{proof} We will show that, under the hypotheses of the theorem, each of $R\Gamma_{c-\partial}(\mathrm{Ig}^b_{K^p}, \F_{\ell})_{\m}$ and $R\Gamma(\mathrm{Ig}^b_{K^p}, \F_{\ell})_{\m}$ is non-zero only when $b$ is ordinary, i.e. when $b = b_{\mathrm{ord}}$. Granted this claim, we see that $(R(\bar{\pi}_{K^p}^\circ)_{*}\F_{\ell})_{\frakm}$ is supported only on the ordinary locus $\Fl(\Q_p)$, and only in degrees $i\ge d_{b_{\textup{ord}}}=d$ by Theorem \ref{thm:semi-perversity}. Hence $H^i(\textup{Sh}_K,\F_\ell)_{\frakm}$ is concentrated in degrees $i\ge d$ by the Leray and Hochschild--Serre spectral sequences, cf.~\cite[p.525]{CaraianiScholzeNonCompact}. The statement for compactly supported cohomology follows, as usual, from Poincar\'e duality applied to usual cohomology, noting that the generic ps condition is preserved under the effect of Poincar\'e duality. The final statement of the theorem is standard, using the fact that Galois representations $\bar{\rho}_{\m}$, where $\m$ arises from the boundary of the Borel--Serre compactification of $\mathrm{Sh}_K$, factor through a proper Levi subgroup of $\mathrm{SO}_{2n+1}(\ol{\F}_{\ell})$. 

It remains to show the claim that the only non-trivial contribution to either $R\Gamma_{c-\partial}(\mathrm{Ig}^b_{K^p}, \F_{\ell})_{\m}$ or $R\Gamma(\mathrm{Ig}^b_{K^p}, \F_{\ell})_{\m}$ comes from the ordinary locus. We do this using induction on $n$. If $n=1$, the only possibility that we need to exclude is the case when $b$ is basic. 
Let $D$ be the unique quaternion algebra over $\mathbb{Q}$ that is ramified precisely at $p$ and at $\infty$. 
Recall from the proof of Theorem~\ref{thm:Langlands parameter} that $\mathrm{Ig}^b_{K^p}$ can be identified with the Shimura set at level $K^p$ attached to $D^\times$. The explicit form of the global Jacquet--Langlands correspondence in this case shows that $(\bar{\rho}_{\frakm}|_{\Gal_{\Q_p}})^{\textup{ss}}$ cannot be of generic ps type. (This case does not satisfy the assumption on the codimension of the boundary being at least $2$, however this assumption is imposed only for aesthetic reasons in order to formulate the semi-perversity results using the Igusa stack. As the basic Igusa varieties are zero-dimensional, semi-perversity is not needed here.)  

From now on, let $n>1$. Assume first that the natural map $R\Gamma_{c-\partial}(\mathrm{Ig}^b_{K^p}, \F_{\ell})_{\m}\to R\Gamma(\mathrm{Ig}^b_{K^p}, \F_{\ell})_{\m}$ is not an isomorphism for some $b\not = b_{\mathrm{ord}}$ and for some choice of tame level $K^p\subset G(\A^{\infty, p})$. In that case, the same argument as in the proof of Theorem~\ref{thm:Langlands parameter} applies to show that $\mathfrak{m}^\vee$ is in the support of $R\Gamma_c(\mathrm{Ig}^b_{K^p, [P]}, i^*_{[P]}Rj_*\F_{\ell})$ for some rational conjugacy class of admissible parabolics $[P]$. Note that the boundary stratum $\mathrm{Ig}^b_{K^p, [P]}$ is a disjoint union of smaller Igusa varieties of the form $\mathrm{Ig}^{b_P}_{K^p}$, attached to the Shimura varieties for $G':=\mathrm{GSp}_{2n'}$ with $n'<n$. Furthermore, the Kottwitz element $b_P$ for $\mathrm{GSp}_{2n'}$ determines the Kottwitz element $b$ for $\mathrm{GSp}_{2n}$ as explained in~\cite[Prop.~3.3.9]{LanStroh} and its proof. As in the proof of Theorem~\ref{thm:Langlands parameter}, there is some maximal ideal $\mathfrak{m}_2$ of the spherical Hecke algebra for $G'(\mathbb{A}^S)$ such that $R\Gamma_c(\mathrm{Ig}^{b_P}_{K^p_{P}}, \F_{\ell})_{\mathfrak{m}_{2}}$ is non-zero. In that case, we have by construction, cf.~\eqref{eq:reducible Gal rep}, and by Chebotarev, that $(\mathrm{Std}\circ\bar{\rho}_{\mathfrak{m}_2})^{\mathrm{ss}}$ is a direct summand of $(\mathrm{Std}\circ\bar{\rho}_{\m})^{\mathrm{ss}}$. Since $(\bar{\rho}_{\mathfrak{m}}\mid_{\mathrm{Gal}_{\Q_p}})^{\mathrm{ss}}$ is assumed to be of generic principal series type, we deduce that $(\bar{\rho}_{\mathfrak{m}_2}\mid_{\mathrm{Gal}_{\Q_p}})^{\mathrm{ss}}$ is of generic ps type. Thus, the induction hypothesis applies to tell us that $b_P$ is ordinary. However, given the relationship between $b_P$ and $b$, this implies that $b$ itself is ordinary, which is a contradiction. 

Assume now that the natural map 
\begin{equation}\label{eq:stalk iso}
R\Gamma_{c-\partial}(\mathrm{Ig}^b_{K^p}, \F_{\ell})_{\m}\to 
R\Gamma(\mathrm{Ig}^b_{K^p}, \F_{\ell})_{\m}    
\end{equation}
is an isomorphism for all non-ordinary $b$ and all (sufficiently small) choices of tame level $K^p$ of the form $K^{p,\ell}U_k\subset G(\A^{\infty, p,\ell})\times G(\Q_{\ell})$. We choose $b_0$ such that $d_{b_0}$ is minimal among all the Newton strata that contribute, exactly as in the proof of Theorem \ref{thm:Langlands parameter}. If $b_0$ is ordinary, i.e. $b_0 = b_{\mathrm{ord}}$, we are done. Indeed, as $d_b< d_{b_{\mathrm{ord}}}$ for all non-ordinary Newton strata $b$, we deduce that there are no contributions from any non-ordinary Newton strata. Assume now that $b_0$ is non-ordinary. Using the isomorphism~\eqref{eq:stalk iso} for $b_0$, the minimality assumption on $d_{b_0}$, and the upper and lower semi-perversities in Theorem~\ref{thm:semi-perversity}, we deduce that the complex $R\Gamma(\mathrm{Ig}^{b_0}_{K^p}, \F_{\ell})_{\m}$ is concentrated in degree $d_{b_0}$. Therefore, both (Hyp1) and (Hyp2) hold for $b_0$. As in the proof of Theorem~\ref{thm:Langlands parameter}, we obtain a Hecke eigensystem $\tilde c^S$, which lifts $\mathfrak{m}$ and such that Proposition \ref{prop:Hc(Ig)-char0-automorphic-statement} applies to $(\tilde{c}^S)^\vee$. Thereby we obtain a semi-simple $\ell$-adic representation $\rho_{c^S}$ whose semi-simplified reduction is isomorphic to $\bar{\rho}_{\frakm}$. Since $(\bar{\rho}_{\frakm}|_{\Gal_{\Q_p}})^{\textup{ss}}$ is of generic ps type, it follows from Lemma~\ref{lem:lifting-Galois-rep} that $(\rho_{c^S}|_{\Gal_{\Q_p}})^{\textup{ss}}$ is of generic ps type. Then Theorem~\ref{thm:Hc(Ig)-char0-vanishing} further tells us that $b_0$ is ordinary, giving a contradiction. 
\end{proof}

\section{Some local representation theory}\label{s:local-rep-theory}

Throughout \S~\ref{s:local-rep-theory} we use the local notation from \S~\ref{ss:notation-local}. This section covers some key properties of the Langlands--Shelstad transfer and principal series representations as needed in the later arguments.

\subsection{Endoscopic transfer of cuspidal functions}\label{ss:endoscopic-transfer}

Let $f\in \cH(G)$. For a parabolic subgroup $P$ of $G$ over $F$ and its Levi subgroup $M$, write $f_P\in \cH(M)$ for the constant term of $f$ along $P$ as defined in \cite[p.97]{ArthurEllipticTempered}; the trace and orbital integral of $f_P$ are well-defined and depend only on $M$, not on the parabolic subgroup whose Levi factor is $M$. We say that $f$ is a \emph{cuspidal} function if it satisfies the following equivalent conditions:
\begin{itemize}
    \item for every proper parabolic $P$ and its Levi $M$, $\tr (f_P|\pi_M)=0$ for every irreducible smooth representation $\pi_M$ of $M(F)$,
    \item $O_\gamma(f)=0$ for every regular semisimple $\gamma\in G(F)$ that is not elliptic.
\end{itemize}
Since $\tr (f_P|\pi_M)=\tr (f| \text{n-Ind}_P^G(\pi_M))$, the first condition is equivalent to the requirement that the trace of $f$ should vanish against all induced representations from proper parabolic subgroups.

By $\cE_{\el}(G)$ we denote a set of representatives for isomorphism classes of elliptic endoscopic data $\frake=(H^\frake,\cH^\frake,s^\frake,\xi^\frake)$ for $G$ as in \cite{LanglandsShelstad}. Here $H^\frake$ is a connected reductive group over $F$, $\cH^\frake$ is a split extension of the Weil group over $F$ by $\hat H^\frake$, $s^\frake\in Z(\hat H^\frake)$, $\xi^\frake: \cH^\frake\to {}^L G$, and the quadruple is subject to further requirements. Given $\e$, our convention is that $H^\e$ always denotes the first entry in the quadruple. Observing that $\frake_0:=(G,{}^L G,1,\textup{id})\in \cE_{\el}(G)$, we write $\cE^<_{\el}(G):=\cE_{\el}(G)\backslash \{\frake_0\}$. For each $\frake\in \cE^<_{\el}(G)$, the dimension of $H^\frake$ is less than that of $G$. Throughout the paper we always work with the groups $G$ satisfying
\begin{itemize}
    \item[\hypertarget{(Hyp)}{\textbf{(Hyp)}}] every member of $\cE_{\el}(G)$ is represented by a quadruple in which $\cH^\frake={}^L H^\frake$,
\end{itemize}
in favor of simplicity and legibility, since \hyperlink{(Hyp)}{(Hyp)} is satisfied by $\Sp_{2n}$, $\GSp_{2n}$, $\GL_n$, and their inner forms or products thereof.
The hypothesis is inessential and can be removed at the expense of consistently working with $z$-extensions and fixed central characters. (For a glimpse, see \cite[\S\S2.6--2.8]{KretShinH0} for example.) 

Given $\frake$ as above, put $H:=H^\frake$ for simplicity. The Langlands--Shelstad transfer (with respect to $\frake$) of $f\in \cH(G)$ is a function $f^H\in \cH(H)$ such that for every strongly $G$-regular semisimple element $\gamma_H\in H(F)$, the following equality holds: 
\begin{equation}\label{eq:LS-transfer}
   SO_{\gamma_H}(f^H) = \sum_{\gamma} \Delta(\gamma_H,\gamma) O_\gamma(f), 
\end{equation}
where $\gamma$ runs over a set of representatives for strongly regular semisimple conjugacy classes in $G(F)$, and $\Delta(\gamma_H,\gamma)$ denotes the Langlands--Shelstad transfer factor. 
(We fix an arbitrary normalization of the transfer factor, since the validity of the assertions in this section does not depend on it.)
The existence of $f^H$ is known by the work of Ng\^o, Waldspurger et al (see \cite[Thm.~7]{NgoICM} and the references therein) in the non-archimedean case. The archimedean case was known earlier by Shelstad \cite{ShelstadLindistinguishability}.

\begin{lemma}\label{lem:cuspidality-preservation}
Let $G$, $H$, and $f$ be as above. If $f$ is a cuspidal function then the transfer $f^H$ can be chosen to be a cuspidal function on $H$.
\end{lemma}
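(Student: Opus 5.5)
Working with the orbital-integral characterization of cuspidality, the plan is to show that any transfer $f^H$ already has vanishing stable orbital integrals at non-elliptic strongly $G$-regular elements of $H(F)$. We then modify $f^H$ so that all its ordinary orbital integrals vanish at non-elliptic regular semisimple elements, without changing its stable orbital integrals on the strongly $G$-regular elliptic locus.

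\textbf{Step 1: vanishing of stable orbital integrals.} Let $\gamma_H\in H(F)$ be strongly $G$-regular semisimple and not elliptic in $H$. Its centralizer $T_H$ is then a maximal torus whose split part $A_{T_H}$ is strictly larger than $A_H$. Since $\frake$ is elliptic, $Z(\hat H)^{\Gamma,0}=Z(\hat G)^{\Gamma,0}$, so $A_H$ and $A_G$ have the same dimension. Every $\gamma\in G(F)$ with $\Delta(\gamma_H,\gamma)\neq 0$ has centralizer $T$ isomorphic over $F$ to $T_H$ via an admissible embedding. Hence $T$ has split rank exceeding that of $A_G$, so $\gamma$ is not elliptic in $G$. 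Cuspidality of $f$ gives $O_\gamma(f)=0$, and \eqref{eq:LS-transfer} then yields $SO_{\gamma_H}(f^H)=0$. The same holds at non-$G$-regular non-elliptic points by the germ expansion and continuity, or we can simply avoid those points in what follows.

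\textbf{Step 2: replacing stable vanishing by actual cuspidality.} Stable vanishing only says that $f^H$ is unstable on the non-elliptic locus, which is weaker than cuspidality. Since \eqref{eq:LS-transfer} only constrains $f^H$ up to functions with vanishing stable orbital integrals, we are free to change $f^H$ by such a function. I would argue spectrally, using the trace Paley--Wiener theorem of Bernstein--Deligne--Kazhdan. Cuspidality is equivalent to $\tr(f^H\mid \text{n-Ind}_{P_H}^H\sigma)=0$ for all proper parabolics $P_H$ of $H$ and all $\sigma$. Consider the invariant linear form $\Phi$ on the Grothendieck group given by $\pi\mapsto\tr(f^H|\pi)$ on elliptic (non-induced) virtual representations and $0$ on properly induced ones. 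This $\Phi$ again satisfies the Paley--Wiener conditions: on each Bernstein component it is a regular function on the unramified twists, and it is compatible with the decomposition of $\mathrm{Groth}$ into the induced part and the elliptic part, as in Kazhdan's density and the work of Bernstein--Deligne--Kazhdan. So there is a cuspidal $\phi\in\cH(H)$ realizing $\Phi$.

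It remains to check that $\phi$ is still a transfer of $f$. The characters of properly induced representations span precisely the class functions supported on the non-elliptic regular set. So $\phi-f^H$ has orbital integrals vanishing on the elliptic regular set. By Step 1, its stable orbital integrals also vanish on the non-elliptic strongly $G$-regular set, since both terms have vanishing stable orbital integrals there. Hence $SO(\phi)=SO(f^H)$ at every strongly $G$-regular element, and $\phi$ is a transfer of $f$.

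\textbf{Main obstacle.} The expected difficulty is in Step 2: ensuring that the projection to the ``elliptic part'' of the trace form produces an honest function in $\cH(H)$ and respects the Paley--Wiener conditions. This relies on the known structure of the elliptic subspace of $\mathrm{Groth}(H)$ (Bernstein--Deligne--Kazhdan, Kazhdan's density theorem). Alternatively, one can cite the standard result, due to Arthur in the elliptic tempered setting, that a function whose stable orbital integrals vanish off the elliptic set is stably equivalent to a cuspidal one. I expect this is what the paper uses. Step 1 is a routine torus-rank comparison.
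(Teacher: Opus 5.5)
\paragraph{Step 1 and the fallback.} Your Step 1 is the second half of the paper's proof. Your comparison $\dim A_H=\dim A_G$, coming from ellipticity of $\frake$, makes explicit why matching centralizers preserve non-ellipticity. Your fallback is exactly how the paper concludes: it cites \cite[Prop.~3.5]{ArthurLocalCharacter}, the surjectivity of $\mathcal I_{\textup{cusp}}(H)\to\mathcal{SI}_{\textup{cusp}}(H)$. On that route you need $SO_{\gamma_H}(f^H)=0$ at \emph{all} non-elliptic $\gamma_H\in H(F)_{\textup{sr}}$, so you cannot ``avoid'' the non-$G$-regular points. No germ expansion is needed either, since those points are regular in $H$. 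As in the paper, local constancy of stable orbital integrals on $H(F)_{\textup{sr}}$ plus density of the strongly $G$-regular locus suffice.

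\paragraph{The gap in Step 2.} Your self-contained Step 2 does not work: the claim that $\phi-f^H$ has vanishing elliptic orbital integrals is false in general.
\begin{itemize}
\item The ``elliptic (non-induced)'' part of $\Groth(H)$ is only a non-canonical complement $\Groth_{\textup{ell}}$ of $\Groth_{\textup{ind}}$.
\item Split the Weyl integration formula as $\tr(h|\pi)=E(h,\pi)+N(h,\pi)$ according to elliptic and non-elliptic tori. Then $E(h,\pi)=0$ for properly induced $\pi$ (van Dijk), and $N(\phi,\pi)=0$ because $\phi$ is cuspidal.
\item Hence for $\pi\in\Groth_{\textup{ell}}$ you get $E(\phi-f^H,\pi)=\tr(f^H|\pi)-E(f^H,\pi)=N(f^H,\pi)$.
\item This vanishes when $\Theta_\pi$ is stable, by Step 1. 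But $f^H$ only has vanishing \emph{stable} non-elliptic orbital integrals. On non-elliptic tori whose stable classes contain several rational classes (e.g.\ $\GL_1\times T_E$ inside a Levi of $\Sp_4$), $O_t(f^H)$ can be nonzero, and unstable members of $\Groth_{\textup{ell}}$ then give $N(f^H,\pi)\neq 0$.
\item You also cannot choose the complement to consist of virtual characters supported on the elliptic set: already the coset $\mathrm{St}+\Groth_{\textup{ind}}(\SL_2)$ contains no such element.
\end{itemize}

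Your sentence about induced characters spanning the class functions on the non-elliptic set does not rescue this. Even in its correct form (Kazhdan: a virtual character vanishing on the elliptic set is induced), it tells you which virtual characters are induced. It does not tell you that the trace form of $\phi-f^H$ comes from a function with vanishing elliptic orbital integrals.

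To repair the step you would need the canonical projection $\pi\mapsto E(f^H,\pi)$, or a version compatible with stable characters. You would then have to prove it is still a trace Paley--Wiener form. That is genuine Kazhdan--Arthur-type input, essentially what the cited result of Arthur packages; it is not a formal consequence of Bernstein--Deligne--Kazhdan. So drop Step 2 and finish with the citation, as the paper does.
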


\begin{proof}
From \cite[Prop.~3.5]{ArthurLocalCharacter}, which in particular tells us that the natural map $\mathcal I_{\textup{cusp}}(H)\to \mathcal{SI}_{\textup{cusp}}(H)$ in his notation (see \S1 of \emph{op.~cit.}) is surjective, we see that if $SO_{\gamma_H}(f^H)=0$ for $\gamma_H\in H(F)_{\textup{sr}}$ that are non-elliptic then $f^H$ can be chosen to be a cuspidal function without disturbing the identity \eqref{eq:LS-transfer}. 

So it suffices to show that the stable orbital integrals of $f^H$ vanish on the non-elliptic subset of $H(F)_{\textup{sr}}$. 
Since the stable orbital integral is locally constant on $H(F)_{\textup{sr}}$, it suffices to check $SO_{\gamma_H}(f^H)=0$ for strongly $G$-regular non-elliptic $\gamma_H\in H(F)$. 
But if $\gamma_H\in H(F)$ is such an element and $\Delta(\gamma_H,\gamma)\neq 0$, then $\gamma$ is a non-elliptic element of $G(F)$ because the centralizer of $\gamma_H$ in $H$ is isomorphic over $F$ to the centralizer of $\gamma$ in $G$; in particular $O_\gamma(f)=0$ by the assumption on $f$. The result follows.
\end{proof}

Now assume that $F=\R$. Denote by $A_G$ the maximal $\R$-split subtorus of $Z_G$. Fix a maximal compact subgroup $K\subset G(\R)$. For a smooth character $\chi:A_G(\R)^0\to \C^\times$, write $\cH(G,\chi^{-1})$ for the space of smooth bi-$K$-finite functions on $G(\R)$ which are compactly supported modulo $A_G(\R)^0$ such that $f(ag)=\chi^{-1}(a)f(g)$ for $a\in A_G(\R)^0$ and $g\in G(\R)$. We assume that $G$ contains an elliptic maximal torus to ensure that $G$ has discrete series representations. A function $f\in \cH(G,\chi^{-1})$ is said to be \emph{stable cuspidal} if the function $\pi\mapsto \tr(f|\pi)$ on the set of irreducible tempered representations of $G(\R)$ with central character $\chi$ on $A_G(\R)^0$ is supported on discrete series representations and constant on each discrete series $L$-packet. We consider a transfer $f^H$ of $f$ as in the paragraph above Lemma \ref{lem:cuspidality-preservation}. (Now $f$ and $f^H$ are only compactly supported modulo $A_G(\R)^0$ but the same identity as \eqref{eq:LS-transfer} still characterizes the transfer.)

\begin{lemma}\label{lem:stable-cuspidality-preservation}
Assume $F=\R$ and let $G$, $H$, and $f$ be as above. If $f$ is stable cuspidal then the transfer $f^H$ can be chosen to be a stable cuspidal function on $H$.
\end{lemma}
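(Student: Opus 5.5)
The argument mirrors Lemma~\ref{lem:cuspidality-preservation}, with discrete series characters playing the role of orbital integrals at elliptic elements. The starting point is a spectral characterization of stable cuspidal functions. By Harish-Chandra's theory together with Shelstad's results, $f$ is stable cuspidal if and only if two conditions hold. First, its orbital integrals vanish on the regular semisimple elements that are not elliptic; this is equivalent to the vanishing of traces on all tempered representations induced from proper parabolics. Second, on the elliptic set the orbital integrals are stable: $O_\gamma(f)$ depends only on the stable class of $\gamma\in G(\R)_{\textup{sr}}$, equivalently $f$ pairs trivially with every $\kappa$-orbital integral for $\kappa\neq 1$.

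Concretely, one writes $\tr(f|\pi)=\sum_{\varphi}c_\varphi(f)\,\tr(\text{d.s. packet }\Pi_\varphi)$. The pseudo-coefficient formula $O_\gamma(f)=\sum_\varphi c_\varphi(f)\,\Theta_{\Pi_\varphi}(\gamma^{-1})\cdot(\text{const})$ on elliptic $\gamma$ then shows that the elliptic orbital integrals are stable. I take this as the working characterization, citing Shelstad \cite{ShelstadLindistinguishability} and Arthur \cite{ArthurLocalCharacter}, just as the surjectivity $\mathcal I_{\textup{cusp}}\to \mathcal{SI}_{\textup{cusp}}$ was used earlier.

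Next I compute $SO_{\gamma_H}(f^H)$ via \eqref{eq:LS-transfer}. For non-elliptic strongly $G$-regular $\gamma_H$, the same centralizer argument as in Lemma~\ref{lem:cuspidality-preservation} shows that every $\gamma$ with $\Delta(\gamma_H,\gamma)\ne0$ is non-elliptic, so $SO_{\gamma_H}(f^H)=0$. For elliptic $\gamma_H$, the sum $\sum_\gamma\Delta(\gamma_H,\gamma)O_\gamma(f)$ runs over the $G(\R)$-classes in the stable class of the image of $\gamma_H$. Since $O_\gamma(f)$ is constant on that stable class, the sum equals $O_{\gamma_0}(f)\sum_\gamma\Delta(\gamma_H,\gamma)$. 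The transfer factor varies over the stable class by the character $\kappa$ attached to $s^\frake$ on $\mathfrak D(T)=\ker(H^1(\R,T)\to H^1(\R,G))$. For $\frake\ne\frake_0$ this character is nontrivial on the elliptic torus, so the sum vanishes. Thus $SO_{\gamma_H}(f^H)=0$ for all strongly $G$-regular $\gamma_H$, and by local constancy on $H(\R)_{\textup{sr}}$ the stable orbital integrals of $f^H$ vanish identically. Hence $f^H$ may be taken to be $0$, which is trivially stable cuspidal. For $\frake=\frake_0$ we may take $f^H=f$.

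The main obstacle is the first step: justifying that stable cuspidality as defined in the paper (a condition on traces against tempered representations) is equivalent to the stability of the elliptic orbital integrals plus their vanishing off the elliptic set. In particular, one needs to know that the nontriviality of $\kappa$ on $\mathfrak D(T)$ for an elliptic $T$ holds for every nontrivial elliptic endoscopic datum. I would handle this by Shelstad's character identities expressing $\kappa$-weighted sums of discrete series characters as transfers. If that sign argument is delicate, an alternative is to state that the transfer of a stable cuspidal $f$ has stable orbital integrals determined by $\sum_\varphi c_\varphi(f)\sum_{\pi\in\Pi_\varphi}\langle s^\frake,\pi\rangle\,\Theta_\pi$. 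Applying Shelstad's inversion then shows this is a stable combination of discrete series characters of $H$, namely zero unless $\frake=\frake_0$, so $f^H$ can be chosen as the corresponding (stable cuspidal) combination of pseudo-coefficients on $H$.
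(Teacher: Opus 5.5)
\textbf{Where the argument breaks.} The failing step is your claim that for $\frake\neq\frake_0$ the sum $\sum_\gamma\Delta(\gamma_H,\gamma)$ over the $G(\R)$-classes in a stable elliptic class vanishes.

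Those classes are indexed by $\mathfrak{D}(T)=\ker\big(H^1(\R,T)\to H^1(\R,G)\big)$. Over $\R$ this is in general only a pointed subset of the group $H^1(\R,T)$, not a subgroup. Now $\kappa$ is a character of the ambient group, and a nontrivial character need not sum to zero over a subset that is not a subgroup.

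Concretely, take $G=\mathrm{U}(2,2)$ and $T=\mathrm{U}(1)^4$, diagonal for the form $\mathrm{diag}(1,1,-1,-1)$. A class $d\in H^1(\R,T)=(\Z/2\Z)^4$ multiplies the $i$-th diagonal entry by $(-1)^{d_i}$. So $\mathfrak{D}(T)$ consists of the $6=|W_\C/W_\R|$ sign changes flipping as many positive as negative lines: $0$, $e_1+e_3$, $e_1+e_4$, $e_2+e_3$, $e_2+e_4$, $e_1+e_2+e_3+e_4$. For $H=\mathrm{U}(1,1)\times\mathrm{U}(1,1)$ the endoscopic character is $\kappa(d)=(-1)^{d_i+d_j}$ for some $i\neq j$, and its sum over $\mathfrak{D}(T)$ is $\pm 2\neq 0$.

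The same combinatorics occurs for $\SO(4,4)$ with $H=\SO(2,2)\times\SO(2,2)$. This matters here, since the lemma is applied inductively in Lemma~\ref{lem:simpleTF-stable} to hyper-endoscopic groups with even orthogonal factors. Spectrally, $\sum_{\pi\in\Pi_\varphi}\langle s,\pi\rangle$ can be nonzero, so a stable pseudo-coefficient can have nonzero transfer.

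Consequently two of your claims are false in the generality of the lemma:
\begin{itemize}
\item the conclusion that ``$f^H$ may be taken to be $0$'';
\item the asserted equivalence between stability of elliptic orbital integrals and vanishing of all $\kappa$-orbital integrals for $\kappa\neq1$.
\end{itemize}
Your argument is valid only when $\mathfrak{D}(T)$ is a subgroup on which $\kappa$ is nontrivial. This holds, for instance, when $H^1(\R,G)=1$, as for $\Sp_{2n}$ and $\GSp_{2n}$.

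\textbf{The repair.} Aim for stable cuspidality of $f^H$ rather than vanishing; this is what the paper does. Your non-elliptic step is fine and yields cuspidality of $f^H$. For the elliptic part, proceed in two steps.
\begin{itemize}
\item The function $\pi\mapsto\tr(f|\pi)$ on tempered representations is supported on finitely many discrete series $L$-packets, and tempered traces determine orbital integrals. So you may replace $f$, without changing its orbital integrals, by a finite linear combination of pseudo-coefficients of discrete series.
\item Then invoke Shelstad's result \cite[\S3]{ShelstadNotes}: each pseudo-coefficient transfers to a stable cuspidal function on $H$. Explicitly, this is a combination of stable pseudo-coefficients for the packets $\Pi_{\varphi_H}$ with $\xi\circ\varphi_H=\varphi$, weighted by $c_\varphi(f)\sum_{\pi\in\Pi_\varphi}\Delta(\varphi_H,\pi)$.
\end{itemize}
This is exactly your fallback paragraph, once you delete the phrase ``namely zero unless $\frake=\frake_0$''. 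Those weights need not vanish.
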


\begin{proof}
Without changing orbital integrals, we may assume that $f$ is a finite linear combination of pseudo-coefficients of discrete series representations (since the trace of $f$ on the set of irreducible tempered representations is supported on a finite union of discrete series $L$-packets). Every pseudo-coefficient is known to transfer to a stable cuspidal function on an endoscopic group; see \cite[\S3]{ShelstadNotes}. (The idea is also explained in \cite[Prop.~4.3.2]{Ferrari} and \cite[\S5.1]{DalalST}.)
\end{proof}

\subsection{Principal series}\label{ss:irred-ps}

Throughout this subsection, assume that $G$ is quasi-split over a non-archimedean local field $F$. Let $B$ be a Borel subgroup of $G$ over $F$, with a maximal torus $T_s$ which is a Levi factor of $B$. Write $A$ for the maximal $F$-split subtorus of $T_s$. By a principal series (abbreviated as \textbf{ps}) \textbf{representation}, we mean the (smooth) normalized induced representation $\text{n-Ind}^G_B(\lambda)$ from a smooth character $\lambda:T_s(F)\to \CC^\times$; notice that the terminology refers to the full induced representation even if it is reducible. For us, it will be vital to record the support of the Harish-Chandra character of a ps representation. 

\begin{lemma}\label{lem:principal-series-stable}
The Harish-Chandra character of a (possibly reducible) ps representation is constant on stable conjugacy classes and supported on conjugates of $T_s(F)$.
\end{lemma}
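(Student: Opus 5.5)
The plan is to use van Dijk's formula for the character of an induced representation, applied with $P=B$. The Harish-Chandra character $\Theta_\pi$ of $\pi=\text{n-Ind}^G_B(\lambda)$ is a locally integrable function, locally constant on $G(F)_{\textup{reg}}$. It therefore suffices to compute it on strongly regular semisimple elements, which are dense.

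For $\gamma\in G(F)_{\textup{sr}}$, van Dijk's formula reads
\[
\Theta_\pi(\gamma)=\sum_{t}\frac{|D_{T_s}(t)|^{1/2}}{|D_G(t)|^{1/2}}\,\lambda(t),
\]
where $t$ runs over the $T_s(F)$-elements that are $G(F)$-conjugate to $\gamma$, taken modulo the relative Weyl group $W(G,T_s)(F)=N_{G(F)}(T_s)/T_s(F)$; here $D_{T_s}(t)=1$. If $\gamma$ is not $G(F)$-conjugate into $T_s(F)$, the sum is empty and $\Theta_\pi(\gamma)=0$. This gives the support claim: the character vanishes on regular elements outside the $G(F)$-conjugates of $T_s(F)$.

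For stability, take $\gamma,\gamma'\in G(F)_{\textup{sr}}$ that are stably conjugate. If neither is conjugate into $T_s(F)$, both values are $0$. Otherwise assume $\gamma=t\in T_s(F)_{\textup{sr}}$. Its centralizer is $T_s$, and the $G(F)$-classes in its stable class are parametrized by $\ker(H^1(F,T_s)\to H^1(F,G))$. If $\gamma'$ is also $G(F)$-conjugate into $T_s(F)$, say to $t'$, then $t'$ and $t$ are stably conjugate elements of $T_s(F)$. A stable conjugacy between regular elements of the same maximal torus is realized by an element $n\in G(\bar F)$ normalizing $T_s$. I then need to show that $t'=w(t)$ for some $w\in W(G,T_s)(F)$, i.e. that the class of $n$ in the absolute Weyl group $W(\bar F)$, which is $\Gal$-fixed, lifts to $N_{G(F)}(T_s)$. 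For quasi-split $G$ with $T_s$ the Levi of an $F$-Borel this is a standard fact: every $\Gal$-fixed Weyl element has an $F$-rational representative, via $W(F)=W(G,A)$ and Kottwitz's/Steinberg's results. Consequently $\gamma'$ is $G(F)$-conjugate to $\gamma$, so the $G(F)$-class of $t$ is the only one meeting $T_s(F)$, and the character values agree trivially. If $\gamma'$ is not conjugate into $T_s(F)$, its value is $0$, and I must show $\Theta_\pi(\gamma)=0$ as well. That cannot be right in general, so the argument has to rule this case out: I claim every element stably conjugate to $t\in T_s(F)_{\textup{sr}}$ is $G(F)$-conjugate into $T_s(F)$. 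The reason is that $H^1(F,T_s)\to H^1(F,G)$ is injective on the relevant kernel. More concretely, $T_s$ is an induced torus when $G$ is simply connected or adjoint, and in general the kernel of $H^1(F,T_s)\to H^1(F,G)$ is trivial for the minimal Levi torus of a quasi-split group. I would justify this by factoring through $H^1(F,B)\cong H^1(F,T_s)$ together with the injectivity of $H^1(F,B)\to H^1(F,G)$, which holds because $G/B$ has $F$-points forming a single $G(F)$-orbit (Bruhat/Borel–Tits). Together these give both assertions. Since $\Theta_\pi$ is locally constant on the regular set, the conclusion also extends from $G(F)_{\textup{sr}}$ to all of $G(F)_{\textup{reg}}$.

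The main obstacle is the cohomological step: the triviality of $\ker(H^1(F,T_s)\to H^1(F,G))$, i.e. that the stable class of a strongly regular element of $T_s(F)$ contains a single rational class. I would use the Borel-factorization argument above. The part that needs care is that elements of $H^1(F,T_s)$ mapping to zero in $H^1(F,G)$ come from $(G/T_s)(F)$, which fibers over $(G/B)(F)=G(F)/B(F)$ with fibers $B(F)$-torsors under unipotent groups with trivial $H^1$.
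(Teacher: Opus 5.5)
Your proposal is correct and follows the paper's proof: van Dijk's formula gives the support statement, and the triviality of $\ker\bigl(H^1(F,T_s)\to H^1(F,G)\bigr)$ shows that the $G(F)$-class of a regular $t\in T_s(F)$ is its entire stable class; you derive this triviality from $H^1(F,T_s)\cong H^1(F,B)$ and $(G/B)(F)=G(F)/B(F)$, which is the Borel case of the Levi-subgroup fact the paper cites from \cite{ShinStableIgusa}. Two minor remarks: the Weyl-group lifting detour is unnecessary once you have the cohomological step, and in van Dijk's formula the sum runs over all elements of $T_s(F)$ that are $G(F)$-conjugate to $\gamma$ (a full $W(G,T_s)(F)$-orbit), not over them modulo $W(G,T_s)(F)$, though this does not affect either conclusion.
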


\begin{proof}
By van Dijk's character formula for induced representations \cite[Thm.~3]{vanDijk}, the character is supported on conjugates of $T_s(F)$. Since $H^1(F,T_s)\to H^1(F,G)$ induced by the inclusion $T_s\hookrightarrow G$ is injective (this general fact is true for a Levi subgroup in place of $T_s$ without assuming $G$ is quasi-split, cf.~\cite[proof of Lem.~3.5]{ShinStableIgusa}), it follows that the $G(F)$-conjugacy class of each regular element $t\in T_s(F)$ is a stable conjugacy class, cf.~\cite[p.788]{KottwitzRational}. Hence the character is clearly constant on stable conjugacy classes.
\end{proof}

Write $G(F)_{\textup{reg},s}\subset G(F)_{\textup{reg}}$ for the subset of $g$ which can be conjugated into $T_s(F)$ by an element of $G(F)$.

\begin{cor}\label{cor:SO=0-then-tr=0}
Assume that the quasi-split group $G$ is not a torus (so $B\neq G$).
Let $f\in \cH(G)$ and $\pi$ be a ps representation of $G(F)$. If $SO_\gamma(f)=0$ for every $\gamma\in G(F)_{\textup{reg},s}$ then $\tr (f|\pi)=0$.
\end{cor}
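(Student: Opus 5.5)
The plan is to express $\tr(f|\pi)$ through the Weyl integration formula and then use Lemma~\ref{lem:principal-series-stable} to turn it into an integral of stable orbital integrals over $T_s$. First, since $\pi$ is admissible of finite length, Harish-Chandra's theorem gives $\tr(f|\pi)=\int_{G(F)} f(g)\Theta_\pi(g)\,dg$ with $\Theta_\pi$ locally integrable and locally constant on $G(F)_{\textup{reg}}$. Next, I apply the Weyl integration formula, which writes this integral as a sum over $G(F)$-conjugacy classes of maximal tori $T$:
\[
\tr(f|\pi)=\sum_{T}|W(G,T)(F)|^{-1}\int_{T(F)_{\textup{reg}}}|D(t)|\,\Theta_\pi(t)\,O_t(f)\,dt .
\]
By Lemma~\ref{lem:principal-series-stable}, $\Theta_\pi$ vanishes on regular elements not conjugate into $T_s(F)$. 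So only tori $T$ whose regular elements meet $G(F)_{\textup{reg},s}$ contribute, and those are $G(F)$-conjugate to $T_s$. Hence only the single term for $T=T_s$ survives.

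Second, I will regroup that $T_s$-term stably. Take $t\in T_s(F)$ strongly regular, which is a dense open subset, so the complement does not affect the integral. The proof of Lemma~\ref{lem:principal-series-stable} shows that the $G(F)$-conjugacy class of $t$ is already its whole stable conjugacy class, by injectivity of $H^1(F,T_s)\to H^1(F,G)$. Therefore $O_t(f)=SO_t(f)$ with compatible measure normalizations. The surviving term becomes
\[
|W(G,T_s)(F)|^{-1}\int_{T_s(F)_{\textup{reg}}}|D(t)|\,\Theta_\pi(t)\,SO_t(f)\,dt .
\]
The hypothesis says $SO_t(f)=0$ for every $t\in G(F)_{\textup{reg},s}$, and this set contains $T_s(F)_{\textup{reg}}$. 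So the integrand vanishes identically and $\tr(f|\pi)=0$.

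The main obstacle is bookkeeping rather than anything conceptual. I need to check that the stable orbital integral, normalized with the measures used in the Weyl formula, really equals the single orbital integral $O_t(f)$, up to a nonzero constant, which is harmless. I also need to handle elements that are regular but not strongly regular, which form a measure-zero set. The hypothesis that $G$ is not a torus is then not needed for the vanishing argument itself. It only ensures that $\pi$ is a genuine induced representation, so that van Dijk's formula, as used in Lemma~\ref{lem:principal-series-stable}, is meaningful. If $G$ were a torus, $\Theta_\pi$ would be supported everywhere and the statement would be about characters rather than principal series.
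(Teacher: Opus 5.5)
Your proposal is correct and is essentially the paper's proof: the Weyl integration formula, Lemma~\ref{lem:principal-series-stable} to discard every torus not $G(F)$-conjugate to $T_s$, and the identity $O_t(f)=SO_t(f)$ on $T_s(F)_{\textup{sr}}$ (from the injectivity of $H^1(F,T_s)\to H^1(F,G)$) to conclude from the hypothesis. You are also right that the non-torus hypothesis plays no role in the argument, though not for the reason you give: if $G$ were a torus, the statement would hold trivially, because $SO_\gamma(f)$ is then a nonzero multiple of $f(\gamma)$ for every $\gamma\in G(F)=G(F)_{\textup{reg},s}$, so the hypothesis forces $f=0$.
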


\begin{proof}
We apply the Weyl integration formula to see that
\[
\tr (f|\pi) = \sum_{T} \frac{1}{|W(T,G)|}\int_{T(F)_{\textup{sr}}} D_{G/T}(t) O^G_t(f) \Theta_\pi(t)dt,
\]
where the sum runs over maximal tori $T$ of $G$ over $F$ up to $G(F)$-conjugacy, and $W(T,G)$ denotes the Weyl group of $T$ in $G$, and $D_{G/T}(t):=|\det(1-\textup{Ad}(t)|\Lie G/\Lie T)|$. If $T$ is not $G(F)$-conjugate to $T_s$ then $\Theta_\pi(t)=0$ for $t\in T(F)_{\textup{sr}}$ by Lemma \ref{lem:principal-series-stable}. If $T=T_s$ then as observed in the proof of Lemma \ref{lem:principal-series-stable}, we have $O^G_t(f)=SO^G_t(f)$, which equals $0$ by assumption. Therefore $\tr (f|\pi)=0$.
\end{proof}

Now consider 
\[ \tilde G=G(\Sp_{2m}\times \SO_{2n}) ~\supset~ G=\Sp_{2m}\times \SO_{2n},\qquad \phi\in\Phi_{\textup{ps}}(G)\]
and a character $\tilde\zeta:Z_{\tilde G}(F)\to \C^\times$ restricting to the central character of $\Pi_{\phi}$. 
Each $L$-parameter $\phi\in \Phi(G)$ is assigned an $L$-packet $\Pi_\phi$ for $G(F)$ by \cite{Arthur}. Xu \cite[\S4.1]{XuGSpGSO} constructed a coarse $L$-packet $\tilde\Pi^{\textup{coarse}}_{\phi,\tilde\zeta}$. Following Arthur and Xu, every packet for the group $\tilde G$ or $G$ (when $n>1$) is given as the set of $\theta$-orbits on the isomorphism classes of irreducible representations of $\tilde G(F)$ or $G(F)$, where $\theta$ is an outer automorphism of order two on the even orthogonal factor. If $\pi$ is a representative in such a $\theta$-orbit, we simply write $\tr \pi$ for the trace distribution $\tfrac12 (\tr \pi+\tr (\pi\circ\theta))$. (This will not cause confusion in practice since test functions will be $\theta$-invariant.) With this understanding, we omit any decoration to indicate that $\Z/2\Z$-orbits are taken.

When $\phi$ is bounded, the coarse packet consists of irreducible representations of $\tilde G(F)$ which appear as constituents of $\text{Ind}_{G(F)Z_{\tilde G}(F)}^{\tilde G(F)}\pi$, for every $\pi\in \Pi_\phi$ and every extension of $\pi$ to a representation of $G(F)Z_{\tilde G}(F)$ which has central character $\tilde\zeta$ on $Z_{\tilde G}(F)$. Moreover, Xu proves a refinement that one can assign an $L$-packet $\Pi_{\tilde\phi}$ consisting of finitely many irreducible representations of $\tilde G(F)$ for every bounded $L$-parameter $\tilde\phi\in \Phi(\tilde G)$ satisfying a number of properties \cite[Thm.~4.6]{XuGSpGSO}. In particular, if $\tilde\phi$ lifts $\phi\in \Phi(G)$ then $\Pi_{\tilde\phi}\subset \tilde\Pi^{\textup{coarse}}_{\phi,\tilde \zeta}$, where $\tilde\zeta$ is determined by $\tilde\phi$ as in \cite[p.1802]{XuLifting}. The $L$-packet $\Pi_{\tilde\phi}$ is unique up to twisting by a quadratic character of $\tilde G(F)$.

Define $\Phi_{\textup{ps}}(G)$ to be the subset of $\Phi(G)$ consisting of principal series $L$-parameters, i.e., those obtained from $L$-parameters of the maximal split torus $A$ (note $A=T_s$ as $G$ is split) by composing with a natural embedding $\hat A\hookrightarrow \hat G$ (canonical up to $\hat G$-conjugacy). Write $\tilde A$ (resp.~$\tilde B$) for the maximal torus of $\tilde G$ generated by $Z_{\tilde G}$ and $A$ (resp.~$B$). Define $\Phi_{\textup{ps}}(\tilde G)$ analogously.
The following lemma, combined with Corollary \ref{cor:SO=0-then-tr=0}, will lead to a vanishing of a stable distribution later.

\begin{lemma}\label{lem:irred-coarse-packet}
Let $\tilde G,~G$ be as above. If the parameter $\tilde\phi\in\Phi_{\textup{ps}}(\tilde G)$ is bounded then $\oplus_{\pi\in \Pi_{\tilde\phi}} \pi$ is a ps representation of $\tilde G(F)$. 
\end{lemma}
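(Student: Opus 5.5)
We aim to show that $\bigoplus_{\pi\in\Pi_{\tilde\phi}}\pi$ is a ps representation. The plan is to identify this sum with the full normalized induction $\text{n-Ind}^{\tilde G}_{\tilde B}(\tilde\lambda)$, where $\tilde\lambda:\tilde A(F)\to\C^\times$ is the character attached to a lift $\tilde\phi_{\tilde A}$ of $\tilde\phi$ through $\hat{\tilde A}\hookrightarrow\hat{\tilde G}$ by local class field theory. We first reduce to $G$. Write $\phi\in\Phi_{\textup{ps}}(G)$ for the image of $\tilde\phi$, factoring through $\phi_A:W_F\to\hat A$, and $\lambda=\tilde\lambda|_{A(F)}$. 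Since $\phi$ is bounded, $\lambda$ is unitary. Hence $\text{n-Ind}^G_B(\lambda)$ is a unitary, tempered, semisimple representation of finite length.

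The first key step is to invoke the compatibility of Arthur's packets for $\Sp_{2m}\times\SO_{2n}$ with parabolic induction, for tempered parameters factoring through a Levi. Applied to the torus $A$, it says that $\Pi_\phi$ is exactly the set of irreducible constituents of $\text{n-Ind}^G_B(\lambda)$, each with multiplicity one, taken modulo $\theta$ in the even orthogonal factor. Multiplicity one follows from the description of the $R$-group, which is abelian with trivial cocycle here. So $\bigoplus_{\pi\in\Pi_\phi}\pi\cong\text{n-Ind}^G_B(\lambda)$, with the $\theta$-convention of the paper.

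Next we pass to $\tilde G$. Since $\tilde B=Z_{\tilde G}B$ and $\tilde A=Z_{\tilde G}A$, restriction gives $\text{n-Ind}^{\tilde G}_{\tilde B}(\tilde\lambda)|_{G(F)Z_{\tilde G}(F)}$ as a sum of conjugates of $\text{n-Ind}^G_B(\lambda)\otimes\tilde\zeta$ indexed by $\tilde G(F)/G(F)Z_{\tilde G}(F)\tilde B(F)$. In fact $\tilde B(F)$ already surjects onto the similitude quotient, so there is a single term. Thus $\text{n-Ind}^{\tilde G}_{\tilde B}(\tilde\lambda)$ restricts to $\text{n-Ind}^G_B(\lambda)$ extended by $\tilde\zeta$. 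Its constituents therefore lie in the coarse packet $\tilde\Pi^{\textup{coarse}}_{\phi,\tilde\zeta}$.

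We then use Xu's characterization in \cite[Thm.~4.6]{XuGSpGSO} of $\Pi_{\tilde\phi}$ inside the coarse packet. It is the unique subset whose restriction to $G(F)$ recovers $\Pi_\phi$ up to multiplicity, and which satisfies the twisted/stable character identity, i.e.\ $\sum_{\tilde\pi\in\Pi_{\tilde\phi}}\Theta_{\tilde\pi}$ is stable and determined by $\tilde\phi$ up to twisting by characters of $\tilde G(F)/G(F)Z_{\tilde G}(F)$. By Lemma~\ref{lem:principal-series-stable}, the character of $\text{n-Ind}^{\tilde G}_{\tilde B}(\tilde\lambda)$ is stable. Its restriction to $G(F)$ is the stable character of $\Pi_\phi$. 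Xu's uniqueness then forces $\Pi_{\tilde\phi}$ to be the set of constituents of $\text{n-Ind}^{\tilde G}_{\tilde B}(\tilde\lambda\otimes\omega)$ for some quadratic twist $\omega$. A twist of a ps representation is again ps, with $\tilde\lambda$ replaced by $\tilde\lambda\otimes\omega|_{\tilde A(F)}$, and the multiplicity-one statement transfers since the restriction has a single term. Hence $\bigoplus_{\pi\in\Pi_{\tilde\phi}}\pi\cong\text{n-Ind}^{\tilde G}_{\tilde B}(\tilde\lambda\otimes\omega)$.

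The main obstacle will be the precise matching with Xu's normalization. We must check that the bijection $\tilde\phi\mapsto\Pi_{\tilde\phi}$ sends the ps lift to exactly the ps constituents, rather than to a different distribution of the constituents under $\tilde G(F)$-conjugation. The first attempt will be to use Xu's compatibility of $\Pi_{\tilde\phi}$ with parabolic induction for the Levi $\tilde A$, which is part of his construction via the Levi case. If that is unavailable, we fall back on the uniqueness-up-to-twist argument above.
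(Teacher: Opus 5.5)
The argument that matches the paper is the one you mention only as a ``first attempt'' in your last paragraph, and it is the whole proof. In Xu's inductive construction of $L$-packets (the proof of Lemma~6.2 in \cite{XuGSpGSO}), when $\tilde\phi$ factors through $\tilde\phi_{\tilde A}$, the packet $\Pi_{\tilde\phi}$ is \emph{defined} as the set of constituents of $\textup{n-Ind}^{\tilde G}_{\tilde B}(\tilde\lambda)$. This induced representation is semisimple because $\tilde\lambda$ is unitary, which gives the lemma. Your reduction to $G$ via Arthur and your Mackey computation are correct ($\tilde B(F)$ does surject onto the similitude quotient, even though $\tilde B(F)\neq Z_{\tilde G}(F)B(F)$). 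However, they are needed only for the paper's parenthetical remark reconciling $\Pi_{\tilde\phi}$ with the coarse packet, not for the lemma itself.

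The fallback you actually write out, via uniqueness, has a genuine gap. The criterion you attribute to Xu does not characterize $\Pi_{\tilde\phi}$ up to twist: namely, a subset of $\tilde\Pi^{\textup{coarse}}_{\phi,\tilde\zeta}$ with stable character sum whose restriction recovers $\Pi_\phi$ up to multiplicity. To see this, take a character $\omega$ of $\tilde G(F)/G(F)Z_{\tilde G}(F)$ with $\Pi_{\tilde\phi}\otimes\omega\neq\Pi_{\tilde\phi}$. Then $\Pi_{\tilde\phi}\cup(\Pi_{\tilde\phi}\otimes\omega)$ also has a stable character sum, since twisting by $\omega\circ\textup{sim}$ preserves stability, and it also restricts to $\Pi_\phi$ up to multiplicity.

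Xu's ``unique up to twisting'' refers to subsets satisfying the properties listed in his Theorem~4.6. These include the exact partition of the coarse packet into twists of $\Pi_{\tilde\phi}$, indexed by characters of $\tilde G(F)/G(F)Z_{\tilde G}(F)$ modulo the image of the component group, and the endoscopic character relations. You check only stability and membership in the coarse packet. To close this route you would have to verify the partition property for the constituents of $\textup{n-Ind}^{\tilde G}_{\tilde B}(\tilde\lambda)$, including identifying their twisting stabilizer with that image. Insofar as the uniqueness rests on the endoscopic character identities, you would have to verify those too, which in practice brings you back to Xu's compatibility with parabolic induction. So drop the fallback and use the construction directly.
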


\begin{remark}
The boundness hypothesis can be weakened, e.g., the lemma is true when the ps representation is irreducible. However the analogue fails if no condition is assumed on $\phi$. For example, the trivial representation forms a singleton packet corresponding to a member of $\Phi_{\textup{ps}}(G)$ but it is not the (full) ps representation.
\end{remark}

\begin{proof}
This is a consequence of Xu's inductive construction of $L$-packets in the proof of \cite[Lem.~6.2]{XuGSpGSO}, which is a step in proving his \cite[Thm.~4.6]{XuGSpGSO}. Namely, since $\tilde\phi$ factors through an $L$-parameter $\tilde\phi_{\tilde A}$ of $\tilde A$, let $\tilde\lambda:\tilde A(F)\to \C^\times$ denote the (unitary) character corresponding to $\tilde\phi_{\tilde A}$. Then $\textup{n-Ind}^{\tilde G}_{\tilde B}(\tilde\lambda)$ is semisimple by unitarity, and $\Pi_{\tilde\phi}$ is defined exactly such that $\oplus_{\pi\in \Pi_{\tilde\phi}} \pi=\textup{n-Ind}^{\tilde G}_{\tilde B}(\tilde\lambda)$. 
(To reconcile this lemma with the description of $\Pi_{\tilde\phi}$ above, let $\lambda:A(F)\to \C^\times$ denote the character corresponding to the parameter of $A$ that $\phi$ factors through. Then $\tilde \lambda|_{A(F)}=\lambda$, and $\Pi_{\phi}$ consists of the irreducible constituents of $\textup{n-Ind}^{G}_{B}(\lambda)$. Hence $\textup{n-Ind}^{\tilde G}_{\tilde B}(\tilde\lambda)$ is realized in  $\tilde\Pi^{\textup{coarse}}_{\phi,\tilde\zeta}$ as above.)
\end{proof}

Let $\Std:\hat G\to \GL_{2m+1}\times \GL_{2n}$ denote the standard representation, which factors through $\SO_{2m+1}\times \SO_{2n}$. Thus we can decompose
\begin{equation}\label{eq:decompose-Std-phi}
  \Std\circ \phi=\big(\mathbf{1}\oplus (\oplus_{i=1}^m \mu_i\oplus \mu^{-1}_i), (\oplus_{j=1}^n \nu_j\oplus \nu^{-1}_j)\big).  
\end{equation}

\section{Global trace formula arguments}\label{s:STF-GSp}

This section prepares us to analyze the stable linear forms that appear in the stabilized trace formula for the cohomology of Igusa varieties in \S~\ref{s:cohomology-Igusa}. After some preliminaries, we will focus on the groups relevant to endoscopy for $\GSp_{2n}$ starting from \S~\ref{ss:endoscopic-Sp-GSp}. Throughout \S~\ref{s:STF-GSp} we allow the base field $F$ to be totally real (not just $\Q$) so that our results may be useful a little more broadly. 

\subsection{General setup}\label{s:STF-setup}

We recall the general setup for the discrete part of the trace formula before specializing to the situation of $\GSp_{2n}$, adopting the notation from \S~\ref{ss:notation-global}; in particular, $G$ is a connected quasi-split reductive group over a number field $F$.
Using the map
\[
\cA^S_{2}(G)\to \cC^S(G)=\cC_\infty(G)\times \cC^S_{\fin}(G) ,\qquad \pi\mapsto\Big(\zeta(\pi_\infty),\, c^S(\pi)=(c_v(\pi_v))_{v\notin S}\Big),
\]
which assigns the infinitesimal character at $\infty$ and the Satake parameters away from $S$ to each automorphic representation $\pi$, we let $\cA^S_{2,\zeta,c^S}(G)$ denote the fiber over $(\zeta,c^S)\in \cC^S(G)$.

Although it is not essential to us, it is convenient to consider the situation where the central character is unitary. Rather than fixing the central character, we define the following notion.

\begin{defn}\label{defn:unitary-central}
We say that $(\zeta,c^S)\in \cC^S(G)$ has \textbf{unitary central character} if the following holds: every continuous character $Z_G(\A_F)/Z_G(F)\to \C^\times$ is unitary whenever its restriction to $Z_G^0(\A_F)$ has infinitesimal character at $\infty$ and Satake parameters away from $S$ given by $(\zeta,c^S)$ via the projection ${}^L G\to {}^L Z_G^0$.
\end{defn}

As in \cite[pp.140--141]{Arthur} and \cite[\S2.4]{ShinWeakTransfer} (the latter is a minor refinement of the former by infinitesimal characters) we have three linear forms in $f\in \cH^S(G)$:
\begin{equation}\label{eq:trR-I-S}
  \tr R^G_{\disc,\zeta,c^S}(f), \qquad I^G_{\disc,\zeta,c^S}(f), \qquad S^G_{\disc,\zeta,c^S}(f),  
\end{equation}
which are the $(\zeta,c^S)$-isotypic part in $\tr R^G_{\disc}$, the discrete part of Arthur's invariant trace formula, and the discrete part of Arthur's stable trace formula, respectively. 

\begin{lemma}\label{lem:trR=I}
If $\zeta\in \cC_\infty(G)$ is regular then $\tr R^G_{\disc,\zeta,c^S}(f)=I^G_{\disc,\zeta,c^S}(f)$.
\end{lemma}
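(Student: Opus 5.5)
The plan is to compare the two linear forms term by term. Following Arthur, $I^G_{\disc}(f)$ is the discrete part of the invariant trace formula:
\[
I^G_{\disc}(f)=\sum_{M}\frac{|W_0^M|}{|W_0^G|}\sum_{s\in W(\mathfrak a_M)_{\textup{reg}}}|\det(s-1)_{\mathfrak a^G_M}|^{-1}\,\tr\big(M_P(s,0)\,\mathcal I_P(0,f)\big).
\]
Here $M$ runs over standard Levi subgroups, and $\mathcal I_P(0,\cdot)$ is the representation of $G(\A_F)$ induced from the discrete spectrum $L^2_{\disc}([M])$. The summand with $M=G$ is exactly $\tr R^G_{\disc}(f)$. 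Taking the $(\zeta,c^S)$-isotypic parts is compatible with this decomposition, since the infinitesimal character and the Satake parameters of constituents of $\mathcal I_P$ are determined by those of the inducing discrete representations. So it suffices to show that every summand with $M\neq G$ contributes nothing to the $(\zeta,c^S)$-part once $\zeta$ is regular.

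The key step is the following. Fix a proper $M$ and $s\in W(\mathfrak a_M)_{\textup{reg}}$, and consider an inducing discrete representation $\sigma$ of $M(\A_F)$ with $s\sigma\cong\sigma$. Only such $\sigma$ contribute, since otherwise $M_P(s,0)$ permutes isotypic pieces without fixed points and the trace vanishes. We must show that $\mathcal I_P(\sigma)$ cannot have regular infinitesimal character. Write the infinitesimal character of $\sigma_\infty$ as $\zeta_M\in (X_*(\hat T_\infty)\otimes\C)/\Omega^M_\infty$; the induced representation then has infinitesimal character equal to the image of $\zeta_M$ in $\cC_\infty(G)$. (Here $\lambda=0$, so no unramified twist appears.) Choose a representative $w\in W^G$ of $s$ normalizing $M$ and $T$. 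Because $s\sigma\cong\sigma$, the element $w$ fixes $\zeta_M$ modulo $\Omega^M_\infty$, so some element $w'=uw$ with $u\in\Omega^M_\infty$ fixes a representative of $\zeta_M$. Regularity of $\zeta$ forces $w'=1$, so $w\in W^M$. But an element of $W^M$ acts trivially on $\mathfrak a_M$, which contradicts $\det(s-1)_{\mathfrak a^G_M}\neq0$ for proper $M$.

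I expect the main obstacle to be the bookkeeping at the infinite places when $F\neq\Q$. There $\cC_\infty(G)$ is a product over the archimedean places, and the Weyl group $W(\mathfrak a_M)$ is a rational Weyl group acting diagonally. The argument still works, because regularity of $\zeta$ at any single infinite place already forces $w$ into $W^M$ there, and $w$ is rational. I would also treat the central character datum $A_{G,\infty}$ uniformly by quotienting. Finally, we need the standard fact that constituents of $\mathcal I_P(\sigma)$ have the infinitesimal character induced from $\sigma_\infty$; this is classical (Harish-Chandra), so it should pose no difficulty.
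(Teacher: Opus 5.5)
Your proposal is correct and is essentially the argument the paper uses. The paper simply invokes Arthur's argument \cite[p.268]{ArthurL2}, whose point is exactly what you make explicit: the terms of $I^G_{\disc}$ with $M\neq G$ only involve inducing data fixed by some regular $s\in W(\mathfrak a_M)$, so they have non-regular archimedean infinitesimal character and vanish on the $(\zeta,c^S)$-part once $\zeta$ is regular.
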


\begin{proof}
This is proved by the same argument as in \cite[p.268]{ArthurL2}. The point is that $I^G_{\disc,\zeta,c^S}(f)-\tr R^G_{\disc,\zeta,c^S}(f)$ is a linear combination of traces of representations of $G(\A_F)$ whose archimedean components have non-regular infinitesimal characters.
\end{proof}

By convention, we can extend to the case when the group $G$ fails to be unramified away from $S$: we consider $\cA^S_2(G)$, $\cC^S(G)$, and so on to be empty, and the distributions in \eqref{eq:trR-I-S} are set to be zero.

\subsection{Endoscopic preparation}
\label{ss:endoscopic-preparation}
We introduce some notions and facts in endoscopy and the trace formula.
We adopt the notation and \hyperlink{(Hyp)}{(Hyp)} from \S~\ref{ss:endoscopic-transfer} but now over a number field $F$. Thus $\cE_{\el}(G)$ consists of elliptic endoscopic data $\e=(H^\e,{}^L H^\e,s^\e,\xi^\e)$ for $G$ over $F$ up to isomorphism. (Starting in \S~\ref{ss:endoscopic-Sp-GSp} we will ask $G$ to be a specific group.)

Define $\cE_{\hel}(G)$ as the set consisting of nonempty sequences of endoscopic data $\underline{\e}=(\e_i)_{i=1}^r$, where 
\begin{itemize}
    \item $\e_1=(H^{\e_1},{}^L H^{\e_1},s^{\e_1},\xi^{\e_1})\in \cE_{\el}(G)$,
    \item $\e_i\in \cE_{\el}^<(H^{\e_{i-1}})$ for $i=2,...,r$.
\end{itemize}
The length of $\underline{\e}\in \cE_{\hel}(G)$ is bounded since the dimension of $H^{\e_i}$ is strictly decreasing.
For $\underline{\e}=(\e_i)_{i=1}^r$, put $H^{\underline{\e}}:=H^{\e_r}$. There is a natural inclusion $\cE_{\el}(G)\hookrightarrow \cE_{\hel}(G)$ with image consisting of length-one sequences. In particular $\e_0=(G,{}^L G,1,\textup{id})\in \cE_{\el}(G)$ lies in $\cE_{\hel}(G)$; write $\cE^<_{\hel}(G):=\cE_{\hel}(G)\backslash \{\e_0\}$.

We refer to the groups of the form $H^{\e}$ (resp.~$H^{\underline{\e}}$) for some $\e\in \cE_{\el}(G)$ (resp.~$\underline{\e}\in \cE_{\hel}(G)$) as \emph{elliptic (resp.~hyper-elliptic) endoscopic groups} for $G$. Such groups are equipped with $L$-morphisms
\[
\xi^{\e}:{}^L H^{\e} \hookrightarrow {}^L G,\qquad \xi^{\underline\e}:{}^L H^{\underline{\e}}\, \hookrightarrow {}^L G,
\]
which induce maps $\cC^S(H^{\e})\to \cC^S(G)$ and $\cC^S(H^{\underline\e})\to \cC^S(G)$. For $(\zeta,c^S)\in \cC^S(G)$ we define
\[
\tr R^{H^\e}_{\disc,\zeta,c^S}:=\sum_{(\zeta^\e,c^{\e,S})\mapsto (\zeta,c^S)} \tr R^{H^\e}_{\disc,\zeta^\e,c^{\e,S}},
\]
where the sum runs over $(\zeta^\e,c^{\e,S})\in \cC^S(H^{\e})$ which maps to $(\zeta,c^S)$.
We make the analogous definitions with $H^{\underline\e}$ in place of $H^{\e}$ and with $I^{H^\e}_{\disc},S^{H^\e}_{\disc}$ in place of $\tr R^{H^\e}_{\disc}$. Then the $(\zeta,c^S)$-part of Arthur's stabilization can be stated as the equality
\begin{equation}\label{eq:stabilization-Arthur}
 I^G_{\disc,\zeta,c^S}(f)=\sum_{H^\e\in \cE_{\el}(G)}\iota(G,H^\e)   S^{H^\e}_{\disc,\zeta,c^S}(f^\e),\qquad f\in \cH^S(G),
\end{equation}
where $\iota(G,H^\e)\in \Q_{>0}$ is a constant, and $f^\e$ is a Langlands--Shelstad transfer of $f$.

\begin{lemma}\label{lem:from-nonvanishingS-to-automorphic-spectrum}
Let $(\zeta,c^S)\in \cC^S(G)$. Assume that $\zeta$ is regular and that $S^G_{\disc,\zeta,c^S}(f)\neq 0$ for some $f\in \cH^S(G)$.
Then for some $\e\in \cE_{\hel}(G)$, there exists $(\zeta^{\underline\e},c^{\underline{\e},S})\in \cC_{2}^S(H^{\underline{\e}})$ which maps to $(\zeta,c^S)$ via $\xi^{\underline\e}$. 
\end{lemma}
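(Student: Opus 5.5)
We argue by induction on $\dim G$, using the stabilization \eqref{eq:stabilization-Arthur} to invert the relation between $S^G_{\disc}$ and the invariant spectral side. Rewriting \eqref{eq:stabilization-Arthur} for the endoscopic datum $\e_0$ gives
\[
S^G_{\disc,\zeta,c^S}(f)=I^G_{\disc,\zeta,c^S}(f)-\sum_{\e\in \cE^<_{\el}(G)}\iota(G,H^\e)\,S^{H^\e}_{\disc,\zeta,c^S}(f^\e),
\]
where $\iota(G,G)=1$. Since $S^G_{\disc,\zeta,c^S}(f)\neq 0$, either $I^G_{\disc,\zeta,c^S}(f)\neq 0$, or $S^{H^\e}_{\disc,\zeta,c^S}(f^\e)\neq0$ for some $\e\in\cE^<_{\el}(G)$.

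\emph{First case.} Here I would invoke Lemma \ref{lem:trR=I}, which applies because $\zeta$ is regular. It gives $\tr R^G_{\disc,\zeta,c^S}(f)\neq 0$. Hence some $\pi\in\cA^S_{2,\zeta,c^S}(G)$ exists, i.e. $(\zeta,c^S)\in\cC^S_2(G)$. We then take the length-one sequence $\e_0\in\cE_{\hel}(G)$.

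\emph{Second case.} By definition, $S^{H^\e}_{\disc,\zeta,c^S}$ is a finite sum of $S^{H^\e}_{\disc,\zeta^\e,c^{\e,S}}$ over the preimages $(\zeta^\e,c^{\e,S})$ of $(\zeta,c^S)$. So some such term is nonzero. We want to apply the induction hypothesis to $H^\e$, which has strictly smaller dimension. For this we must check three things.
\begin{enumerate}
\item $\zeta^\e$ is regular. Under $\xi^\e$, the Weyl group of $\hat H^\e$ embeds into that of $\hat G$, and the image of $\zeta^\e$ is $\zeta$. Any nontrivial stabilizing Weyl element for $\zeta^\e$ would therefore stabilize $\zeta$, which is impossible.
\item $H^\e$ is quasi-split and satisfies \hyperlink{(Hyp)}{(Hyp)}.
\item $f^\e\in\cH^S(H^\e)$ can be taken unramified outside $S$, by the fundamental lemma. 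If $H^\e$ is ramified outside $S$, the distribution is zero by convention, so this case does not occur.
\end{enumerate}
The induction hypothesis then yields $\underline{\e}'\in\cE_{\hel}(H^\e)$ and a point of $\cC^S_2(H^{\underline\e'})$ mapping to $(\zeta^\e,c^{\e,S})$. Prepending $\e$ gives a sequence in $\cE_{\hel}(G)$, provided the first entry $\e'_1$ lies in $\cE^<_{\el}(H^\e)$. If $\underline\e'=\e_0(H^\e)$, the length-one sequence $(\e)$ already suffices. Composing $\xi^\e\circ\xi^{\underline\e'}$ recovers $(\zeta,c^S)$.

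\emph{Main obstacle.} The delicate step is the bookkeeping of the definition of $\cE_{\hel}$. Later entries are required to lie in $\cE^<_{\el}$, and if the chain from the induction starts with a non-trivial $\e'_1\in\cE^<_{\el}(H^\e)$, the prepended sequence is fine as is. One also has to make sure that the composite $L$-morphism is the $\xi^{\underline\e}$ used in the statement. Finally, the base of the induction, tori or $\dim$ minimal, is covered by the first case, since then $\cE^<_{\el}$ is empty.
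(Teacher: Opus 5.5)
Your proof is correct and follows the same route as the paper. You combine Lemma~\ref{lem:trR=I} with the stabilization \eqref{eq:stabilization-Arthur}, pass regularity of $\zeta$ to $\zeta^\e$ through the embedding of Weyl groups, and recurse on the endoscopic groups. The only difference is presentation: the paper unrolls your induction on $\dim G$ into an expansion of $S^G_{\disc,\zeta,c^S}(f)$ as a linear combination of terms $\tr R^{H^{\underline\e}}_{\disc}(f^{\underline\e})$ over hyper-elliptic data, one of which must be nonzero.
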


\begin{proof}
By Lemma \ref{lem:trR=I} and \eqref{eq:stabilization-Arthur} we have
\[
S^G_{\disc,\zeta,c^S}(f)=\tr R^G_{\disc,\zeta,c^S}(f)-\sum_{H^\e\in \cE^<_{\el}(G)}\iota(G,H^\e)\sum_{(\zeta^\e,c^{\e,S})\mapsto (\zeta,c^S)} S^{H^\e}_{\disc,\zeta^\e,c^{\e,S}}(f^\e).
\]
Moreover $\zeta^\e$ is regular for $H^\e$, since the Weyl group of $H^\e$ is a subgroup of the Weyl group of $G$ (when the pinnings for $H^\e$ and $H$ are fixed).
Thus we can apply Lemma \ref{lem:trR=I} and \eqref{eq:stabilization-Arthur} to obtain an analogous equality with $H^\e$ in place of $G$, that is, each $S^{H^\e}_{\disc,\zeta,c^S}(f^\e)$ is a linear combination of $\tr R^{H^\e}_{\disc,\zeta,c^S}(f^\e)$ and stable distributions on elliptic endoscopic groups in $\cE^<_{\el}(H^\e)$. Repeating this process (which ends in finitely many steps since $\dim H$ is finite), we write $S^G_{\disc,\zeta,c^S}(f)$ as a linear combination of 
\begin{equation}\label{eq:trRHe}
   \tr R^{H^{\underline\e}}_{\disc,\zeta^{\underline\e},c^{\underline\e,S}}(f^{\underline\e}),\qquad H^{\underline\e}\in \cE_{\hel}(G),\quad \mbox{such that}\quad (\zeta^{\underline\e},c^{\underline\e,S})\mapsto (\zeta,c^S), 
\end{equation}
where $f^{\underline\e}\in \cH^S(H^{\underline\e})$ is a repeated Langlands--Shelstad transfer of $f$ with respect to $\frake_1,...,\frake_r$. Since $S^G_{\disc,\zeta,c^S}(f)\neq0$, at least one term of the form \eqref{eq:trRHe} is nonzero. Hence, for some $H^{\underline\e}$, there exists $(\zeta^{\underline\e},c^{\underline{\e},S})\in \cC^S_{2}(H^{\underline\e})$ which maps to $(\zeta,c^S)$.     
\end{proof}

Write $T^G_{\el}$ (resp.~$ST^G_{\el}$) for the elliptic part of the trace formula (resp.~stable trace formula) following \cite{LabesseStableTwisted}, specialized to the untwisted case. (The simplifying assumption at the start of \cite[V.4]{LabesseStableTwisted} is satisfied by our \hyperlink{(Hyp)}{(Hyp)}.) We write $I^G,S^G$ for Arthur's invariant and stable linear forms following \cite{ArthurSTF1}; see also \cite{ArthurIVF2} for $I^G,I^G_{\disc}$.

\begin{lemma}\label{lem:simpleTF-stable}
Let $f=\prod_v f_v\in \cH(G)$. Let $v_\ell$ (resp.~$v_\infty$) be a finite (resp.~infinite) place of $F$.
If $f^G_{v_\ell}$ is cuspidal and $f^G_{v_\infty}$ is stable cuspidal then
$$ST^G_{\el}(f) = S^G(f) = S^G_{\disc}(f).$$
\end{lemma}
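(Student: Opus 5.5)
The proof runs through the stable trace formula of Arthur \cite{ArthurSTF1}. There $S^G(f)$ is written as a geometric expansion, a sum over Levi subgroups $M$ of stable weighted orbital integrals $S^G_M(\gamma,f)$ attached to stable classes $\gamma$ in $M(F)$. The same $S^G(f)$ also has a spectral expansion, a sum over Levi subgroups of stable weighted characters, whose $M=G$ term is $S^G_{\disc}(f)$. The two hypotheses do separate jobs: the cuspidality of $f_{v_\ell}$ will kill the terms with $M\neq G$, and the stable cuspidality of $f_{v_\infty}$ will remove the non-elliptic and unipotent-type (non-semisimple) contributions within the $M=G$ term. That leaves exactly the stable elliptic part $ST^G_{\el}(f)$.

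\textbf{Step 1: the identity $S^G(f)=S^G_{\disc}(f)$.} Write the spectral side as
\[
S^G(f)=\sum_{M}\frac{|W^M|}{|W^G|}\int S^G_M(\phi,f)\,d\phi .
\]
Arthur's splitting formula expresses each stable weighted character $S^G_M(\phi,f)$ with $M\neq G$, for a decomposable $f$, as a sum of products over places. In every summand, at least one place $v$ contributes a factor that is a stable (or endoscopically transferred) distribution on the Levi $M_v$, evaluated on the constant term $f_{v,M}$. Using the descent formula, I would rearrange so that the factor at $v_\ell$ is of the form $S^{M}_{M_{v_\ell}}(\cdot,f_{v_\ell,M})$, or a weighted character of a proper Levi. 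Because $f_{v_\ell}$ is cuspidal, these vanish; here one also uses Lemma~\ref{lem:cuspidality-preservation} to ensure that endoscopic transfers stay cuspidal. At the infinite place, Lemma~\ref{lem:stable-cuspidality-preservation} keeps stable cuspidality under transfer. This is the standard argument of Arthur \cite[\S7]{ArthurSTF1}, the ``simple stable trace formula'' for a function that is cuspidal at two places. The only new point is to check that one cuspidal place together with one stable cuspidal place is enough.

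\textbf{Step 2: the identity $S^G(f)=ST^G_{\el}(f)$.} On the geometric side, the terms with $M\neq G$ vanish for the same reason: the splitting and descent formulas for stable weighted orbital integrals isolate a factor at $v_\ell$ which is an orbital integral of the constant term, and that factor is $0$. The remaining $M=G$ term is $\sum_\gamma b^G(\gamma)\,SO_\gamma(f)$, taken over stable classes $\gamma$ that are possibly non-semisimple. Stable cuspidality at $v_\infty$ forces $SO_\gamma(f_{v_\infty})$ to vanish unless $\gamma_{v_\infty}$ is elliptic. Stable cuspidality should also kill the contributions of non-semisimple classes, since a stable cuspidal function has vanishing stable unipotent orbital integrals except through the semisimple parts. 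Arthur proves exactly this reduction in \cite[Thm.~7.1]{ArthurSTF1} for functions with the analogous properties. After these vanishings, what survives is Labesse's stable elliptic expression $ST^G_{\el}(f)$ \cite{LabesseStableTwisted}.

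\textbf{Main obstacle.} I expect the delicate step to be justifying that a single cuspidal place (at $v_\ell$, not stably cuspidal) suffices to kill all proper-Levi terms in both stable expansions. Arthur's simple form of the trace formula usually assumes cuspidality at two places, or stable cuspidality. I would first invoke Arthur's splitting formula in its stable form: each proper-Levi term factors through a $v_\ell$-component that is a (weighted) stable distribution on a proper Levi, applied to $f_{v_\ell}$. I would then verify that this component vanishes when $f_{v_\ell}$ is cuspidal, using that the stable distributions are built inductively from invariant ones via transfers, and that transfers preserve cuspidality (Lemma~\ref{lem:cuspidality-preservation}). If that direct route fails, my fallback is to argue by induction on $\dim G$ with the stabilization identities $I=\sum\iota\, S$: first kill the invariant terms using the simple invariant trace formula with $v_\ell$ cuspidal and $v_\infty$ cuspidal, then peel off the endoscopic terms using the induction hypothesis.
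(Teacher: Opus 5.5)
Your direct route breaks at the step you yourself flag. In Step 1 and in the ``main obstacle'' paragraph, you want to arrange each proper-Levi term so that its $v_\ell$-factor is a distribution evaluated on a constant term of $f_{v_\ell}$, and then kill it using cuspidality of $f_{v_\ell}$ alone. The splitting formula does not allow this. For $f=f_{v_\ell}f'$ it is a sum over pairs of Levi subgroups $L_1,L_2\supseteq M$ with $\mathfrak a^{L_1}_M\oplus\mathfrak a^{L_2}_M=\mathfrak a^G_M$. It always contains the term $L_1=G$, $L_2=M$, in which $v_\ell$ carries the full weighted character (or weighted orbital integral) of $f_{v_\ell}$, while the constant term sits at the other places. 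Weighted orbital integrals of cuspidal functions are not zero in general. For instance, by a theorem of Arthur, for a supercuspidal matrix coefficient they compute, up to sign and normalization, the character at regular elements that are elliptic in $M$. So the vanishing of the $v_\ell$-component that you propose to verify is false, and one cuspidal place never suffices.

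The obstacle is also misdiagnosed. A stable cuspidal $f_{v_\infty}$ is itself cuspidal: its trace vanishes on properly induced tempered representations, hence, by analytic continuation in the inducing character, on all properly induced ones. So you do have two cuspidal places, and in every $M\neq G$ term one of $L_{v_\ell},L_{v_\infty}$ is proper. Even after this repair, Step 2 is only asserted, not argued. That step is the vanishing of the non-semisimple stable terms and the matching of the surviving terms with Labesse's $ST^G_{\el}$.

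Your fallback is exactly the paper's proof, but as stated it omits the ingredient that brings $ST^G_{\el}$ into the picture at all. The paper starts from the invariant identities $T^G_{\el}(f)=I^G(f)=I^G_{\disc}(f)$. The first is \cite[Cor.~7.4]{ArthurIVF2}; the second uses regularity of infinitesimal characters at $v_\infty$, cf.~\cite{ArthurL2}. It then uses three stabilizations:
\begin{itemize}
\item Arthur's stabilization of $I^G$;
\item Arthur's stabilization of $I^G_{\disc}$;
\item the Langlands--Kottwitz stabilization $T^G_{\el}(f)=\sum_{\e\in\cE_{\el}(G)}\iota(G,H^\e)\,ST^{H^\e}_{\el}(f^\e)$, i.e.\ the untwisted case of \cite{LabesseStableTwisted}.
\end{itemize}
The identities $I=\sum\iota S$ and $I_{\disc}=\sum\iota S_{\disc}$ relate $S^G$ to $S^G_{\disc}$, but say nothing about $ST^G_{\el}$. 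With all three in hand, Lemmas~\ref{lem:cuspidality-preservation} and~\ref{lem:stable-cuspidality-preservation} let you choose each $f^\e$ cuspidal at $v_\ell$ and stable cuspidal at $v_\infty$. Induction on dimension then gives $ST^{H^\e}_{\el}(f^\e)=S^{H^\e}(f^\e)=S^{H^\e}_{\disc}(f^\e)$ for $\e\in\cE^<_{\el}(G)$. Subtracting these endoscopic terms from the invariant identities yields the lemma for $G$, with no stable splitting or descent formulas and no analysis of non-semisimple stable classes.
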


\begin{proof}
We start by observing that the invariant analogue of the lemma is true:
\begin{equation}\label{eq:simpleTF}
    T^G_{\el}(f) = I^G(f) = I^G_{\disc}(f).
\end{equation}
The first equality is \cite[Cor.~7.4]{ArthurIVF2}; the second follows from the condition at $v_\infty$ alone (not at $v_\ell$) by using regularity of infinitesimal characters, cf.~\cite[pp.267--268]{ArthurL2}.
Arthur's stabilization \cite{ArthurSTF1,ArthurSTF2,ArthurSTF3} asserts that $I^G(f) = S^G(f) + \sum_{\e\in \cE^<_{\el}(G)}\iota(G,H^\e) S^{H^\e}(f^{\e})$, and that the same equality holds with the subscript ``disc'' added to both $I$ and $S$. The analogous stabilization for $T^H_{\el},ST^{H^\e}_{\el}$ in place of $I^G,S^{H^\e}$ (including $H^\e=G$) is due to Langlands and Kottwitz; it is the untwisted case of the main theorem in \cite{LabesseStableTwisted}. (See the introduction of \emph{loc.~cit.}) Therefore one can prove the lemma starting from \eqref{eq:simpleTF} by inducting on dimension, using Lemmas \ref{lem:cuspidality-preservation} and \ref{lem:stable-cuspidality-preservation} to pass down the assumptions at $v_\ell,v_\infty$ to endoscopic groups.
\end{proof}

\subsection{Endoscopy for classical and similitude groups}
\label{ss:endoscopic-Sp-GSp}

Let us introduce some notation relevant to $\Sp_{2n}$ and $\GSp_{2n}$. Given a continuous character $\eta:\Gal_F\to \{\pm1\}$, write $\SO_{2n}^\eta$ and $\GSO_{2n}^\eta$ for the quasi-split forms of the split groups $\SO_{2n}$ and $\GSO_{2n}$ corresponding to $\eta$, cf.~\cite[\S2.1.1]{XuGSpGSO}. When $\eta$ is trivial, it is often omitted from the superscript.
The similitude groups come equipped with similitude characters $\GSp_{2n}\to \Gm$ and $\GSO^\eta_{2n}\to \Gm$. Given $m_i\in \Z_{\ge1}$ and $\eta_i:\Gal_F\to \{\pm1\}$ for $i=1,...,r$ (where $r\ge1$), define a new group by taking fiber product over $\Gm$ via the similitude characters:
\[
G\big(\Sp_{2m_0}\times \SO^{\eta_1}_{2m_1}\times \cdots \times \SO^{\eta_r}_{2m_r}\big):= \GSp_{2m_0}\times_{\Gm} \GSO^{\eta_1}_{2m_1}\times_{\Gm} \cdots \times_{\Gm} \GSO^{\eta_r}_{2m_r}
\]
Let us denote this group by $\tilde G$. By construction it is endowed with a similitude character $\textup{sim}:\tilde G\to \Gm$. The kernel $\Sp_{2m_0}\times \prod_{i=1}^r\SO^{\eta_i}_{2m_i}$ 
is denoted without tilde, thus by $G$. We have a short exact sequence and a dual exact sequence 
\begin{equation}\label{eq:endoscopic-L-morphism}
  1 \to G \to \tilde G \stackrel{\textup{sim}}{\longrightarrow} \Gm \to 1,\qquad 1\to \Gm \to {}^L \tilde G \to {}^L G \to 1.  
\end{equation}
We may and will choose the pinnings for $\tilde G, \hat{\tilde G}$ to be compatible with those for $G,\hat G$ via the exact sequences.

The elliptic and hyper-elliptic endoscopic groups for $\Sp_{2n}$ and $\GSp_{2n}$ are as follows, with the convention that $\Sp_0=\SO_0=\SO_0^\eta=\{1\}$; see \cite[1.2]{Arthur} for $\Sp_{2n}$ and \cite[pp.87--88]{XuGSpGSO} for $\GSp_{2n}$ (since $\omega=1$ in the table of \cite[p.88]{XuGSpGSO}, the character $\eta$ there must be trivial in light of his Lemma 2.1): 
\begin{itemize}
    \item $\Sp_{2n}$, elliptic: $\Sp_{2m}\times \SO^\eta_{2n-2m}$, where $0\le m\le n$.
    \item $\GSp_{2n}$, elliptic: $G(\Sp_{2m}\times \SO_{2n-2m})$, where $0\le m\le n$.
    \item $\Sp_{2n}$, hyper-elliptic: $\Sp_{2m_0}\times \prod_{i=1}^r \SO^{\eta_i}_{2m_i}$, where $r,m_0\ge 0$, $m_i\ge 1$ for $i\ge 1$, and $\sum_{i=0}^r m_i=n$.
    \item $\GSp_{2n}$, hyper-elliptic: $G\big(\Sp_{2m_0}\times \prod_{i=1}^r \SO_{2m_i}\big)$, where $r,m_0\ge 0$, $m_i\ge 1$ for $i\ge 1$, and $\sum_{i=0}^r m_i=n$.
\end{itemize}

We note that every group appearing above satisfies hypothesis \hyperlink{(Hyp)}{(Hyp)} in \S~\ref{ss:endoscopic-transfer}.
Each (hyper-)elliptic endoscopic group above represents an isomorphism class of (hyper-)elliptic endoscopic data. For $G=\Sp_{2n}$ and each $G^{\underline{\e}}=\Sp_{2m_0}\times \prod_{i=1}^r \SO^{\eta_i}_{2m_i}$, the composite $L$-morphism $\xi^{\underline{\e}}$ associated with the datum $\underline{\e}$ may be concretely given by the usual map
\[\hat G^{\underline{\e}}= \SO_{2m_0+1}\times \textstyle\prod_{i=1}^r \SO_{2m_i}\to \widehat{\Sp_{2n}}=\SO_{2n+1}\]
 induced by the decomposition of the $2n+1$-dimensional orthogonal space. The extension of this map to an $L$-morphism ${}^L G^{\underline{\e}}\to {}^L \Sp_{2n}$ can be understood via the analogous map for orthogonal groups in place of special orthogonal groups.

Let $\tilde H\in \cE_{\hel}(\GSp_{2n})$. We see from the bullet list that there is a corresponding $H\in \cE_{\hel}(\Sp_{2n})$ given by $\ker(\textup{sim}:\tilde H\to \Gm)$.
We may arrange that the associated $L$-morphisms $\tilde\xi:{}^L \tilde H\to {}^L \GSp_{2n}$ and $\xi:{}^L H\to {}^L \Sp_{2n}$ are compatible with the surjections $\textup{pr}_H:{}^L \tilde H\to {}^L H$ and $\textup{pr}_{\Sp}:{}^L \GSp_{2n}\to {}^L \Sp_{2n}$ in the sense that the equality $ \xi\circ\textup{pr}_H = \textup{pr}_{\Sp} \circ 
 \tilde\xi$ holds up to $\widehat{\Sp_{2n}}$-conjugacy. In fact, we may and will have $ \xi\circ\textup{pr}_H = \textup{pr}_{\Sp} \circ 
 \tilde\xi$ for $\xi$ and $\tilde\xi$ as maps on the Langlands dual groups and then extend them to maps of $L$-groups by the identity maps on the Weil group. Hence the following diagram commutes:
\begin{equation}\label{eq:CS-commutative-diagram}
  \xymatrix{
\cC^S(\tilde H) \ar[r]^-{\tilde\xi} \ar[d]_-{\textup{pr}_{H}} & \cC^S(\GSp_{2n}) \ar[d]^-{\textup{pr}_{\Sp}} \\
\cC^S(H) \ar[r]^-{\xi}  & \cC^S(\Sp_{2n}) 
}  
\end{equation}
Moreover, if $c_H^S\in \cC^S(\tilde H)$ has unitary central character then so does $\tilde\xi(c_H^S)$.

We recall the formalism of global $A$-parameters for classical groups from \cite[\S1.4]{Arthur}; the reader is referred to \emph{loc.~cit.}~for details. Let $N\in \Z_{\ge1}$. Write $\tilde\Psi(\GL_N)$ for the set of self-dual parameters of $\GL_N$ consisting of (unordered) formal sums
\begin{equation}\label{eq:psi-decomposition}
\psi=\boxplus_{i=1}^r l_i (\phi_i\boxtimes \nu(n_i)),\qquad r\in \Z_{\ge1},~l_1,...,l_r\in \Z_{\ge1},
\end{equation}
where 
\begin{itemize}
    \item $\phi_i$ is a cuspidal automorphic representation of $\GL_{m_i}(\A_F)$,
    \item $\nu(n_i)$ is an irreducible representation of $\SL_2$ of dimension $n_i$ such that $\sum_{i=1}^r l_i m_i n_i=N$,
    \item the set of pairs $(\phi_i,n_i)$ is mutually distinct,
    \item $\psi$ is self-dual: the dual parameter obtained by the contragredient $\phi_i\mapsto \phi^\vee_i$ equals $\psi$.
\end{itemize}
Following Arthur, a global parameter $\psi$ is said to be \textbf{generic} if every $\nu(n_i)$ is trivial, i.e., if $n_i=1$ for all $i$.
By construction, the set $\Psi(\Sp_{2n})$ of global parameters for $\Sp_{2n}$ is a subset of $\psi\in \tilde\Psi(\GL_{2n+1})$ cut out by the condition for every $i$ that $l_i$ must be even  whenever the corresponding factor $\phi_i\boxtimes \nu(n_i)$ is self-dual of symplectic type. The subset $\Psi_2(\Sp_{2n})$ of square-integrable parameters is determined by two further conditions: $l_i=1$ and $\phi_i\boxtimes \nu(n_i)$ is self-dual of orthogonal type for every $i$. (Thus $\phi_i$ is of orthogonal/symplectic type if $n_i$ is odd/even, respectively.)

Recall from \S~\ref{ss:notation-local} the notation $\Psi(\Sp_{2n,v})$ and $\Psi^+(\Sp_{2n,v})$ for (generalized) $A$-parameters, where $\Sp_{2n,v}$ is the base change of $\Sp_{2n}$ over $F_v$. Arthur \cite[\S1.4]{Arthur} attaches localization maps
\[
\Psi(\Sp_{2n}) \to \Psi^+(\Sp_{2n,v}),\qquad \psi\mapsto \psi_v.
\]
(The image would be contained in $\Psi(\Sp_{2n,v})$ if the Ramanujan conjecture for general linear groups were known.)
Define $\Psi^S(\Sp_{2n})$ to be the subset of $\psi\in \Psi(\Sp_{2n})$ such that $\psi_v$ is an unramified $A$-parameter at every $v\notin S$. Another subset $\Psi_{\ralg}(\Sp_{2n})$ is defined by the constraint that $\psi_v$ is regular $C$-algebraic (in the sense of \S~\ref{ss:notation-local}) at every infinite place $v$. 

For even special orthogonal groups, Arthur defines the set of global parameters $\tilde\Psi(\SO^\eta_{2n})$ analogously in terms of $\tilde\Psi(\GL_{2n})$, with the following caveat on notation: the tilde means self-duality for $\GL_{2n}$ but ``up to outer automorphism'' for $\SO^{\eta}_{2n}$. The square-integrable subset $\tilde\Psi_2(\SO^\eta_{2n})$ is defined by requiring that $l_i=1$ and that $\phi_i\boxtimes \nu(n_i)$ be self-dual of orthogonal type for all $i$. Again there is a localization map $\tilde\Psi(\SO^\eta_{2n}) \to \tilde\Psi(\SO^\eta_{2n,v})$, where $\tilde\Psi(\SO^\eta_{2n,v})$ stands for the quotient of $\Psi(\SO^\eta_{2n,v})$ modulo the outer automorphisms induced by $\textup{O}^\eta_{2n,v}$. Since the notion of unramified or regular $C$-algebraic parameters is invariant under outer automorphisms, we can still define $\tilde\Psi^S(\SO^\eta_{2n})$ and $\tilde\Psi_{\ralg}(\SO^\eta_{2n})$.

The definition of global parameters, and localization maps, easily extend to a finite product of symplectic and (even) special orthogonal groups. Thus we can make sense of $\tilde\Psi^S(H)$, $\tilde\Psi_2(H)$, $\tilde\Psi_{\ralg}(H)$ and so on, for $H\in \cE_{\hel}(\Sp_{2n})$. Similarly we define $\tilde\cC^S(H)$ etc.~to be the quotient set of $\cC^S(H)$ etc.~by outer automorphisms. Of course the tilde symbol is superfluous when $H=\Sp_{2n}$ itself.
Multiple decorations mean the intersection: e.g., $\tilde\Psi^S_{\ralg}(H):=\tilde\Psi^S(H)\cap \tilde\Psi_{\ralg}(H)$. 
Since we can write $H=\Sp_{2m_0}\times \prod_{i=1}^r \SO^{\eta_i}_{2m_i}$, we can combine the parameter from each factor by the formal sum $\boxplus$ to define a map 
\begin{equation}\label{eq:PsiH-to-PsiSp2n}
   \tilde\Psi(H)=\Psi(\Sp_{2m_0})\times \prod_{i=1}^r \tilde\Psi(\SO^{\eta_i}_{2m_i}) \longrightarrow \Psi(\Sp_{2n}),\qquad H\in \cE_{\hel}(\Sp_{2n}).
\end{equation}
By construction, the map $\tilde\Psi(H_v)\to \Psi(\Sp_{2n,v})$ induced by \eqref{eq:PsiH-to-PsiSp2n} upon localization coincides with the map given by $\xi:{}^L H \to {}^L \Sp_{2n}$.
By keeping parameters unramified away from $S$, the preceding map restricts to $\tilde\Psi^S(H)\to \Psi^S(\Sp_{2n})$.
We also have a map assigning the infinitesimal character and Satake parameters:
\[
\tilde \Psi^S(H)\to \tilde \cC^S(H),\qquad \psi\mapsto \big(\zeta(\psi),c^S(\psi)=(c_v(\psi))_{v\notin S}\big).
\]
The global classification \cite[Thm.~1.5.3]{Arthur} implies that the above map restricts to a surjection
\begin{equation}\label{eq:Psi2H-to-C2H}
\tilde \Psi^S_2(H)\twoheadrightarrow \tilde \cC^S_2(H).
\end{equation}

\begin{lemma}\label{lem:SO-odd-valued-Galois-reps}
Assume that $F$ is a totally real field. 
Let $H\in \cE_{\hel}(\Sp_{2n})$ and write $\xi:{}^L H\hookrightarrow {}^L \Sp_{2n}$ for the associated $L$-morphism. Let $\psi^H\in \tilde \Psi^S_{\ralg}(H)$ and denote by $\psi$ its image in $\Psi^S(\Sp_{2n})$ under \eqref{eq:PsiH-to-PsiSp2n}. Then there exists a semisimple Galois representation 
\[
\rho_\psi: \Gal_F \to \SO_{2n+1}(\Qlbar)
\]
such that, if we write $\rho_{\psi,v}:=\rho_\psi|_{\Gal_{F_v}}$, 
\[
\phi_{\rho_{\psi,v}}=\phi_{\psi_v}\quad
\mbox{for all finite places}~v~\mbox{of}~F,\quad v\nmid \ell,
\]
where $\phi_{\rho_{\psi,v}}$ and $\phi_{\psi_v}$ are the $L$-parameters obtained from $\rho_{\psi,v}$ and $\psi_v$ as in \S~\ref{ss:notation-local}. In particular,  $\rho_\psi(\Frob_v)_{\textup{ss}}=\xi(c_v(\psi))$ for $v\notin S\cup \{\ell\}$.
\end{lemma}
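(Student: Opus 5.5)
We reduce to the case of a single factor by writing $\psi^H=(\psi_0,\psi_1,\dots,\psi_r)$ according to $H=\Sp_{2m_0}\times\prod_{i=1}^r \SO^{\eta_i}_{2m_i}$. Then
\[
\psi=\psi_0\boxplus\psi_1\boxplus\cdots\boxplus\psi_r\in\Psi(\Sp_{2n})\subset\tilde\Psi(\GL_{2n+1}),
\]
and each $\psi_j$ is itself a formal sum $\boxplus_i l_i(\phi_i\boxtimes\nu(n_i))$ of cuspidal automorphic representations $\phi_i$ of $\GL_{m_i}(\A_F)$. Regular $C$-algebraicity of $\psi_v$ at each infinite place forces the infinitesimal characters of the $\phi_i\boxtimes\nu(n_i)$ to be regular and disjoint. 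It follows that each $\phi_i$, after twisting by $|\det|^{(n_i-1)/2}$-type shifts (or by half-integral shifts when needed), is regular algebraic, and in particular $l_i=1$. Each $\phi_i$ is also essentially self-dual, since $\psi$ is self-dual and the pairs $(\phi_i,n_i)$ are distinct, which would otherwise force $\phi_i^\vee$ to be another summand with the same infinitesimal-character block. For each $\phi_i$ we then invoke the known construction of Galois representations for regular algebraic (essentially) self-dual cuspidal representations over totally real fields (Harris--Taylor, Shin, Chenevier--Harris, Caraiani, etc.). This gives $\rho_{\phi_i}:\Gal_F\to\GL_{m_i}(\Qlbar)$ with local-global compatibility up to semisimplification at all $v\nmid\ell$. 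We need the full Frobenius-semisimple compatibility $\phi_{\rho_{\phi_i,v}}=\mathrm{rec}(\phi_{i,v})$, which is available in these works (Caraiani for $v\nmid\ell$). We then set
\[
\rho=\bigoplus_i \rho_{\phi_i}\otimes\Big(\textstyle\bigoplus_{k=0}^{n_i-1}\omega_\ell^{\,k-(n_i-1)/2}\Big),
\]
a $(2n+1)$-dimensional representation. Comparing with $\phi_{\psi_v}=\psi_v\circ i_{\cL_{F_v}}$ shows $\phi_{\rho_v}=\phi_{\psi_v}$ as $\GL_{2n+1}$-valued $L$-parameters. The half-integral exponents do not occur, since $\phi_i$ orthogonal forces $n_i$ odd, and $\phi_i$ symplectic forces $n_i$ even; in the even case, the $C$- versus $L$-normalisations of $\rho_{\phi_i}$ absorb the square root, and we check this bookkeeping.

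Next we make $\rho$ orthogonal and $\SO$-valued. Each $\phi_i\boxtimes\nu(n_i)$ is of orthogonal type when $l_i$ is odd, and symplectic summands come in pairs $l_i$ even. Applying this to the components from $\Sp_{2m_0}$ and from the $\SO^{\eta_i}_{2m_i}$ factors, each $\rho_{\phi_i}\otimes\mathrm{Sym}^{n_i-1}$ carries a nondegenerate symmetric pairing in the orthogonal case. This uses the sign results of Bellaïche--Chenevier, which say that the Galois representation of an orthogonal-type (resp. symplectic-type) regular algebraic $\phi_i$ preserves a symmetric (resp. alternating) form with the correct multiplier. Tensoring with $\mathrm{Sym}^{n_i-1}$ then flips the sign as needed, so the sum lands in $\mathrm{O}_{2n+1}(\Qlbar)$. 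We fix the determinant by comparing it with the trivial character. The determinant is a finite-order character whose Frobenius values at $v\notin S$ equal $\det\xi(c_v(\psi))=1$, since $\xi$ lands in $\SO_{2n+1}$, so by Chebotarev it is trivial. Hence $\rho$ factors through $\SO_{2n+1}(\Qlbar)$.

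Finally we compare conjugacy classes. Two semisimple elements of $\SO_{2n+1}$ are $\SO_{2n+1}$-conjugate iff their images in $\GL_{2n+1}$ are conjugate; $\mathrm{O}_{2n+1}=\SO_{2n+1}\times\{\pm1\}$, so $\mathrm{O}$-conjugacy equals $\SO$-conjugacy. Similarly, two $\SO_{2n+1}$-valued $L$-parameters agree iff their standard compositions agree, since $\SO_{2n+1}$ is acceptable in this sense (Larsen-type phenomena do not occur for odd orthogonal groups, because an orthogonal representation determines its $\mathrm{O}$-conjugacy class). This upgrades the $\GL_{2n+1}$ identity $\phi_{\rho_v}=\phi_{\psi_v}$ to $\SO_{2n+1}$, and at $v\notin S\cup\{\ell\}$ gives $\rho(\Frob_v)_{\textup{ss}}=\xi(c_v(\psi))$. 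The main obstacle is the second step: establishing that the $\rho_{\phi_i}$ have the correct orthogonal/symplectic sign, including the non-regular-weight issues that can arise when only $\psi$, not each $\phi_i$ individually, is assumed regular. I would handle this by citing Bellaïche--Chenevier together with the regularity deduced in the first step.
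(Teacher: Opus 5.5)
Your proposal is correct in substance and follows the same route as the paper: the paper also views $\psi$ as the isobaric sum of the pieces $\phi_i|\det|^{(n_i-1)/2-k}$ and applies \cite[Thm.~2.1.1]{BLGGTpotentialautomorphy}, with full local--global compatibility at $v\nmid\ell$, to each piece, then assembles an $\SO_{2n+1}$-valued representation using the sign, determinant and conjugacy arguments of \cite{ShinWeakTransfer} and \cite[App.~B]{KretShinGSp}. One caution is that the regularity hypothesis is on $\psi^H$ for $H$, not on $\psi$ for $\Sp_{2n}$, so your deductions (regular and disjoint blocks, $l_i=1$) are automatic only when $H=\Sp_{2n}$, which is the only case used later, in Proposition~\ref{prop:from-Sdisc-to-Galois-rep}; for general $H$ the blocks can overlap and a single cuspidal piece can fail to be regular, since an $\SO_{2m}$-regular weight can contain $0$ twice in $\Std$, and that regularity (which the appeal to BLGGT needs) is the real point to watch, not the issue you raise at the end.
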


\begin{remark}\label{rem:up-to-conjugacy}
The condition $\rho_\psi(\Frob_v)_{\textup{ss}}=\xi(c_v(\psi))$ for $v\notin S\cup \{\ell\}$ characterizes $\rho_{\psi}$ up to isomorphism, i.e., up to $\SO_{2n+1}(\Qlbar)$-conjugacy. Such results are well known for $\SO_{2n+1}$-valued representations, cf.~\cite[App.~B]{KretShinGSp}. 
\end{remark}

\begin{proof}
The lemma follows from the arguments for \cite[Prop.~3.2.4, Thm.~3.2.7]{ShinWeakTransfer}. 
Essentially the only modification is that $\Pi$ in the proof of Prop.~3.2.4 there should be replaced with $\psi$, the latter being viewed as a representation of $\GL_{2n+1}(\A_F)$ given by the isobaric sum
\begin{equation}\label{eq:psi-as-isobaric-sum}
\boxplus_{i=1}^r \big( \phi_i|\det|^{\frac{n_i-1}{2}} \boxplus \phi_i|\det|^{\frac{n_i-3}{2}} \boxplus \cdots \boxplus \phi_i|\det|^{\frac{1-n_i}{2}} \big)
\end{equation}
in the notation of \eqref{eq:psi-decomposition}. To recall the main point, 
each factor in \eqref{eq:psi-as-isobaric-sum} is an essentially self-dual $L$-algebraic cuspidal automorphic representation; then $\rho_\psi$ is obtained by applying the construction of automorphic Galois representations \cite[Thm.~2.1.1]{BLGGTpotentialautomorphy} to each factor and combining them with appropriate normalizations. 
We add that condition (H3) in \cite{ShinWeakTransfer} may not be satisfied, but this is okay since we do not make any claim on Hodge--Tate cocharacters. (The condition is actually satisfied if $\psi$ belongs to the subset $\Psi_2(H)$.) 
In fact \cite{ShinWeakTransfer} covers $\phi_{\rho_{\psi,v}}=\phi_{\psi_v}$ only at $v\notin S\cup \{\ell\}$ because that paper concentrates on local information away from $S$. However, it is an easy consequence of the local-global compatibility as stated in \cite[Thm.~2.1.1 (2)]{BLGGTpotentialautomorphy} that $\phi_{\rho_{\psi,v}}=\phi_{\psi_v}$ for all $v$ not above $\ell$, since the equality can be checked after applying $\Std$; e.g., see \cite[App.~B]{KretShinGSp}.
\end{proof}

\subsection{Contributions to $S_{\disc}$}
\label{ss:contributions-to-Sdisc}

Fix $(\tilde\zeta,\tilde c^S)\in \cC^S_{\ralg}(\GSp_{2n})$. Then its image in $\cC^S(\Sp_{2n})$, denoted $(\zeta,c^S)$, lies in $\cC^S_{\ralg}(\Sp_{2n})$.

\begin{prop}\label{prop:from-Sdisc-to-Galois-rep}
Let $\tilde H \in \cE_{\el}(\GSp_{2n})$. Assume that
\begin{equation}\label{eq:SHdisc-nonzero}
  S^{\tilde H }_{\disc,\tilde\zeta,\tilde c^S}(f)\neq 0\qquad
\mbox{for some}~f=f^{\infty}f_\infty\in \cH^S(\tilde H).  
\end{equation}
The following are true.
\begin{enumerate}
    \item There exists a unique $\psi\in \Psi^S_{2,\ralg}(\Sp_{2n})$ such that $(\zeta(\psi),c^S(\psi))\in \cC^S(\Sp_{2n})$ equals $(\zeta,c^S)$.
    \item If moreover $F$ is a totally real field, there exists a semisimple Galois representation 
    \[\rho=\rho_{\zeta,c^S}:\Gal_{F}\to \SO_{2n+1}(\Qlbar)\] 
    such that $\rho(\Frob_v)_{\textup{ss}}=\iota c_v$ for every finite place $v\notin S$, and $\phi_{\rho_{v}}=\iota\phi_{\psi_v}$ at finite places $v\nmid \ell$.
\end{enumerate}
\end{prop}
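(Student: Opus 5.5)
Here is the plan. We pass from the nonvanishing \eqref{eq:SHdisc-nonzero} to an automorphic representation of some hyper-elliptic endoscopic group, then to a global $A$-parameter for $\Sp_{2n}$, and finally to a Galois representation.

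\textbf{Step 1: reduce to a hyper-elliptic endoscopic group.} Since $(\tilde\zeta,\tilde c^S)$ is regular $C$-algebraic, the infinitesimal character on $\tilde H$ lying over $\tilde\zeta$ is regular. We apply Lemma~\ref{lem:from-nonvanishingS-to-automorphic-spectrum} to $\tilde H$; the summation convention of \S\ref{ss:endoscopic-preparation} absorbs the preimages. This produces $\underline{\e}\in\cE_{\hel}(\tilde H)$, hence a hyper-elliptic endoscopic group $\tilde H'=H^{\underline\e}\in \cE_{\hel}(\GSp_{2n})$ by concatenation, together with a class $(\zeta',c'^S)\in\cC^S_2(\tilde H')$ mapping to $(\tilde\zeta,\tilde c^S)$ under $\tilde\xi$.

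\textbf{Step 2: restrict to $\Sp_{2n}$ and obtain an $A$-parameter.} Let $H'=\ker(\textup{sim})\in\cE_{\hel}(\Sp_{2n})$. Restricting a discrete automorphic representation of $\tilde H'$ to $H'(\A_F)$ and taking an irreducible constituent in the discrete spectrum (standard; Labesse--Langlands style restriction), we get a class in $\tilde\cC^S_2(H')$. By the commutative diagram \eqref{eq:CS-commutative-diagram}, it maps to $(\zeta,c^S)$. The surjection \eqref{eq:Psi2H-to-C2H} gives $\psi^{H'}\in\tilde\Psi^S_2(H')$, and we let $\psi\in\Psi^S(\Sp_{2n})$ be its image under \eqref{eq:PsiH-to-PsiSp2n}. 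Then $\zeta(\psi)=\zeta$ and $c^S(\psi)=c^S$. Regularity of $\zeta$ forces $\psi$ to be multiplicity free, with distinct orthogonal-type summands, so $\psi\in\Psi_2(\Sp_{2n})$. It is regular $C$-algebraic because $\zeta$ is. This gives existence in (1).

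Uniqueness follows from strong multiplicity one for isobaric sums on $\GL_{2n+1}$ (Jacquet--Shalika). The Satake parameters $c^S$, viewed in $\GL_{2n+1}$ via $\Std$, determine the isobaric representation \eqref{eq:psi-as-isobaric-sum}, and hence $\psi$. The main point to check is that the decomposition into pieces $\phi_i\boxtimes\nu(n_i)$ is recovered from the isobaric sum. Here regularity of $\zeta$ helps again: it rules out coincidences such as $\phi|\cdot|^{1/2}\boxplus\phi|\cdot|^{-1/2}$ being reinterpreted in two ways.

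\textbf{Step 3: the Galois representation.} For (2), apply Lemma~\ref{lem:SO-odd-valued-Galois-reps} with $H=\Sp_{2n}$ and $\psi^H=\psi$, which lies in $\Psi^S_{\ralg}$. This gives $\rho_\psi$ with $\phi_{\rho_{\psi,v}}=\phi_{\psi_v}$ for $v\nmid\ell$, and $\rho_\psi(\Frob_v)_{\textup{ss}}=c_v(\psi)=c_v$ for $v\notin S\cup\{\ell\}$ (note $\ell\in S$). Take $\rho=\rho_\psi$, inserting $\iota$ to transport the parameters from $\C$ to $\Qlbar$.

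\textbf{Main obstacle.} The delicate point is Step 2: correctly transporting the discrete class from the similitude group $\tilde H'$ to its derived part $H'$ while keeping track of outer automorphisms and central characters. A possible alternative is to apply Lemma~\ref{lem:from-nonvanishingS-to-automorphic-spectrum} directly after restricting the stable distribution to $\Sp_{2n}$, though that would require a compatibility of $S_{\disc}$ with restriction that I would try to avoid.
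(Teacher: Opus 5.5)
Your proposal is correct and follows the paper's proof essentially step for step: Lemma~\ref{lem:from-nonvanishingS-to-automorphic-spectrum}, then restriction of the discrete automorphic representation to $\ker(\textup{sim})$ (the paper cites \cite[Lem.~5.2, 5.3]{XuGSpGSO} for this), then Arthur's classification together with \eqref{eq:CS-commutative-diagram}, then regularity for multiplicity one, then strong multiplicity one on $\GL_{2n+1}$ for uniqueness, and finally Lemma~\ref{lem:SO-odd-valued-Galois-reps} for (2). Two small clarifications: first, the self-dual orthogonal type of the summands of $\psi$ is inherited from $\psi^{H'}\in\tilde\Psi_2(H')$, and regularity is only needed to exclude repeated summands coming from different factors of $H'$; second, in the uniqueness step the pieces $\phi_i\boxtimes\nu(n_i)$ are recovered from the cuspidal constituents because Arthur's $\phi_i$ are unitary, so regularity plays no role there.
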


\begin{proof}
Let us prove (1). Lemma \ref{lem:from-nonvanishingS-to-automorphic-spectrum} implies that there exists $\tilde\pi\in \cA_{2}^S(\tilde H^{\underline\e})$ such that
\[
(\zeta(\tilde\pi_\infty),c^S(\tilde\pi))\in \cC^S_2(\tilde H^{\underline\e}) \quad \mapsto  \quad (\tilde\zeta,\tilde c^S)\in \cC^S(\GSp_{2n})\quad\mbox{via}\quad\tilde\xi^{\underline{\e}}.
\]
(Since $\underline{\e}$ is a datum for a similitude group, we choose to write $\tilde\xi^{\underline{\e}}$ instead of $\xi^{\underline{\e}}$.)
Write $H^{\underline\e}:=\ker(\tilde H^{\underline\e}\stackrel{\textup{sim}}{\to} \Gm)$. By \cite[Lem.~5.2, 5.3]{XuGSpGSO} an irreducible $H^{\underline\e}(\A_F)$-subrepresentation $\pi$ of $\tilde\pi$ has the property that $\pi$ belongs to $\cA_{2}^S(H^{\underline\e})$ (i.e., $\pi$ appears in the discrete automorphic spectrum of $H^{\underline\e}$) and that 
\[
(\zeta(\tilde\pi_\infty),c^S(\tilde\pi))\in \cC^S_2(\tilde H^{\underline\e}) \quad \mapsto  \quad (\zeta(\pi_\infty),c^S(\pi))\in \cC^S_2(H^{\underline\e})\quad \mbox{via}\quad \textup{pr}_{H^{\underline\e}}.
\]
By the endoscopic classification for quasi-split classical groups \cite[Thm.~1.5.2]{Arthur}, such a $\pi$ is assigned a unique parameter $\psi^{\underline\e}\in \tilde\Psi_2(H^{\underline\e})$ such that, among other things, $(\zeta(\pi_\infty),c^S(\pi))=(\zeta(\psi^{\underline\e}),c^S(\psi^{\underline\e}))$. Now take $\psi\in \Psi(\Sp_{2n})$ to be the image of $\psi^{\underline\e}$ under \eqref{eq:PsiH-to-PsiSp2n} so that $(\zeta(\psi),c^S(\psi))\in \cC^S(\Sp_{2n})$ is the image of $(\zeta(\pi_\infty),c^S(\pi))\in \cC^S_2(H^{\underline\e})$. Then the desired equality $(\zeta(\psi),c^S(\psi))=(\zeta,c^S)$ is a direct consequence of the commutative diagram \eqref{eq:CS-commutative-diagram}.

Since $\tilde\zeta$ is regular ($C$-)algebraic for $\GSp_{2n}$, so is $\zeta$. Hence $\psi\in \Psi_{\ralg}(\Sp_{2n})$. Let us check that $\psi\in \Psi_2(\Sp_{2n})$. The square-integrability of $\psi^{\underline\e}\in \Psi_2(H^{\underline\e})$ implies that every simple factor in $\psi$ is self-dual of orthogonal type; see the discussion below \eqref{eq:psi-decomposition}. Such a simple factor has multiplicity one by regularity of $\psi$. The uniqueness of $\psi$ follows from strong multiplicity one for isobaric automorphic representations of $\GL_{2n+1}$.

Assertion (2) readily follows from part (1) and Lemma \ref{lem:SO-odd-valued-Galois-reps} by setting $\rho_{\zeta,c^S}:=\rho_\psi$.
\end{proof}

\subsection{A genericity condition in characteristic $0$}
\label{ss:genericity-char0}

Let $(\tilde\zeta,\tilde c^S),\,(\zeta,c^S)$ be as in the preceding subsection. Put ourselves in the situation of Proposition \ref{prop:from-Sdisc-to-Galois-rep}; in particular, we have $\psi\in \Psi^S_{2,\ralg}(\Sp_{2n})$ and $\rho_{\zeta,c^S}=\rho_\psi$. Fix a prime $\frakp$ of $F$ not above $\ell$. 
(The prime $\frakp$ may or may not belong to $S$.) Write $\phi_{\psi_{\frakp}}\in \Phi(\Sp_{2n,\frakp})$ for the $L$-parameter associated with the (generalized) $A$-parameter $\psi_{\frakp}$ (\S~\ref{ss:notation-local}).

\begin{lemma}\label{lem:consequence-of-gen-at-p}
If $\phi_{\psi_{\frakp}}$ is generic of ps type (Definition \ref{def:generic-ps}) then 
\begin{enumerate}
    \item $\psi$ is a generic parameter (i.e., $n_i=1$ for all $i$ in \eqref{eq:psi-decomposition}) whose localization $\psi_v$ is bounded for every place $v$,
    \item $\psi_y$ is a discrete series $L$-parameter for $\Sp_{2n}(F_y)$ at every infinite place $y$.
\end{enumerate}
\end{lemma}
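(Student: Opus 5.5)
Write $\psi=\boxplus_{i=1}^r \phi_i\boxtimes\nu(n_i)$ as in \eqref{eq:psi-decomposition}, with $l_i=1$ since $\psi\in\Psi_2(\Sp_{2n})$, each $\phi_i\boxtimes\nu(n_i)$ self-dual of orthogonal type, and the pairs $(\phi_i,n_i)$ distinct. The plan is to read off everything from the localization at $\frakp$. We have
\[
\Std\circ\phi_{\psi_\frakp}=\bigoplus_{i=1}^r\bigoplus_{j=0}^{n_i-1}\phi_{i,\frakp}\otimes|\cdot|^{\frac{n_i-1}{2}-j},
\]
and the generic ps hypothesis says this is $\mathbf{1}\oplus\bigoplus_k(\chi_k\oplus\chi_k^{-1})$ with trivial $\SL_2$-part. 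In particular each $\phi_{i,\frakp}$ is a sum of characters, and no root character $\chi_k^{\pm1}$, $(\chi_k\chi_m)^{\pm1}$, $(\chi_k/\chi_m)^{\pm1}$ equals $|\cdot|^{\pm1}$. (Here $\omega_\ell$ corresponds to $|\cdot|$ up to sign convention under the Weil group normalization; both signs are excluded because the conditions are symmetric.)

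For part (1), I first show $n_i=1$. If some $n_i\ge2$, pick a character $\chi$ occurring in $\phi_{i,\frakp}$. Then both $\chi|\cdot|^{\frac{n_i-1}{2}}$ and $\chi|\cdot|^{\frac{n_i-3}{2}}$ occur in $\Std\circ\phi_{\psi_\frakp}$, and their ratio is $|\cdot|$. The main bookkeeping step is to check that any two characters $\alpha,\beta$ occurring in $\mathbf 1\oplus\bigoplus(\chi_k\oplus\chi_k^{-1})$ at distinct positions have ratio $\alpha/\beta$ equal to a root character or to $\chi_k^{\pm2}$. The case $\alpha=\chi_k$, $\beta=\chi_k^{-1}$ gives $\chi_k^2$, which is not a root of $\SO_{2n+1}$. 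In that case $\chi|\cdot|^{a}=\chi_k$ and $\chi|\cdot|^{a-1}=\chi_k^{-1}$, so the self-duality of the $i$-th block forces the middle term of the string to involve $\chi$ itself. For $n_i\ge 3$ this gives an adjacent pair with ratio a genuine root. For $n_i=2$, I would instead use that the block $\phi_i\boxtimes\nu(2)$ has $\phi_i$ symplectic, hence even-dimensional. This supplies another pair $\chi',\chi'^{-1}$ inside $\phi_{i,\frakp}$ and produces a ratio $\chi_k\chi_m^{\pm1}$ with $k\ne m$ equal to $|\cdot|$. I expect this case analysis to be the main fiddly obstacle.

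Once $\psi$ is generic, $\psi_\frakp=\phi_{\psi_\frakp}$ and each $\phi_{i,\frakp}$ is a sum of characters. Boundedness of every $\psi_v$ follows from boundedness of each $\phi_{i,v}$, i.e.\ Ramanujan for the cuspidal $\phi_i$. Rather than invoke Ramanujan in general, I would argue that $\psi$ is regular $C$-algebraic, so each $\phi_i$ is a regular algebraic, essentially self-dual (indeed self-dual orthogonal) cuspidal representation of $\GL_{m_i}$. For such representations Ramanujan is known at all finite places by the work of Harris--Taylor, Shin, Caraiani et al.\ applied to the Galois representations of Lemma~\ref{lem:SO-odd-valued-Galois-reps}. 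At infinite places boundedness holds because the archimedean components are tempered, being cohomological of regular weight with unitary central character.

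For part (2), at an infinite place $y$ the parameter $\psi_y=\phi_{\psi_y}$ is generic, bounded, and regular $C$-algebraic. Since $\rho\in X_*$ for $\Sp_{2n}$, it is also $L$-algebraic with regular infinitesimal character. A tempered $L$-parameter of $\Sp_{2n}(\R)$ with regular integral infinitesimal character, hence with no $\SL_2$ part, must be a discrete series parameter. Concretely, $\Std\circ\phi_{\psi_y}$ restricted to $W_\C$ is $\bigoplus z^{a}\bar z^{-a}$ with distinct $a$'s. Regularity forces $W_\R$ to act irreducibly on two-dimensional pieces and by a sign on the single trivial line, so the image is not contained in a proper Levi, and $\phi_{\psi_y}$ is discrete.
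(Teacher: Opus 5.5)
Your treatment of the subcase $n_i=2$ in the genericity step does not work as written. You use only that $\phi_i$ is even-dimensional to find a second character $\chi'$ in $\phi_{i,\frakp}$. That produces a root character equal to $|\cdot|$ only if $\chi'=\chi$.

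For instance, take $n=2$ and the $\GL_5$-level configuration $\psi=\phi_1\boxtimes\nu(2)\boxplus\mathbf 1$ with $\phi_{1,\frakp}=\chi\oplus\chi'$, where $\chi\neq\chi'$ are both quadratic. Then $\Std\circ\phi_{\psi_\frakp}=\mathbf 1\oplus(\chi_1\oplus\chi_1^{-1})\oplus(\chi_2\oplus\chi_2^{-1})$ with $\chi_1=\chi|\cdot|^{1/2}$ and $\chi_2=\chi'|\cdot|^{1/2}$. The root characters are $\chi_k^{\pm1}$, $(\chi_1\chi_2)^{\pm1}=(\chi\chi'|\cdot|)^{\pm1}$ and $(\chi_1/\chi_2)^{\pm1}=\chi\chi'$, and none of them equals $|\cdot|^{\pm1}$. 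This configuration satisfies every constraint your bookkeeping uses: a multiset of characters, self-duality of $\phi_{1,\frakp}$, and $\dim\phi_1$ even. So no contradiction can be extracted from those constraints alone.

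What actually excludes it is that $\phi_{i,\frakp}$ is \emph{symplectic}. Here that means the central character of $\phi_{1,\frakp}$, namely $\chi\chi'$, is trivial. In general, for $\chi$ quadratic, the $\chi$-isotypic part of a symplectic sum of characters carries a nondegenerate alternating form, so it has even dimension. That gives a second copy of $\chi|\cdot|^{1/2}$, i.e.\ $\chi_k=\chi_m$ for some $k\neq m$, and then $(e_k+e_m)\circ\phi_{\psi_\frakp}=|\cdot|$. This local symplecticity is extra input about the local parameters (it comes from Arthur's theory) and is invisible in the $\GL_{2n+1}$-level data you work with.

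Your case $n_i\ge 3$ is fine. Two consecutive adjacent pairs in the string cannot both be of type $\chi_k^{\pm2}$: both products would be $1$, which forces $|\cdot|^2=1$.

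The paper avoids this case analysis by working in $\SO_{2n+1}$ itself. If $\psi_\frakp|_{\SL_2^A}$ is nontrivial, let $N$ be the image of $\left(\begin{smallmatrix}0&1\\0&0\end{smallmatrix}\right)$ under $d\psi_\frakp$. It is a nonzero nilpotent element of $\mathfrak{so}_{2n+1}$ with $\textup{Ad}(\phi_{\psi_\frakp}(w))N=|w|N$. Since $\phi_{\psi_\frakp}$ lands in a maximal torus, which acts trivially on its own Lie algebra, the toral component of $N$ vanishes. Hence some root-space component is nonzero, giving $\alpha\circ\phi_{\psi_\frakp}=|\cdot|$ for a root $\alpha$. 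The orthogonal structure your argument is missing is built in automatically here.

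The rest of your proposal is sound. Boundedness is obtained as in the paper. At infinity, though, the relevant input is Clozel's purity lemma, which uses cuspidality of $\phi_i$; unitarity and cohomologicality alone do not give temperedness (think of the trivial representation).

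For (2) your route differs from the paper's. You argue on the parameter side: boundedness plus regular integral infinitesimal character makes $\Std\circ\phi_{\psi_y}$ a multiplicity-free sum of nondegenerate irreducible $W_\R$-modules, so the image lies in no proper Levi. The paper argues via representations: members of $\Pi_{\psi_y}$ are unitary with regular $C$-algebraic infinitesimal character, hence cohomological by Salamanca-Riba, and tempered cohomological representations are discrete series. Both routes are valid, and yours is more elementary.
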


\begin{proof}
(1) Suppose that $\psi$ is non-generic. Recall from \S~\ref{ss:notation-local} that $\phi_{\psi_{\frakp}}$ is the composition 
\[
W_{F_v}\stackrel{i_{\cL_{F_v}}}{\to} W_{F_\frakp}\times \SL_2^A(\C) \stackrel{\psi_{\frakp}}{\to}\SO_{2n+1}(\C),
\]
Since $\phi_{\psi_{\frakp}}\in \Phi_{\textup{ps}}(\Sp_{2n})$, we may assume that $\phi_{\psi_{\frakp}}$ has image in the standard maximal torus $T_{\SO_{2n+1}}$ of $\SO_{2n+1}$.
By assumption $\psi_{\frakp}|_{\SL_2^A}$ is nontrivial. Since $W_{F_\frakp}$ acts on $\left(\begin{smallmatrix}
0 & 1 \\ 0 & 0
\end{smallmatrix}\right)\in \Lie \SL_2^A$ as $|\cdot|$ by conjugation via $i_{\cL_{F_\frakp}}$, it follows that there exists a root $\alpha$ of $T_{\SO_{2n+1}}$ in $\SO_{2n+1}$ such that $\alpha\circ \phi_{\psi_{\frakp}}=|\cdot|$. If we decompose $\Std\circ\phi_{\psi_{\frakp}}=\mathbf{1}\oplus(\oplus_{i=1}^n \mu_i\oplus \mu_i^{-1})$, cf.~\eqref{eq:decompose-Std-phi}, then the preceding condition is made explicit as: either $\mu^\epsilon_i=|\cdot|$ for some $1\le i\le n$ and $\epsilon\in \{\pm1\}$ or $\mu_i^{\epsilon_1}\mu_j^{\epsilon_2}=|\cdot|$ for some $1\le i<j\le n$ and $\epsilon_1,\epsilon_2\in \{\pm1\}$. This contradicts the second condition above Definition \ref{def:generic-ps}. Hence $\psi$ is generic.

Now that $\psi$ is generic, \eqref{eq:psi-decomposition} becomes $\psi=\boxplus_{i=1}^r \phi_i$ with each $\phi_i$ an $L$-algebraic regular self-dual cuspidal automorphic representation of $\GL_{m_i}(\A_F)$. Applying the main result of \cite{CaraianiLGC1} and Clozel's purity lemma \cite{Clozel}, we deduce that $\phi_i$ is tempered at every place $v$, with unitary central character due to self-duality. Since $\Std\circ \psi_v$ is the $L$-parameter for the representation of $\GL_{2n+1}(F_v)$ given by the local isobaric sum $\boxplus_{i=1}^r \phi_{i,v}$, which is tempered, it follows that $\Std\circ \psi_v$ is a bounded parameter. Therefore $\psi_v$ is a bounded $L$-parameter for every $v$. 

(2) Each $\pi_y\in \Pi_{\psi_y}$ is unitary and regular $C$-algebraic, thus cohomological \cite{SalamancaRiba}, i.e., the relative Lie algebra cohomology of $\pi_y\otimes V_y$ is nontrivial in some degree for some algebraic representation $V_y$ of $G(F_y)$. On the other hand, $\pi_y$ is tempered since $\psi_y$ is bounded. When $G_{F_y}$ contains an elliptic maximal torus, a cohomological and tempered representation is a discrete series representation, so it follows that $\psi_y$ is a discrete series $L$-parameter.
\end{proof}

\begin{prop}\label{prop:sum-of-irred-ps}
Given $(\tilde\zeta,\tilde c^S)\in \cC^S_{\ralg}(\GSp_{2n})$ and $\tilde H\in \cE_{\el}(\GSp_{2n})$, let $\psi$ be as above. Assume that \eqref{eq:SHdisc-nonzero} holds, that $\phi_{\psi_{\frakp}}$ is generic of ps type, and that $(\tilde\zeta,\tilde c^S)$ has unitary central character (Definition \ref{defn:unitary-central}).
For each $\tilde f^{\frakp}\in \cH^{\frakp}(\tilde H)$, the following linear form in $\tilde f_\frakp\in \cH_\frakp(\tilde H)$
\[
    S^{\tilde H}_{\disc,\tilde\zeta,\tilde c^S}(\tilde f^{\frakp}\tilde f_\frakp)
\]
is a linear combination of $\tr \pi_i(\tilde f_\frakp)$ over a finite set $i\in I$, where $\pi_i$ is a (possibly reducible) ps representation of $\tilde H(F_\frakp)$. 
\end{prop}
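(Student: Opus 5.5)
The plan is to expand the stable distribution spectrally and identify the local components at $\frakp$. By the inductive argument in the proof of Lemma~\ref{lem:from-nonvanishingS-to-automorphic-spectrum}, combined with Lemma~\ref{lem:trR=I} (the infinitesimal character $\tilde\zeta$ is regular), we write $S^{\tilde H}_{\disc,\tilde\zeta,\tilde c^S}(\tilde f^{\frakp}\tilde f_\frakp)$ as a finite linear combination of the distributions $\tr R^{\tilde H^{\underline\e}}_{\disc,\zeta^{\underline\e},c^{\underline\e,S}}\big((\tilde f^\frakp\tilde f_\frakp)^{\underline\e}\big)$ over $\underline\e\in\cE_{\hel}(\tilde H)$. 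Each of these is a finite sum of $\tr\tilde\pi(\cdot)$ over discrete automorphic $\tilde\pi$. This approach is awkward, however, because the repeated transfers at $\frakp$ pass to endoscopic groups of $\tilde H$.

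A cleaner route is to use Arthur's stable multiplicity formula in the form extended to similitude groups by Xu~\cite{XuGSpGSO}. For each elliptic $\tilde H=G(\Sp_{2m}\times\SO_{2n-2m})$, the distribution $S^{\tilde H}_{\disc,\tilde\zeta,\tilde c^S}$ is a finite linear combination of products $\prod_v f_v\mapsto\prod_v \tilde f_v(\tilde\psi_v)$ of stable packet characters. These range over global parameters $\tilde\psi$ of $\tilde H$ lifting parameters $\psi^H$ of $H=\ker(\textup{sim})$ whose image in $\Psi(\Sp_{2n})$ is the unique $\psi$ of Proposition~\ref{prop:from-Sdisc-to-Galois-rep}(1). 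Finiteness holds because $\tilde c^S$ and $\tilde\zeta$ pin down $\psi$, hence finitely many $\psi^H$, and the unitary central character, together with the fixed data away from $S$, leaves only finitely many similitude lifts. Fixing $\tilde f^\frakp$ makes each term a constant multiple of the stable linear form $\tilde f_\frakp\mapsto\tilde f_\frakp(\tilde\psi_\frakp)=\sum_{\pi\in\Pi_{\tilde\phi_\frakp}}\tr\pi(\tilde f_\frakp)$.

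It remains to see that each such local packet sum is a ps representation. By Lemma~\ref{lem:consequence-of-gen-at-p}, $\psi$ is generic and $\psi_\frakp$ is bounded, so $\psi^H_\frakp$ is a bounded $L$-parameter. Since $\xi\circ\psi^H_\frakp=\phi_{\psi_\frakp}$ factors through a maximal torus of $\SO_{2n+1}$, and $\xi$ identifies a maximal torus of $\hat H$ with one of $\SO_{2n+1}$ (the centralizer of a torus image), $\psi^H_\frakp$ lies in $\Phi_{\textup{ps}}(H)$. The lift $\tilde\phi_\frakp$ is then in $\Phi_{\textup{ps}}(\tilde H)$ and is bounded, using unitarity of the central character. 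Lemma~\ref{lem:irred-coarse-packet} then shows that $\oplus_{\pi\in\Pi_{\tilde\phi_\frakp}}\pi$ is the full normalized principal series $\textup{n-Ind}^{\tilde H}_{\tilde B}(\tilde\lambda)$, which is the desired form. The $\theta$-orbit convention for the even orthogonal factor is harmless, since averaging a ps representation over $\theta$ gives a combination of ps representations.

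The main obstacle is justifying the stable multiplicity formula for the similitude group $\tilde H$ with $\tilde c^S$-isotypic decomposition. Arthur's version is for $H$, so one must descend from $\tilde H$ to $H$, either as in the proof of Proposition~\ref{prop:from-Sdisc-to-Galois-rep} via Xu's results, or by comparing $S^{\tilde H}_{\disc}$ with $S^{H}_{\disc}$ after restricting functions to $H(\A_F)Z_{\tilde H}(\A_F)$. If this fails, I would fall back on the first route: induct over hyper-elliptic data, using Xu's description of coarse packets and Lemma~\ref{lem:principal-series-stable} for the transfer of ps characters. Throughout one must verify that the bounded ps property is preserved and that only finitely many $\tilde\psi$ contribute.
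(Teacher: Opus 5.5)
Your second route is the one the paper takes. You reduce to $S^{\tilde H}_{\disc,\phi^H}$ for the generic $\psi^H=\phi^H\in\Psi_2(H)$ mapping to $\psi$, expand it by Xu's stable multiplicity formula into finitely many products of local stable packet characters (with twists $\omega_v$, which preserve the ps property), and conclude with Lemma~\ref{lem:consequence-of-gen-at-p} and Lemma~\ref{lem:irred-coarse-packet}.

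Your ``main obstacle'' is not one: the paper simply cites \cite[Thm.~4.9 (2)]{XuGSpGSO2}. Its hypothesis (no simple factor of $\GL$-type) holds because $\phi^H$ is generic \emph{and discrete at the infinite places}. This is where Lemma~\ref{lem:consequence-of-gen-at-p}(2) enters, and you never use it. The paper passes from Xu's fixed central characters to the unfixed setting by a Fourier transform on $Z_{\tilde H}(\A_F)$, with finiteness from weak approximation for $Z^\circ_{\tilde H}\cong\Gm$. Your fallback via hyper-elliptic data is unnecessary.

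The step you do justify is justified incorrectly. From $\xi\circ\psi^H_\frakp=\phi_{\psi_\frakp}$ landing in a maximal torus of $\SO_{2n+1}$, it does not follow that $\psi^H_\frakp$ lands in a maximal torus of $\hat H$. The torus containing the image need not be conjugate into $\xi(\hat H)$ by an element of $\hat H$.

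A counterexample: take $n=3$, $H=\Sp_2\times\SO_4$, and distinct nontrivial quadratic characters $\chi,\chi'$, with $\Std\circ\psi^H_\frakp=(\chi\oplus\chi'\oplus\chi\chi',\ \mathbf 1\oplus\chi\oplus\chi'\oplus\chi\chi')$. This composes to the generic ps parameter of Remark~\ref{rem:irreducibility}. However, its $\SO_3$-component has Klein four image, so it lies in no maximal torus, and its $\SL_2$-packet consists of supercuspidals. Lemma~\ref{lem:irred-coarse-packet} therefore does not apply. The paper's proof also invokes Lemma~\ref{lem:irred-coarse-packet} here without comment, so you should supply an argument, at least in the case where the proposition is used (Theorem~\ref{thm:generic-vanishing}).

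In that case $\phi_{\psi_p}$ comes from a lift of an unramified $\bar\rho$ of generic ps type via Lemma~\ref{lem:lifting-Galois-rep}. Hence every self-dual character in $\Std\circ\phi_{\psi_p}$ is trivial or the unramified quadratic character $\eta$:
\begin{itemize}
\item for $\ell$ odd, a quadratic lift of an unramified character is unramified;
\item for $\ell=2$, a quadratic $\chi_i$ would give $\bar\chi_i=1=\bar\omega_2$, which genericity excludes.
\end{itemize}
Now $\det=1$ on each orthogonal block of $\hat H=\SO_{2m+1}\times\SO_{2n-2m}$ forces the $\eta$-isotypic parts to be even-dimensional. Choosing hyperbolic bases of the isotypic pieces then puts the image of $\psi^H_p$ in a maximal torus of $\hat H$, whose preimage in $\hat{\tilde H}$ is a maximal torus.
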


\begin{proof}
The main ingredients will come from \cite{XuGSpGSO,XuGSpGSO2}. 
In light of Lemma \ref{lem:consequence-of-gen-at-p} (1), we put $\phi:=\psi$ to conform to Xu's notation. Since he is fixing central characters (e.g., $\tilde\chi,\chi$ in \cite[\S5.1]{XuGSpGSO}) in the stable distributions and test functions while we are not, we explain at the outset why his formulas are valid in our setting in the same form (apart from letting central characters unfixed). The reason is that the central character $Z_{\tilde H}(\A_F)/Z_{\tilde H}(F)\to \C^\times$
is determined away from $S$ by each $\tilde c^{H,S}\in \cC^S_{\fin}(\tilde H)$. By weak approximation for $Z_{\tilde H}^\circ$ (which is isomorphic to $\GG_m$), only finitely many central characters contribute to the stable distribution $S^{\tilde H}_{\disc,\tilde\zeta^H,\tilde c^{H,S}}$. Hence $S^{\tilde H}_{\disc,\tilde\zeta^H,\tilde c^{H,S}}(\tilde f)$ for $\tilde f\in \cH(\tilde H)$ can be computed as a finite sum $\sum_{\tilde\chi} S^{\tilde H}_{\disc,\tilde\zeta^H,\tilde c^{H,S},\tilde\chi}(\tilde f_{\tilde\chi})$, where $S^{\tilde H}_{\disc,\tilde\zeta^H,\tilde c^{H,S},\tilde\chi}$ is Xu's stable distribution with central character $\tilde\chi$ and $f_{\tilde\chi}(g):=\int_{Z_{\tilde H}(\A_F)} \tilde f(gz)\tilde\chi(z)dz$. (This is simply a Fourier transform on the locally compact abelian group $Z_{\tilde H}(\A_F)$.)

Write $\tilde f:=\tilde f^{\frakp}\tilde f_\frakp$. We may and will assume that $\tilde f^\frakp=\prod_{v\neq \frakp} \tilde f_v$ and that $\tilde f_v$ is unramified for $v\notin S$. Put $(\zeta,c^S):=(\zeta(\phi),c^S(\phi))$, which is the image of $(\tilde \zeta,\tilde c^S)$ under $\textup{pr}_{\Sp}$. We have
\[
S^{\tilde H}_{\disc,\tilde\zeta,\tilde c^S}(\tilde f)=
\sum_{(\tilde \zeta^H,\tilde c^{H,S})\mapsto (\tilde\zeta,\tilde c^S)}  S^{\tilde H}_{\disc,\tilde\zeta^H,\tilde c^{H,S}}(\tilde f),
\]
where the sum runs over $(\tilde \zeta^H,\tilde c^{H,S})\in \cC^S(\tilde H)$ mapping to $(\tilde\zeta,\tilde c^S)$. Hence it is enough to verify the analogue of the proposition for $S^{\tilde H}_{\disc,\tilde\zeta^H,\tilde c^{H,S}}(\tilde f)$. At this point, note that $(\tilde\zeta^H,\tilde c^{H,S})$ has unitary central character since $(\tilde\zeta,\tilde c^S)$ does. (The natural map $Z_{\GSp_{2n}}\hookrightarrow Z_{\tilde H}$ induces an isomorphism on the identity component.) Hence every term appearing in the expansion will have a unitary central character.

If $S^{\tilde H}_{\disc,\tilde\zeta^H,\tilde c^{H,S}}(\tilde f)\neq 0$, the proof of Proposition \ref{prop:from-Sdisc-to-Galois-rep} shows that the image $(\zeta^H,c^{H,S})\in \cC^S(H)$ of $(\tilde\zeta^H,\tilde c^{H,S})$ equals the image of some parameter $\psi^H\in \Psi_2(H)$ under \eqref{eq:Psi2H-to-C2H} and that $(\zeta^H,c^{H,S})$ maps to $(\zeta,c^S)$. In particular $\psi^H$ maps to $\phi=\psi$, so $\psi^H$ is generic, the localization $\psi^H_v$ is bounded for every $v$ and $\psi^H_y$ is discrete at $y|\infty$; we write $\phi^H:=\psi^H$. Then 
\[
S^{\tilde H}_{\disc,\phi^H}(\tilde f)=
\sum_{(\tilde \zeta^{H,\prime},\tilde c^{H,S,\prime})\mapsto (\zeta^H,c^{H,S})}  S^{\tilde H}_{\disc,\tilde\zeta^{H,\prime},\tilde c^{H,S,\prime}}(\tilde f),
\]
in which $S^{\tilde H}_{\disc,\tilde\zeta^H,\tilde c^{H,S}}(\tilde f)$ is the eigendistribution separated by $\tilde\zeta^H,\tilde c^{H,S}$. Thus it suffices to prove the analogue of the proposition for $S^{\tilde H}_{\disc,\phi^H}(\tilde f)$.

We can apply \cite[Thm.~4.9 (2)]{XuGSpGSO2}, since $\phi^H$ is generic and discrete at $y|\infty$ (thus $\phi^H$ contains no $\GL$-type simple parameter in the sense of \emph{loc.~cit.}), to obtain 
\begin{equation}\label{eq:Xu-SMF}
   S^{\tilde H}_{\disc,\phi^H}(\tilde f)= \sum_{\tilde\chi}\sum_{\omega} c_{\phi^H}  \Big(\prod_v \tilde f_v(\tilde \phi^H_v\otimes \omega_v)\Big), 
\end{equation}
where the notation is as follows: $\tilde\chi$ runs over a certain set of automorphic characters of $Z_{\tilde H}(\A_F)$, which are consistent with $(\tilde\zeta,\tilde c^S)$,  
$\omega=\prod_v \omega_v$ runs over a certain set of (unitary) automorphic characters of $\tilde H(\A_F)$, $v$ runs over the places of $F$, and $c_{\phi^H}\in \Q_{>0}$ is a constant depending on $\phi^H$. The last term $\tilde f_v(\tilde \phi^H_v\otimes \omega_v)$ is a finite sum of traces $\tr (\tilde \pi_v\otimes\omega_v)(\tilde f_v)$ as $\tilde \pi_v$ runs over the local packet $\Pi_{\tilde \phi^H_v}$ of $\tilde H(F_v)$, 
where $\tilde \phi^H_v\in \Phi(\tilde H_v)$ is a parameter which lifts the bounded parameter $\phi^H_v$ and has a unitary central character; hence $\tilde \phi^H_v$ is bounded. 
Besides the choice of $\tilde \phi^H_v$, the $L$-packet $\Pi_{\tilde \phi^H_v}$ recalled above Lemma \ref{lem:irred-coarse-packet} is only well defined up to a character twist, but the right hand side of \eqref{eq:Xu-SMF} is still well defined as the ambiguity is absorbed into the sum. Now the proof is finished by Lemma \ref{lem:irred-coarse-packet} and the fact that a (quadratic) character twist of a ps representation is again a ps representation. 
\end{proof}

\section{$\ell$-adic cohomology of Igusa varieties}\label{s:cohomology-Igusa}

Applying the trace formula machinery to the cohomology of Igusa varieties with characteristic zero coefficients, we obtain two consequences. Firstly, we complete the proof (postponed from \S~\ref{ss:main-thm}) that the Hecke eigensystem appearing in the cohomology has an associated Galois representation. Secondly, if the cohomology is nonzero in a suitable Grothendieck group and if a genericity condition holds at $p$, then the Igusa varieties must be ordinary. 
As a consequence, we complete the proof of Theorem \ref{thm:generic-vanishing}, the main result of this paper.

\subsection{The stable trace formula for Igusa varieties}\label{ss:STF-Igusa}

Essential to our method is the stabilized trace formula for Igusa varieties, proven in the case of Hodge type \cite{MackCraneShin,BMS}
(the Siegel case was previously covered by \cite{ShinIgusaPEL,ShinStableIgusa}), which we recall in this subsection with a few consequences, assuming that \hyperlink{(Hyp)}{(Hyp)} in \S~\ref{ss:endoscopic-transfer} is satisfied for all groups involved. We will specialize to the Siegel case in the next subsection; \hyperlink{(Hyp)}{(Hyp)} is satisfied in that case. Let us adopt the endoscopic notation as in \S~\ref{ss:endoscopic-preparation}.

Write $\phi_{\mathbf 1}:W_{\R}\to {}^L G$ for the discrete $L$-parameter of $G_{\R}$ whose packet consists of discrete series representations with the same central and infinitesimal characters as the trivial representation $\mathbf 1$ of $G(\R)$.
Denote by $\zeta_{\mathbf 1}\in \cC_\infty(G)$ the infinitesimal character of the trivial representation.
Let $\varphi^{p,\infty}=\prod_{v\neq p,\infty} \varphi_v\in C^\infty_c(G(\A^{p,\infty}))$ and $\varphi_p\in C^\infty_c(J_b(\Q_p))$. Assume that $\varphi_p$ is $\varrho$-acceptable (for a fixed choice of $\varrho$) in the sense of \cite[Def.~4.1.5]{MackCraneShin}, which is acceptable in the sense of \cite[Def.~2.7.1, 2.7.2]{BMS} by \cite[Lem.~2.2.7]{MackCraneShin}. We need not copy the definition here but it is enough to note that we have a sufficient supply of acceptable functions at $p$ in that the following implication is true for $[\Pi]\in \Groth(G(\A^{p,\infty})\times J_b(\Q_p))$ as shown in \cite[Lem.~24]{ShinIgusaPEL} (whose argument applies equally well to $\varrho$-acceptable or acceptable functions, cf.~\cite[Lem.~4.1.10]{MackCraneShin}):
\begin{equation}\label{eq:enough-acceptable-functions}
\forall \varphi^{p,\infty},~\forall\, \mbox{acceptable}~\varphi_p,\quad \tr(\varphi^{p,\infty}\times \varphi_p|[\Pi])=0 \qquad \Longrightarrow \qquad [\Pi]=0.
\end{equation}
We are ready to state the stabilized formula from \cite[Thm.~4.4.2, \S3.3.6]{BMS}:
\begin{equation}\label{eq:STF-Igusa}
\tr\big(\varphi^{p,\infty}\times \varphi_p|[H_c(\Ig^b,\Qlbar)]\big)=\sum_{\e\in \cE_{\el}(G)}\iota(G,H^\e)   ST^{H^\e}_{\el}(f^\e),    
\end{equation}
where the notation on the right hand side is as in \S~\ref{ss:endoscopic-preparation}, and $f^\e=\prod_v f^\e_v$ is the Langlands--Shelstad transfer of $\varphi^{p,\infty}$ at $v\neq p,\infty$. The function $f^\e_\infty$ is a stable cuspidal function such that a discrete series representation of $H^\e(\R)$ has nonzero trace against $f^\e_\infty$ exactly when it is contained in the discrete series $L$-packet for a parameter of $H^\e(\R)$ which transfers to $\phi_{\mathbf 1}$ via $\xi^{\e}$. In particular $f^\e_\infty$ has nonzero trace only on representations of $H^\e(\R)$ whose infinitesimal characters transfer to $\zeta_{\mathbf 1}$ via $\xi^{\e}$. As for $f^\e_p$, it is given by the function $h_{1,p}$ above \cite[Prop.~4.3.11]{BMS}; in our setting, the $z$-extension and the central character datum therein are trivial. Necessary facts about $f^\e_p$ will be recalled in the proof of Lemma \ref{lem:SO=0-if-non-ordinary} below.

Let $(U_{\ell,k},\lambda)$ be a supercuspidal type for $G(\Q_\ell)$. This gives rise to an $\ell$-adic local system $\cL_\lambda$ on $\Ig_{K^S U_{\ell, k}}$ as in \S~\ref{ss:inductive-lemmas} such that $H^i_c(\Ig^b_{U_{\ell, k}},\cL_\lambda)=\Hom_{U_{\ell, k}}(\lambda^{-1},H^i_c(\Ig^b,\Qlbar))$ for each $i\ge 0$.
Write $\varphi^{p,\ell,\infty}=\prod_{v\neq p,\ell,\infty}\varphi_v\in C^\infty_c(G(\A^{p,\ell,\infty}))$ analogously as above. We keep the assumption that $\varphi_p$ is $\varrho$-acceptable.

\begin{lemma}\label{lem:TF-Igusa-Sdisc}
Let $(U_{\ell, k},\lambda)$, $\cL_\lambda$, and other notation be as above. Then the following equality holds:
\[ 
\textup{vol}(U_{\ell, k})\tr\big(\varphi^{p,\ell,\infty}\times \varphi_p|[H_c(\Ig^b_{U_{\ell, k}},\cL_\lambda)]\big)=\sum_{\e\in \cE_{\el}(G)}\iota(G,H^\e)   S^{H^\e}_{\disc}(f^\e).
\]
\end{lemma}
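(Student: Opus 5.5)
The plan is to specialize the stabilized formula \eqref{eq:STF-Igusa} to a test function at $\ell$ built from the supercuspidal type, and then use Lemma~\ref{lem:simpleTF-stable} to replace each $ST^{H^\e}_{\el}$ by $S^{H^\e}_{\disc}$.

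\emph{Step 1: the test function at $\ell$.} Let $\varphi_\ell\in C^\infty_c(G(\Q_\ell))$ be the function supported on $U_{\ell,k}$ with $\varphi_\ell(u)=\textup{vol}(U_{\ell,k})^{-1}\lambda(u)$. (I would fix the sign of $\lambda^{\pm1}$ so that it matches the convention $H^i_c(\Ig^b_{U_{\ell,k}},\cL_\lambda)=\Hom_{U_{\ell,k}}(\lambda^{-1},H^i_c(\Ig^b,\Qlbar))$.) For any smooth admissible representation $\Pi_\ell$, the operator $\Pi_\ell(\varphi_\ell)$ is then the projector onto the $\lambda^{-1}$-isotypic part for $U_{\ell,k}$. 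Consequently
\[
\tr\big(\varphi^{p,\ell,\infty}\varphi_\ell\times \varphi_p|[H_c(\Ig^b,\Qlbar)]\big)=\tr\big(\varphi^{p,\ell,\infty}\times\varphi_p|[H_c(\Ig^b_{U_{\ell,k}},\cL_\lambda)]\big).
\]
The factor $\textup{vol}(U_{\ell,k})$ in the statement is produced by this normalization of $\varphi_\ell$, up to the chosen Haar measure. Now apply \eqref{eq:STF-Igusa} with $\varphi^{p,\infty}:=\varphi^{p,\ell,\infty}\varphi_\ell$. The left side becomes the left side of the lemma, and the right side becomes $\sum_\e\iota(G,H^\e)ST^{H^\e}_{\el}(f^\e)$.

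\emph{Step 2: cuspidality of $\varphi_\ell$.} The function $\varphi_\ell$ is cuspidal in the sense of \S~\ref{ss:endoscopic-transfer}. Indeed, for any proper parabolic $P$ and representation $\pi_M$, we have $\tr(\varphi_\ell|\text{n-Ind}_P^G\pi_M)=0$. This holds because every irreducible subquotient of the induced representation is non-supercuspidal, so it contains no $\lambda^{-1}$-isotypic vectors, by the type property (2) of $(U_{\ell,k},\lambda)$. The trace is additive over a composition series, so it vanishes. By Lemma~\ref{lem:cuspidality-preservation}, the transfer $f^\e_\ell$ may then be chosen cuspidal on $H^\e(\Q_\ell)$. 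Changing the transfer does not change the stable orbital integrals, so it does not affect $ST^{H^\e}_{\el}$ or $S^{H^\e}$.

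\emph{Step 3: applying the simple stable trace formula.} At $\infty$, $f^\e_\infty$ is stable cuspidal by the description following \eqref{eq:STF-Igusa}. Lemma~\ref{lem:simpleTF-stable}, applied to each $H^\e$ with $v_\ell=\ell$ and $v_\infty=\infty$, then gives $ST^{H^\e}_{\el}(f^\e)=S^{H^\e}_{\disc}(f^\e)$. Summing over $\e$ finishes the proof.

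The main obstacle is Step 3. Lemma~\ref{lem:simpleTF-stable} is stated for $G$ itself, and it requires the cuspidality and stable-cuspidality hypotheses on $H^\e$, which Steps 2 and 3 supply. One must also check that the $f^\e_p=h_{1,p}$ of \cite{BMS} causes no interference at $p$; this is fine, since that lemma imposes no condition at $p$. A further point to check is that the central character datum conventions in \cite{BMS} match those of \S~\ref{ss:notation-global}, which is harmless in the Siegel case since the $z$-extensions there are trivial.
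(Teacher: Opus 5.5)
Your proposal is correct and follows the paper's proof essentially verbatim: take $\varphi_\ell$ equal to $\lambda$ on $U_{\ell,k}$ and zero elsewhere, and note it is cuspidal because $(U_{\ell,k},\lambda^{-1})$ is again a supercuspidal type. (Your Step 2 cites property (2) for $\lambda$ rather than $\lambda^{-1}$; the case of $\lambda^{-1}$ follows by passing to contragredients.) Then plug $\varphi_\ell$ into \eqref{eq:STF-Igusa} and combine Lemma~\ref{lem:cuspidality-preservation} with Lemma~\ref{lem:simpleTF-stable} applied to each $H^\e$.

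The only slip is the normalization. With your extra factor $\textup{vol}(U_{\ell,k})^{-1}$, the volume disappears from the left-hand side rather than being produced there. To get the identity exactly as displayed, drop that factor as the paper does, so that $\Pi_\ell(\varphi_\ell)$ is $\textup{vol}(U_{\ell,k})$ times the projector onto the $\lambda^{-1}$-isotypic part.
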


\begin{proof}
Define $\varphi_\ell\in C^\infty_c(G(\Q_{\ell}))$ by $\varphi_{\ell}(g)=\lambda(g)$ if $g\in U_{\ell,k}$ and $\varphi_{\ell}(g)=0$ otherwise. Since $(U_{\ell, k},\lambda)$ is a supercuspidal type, so is $(U_{\ell, k},\lambda^{-1})$, hence $\varphi_{\ell}$ is a cuspidal function. Plugging this $\varphi_\ell$ into \eqref{eq:STF-Igusa}, the left-hand side of \eqref{eq:STF-Igusa} becomes the left-hand side in the lemma. Now we analyze the right hand sides. As explained above, $f^\e_\infty$ is stable cuspidal. By Lemma \ref{lem:cuspidality-preservation} $f^\e_\ell$ is cuspidal since it is transferred from $\varphi_\ell$. Hence 
$ST^{H^\e}_{\el}(f^\e)=S^{H^\e}_{\disc}(f^\e)$ from Lemma \ref{lem:simpleTF-stable}.
\end{proof}

Let $S$ be a finite set of places of $\Q$ containing $p,\ell,\infty$ and the places at which $G$ is ramified. 
Let $c^S\in \cC^S_{\fin}(G)$. By the subscript $c^S$ in $H^i_c(\Ig^b_{K^S U_{\ell, k}},\cL_\lambda)_{c^S}$ we mean the $c^S$-isotypic part, namely the corresponding generalized eigenspace for the $\TT^S_{\Qlbar}$-action.

\begin{cor}\label{cor:TF-Igusa-Sdisc}
In the setting of Lemma \ref{lem:TF-Igusa-Sdisc}, if $\varphi_v\in \cH^{\ur}(G(\Q_v))$ for $v\notin S$, then
\[ 
\textup{vol}(U_{\ell, k})\tr\big(\varphi^{p,\ell,\infty}\times \varphi_p|[H_c(\Ig^b_{K^S U_{\ell, k}},\cL_\lambda)_{c^S}]\big)=\sum_{\e\in \cE_{\el}(G)}\iota(G,H^\e)   S^{H^\e}_{\disc,\zeta_{\mathbf 1},c^S}(f^\e).
\]    
\end{cor}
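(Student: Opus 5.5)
The plan is to deduce the corollary from Lemma~\ref{lem:TF-Igusa-Sdisc} by isolating the $c^S$-part on both sides, using the spherical Hecke algebra away from $S$ as a separating tool. Fix the test functions $\varphi_v$ at places $v\in S\setminus\{p,\ell,\infty\}$ and $\varphi_p$, and let the unramified components $\varphi^S=\prod_{v\notin S}\varphi_v$ vary over $\cH^{\ur}(G(\A^S))$.

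On the left-hand side, $H^i_c(\Ig^b_{K^SU_{\ell,k}},\cL_\lambda)$ is finite-dimensional. It is a $\TT^S_{\Qlbar}$-module, so it decomposes as a finite direct sum of generalized eigenspaces indexed by $c'^S\in\cC^S_{\fin}(G)$. For each $c'^S$, the trace of $\varphi^{p,\ell,\infty}\times\varphi_p$ on that summand equals $\hat\varphi^S(c'^S)$ times a constant depending only on $c'^S$ and the components in $S$. Here $\hat\varphi^S$ is the Satake transform, evaluated at $c'^S$.

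On the right-hand side, we first note that $f^\e_\infty$ only sees infinitesimal characters transferring to $\zeta_{\mathbf 1}$, as recalled after \eqref{eq:STF-Igusa}. Hence $S^{H^\e}_{\disc}(f^\e)=\sum_{c'^S}S^{H^\e}_{\disc,\zeta_{\mathbf 1},c'^S}(f^\e)$, where the sum runs over $c'^S\in\cC^S_{\fin}(G)$ and uses the convention of \S~\ref{ss:endoscopic-preparation} of summing over preimages under $\xi^\e$. By the fundamental lemma, the unramified transfer $f^\e_v$ at $v\notin S$ is the image of $\varphi_v$ under the map of spherical Hecke algebras dual to $\xi^\e$. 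Therefore the $(\zeta^\e,c^{\e,S})$-eigendistribution is multiplied by $\hat\varphi^S(\xi^\e(c^{\e,S}))$. Each $S^{H^\e}_{\disc,\zeta_{\mathbf 1},c'^S}(f^\e)$ is thus $\hat\varphi^S(c'^S)$ times a quantity independent of $\varphi^S$.

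We then have an identity of the form $\sum_{c'}a_{c'}\hat\varphi^S(c')=\sum_{c'}b_{c'}\hat\varphi^S(c')$ for all $\varphi^S$. The step needing care is that the right-hand side is a priori an infinite sum over $c'^S$, so linear independence of characters cannot be applied naively. I would handle this by noting that, for fixed level (the supports of the $f^\e_v$ at $v\in S$ are fixed), only finitely many $c'^S$ contribute nonzero terms. This follows from finiteness of the discrete spectrum with fixed $K$-type and fixed infinitesimal character; the $\iota(G,H^\e)$ sum is also finite.

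To finish, restrict the identity to a finitely generated subalgebra of $\cH^{\ur}$, a finite set of $v$ being enough to separate the finitely many relevant $c'^S$. Linear independence of distinct characters of a commutative algebra then gives $a_{c^S}=b_{c^S}$, which is the claimed identity. If the generalized eigenspaces are not semisimple this causes no problem, since traces only see the semisimplification. Finally, we choose $\varphi^S$ to be the characteristic function of $K^S$, so that the statement holds for the given unramified $\varphi_v$ by multiplicativity.
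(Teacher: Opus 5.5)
Your proposal is correct and follows the paper's argument. The paper likewise separates the $c^S$-parts in Lemma~\ref{lem:TF-Igusa-Sdisc} via the unramified Hecke algebra, and then uses that $f^\e_\infty$ only sees infinitesimal characters transferring to $\zeta_{\mathbf 1}$ to replace $S^{H^\e}_{\disc,c^S}(f^\e)$ by $S^{H^\e}_{\disc,\zeta_{\mathbf 1},c^S}(f^\e)$; you spell out the separation step (fundamental lemma, finiteness, linear independence of characters) that the paper leaves implicit.

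One small correction: $H^i_c(\Ig^b_{K^SU_{\ell,k}},\cL_\lambda)$ is not finite-dimensional, since the level is still infinite at $p$ and at $S\setminus\{p,\ell,\infty\}$. It is admissible, though, so your finite eigenspace decomposition should be carried out on the invariants under a compact open subgroup fixing $\varphi_p$ and the $\varphi_v$ for $v\in S\setminus\{p,\ell,\infty\}$.
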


\begin{proof}
We obtain the corollary from Lemma \ref{lem:TF-Igusa-Sdisc} by separating the $c^S$-parts as in \S~\ref{ss:endoscopic-preparation}, except that it remains to justify $S^{H^\e}_{\disc,c^S}(f^\e)=S^{H^\e}_{\disc,\zeta_{\mathbf 1},c^S}(f^\e)$. The latter follows from the fact recalled above that $f^\e_\infty$ has nonzero trace only on representations of $H^\e(\R)$ whose infinitesimal characters transfer to $\zeta_{\mathbf 1}$ via $\xi^{\e}$.
\end{proof}

\begin{lemma}\label{lem:SO=0-if-non-ordinary}
In the setting of Lemma \ref{lem:TF-Igusa-Sdisc}, consider $\e\in\cE_{\el}(G)$ such that the $\Q_p$-ranks of $G$ and $H^\e$ are equal. If $J_b$ is not quasi-split over $\Q_p$ then $SO_{\gamma^\e}^{H^\e}(f^\e_p)=0$ for 
every $\gamma^\e\in H^\e(\Q_p)_{\textup{reg},s}$.
\end{lemma}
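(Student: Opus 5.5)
Recall how $f^\e_p$ is built from $\varphi_p$ in \cite[Prop.~4.3.11]{BMS} (and earlier \cite{ShinStableIgusa}). The function $h_{1,p}=f^\e_p$ is characterized by its stable orbital integrals. For a $(G,H^\e)$-regular semisimple $\gamma^\e\in H^\e(\Q_p)$, the value $SO^{H^\e}_{\gamma^\e}(f^\e_p)$ is a sum, with transfer factors, of orbital integrals $O^{J_b}_{\delta}(\varphi_p)$. Here $\delta$ ranges over semisimple elements of $J_b(\Q_p)$ whose stable conjugacy class in $G$ (via the inner twisting $J_b\otimes\breve\Q_p\simeq M_b\otimes\breve\Q_p$ of a Levi $M_b$ of $G$) corresponds to $\gamma^\e$. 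In particular, $SO_{\gamma^\e}(f^\e_p)=0$ unless $\gamma^\e$ transfers to an element of $G(\Q_p)$ which moreover transfers to $J_b(\Q_p)$. So the plan is to show that, when $J_b$ is non-quasi-split, no element of $H^\e(\Q_p)_{\textup{reg},s}$ has such a transfer.

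Let $\gamma^\e\in H^\e(\Q_p)_{\textup{reg},s}$. It lies in a conjugate of the maximal torus $T_s^\e$ of a Borel of the quasi-split group $H^\e$. Since the $\Q_p$-ranks of $G$ and $H^\e$ agree, $T_s^\e$ has $\Q_p$-split rank equal to $\textup{rk}_{\Q_p}G$. It transfers to a maximal torus $T\subset G$ with $T\simeq T^\e_s$ over $\Q_p$, because the admissible embedding of tori preserves $\Q_p$-structure. Hence the centralizer of the image $\gamma$ in $G$ contains a maximal torus whose split rank equals $\textup{rk}_{\Q_p}G$. If $\gamma^\e$ is not strongly regular, I would use local constancy of stable orbital integrals around strongly $G$-regular points, or a descent argument, to reduce to that case, as in the proof of Lemma \ref{lem:cuspidality-preservation}.

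Now suppose $\delta\in J_b(\Q_p)$ is related to $\gamma$. Then the centralizer of $\delta$ in $J_b$ is an inner form of the centralizer of $\gamma$ in $G$ (or in $M_b$). Since $\gamma$ is regular, its centralizer $T$ is a torus, and inner forms of tori are isomorphic. So $J_b$ contains a maximal torus $T'\simeq T$ over $\Q_p$, of split rank $\textup{rk}_{\Q_p}G$. But $J_b$ is an inner form of the Levi $M_b$ of the split group $G$, and $\textup{rk}_{\Q_p}J_b\le \textup{rk}_{\Q_p}M_b=\textup{rk}_{\Q_p}G$. Equality forces a maximal torus of $J_b$ to be split, and an inner form of a split group containing a split maximal torus is itself split, hence quasi-split. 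This contradicts the hypothesis, so no such $\delta$ exists, every term in the expression for $SO_{\gamma^\e}(f^\e_p)$ vanishes, and the lemma follows.

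The main obstacle is the first step: correctly extracting from \cite{BMS} the description of $f^\e_p$. Its construction involves the Jacquet module/constant-term to $M_b$ and a twist by a character, so the matching passes through $M_b$ and its endoscopic group rather than $G$ directly. I would check that the vanishing statement still only needs the existence of a regular $\delta\in J_b(\Q_p)$ matching $\gamma^\e$ through $M_b$. I also need to handle the fact that the relevant elements may only be conjugate into $T_s$ rather than inside it; but the split-rank argument is conjugation-invariant.
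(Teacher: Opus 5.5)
Your proposal is correct and is essentially the paper's argument. The paper likewise uses \cite[Prop.~4.3.11]{BMS} to turn $SO_{\gamma^\e}(f^\e_p)\neq 0$ into an element $\gamma_{H_b}$ of a Levi $H_b\subset H^\e$, stably conjugate to $\gamma^\e$, that matches some $\delta_b\in J_b(\Q_p)$; this is exactly the passage through $M_b$ and its endoscopic group that you flag. It then gets the contradiction by comparing the split ranks of the toral centralizers, as you do, and your reduction to strongly $G$-regular $\gamma^\e$ via local constancy fills in a point the paper leaves implicit.
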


\begin{proof}
The main ingredient is \cite[Prop.~4.3.11]{BMS} and the formulas preceding that proposition therein; we can and will drop the subscript ``1'' in every formula there because no $z$-extension is needed thanks to our running hypothesis \hyperlink{(Hyp)}{(Hyp)}. 

For the sake of contradiction, suppose that $SO_{\gamma^\e}^{H^\e}(f^\e_p)\neq 0$.
(Our $H^\e,\gamma^\e,f^\e_p,b$ are $H_{1,p},\gamma_{H_{1,p}},h_{1,p},\mathbf{b}$ in \emph{loc.~cit.}) Then $SO_{\gamma^\e}^{H^\e}(f^{\e_b}_b)\neq 0$ for some datum $\e_b$ as in the line below \cite[(4.3.6)]{BMS} (see \S4.3.1 therein for notation), which tells us that there exist a Levi subgroup $H_b\subset H^\e$ and an element $\gamma_{H_b}\in H_b(\Q_p)$ which is stably conjugate to $\gamma^\e$ in $H^\e(\Q_p)$. (This is proved in \cite[Prop.~4.3.11]{BMS}. See the second paragraph of its proof.) By the formula above \cite[(4.3.7)]{BMS}, we have $SO_{\gamma_{H_b}}^{H_b}(\phi_p^{H_b})\neq 0$ for some function $\phi_p^{H_b}\in C^\infty_c(H_b(\Q_p))$ that is a Langlands--Shelstad transfer of $\varphi_p$ on $J_b(\Q_p)$ up to multiplying a nonzero normalizing factor. It follows that there exists $\delta_b\in J_b(\Q_p)$ whose conjugacy class matches that of $\gamma_{H_b}$.

Since $\gamma_{H_b}$ is stably conjugate to $\gamma^\e\in H^\e(\Q_p)_{\textup{reg},s}$, the connected centralizer of $\gamma_{H_b}$ in $H_b$ has split rank equal to the $\Q_p$-rank of $H^\e$, which equals the $\Q_p$-rank of $G$ by assumption. On the other hand, the connected centralizer of $\gamma_{H_b}$ in $H_b$ is isomorphic to that of $\delta_b$ in $J_b$, and the latter has $\Q_p$-rank strictly less than the $\Q_p$-rank of $G$ since $J_b$ is not quasi-split. This is a contradiction.
\end{proof}

\subsection{The cohomology of Igusa varieties in the Siegel case}\label{ss:Hc(Igusa)-Siegel-char0}

Now we specialize to the case of Siegel Shimura varieties. So $\tilde G=\GSp_{2n}$ and $G=\Sp_{2n}$. A tilde symbol is added to signify a similitude group or a related object as in \S~\ref{ss:endoscopic-Sp-GSp}: e.g., an elliptic endoscopic group for $\tilde G$ is denoted $\tilde H^\e$ rather than $H^\e$, and an element of $\cC^S(\tilde G)$ is denoted $(\tilde\zeta,\tilde c^S)$ rather than $(\zeta,c^S)$.
Let $\tilde \zeta_{\mathbf 1}\in \cC_\infty(\tilde G)$, resp.~$\zeta_{\mathbf 1}\in \cC_\infty(G)$, denote the infinitesimal character of the trivial representation for each group.

\begin{prop}\label{prop:Hc(Ig)-char0-automorphic} 
Let $(\tilde\zeta,\tilde c^S)\in \cC^S(\tilde G)$ with $\tilde\zeta=\tilde \zeta_{\mathbf 1}$. Assume that 
\[
[H_c(\Ig^b_{U_{\ell, k}},\cL_\lambda)_{\tilde c^S}]\neq 0\qquad \mbox{in}\quad \Groth(\tilde G(\A_{S\backslash\{p,\ell,\infty\}})\times J_b(\Q_p)).
\]
Then $(\tilde\zeta_{\mathbf 1},\tilde c^S)$ is automorphic and regular algebraic in the following sense: its image $(\zeta_{\mathbf 1},c^S)\in \cC^S(\Sp_{2n})$ is the image of $\psi\in\Psi_{2,\ralg}(\Sp_{2n})$, which is necessarily unique. Moreover $(\tilde\zeta_{\mathbf 1},\tilde c^S)$ has unitary central character (Definition \ref{defn:unitary-central}), and there exists a semisimple Galois representation $\rho_\psi:\Gal_{\Q}\to \SO_{2n+1}(\Qlbar)$ such that $\phi_{\rho_\psi,v}=\iota\phi_{\psi_v}$ for all finite places $v\nmid \ell$.
\end{prop}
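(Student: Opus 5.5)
The plan is to convert the nonvanishing of the Grothendieck group class into the nonvanishing of a stable discrete linear form, and then to invoke Proposition~\ref{prop:from-Sdisc-to-Galois-rep}.

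First, I will use \eqref{eq:enough-acceptable-functions}. Since $[H_c(\Ig^b_{U_{\ell,k}},\cL_\lambda)_{\tilde c^S}]\neq 0$, there exist $\varphi^{p,\ell,\infty}$ and an acceptable $\varphi_p$ with nonzero trace. We may take $\varphi_v$ unramified for $v\notin S$, after passing to the $\tilde c^S$-isotypic part. For functions of the form $\varphi^{S}\otimes\varphi_{S\setminus\{p,\ell,\infty\}}$ with $\varphi^S\in\cH^{\ur}$, the isotypic projection is realized by the spectral decomposition of the $\TT^S$-action. Corollary~\ref{cor:TF-Igusa-Sdisc} then gives
\[
\sum_{\e\in\cE_{\el}(\tilde G)}\iota(\tilde G,\tilde H^\e)\,S^{\tilde H^\e}_{\disc,\tilde\zeta_{\mathbf 1},\tilde c^S}(f^\e)\neq 0 .
\]
Hence some $\tilde H=\tilde H^\e$ satisfies \eqref{eq:SHdisc-nonzero}, with the transferred function $f^\e\in\cH^S(\tilde H)$ unramified outside $S$ by the fundamental lemma. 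The infinitesimal character $\tilde\zeta_{\mathbf 1}$ is regular and $C$-algebraic, so $(\tilde\zeta_{\mathbf 1},\tilde c^S)\in\cC^S_{\ralg}(\tilde G)$.

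Second, Proposition~\ref{prop:from-Sdisc-to-Galois-rep}(1) yields a unique $\psi\in\Psi^S_{2,\ralg}(\Sp_{2n})$ whose infinitesimal character and Satake parameters are $(\zeta_{\mathbf 1},c^S)$. Part (2), applied with $F=\Q$ (which is totally real), yields $\rho_\psi$ with $\phi_{\rho_{\psi,v}}=\iota\phi_{\psi_v}$ for all finite $v\nmid\ell$. Uniqueness of $\psi$ is already part of that proposition.

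Third, I must show that the central character is unitary in the sense of Definition~\ref{defn:unitary-central}. Since $Z_{\GSp_{2n}}=\Gm$, a character of $\A^\times/\Q^\times$ is unitary exactly when its archimedean component on $\R_{>0}$ is trivial. By our central character datum, $A_{G,\infty}=\R_{>0}$ acts trivially, and the infinitesimal character $\tilde\zeta_{\mathbf 1}$ of the trivial representation forces the archimedean restriction to $\R_{>0}$ to be trivial. So any Hecke character compatible with $(\tilde\zeta_{\mathbf 1},\tilde c^S)$ on $Z^0$ is a finite-order character times a unitary one, hence unitary.

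The main obstacle is the first step: checking that the isotypic localization is compatible with the stabilized formula of Corollary~\ref{cor:TF-Igusa-Sdisc} and with acceptability at $p$, and that the Hecke operators at $S\setminus\{p,\ell,\infty\}$ do not destroy nonvanishing. To handle this I will use linear independence of characters of the commutative algebra $\TT^S_{\Qlbar}$ acting on the finite-dimensional cohomology at fixed level.
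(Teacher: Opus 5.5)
Your proposal is correct and takes essentially the same route as the paper. Nonvanishing of the class gives test functions making the left side of Corollary~\ref{cor:TF-Igusa-Sdisc} nonzero, so some $S^{\tilde H^\e}_{\disc,\tilde\zeta_{\mathbf 1},\tilde c^S}(f^\e)\neq 0$; Proposition~\ref{prop:from-Sdisc-to-Galois-rep} then supplies $\psi$ (with uniqueness) and $\rho_\psi$, and unitarity comes from the infinitesimal character $\tilde\zeta_{\mathbf 1}$ forcing triviality on $\R_{>0}$ (the paper phrases this for central characters contributing to $S^{\tilde H^\e}_{\disc}$, but your direct check against Definition~\ref{defn:unitary-central} is equally valid).

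One small correction: only the implication ``trivial on $\R_{>0}$ implies unitary'' holds and is needed, not ``exactly when'', since characters such as $|\cdot|^{it}$ are unitary but nontrivial on $\R_{>0}$.
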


\begin{proof}
The hypothesis implies that $[H_c(\Ig^b_{K^S U_{\ell, k}},\cL_\lambda)_{\tilde c^S}]\neq 0$ in  $\Groth(\tilde G(\A^{p,\infty})\times J_b(\Q_p))$, so we can choose $\varphi^{p,\ell,\infty}$ and $\varphi_p$ such that the left hand side of Corollary \ref{cor:TF-Igusa-Sdisc} is nonzero. Therefore $S^{\tilde H^\e}_{\disc,\tilde \zeta_{\mathbf 1},\tilde c^S}(f^\e)\neq 0$ for some $\e$. The last assertion now follows from Proposition \ref{prop:from-Sdisc-to-Galois-rep}. For the unitarity assertion, it suffices to check that a central character $\chi:\GG_m(\A)/\GG_m(\Q)\to \C^\times$ on the identity component of $Z_{\tilde H^\e}$ is unitary if it contributes to $S^{\tilde H^\e}_{\disc,\tilde \zeta_{\mathbf 1},\tilde c^S}$. This is clear because the infinitesimal character constraint by $\tilde \zeta_{\mathbf 1}$ implies that $\chi|_{\GG_m(\R)^0}$ is trivial.
\end{proof}

\begin{thm}\label{thm:Hc(Ig)-char0-vanishing} 
Assume that
\[
[H_c(\Ig^b_{U_{\ell, k}},\cL_\lambda)_{\tilde c^S}]\neq 0\qquad \mbox{in}\quad \Groth(\tilde G(\A_{S\backslash\{p,\ell,\infty\}})\times J_b(\Q_p)).
\]
Let $\psi$ be as in Proposition \ref{prop:Hc(Ig)-char0-automorphic}. If $\phi_{\psi_p}$ is of generic ps type then $b$ must be ordinary.
\end{thm}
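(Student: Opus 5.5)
We argue by contradiction: assume $b$ is not ordinary. By Lemma~\ref{lem:Jb-non-quasi-split} (applied to $\GSp_{2n}$ over $\Q_p$ with our minuscule $\mu$), $J_b$ is then not quasi-split. The goal is to show that every term on the right-hand side of Corollary~\ref{cor:TF-Igusa-Sdisc} vanishes, for all $\varphi^{p,\ell,\infty}$ with unramified components outside $S$ and all acceptable $\varphi_p$. Then the trace of $[H_c(\Ig^b_{K^SU_{\ell,k}},\cL_\lambda)_{\tilde c^S}]$ vanishes identically. By \eqref{eq:enough-acceptable-functions} this class is therefore $0$, contradicting the hypothesis.

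\textbf{Step 1 (global input).} By Proposition~\ref{prop:Hc(Ig)-char0-automorphic}, $(\tilde\zeta_{\mathbf 1},\tilde c^S)$ is regular algebraic, has unitary central character, and comes from a unique $\psi\in\Psi_{2,\ralg}(\Sp_{2n})$. We apply Corollary~\ref{cor:TF-Igusa-Sdisc} and fix an elliptic endoscopic datum $\e$ with $\tilde H:=\tilde H^\e=G(\Sp_{2m}\times\SO_{2n-2m})$ such that $S^{\tilde H}_{\disc,\tilde\zeta_{\mathbf 1},\tilde c^S}(f^\e)\neq0$; if no such $\e$ exists, we are already done. Since $\phi_{\psi_p}$ is of generic ps type, Proposition~\ref{prop:sum-of-irred-ps} with $\frakp=p$ expresses the linear form $f_p\mapsto S^{\tilde H}_{\disc,\tilde\zeta_{\mathbf 1},\tilde c^S}(f^{\e,p}f_p)$ as a finite linear combination of $\tr\pi_i(f_p)$, where each $\pi_i$ is a (possibly reducible) principal series representation of $\tilde H(\Q_p)$.

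\textbf{Step 2 (local vanishing).} It now suffices to show $\tr\pi_i(f^\e_p)=0$. By Corollary~\ref{cor:SO=0-then-tr=0}, this follows once $SO_{\gamma}(f^\e_p)=0$ for every $\gamma\in\tilde H(\Q_p)_{\textup{reg},s}$, i.e.\ for every regular element conjugate into the maximal torus $T_s$ of a Borel subgroup. Such a vanishing is exactly what Lemma~\ref{lem:SO=0-if-non-ordinary} provides, since $J_b$ is not quasi-split. That lemma, however, requires the $\Q_p$-ranks of $\tilde G$ and $\tilde H$ to be equal.

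\textbf{Step 3 (equal ranks).} We must therefore check that $\tilde H_{\Q_p}$ has the same $\Q_p$-rank as $\GSp_{2n}$. The endoscopic groups listed in \S~\ref{ss:endoscopic-Sp-GSp} for $\GSp_{2n}$ are $G(\Sp_{2m}\times\SO_{2n-2m})$ with the split $\SO$; we need to confirm that the quasi-split form really is split (i.e.\ $\eta$ trivial, as noted from Xu's table). Then $\tilde H$ is split of rank $n+1$, matching $\GSp_{2n}$. If some non-split $\eta$ survived at $p$, a fallback is available: a quasi-split $\tilde H$ of smaller rank has no principal series with the required parameter. Indeed, the generic ps parameter of $\psi_p$ factors through a split maximal torus, and it should not transfer to a non-split $\SO^\eta_{2n-2m}$ factor whose $L$-group forces a nontrivial Galois action. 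Such terms would therefore vanish already in Step 1.

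\textbf{Main obstacle.} The delicate point is Step 2's reliance on matching exactly the hypotheses of Corollary~\ref{cor:SO=0-then-tr=0}. Specifically, one must confirm that the stable distribution at $p$ is a combination of \emph{full} principal series characters, which are stable and supported on $T_s$. This is where boundedness from Lemma~\ref{lem:consequence-of-gen-at-p} and Lemma~\ref{lem:irred-coarse-packet} enter. The rest is bookkeeping with Corollary~\ref{cor:TF-Igusa-Sdisc} and \eqref{eq:enough-acceptable-functions}.
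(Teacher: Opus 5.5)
Your proposal is correct and follows the paper's own proof step for step. Non-ordinary $b$ makes $J_b$ non-quasi-split (Lemma~\ref{lem:Jb-non-quasi-split}); Proposition~\ref{prop:sum-of-irred-ps} reduces each endoscopic term at $p$ to principal series traces; equal $\Q_p$-ranks let Lemma~\ref{lem:SO=0-if-non-ordinary} kill the relevant stable orbital integrals, and Corollary~\ref{cor:SO=0-then-tr=0} then kills the traces; finally \eqref{eq:enough-acceptable-functions} gives the contradiction. The ``fallback'' in Step 3 is unnecessary, and not justified as stated, because the list in \S~\ref{ss:endoscopic-Sp-GSp} already shows that every elliptic endoscopic group of $\GSp_{2n}$ is $G(\Sp_{2m}\times\SO_{2n-2m})$ with trivial $\eta$, hence split of rank $n+1$.
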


\begin{proof}
For $\e\in \cE_{\el}(\GSp_{2n})$ and $h^p\in \cH^p(\tilde H^\e)$, the linear form
\[
h_p\in \cH_p(\tilde H^\e) \quad \mapsto \quad S^{\tilde H^\e}_{\disc,\tilde\zeta_{\mathbf 1},\tilde c^S}(h^{p}h_p)
\]
is a linear combination of traces of principal series representations by Proposition \ref{prop:sum-of-irred-ps} thanks to the condition on $\phi_{\psi_{p}}$. Let us show that this linear form vanishes if $b$ is non-ordinary. 
From the list of possible $\tilde H^\e$ in \S~\ref{ss:endoscopic-Sp-GSp} we see that the $\Q_p$-ranks of $\GSp_{2n}$ and $\tilde H^\e$ are equal.
Now consider the test function $f^\e=\prod_v f^\e_v\in \cH(\tilde H^\e)$ in the stabilized formula of Corollary \ref{cor:TF-Igusa-Sdisc}. 
Lemma \ref{lem:Jb-non-quasi-split} tells us that $J_b$ is not quasi-split, so we deduce from Lemma \ref{lem:SO=0-if-non-ordinary} that $SO_{\gamma_H}(f^\e_p)=0$ for every $\gamma_H\in \tilde H^\e(\Q_p)_{\textup{reg},s}$. Hence $S^{\tilde H^\e}_{\disc,\tilde\zeta_{\mathbf 1},\tilde c^S}(f^\e)=0$ by Corollary \ref{cor:SO=0-then-tr=0}. 
Then Corollary \ref{cor:TF-Igusa-Sdisc} implies that 
\[
\tr\big(\varphi^{p,\ell,\infty}\times \varphi_p|[H_c(\Ig^b_{U_{\ell, k}},\cL_\lambda)_{\tilde c^S}]\big)= 0,
\]
contradicting the initial hypothesis of the theorem in view of \eqref{eq:enough-acceptable-functions} (which implies a formal analogue constrained by $(U_{\ell,k},\lambda)$ at $\ell$ and unramified away from $S$). Therefore $b$ must be ordinary.
\end{proof}

\bibliographystyle{amsalpha}

\bibliography{bib}

@book {Arthur,
    AUTHOR = {Arthur, James},
     TITLE = {The endoscopic classification of representations},
    SERIES = {American Mathematical Society Colloquium Publications},
    VOLUME = {61},
      NOTE = {Orthogonal and symplectic groups},
 PUBLISHER = {American Mathematical Society, Providence, RI},
      YEAR = {2013},
     PAGES = {xviii+590},
      ISBN = {978-0-8218-4990-3},
   MRCLASS = {22E55 (11F66 11R37 20G25 22E50)},
  MRNUMBER = {3135650},
}

@article{BMS,
Title={A stable trace formula for {I}gusa varieties, {II}},
	Author={Bertoloni Meli, Alexander and Shin, Sug Woo},
	Note={\url{https://arxiv.org/abs/2205.05462}},
    journal={to appear in Kyoto J.~Math.}
    }

@article {vanDijk,
    AUTHOR = {van Dijk, G.},
     TITLE = {Computation of certain induced characters of {${\mathfrak
              p}$}-adic groups},
   JOURNAL = {Math. Ann.},
  FJOURNAL = {Mathematische Annalen},
    VOLUME = {199},
      YEAR = {1972},
     PAGES = {229--240},
      ISSN = {0025-5831,1432-1807},
   MRCLASS = {22E50},
  MRNUMBER = {338277},
MRREVIEWER = {Allan\ J.\ Silberger},
       DOI = {10.1007/BF01429876},
       URL = {https://doi.org/10.1007/BF01429876},
}

@misc{Hamann-Lee,
      title={Torsion Vanishing for Some {S}himura Varieties}, 
      author={Linus Hamann and Si Ying Lee},
      year={2024},
      eprint={2309.08705},
      archivePrefix={arXiv},
      primaryClass={math.NT},
      url={https://arxiv.org/abs/2309.08705}, 
}

@misc{Kim, 
title = {Uniqueness and functoriality of {I}gusa stacks}, 
author = {Daniel Kim}, 
year = {2025}, 
note = {\url{https://arxiv.org/abs/2504.15542}},
}

@book {HT01,
    AUTHOR = {Harris, M. and Taylor, R.},
     TITLE = {The geometry and cohomology of some simple {S}himura
              varieties},
    SERIES = {Annals of Mathematics Studies},
    VOLUME = {151},
      NOTE = {With an appendix by V. G. Berkovich},
 PUBLISHER = {Princeton University Press},
   ADDRESS = {Princeton, NJ},
      YEAR = {2001},
     PAGES = {viii+276},
      ISBN = {0-691-09090-4},
   MRCLASS = {11G18 (11F70 11S37 14G35 22E45)},
  MRNUMBER = {1876802 (2002m:11050)},
MRREVIEWER = {James Milne},
}

@misc{Xiao-Zhu,
      title={Cycles on Shimura varieties via geometric Satake}, 
      author={Liang Xiao and Xinwen Zhu},
      year={2017},
      eprint={1707.05700},
      archivePrefix={arXiv},
      primaryClass={math.AG},
      url={https://arxiv.org/abs/1707.05700}, 
}

@misc{Hamann,
      title={Geometric {E}isenstein Series, Intertwining Operators, and {S}hin's Averaging Formula}, 
      author={Linus Hamann},
      year={2024},
      eprint={2209.08175},
      archivePrefix={arXiv},
      primaryClass={math.NT},
      url={https://arxiv.org/abs/2209.08175}, 
}

@incollection {LanStroh2,
    AUTHOR = {Lan, Kai-Wen and Stroh, Beno\^it},
     TITLE = {Nearby cycles of automorphic \'etale sheaves, {II}},
 BOOKTITLE = {Cohomology of arithmetic groups},
    SERIES = {Springer Proc. Math. Stat.},
    VOLUME = {245},
     PAGES = {83--106},
 PUBLISHER = {Springer, Cham},
      YEAR = {2018},
      ISBN = {978-3-319-95549-0; 978-3-319-95548-3},
   MRCLASS = {11G18 (11F75 11G15 14F20 14G35)},
  MRNUMBER = {3848816},
MRREVIEWER = {Giovanni\ Rosso},
       DOI = {10.1007/978-3-319-95549-0\_4},
       URL = {https://doi.org/10.1007/978-3-319-95549-0_4},
}

@article {Pink,
    AUTHOR = {Pink, Richard},
     TITLE = {On {$l$}-adic sheaves on {S}himura varieties and their higher
              direct images in the {B}aily-{B}orel compactification},
   JOURNAL = {Math. Ann.},
  FJOURNAL = {Mathematische Annalen},
    VOLUME = {292},
      YEAR = {1992},
    NUMBER = {2},
     PAGES = {197--240},
      ISSN = {0025-5831,1432-1807},
   MRCLASS = {11G18 (11F75 14F20)},
  MRNUMBER = {1149032},
MRREVIEWER = {Min\ Ho\ Lee},
       DOI = {10.1007/BF01444618},
       URL = {https://doi.org/10.1007/BF01444618},
}

@misc{Koshikawa-Shin,
    title = {On the non-generic part of the {$L^2$}-cohomology of locally symmetric spaces},
    author = {Teruhisa Koshikawa and Sug Woo Shin},
    journal = {Preprint},
    url={https://math.berkeley.edu/~swshin/vanishing.pdf},
    year = {2025},
}

@article{SW-curve,
  author  = {Shin, Sug Woo},
  title   = {Cohomology of {Igusa} Curves---A Survey},
  journal = {RIMS K{\^o}ky{\^u}roku},
  volume  = {2204},
  year    = {2021},
  pages   = {166--176},
  note    = {Automorphic forms, Automorphic representations, Galois representations, and its related topics},
}

@article{Caraiani-Hamann-Zhang,
title = {Intersection cohomology of {I}gusa stacks},
author= {Ana Caraiani and Linus Hamann and Mingjia Zhang}, 
journal = {preprint available online at \url{https://arxiv.org/abs/2607.25889}},
}

@article {patrikis,
    AUTHOR = {Patrikis, Stefan},
     TITLE = {Deformations of {G}alois representations and exceptional
              monodromy},
   JOURNAL = {Invent. Math.},
  FJOURNAL = {Inventiones Mathematicae},
    VOLUME = {205},
      YEAR = {2016},
    NUMBER = {2},
     PAGES = {269--336},
      ISSN = {0020-9910,1432-1297},
   MRCLASS = {11F70 (14L15 17B25 20G07 20G41)},
  MRNUMBER = {3529115},
MRREVIEWER = {\Dbar\cftil o{} Ng\d oc Di\cfudot ep},
       DOI = {10.1007/s00222-015-0635-3},
       URL = {https://doi.org/10.1007/s00222-015-0635-3},
}

@article {KretShinH0,
    AUTHOR = {Kret, Arno and Shin, Sug Woo},
     TITLE = {{$H^0$} of {I}gusa varieties via automorphic forms},
   JOURNAL = {J. \'{E}c. polytech. Math.},
  FJOURNAL = {Journal de l'\'{E}cole polytechnique. Math\'{e}matiques},
    VOLUME = {10},
      YEAR = {2023},
     PAGES = {1299--1390},
      ISSN = {2429-7100,2270-518X},
   MRCLASS = {11G18 (11F70 14G35)},
  MRNUMBER = {4664656},
       DOI = {10.5802/jep.246},
       URL = {https://doi.org/10.5802/jep.246},
}

@article {CaraianiScholzeGeneric,
    AUTHOR = {Caraiani, Ana and Scholze, Peter},
     TITLE = {On the generic part of the cohomology of compact unitary
              {S}himura varieties},
   JOURNAL = {Ann. of Math. (2)},
  FJOURNAL = {Annals of Mathematics. Second Series},
    VOLUME = {186},
      YEAR = {2017},
    NUMBER = {3},
     PAGES = {649--766},
      ISSN = {0003-486X},
   MRCLASS = {11F75 (11G18 11R23 14G35)},
  MRNUMBER = {3702677},
       DOI = {10.4007/annals.2017.186.3.1},
       URL = {https://doi.org/10.4007/annals.2017.186.3.1},
}

@article {Ferrari,
    AUTHOR = {Ferrari, Axel},
     TITLE = {Th\'eor\`eme de l'indice et formule des traces},
   JOURNAL = {Manuscripta Math.},
  FJOURNAL = {Manuscripta Mathematica},
    VOLUME = {124},
      YEAR = {2007},
    NUMBER = {3},
     PAGES = {363--390},
      ISSN = {0025-2611},
     CODEN = {MSMHB2},
   MRCLASS = {22E55 (11F72 58J20)},
  MRNUMBER = {2350551 (2008j:22026)},
MRREVIEWER = {Hadi Salmasian},
       DOI = {10.1007/s00229-007-0130-2},
       URL = {http://dx.doi.org/10.1007/s00229-007-0130-2},
}

@article {XuGSpGSO2,
    AUTHOR = {Xu, Bin},
     TITLE = {Global {$L$}-packets of quasisplit {${\rm GSp}(2n)$} and
              {${\rm GO}(2n)$}},
   JOURNAL = {Amer. J. Math.},
  FJOURNAL = {American Journal of Mathematics},
    VOLUME = {147},
      YEAR = {2025},
    NUMBER = {2},
     PAGES = {401--464},
      ISSN = {0002-9327,1080-6377},
   MRCLASS = {22E50 (11F70)},
  MRNUMBER = {4887967},
MRREVIEWER = {Neven\ Grbac},
       DOI = {10.1353/ajm.2025.a954647},
       URL = {https://doi-org.libproxy.berkeley.edu/10.1353/ajm.2025.a954647},
}

@article {XuGSpGSO,
    AUTHOR = {Xu, Bin},
     TITLE = {L-packets of quasisplit {$GSp(2n)$} and {$GO(2n)$}},
   JOURNAL = {Math. Ann.},
  FJOURNAL = {Mathematische Annalen},
    VOLUME = {370},
      YEAR = {2018},
    NUMBER = {1-2},
     PAGES = {71--189},
      ISSN = {0025-5831,1432-1807},
   MRCLASS = {22E50 (11F70)},
  MRNUMBER = {3747484},
MRREVIEWER = {Ivan\ Mati\'{c}},
       DOI = {10.1007/s00208-016-1515-x},
       URL = {https://doi.org/10.1007/s00208-016-1515-x},
}

@incollection {MullerIrreducibility,
    AUTHOR = {Muller, I.},
     TITLE = {Int\'{e}grales d'entrelacement pour un groupe de {C}hevalley
              sur un corps {$p$}-adique},
 BOOKTITLE = {Analyse harmonique sur les groupes de {L}ie ({S}\'{e}m.,
              {N}ancy-{S}trasbourg 1976--1978), {II}},
    SERIES = {Lecture Notes in Math.},
    VOLUME = {739},
     PAGES = {367--403},
 PUBLISHER = {Springer, Berlin},
      YEAR = {1979},
      ISBN = {3-540-09536-5},
   MRCLASS = {22E50},
  MRNUMBER = {560847},
MRREVIEWER = {Allan\ J.\ Silberger},
}

@article {FintzenShin,
    AUTHOR = {Fintzen, Jessica and Shin, Sug Woo},
     TITLE = {Congruences of algebraic automorphic forms and supercuspidal
              representations},
   JOURNAL = {Camb. J. Math.},
  FJOURNAL = {Cambridge Journal of Mathematics},
    VOLUME = {9},
      YEAR = {2021},
    NUMBER = {2},
     PAGES = {351--429},
      ISSN = {2168-0930,2168-0949},
   MRCLASS = {11F70 (11F33 22E50)},
  MRNUMBER = {4325284},
MRREVIEWER = {Chen\ Wan},
       DOI = {10.4310/CJM.2021.v9.n2.a2},
       URL = {https://doi.org/10.4310/CJM.2021.v9.n2.a2},
}

@article {Zhang,
    AUTHOR = {Zhang, Mingjia},
     TITLE = {A {PEL}-type {I}gusa stack and the {$p$}-adic geometry of
              {S}himura varieties},
   JOURNAL = {Camb. J. Math.},
  FJOURNAL = {Cambridge Journal of Mathematics},
    VOLUME = {14},
      YEAR = {2026},
    NUMBER = {2},
     PAGES = {377--486},
      ISSN = {2168-0930,2168-0949},
   MRCLASS = {14 (11)},
  MRNUMBER = {5073031},
       DOI = {10.4310/cjm.260514231850},
       URL = {https://doi-org.libproxy.berkeley.edu/10.4310/cjm.260514231850},
}

@article{DvHKZ,
    author = {Daniels, Patrick and van Hoften, Pol and Kim, Dongryul and Zhang, Mingjia},
    title = {Igusa Stacks and the Cohomology of {S}himura Varieties},
    NOTE = {\url{https://arxiv.org/abs/2408.01348}},
}

@article{DvHKZ2,
    author = {Daniels, Patrick and van Hoften, Pol and Kim, Dongryul and Zhang, Mingjia},
    title = {Igusa Stacks and the Cohomology of {S}himura Varieties {II}},
    NOTE = {\url{https://arxiv.org/abs/2603.24921}},
}

@article {CaraianiScholzeNonCompact,
    AUTHOR = {Caraiani, Ana and Scholze, Peter},
     TITLE = {On the generic part of the cohomology of non-compact unitary
              {S}himura varieties},
   JOURNAL = {Ann. of Math. (2)},
  FJOURNAL = {Annals of Mathematics. Second Series},
    VOLUME = {199},
      YEAR = {2024},
    NUMBER = {2},
     PAGES = {483--590},
      ISSN = {0003-486X,1939-8980},
   MRCLASS = {11R39 (14G35 14G45)},
  MRNUMBER = {4713019},
       DOI = {10.4007/annals.2024.199.2.1},
       URL = {https://doi.org/10.4007/annals.2024.199.2.1},
}

@article{FarguesScholze,
	author = {Fargues, Laurent and Scholze, Peter},
	note = {\url{https://arxiv.org/abs/2102.13459}},
	title = {Geometrization of the local {L}anglands correspondence},
    journal = {to appear in Ast\'erisque}
}

@article {Newton-Thorne,
    AUTHOR = {Newton, James and Thorne, Jack A.},
     TITLE = {Torsion {G}alois representations over {CM} fields and {H}ecke
              algebras in the derived category},
   JOURNAL = {Forum Math. Sigma},
  FJOURNAL = {Forum of Mathematics. Sigma},
    VOLUME = {4},
      YEAR = {2016},
     PAGES = {Paper No. e21, 88},
      ISSN = {2050-5094},
   MRCLASS = {11F80 (11F75)},
  MRNUMBER = {3528275},
MRREVIEWER = {Alan\ Koch},
       DOI = {10.1017/fms.2016.16},
       URL = {https://doi.org/10.1017/fms.2016.16},
}

@article {ScholzeTorsion,
    AUTHOR = {Scholze, Peter},
     TITLE = {On torsion in the cohomology of locally symmetric varieties},
   JOURNAL = {Ann. of Math. (2)},
  FJOURNAL = {Annals of Mathematics. Second Series},
    VOLUME = {182},
      YEAR = {2015},
    NUMBER = {3},
     PAGES = {945--1066},
      ISSN = {0003-486X},
   MRCLASS = {11S37},
  MRNUMBER = {3418533},
MRREVIEWER = {Kimball L. Martin},
       DOI = {10.4007/annals.2015.182.3.3},
       URL = {https://doi-org.libproxy.berkeley.edu/10.4007/annals.2015.182.3.3},
}

@article {LanStroh,
    AUTHOR = {Lan, Kai-Wen and Stroh, Beno\^{i}t},
     TITLE = {Compactifications of subschemes of integral models of
              {S}himura varieties},
   JOURNAL = {Forum Math. Sigma},
  FJOURNAL = {Forum of Mathematics. Sigma},
    VOLUME = {6},
      YEAR = {2018},
     PAGES = {Paper No. e18, 105},
      ISSN = {2050-5094},
   MRCLASS = {11G18 (11F75 11G15 14G35)},
  MRNUMBER = {3859178},
MRREVIEWER = {Pietro\ Mercuri},
       DOI = {10.1017/fms.2018.20},
       URL = {https://doi.org/10.1017/fms.2018.20},
}

@article {DeligneSerre,
    AUTHOR = {Deligne, Pierre and Serre, Jean-Pierre},
     TITLE = {Formes modulaires de poids {$1$}},
   JOURNAL = {Ann. Sci. \'{E}cole Norm. Sup. (4)},
  FJOURNAL = {Annales Scientifiques de l'\'{E}cole Normale Sup\'{e}rieure.
              Quatri\`eme S\'{e}rie},
    VOLUME = {7},
      YEAR = {1974},
     PAGES = {507--530},
      ISSN = {0012-9593},
   MRCLASS = {10D15 (12A65)},
  MRNUMBER = {379379},
MRREVIEWER = {Stephen\ Gelbart},
       URL = {http://www.numdam.org/item?id=ASENS_1974_4_7_4_507_0},
}

@book {RapoportZink,
    AUTHOR = {Rapoport, M. and Zink, Th.},
     TITLE = {Period spaces for {$p$}-divisible groups},
    SERIES = {Annals of Mathematics Studies},
    VOLUME = {141},
 PUBLISHER = {Princeton University Press, Princeton, NJ},
      YEAR = {1996},
     PAGES = {xxii+324},
      ISBN = {0-691-02782-X; 0-691-02781-1},
   MRCLASS = {14G20 (11G18 14F30 14L05 14M15 20G05 20G25)},
  MRNUMBER = {1393439},
MRREVIEWER = {Robert\ E.\ Kottwitz},
       DOI = {10.1515/9781400882601},
       URL = {https://doi.org/10.1515/9781400882601},
}

@article {KottwitzIsocrystal2,
    AUTHOR = {Kottwitz, Robert E.},
     TITLE = {Isocrystals with additional structure. {II}},
   JOURNAL = {Compositio Math.},
  FJOURNAL = {Compositio Mathematica},
    VOLUME = {109},
      YEAR = {1997},
    NUMBER = {3},
     PAGES = {255--339},
      ISSN = {0010-437X},
     CODEN = {CMPMAF},
   MRCLASS = {20G25 (11S25 14F30 14L05)},
  MRNUMBER = {1485921 (99e:20061)},
MRREVIEWER = {Guy Rousseau},
       DOI = {10.1023/A:1000102604688},
       URL = {http://dx.doi.org/10.1023/A:1000102604688},
}

@article {RapoportRichartz,
	AUTHOR = {Rapoport, M. and Richartz, M.},
	TITLE = {On the classification and specialization of {$F$}-isocrystals
	with additional structure},
	JOURNAL = {Compositio Math.},
	FJOURNAL = {Compositio Mathematica},
	VOLUME = {103},
	YEAR = {1996},
	NUMBER = {2},
	PAGES = {153--181},
	ISSN = {0010-437X},
	MRCLASS = {14F30 (22E50)},
	MRNUMBER = {1411570},
	MRREVIEWER = {Abdellah Mokrane},
	URL = {http://www.numdam.org/item?id=CM_1996__103_2_153_0},
}

@article {ArthurEllipticTempered,
    AUTHOR = {Arthur, James},
     TITLE = {On elliptic tempered characters},
   JOURNAL = {Acta Math.},
  FJOURNAL = {Acta Mathematica},
    VOLUME = {171},
      YEAR = {1993},
    NUMBER = {1},
     PAGES = {73--138},
      ISSN = {0001-5962,1871-2509},
   MRCLASS = {22E50 (22E35)},
  MRNUMBER = {1237898},
MRREVIEWER = {Rebecca\ Herb},
       DOI = {10.1007/BF02392767},
       URL = {https://doi.org/10.1007/BF02392767},
}

@article {ArthurLocalCharacter,
    AUTHOR = {Arthur, James},
     TITLE = {On local character relations},
   JOURNAL = {Selecta Math. (N.S.)},
  FJOURNAL = {Selecta Mathematica. New Series},
    VOLUME = {2},
      YEAR = {1996},
    NUMBER = {4},
     PAGES = {501--579},
      ISSN = {1022-1824,1420-9020},
   MRCLASS = {22E35 (11F70 22E50)},
  MRNUMBER = {1443184},
       DOI = {10.1007/PL00001383},
       URL = {https://doi.org/10.1007/PL00001383},
}

@inproceedings {NgoICM,
    AUTHOR = {Ng{\^o}, Bao Ch{\^a}u},
     TITLE = {Endoscopy theory of automorphic forms},
 BOOKTITLE = {Proceedings of the {I}nternational {C}ongress of
              {M}athematicians. {V}olume {I}},
     PAGES = {210--237},
 PUBLISHER = {Hindustan Book Agency, New Delhi},
      YEAR = {2010},
      ISBN = {978-81-85931-08-3; 978-981-4324-31-1; 981-4324-31-0},
   MRCLASS = {11F70 (22E57)},
  MRNUMBER = {2827891},
MRREVIEWER = {Zhiwei\ Yun},
}

@article {KottwitzRational,
 	AUTHOR = {Kottwitz, Robert E.},
 	TITLE = {Rational conjugacy classes in reductive groups},
 	JOURNAL = {Duke Math. J.},
 	FJOURNAL = {Duke Mathematical Journal},
 	VOLUME = {49},
 	YEAR = {1982},
 	NUMBER = {4},
 	PAGES = {785--806},
 	ISSN = {0012-7094},
 	CODEN = {DUMJAO},
 	MRCLASS = {20G15 (22E55)},
 	MRNUMBER = {683003 (84k:20020)},
 	MRREVIEWER = {Andy R. Magid},
 	URL = {http://projecteuclid.org/euclid.dmj/1077315531},
 }

@article {ShinStableIgusa,
 	AUTHOR = {Shin, Sug Woo},
 	TITLE = {A stable trace formula for {I}gusa varieties},
 	JOURNAL = {J. Inst. Math. Jussieu},
 	FJOURNAL = {Journal of the Institute of Mathematics of Jussieu. JIMJ.
 	Journal de l'Institut de Math\'ematiques de Jussieu},
 	VOLUME = {9},
 	YEAR = {2010},
 	NUMBER = {4},
 	PAGES = {847--895},
 	ISSN = {1474-7480},
 	MRCLASS = {22Exx (11G18 14G35)},
 	MRNUMBER = {2684263},
 	DOI = {10.1017/S1474748010000046},
 	URL = {http://dx.doi.org/10.1017/S1474748010000046},
 }

@unpublished{ShelstadNotes,
     author = {Shelstad, Diana},
     title = {A note on real endoscopic transfer and pseudocoefficients},
     year = {2010},
note = {\url{https://sites.rutgers.edu/diana-shelstad/wp-content/uploads/sites/189/2019/11/shelstadnotepseudocoeffs.pdf}}
 }

@article {DalalST,
    AUTHOR = {Dalal, Rahul},
     TITLE = {Sato-{T}ate equidistribution for families of automorphic
              representations through the stable trace formula},
   JOURNAL = {Algebra Number Theory},
  FJOURNAL = {Algebra \& Number Theory},
    VOLUME = {16},
      YEAR = {2022},
    NUMBER = {1},
     PAGES = {59--137},
      ISSN = {1937-0652,1944-7833},
   MRCLASS = {11F55 (11F70 11F72 11F75 22E50 22E55)},
  MRNUMBER = {4384564},
MRREVIEWER = {Henry\ H.\ Kim},
       DOI = {10.2140/ant.2022.16.59},
       URL = {https://doi-org.libproxy.berkeley.edu/10.2140/ant.2022.16.59},
}

@article {ShelstadLindistinguishability,
    AUTHOR = {Shelstad, D.},
     TITLE = {{$L$}-indistinguishability for real groups},
   JOURNAL = {Math. Ann.},
  FJOURNAL = {Mathematische Annalen},
    VOLUME = {259},
      YEAR = {1982},
    NUMBER = {3},
     PAGES = {385--430},
      ISSN = {0025-5831,1432-1807},
   MRCLASS = {22E45},
  MRNUMBER = {661206},
MRREVIEWER = {Stephen\ Gelbart},
       DOI = {10.1007/BF01456950},
       URL = {https://doi.org/10.1007/BF01456950},
}

@article {ShinWeakTransfer,
    AUTHOR = {Shin, Sug Woo},
     TITLE = {Weak transfer from classical groups to general linear groups},
   JOURNAL = {Essent. Number Theory},
  FJOURNAL = {Essential Number Theory},
    VOLUME = {3},
      YEAR = {2024},
    NUMBER = {1},
     PAGES = {19--62},
      ISSN = {2834-4626,2834-4634},
   MRCLASS = {11F70 (11F72 11R39)},
  MRNUMBER = {4752712},
MRREVIEWER = {Kimball\ L.\ Martin},
       DOI = {10.2140/ent.2024.3.19},
       URL = {https://doi-org.libproxy.berkeley.edu/10.2140/ent.2024.3.19},
}

@article {KretShinGSp,
    AUTHOR = {Kret, Arno and Shin, Sug Woo},
     TITLE = {Galois representations for general symplectic groups},
   JOURNAL = {J. Eur. Math. Soc. (JEMS)},
  FJOURNAL = {Journal of the European Mathematical Society (JEMS)},
    VOLUME = {25},
      YEAR = {2023},
    NUMBER = {1},
     PAGES = {75--152},
      ISSN = {1435-9855,1435-9863},
   MRCLASS = {11R39 (11F70 11F80 11G18)},
  MRNUMBER = {4556781},
MRREVIEWER = {Jack\ Shotton},
       DOI = {10.4171/jems/1179},
       URL = {https://doi.org/10.4171/jems/1179},
}

@article {LanglandsShelstad,
	AUTHOR = {Langlands, R. P. and Shelstad, D.},
	TITLE = {On the definition of transfer factors},
	JOURNAL = {Math. Ann.},
	FJOURNAL = {Mathematische Annalen},
	VOLUME = {278},
	YEAR = {1987},
	NUMBER = {1-4},
	PAGES = {219--271},
	ISSN = {0025-5831},
	MRCLASS = {11S37 (11F70 11F72 22E35 22E45 22E50)},
	MRNUMBER = {909227},
	MRREVIEWER = {Ernst-Wilhelm Zink},
	DOI = {10.1007/BF01458070},
	URL = {https://doi-org.proxy-um.researchport.umd.edu/10.1007/BF01458070},
}

@article {ArthurL2,
 	AUTHOR = {Arthur, James},
 	TITLE = {The {$L^2$}-{L}efschetz numbers of {H}ecke operators},
 	JOURNAL = {Invent.~Math.},
 	FJOURNAL = {Inventiones Mathematicae},
 	VOLUME = {97},
 	YEAR = {1989},
 	NUMBER = {2},
 	PAGES = {257-290},
 }

@article {BLGGTpotentialautomorphy,
    AUTHOR = {Barnet-Lamb, Thomas and Gee, Toby and Geraghty, David and
              Taylor, Richard},
     TITLE = {Potential automorphy and change of weight},
   JOURNAL = {Ann. of Math. (2)},
  FJOURNAL = {Annals of Mathematics. Second Series},
    VOLUME = {179},
      YEAR = {2014},
    NUMBER = {2},
     PAGES = {501--609},
      ISSN = {0003-486X,1939-8980},
   MRCLASS = {11F33},
  MRNUMBER = {3152941},
MRREVIEWER = {Wen-Wei\ Li},
       DOI = {10.4007/annals.2014.179.2.3},
       URL = {https://doi.org/10.4007/annals.2014.179.2.3},
}

@article {CaraianiLGC1,
    AUTHOR = {Caraiani, Ana},
     TITLE = {Local-global compatibility and the action of monodromy on
              nearby cycles},
   JOURNAL = {Duke Math. J.},
  FJOURNAL = {Duke Mathematical Journal},
    VOLUME = {161},
      YEAR = {2012},
    NUMBER = {12},
     PAGES = {2311--2413},
      ISSN = {0012-7094,1547-7398},
   MRCLASS = {22E57 (11F70 11F80 11R39 14G35)},
  MRNUMBER = {2972460},
MRREVIEWER = {Kai-Wen\ Lan},
       DOI = {10.1215/00127094-1723706},
       URL = {https://doi.org/10.1215/00127094-1723706},
}

@article {SalamancaRiba,
    AUTHOR = {Salamanca-Riba, Susana A.},
     TITLE = {On the unitary dual of real reductive {L}ie groups and the
              {$A_g(\lambda)$} modules: the strongly regular case},
   JOURNAL = {Duke Math. J.},
  FJOURNAL = {Duke Mathematical Journal},
    VOLUME = {96},
      YEAR = {1999},
    NUMBER = {3},
     PAGES = {521--546},
      ISSN = {0012-7094,1547-7398},
   MRCLASS = {22E46 (20G05 22D10 22E47)},
  MRNUMBER = {1671213},
MRREVIEWER = {Karl-Hermann\ Neeb},
       DOI = {10.1215/S0012-7094-99-09616-3},
       URL = {https://doi.org/10.1215/S0012-7094-99-09616-3},
}

@article {MackCraneShin,
    AUTHOR = {Mack-Crane, Sander and Shin, Sug Woo},
     TITLE = {Counting points on {I}gusa varieties of {H}odge type},
   JOURNAL = {Math. Res. Lett.},
  FJOURNAL = {Mathematical Research Letters},
    VOLUME = {32},
      YEAR = {2025},
    NUMBER = {5},
     PAGES = {1597--1665},
      ISSN = {1073-2780,1945-001X},
   MRCLASS = {11G25 (11G18)},
  MRNUMBER = {5009221},
       DOI = {10.4310/mrl.251215195034},
       URL = {https://doi-org.libproxy.berkeley.edu/10.4310/mrl.251215195034},
}

@article {ArthurIVF2,
    AUTHOR = {Arthur, James},
     TITLE = {The invariant trace formula. {II}. {G}lobal theory},
   JOURNAL = {J. Amer. Math. Soc.},
  FJOURNAL = {Journal of the American Mathematical Society},
    VOLUME = {1},
      YEAR = {1988},
    NUMBER = {3},
     PAGES = {501--554},
      ISSN = {0894-0347,1088-6834},
   MRCLASS = {22E55 (11F72)},
  MRNUMBER = {939691},
MRREVIEWER = {A.\ B.\ Venkov},
       DOI = {10.2307/1990948},
       URL = {https://doi.org/10.2307/1990948},
}

@article {ShinIgusaPEL,
    AUTHOR = {Shin, Sug Woo},
     TITLE = {Counting points on {I}gusa varieties},
   JOURNAL = {Duke Math. J.},
  FJOURNAL = {Duke Mathematical Journal},
    VOLUME = {146},
      YEAR = {2009},
    NUMBER = {3},
     PAGES = {509--568},
      ISSN = {0012-7094,1547-7398},
   MRCLASS = {11G18 (11F72 11G15 11R39 14G35)},
  MRNUMBER = {2484281},
MRREVIEWER = {Nguy\cftil{e}n Qu\^{o}c Th\'{a}ng},
       DOI = {10.1215/00127094-2009-004},
       URL = {https://doi.org/10.1215/00127094-2009-004},
}

@article {ArthurSTF1,
    AUTHOR = {Arthur, James},
     TITLE = {A stable trace formula. {I}. {G}eneral expansions},
   JOURNAL = {J. Inst. Math. Jussieu},
  FJOURNAL = {Journal of the Institute of Mathematics of Jussieu. JIMJ.
              Journal de l'Institut de Math\'{e}matiques de Jussieu},
    VOLUME = {1},
      YEAR = {2002},
    NUMBER = {2},
     PAGES = {175--277},
      ISSN = {1474-7480,1475-3030},
   MRCLASS = {11F72 (11R39 22E55)},
  MRNUMBER = {1954821},
MRREVIEWER = {Volker\ J.\ Heiermann},
       DOI = {10.1017/S1474-748002000051},
       URL = {https://doi.org/10.1017/S1474-748002000051},
}

@article {ArthurSTF2,
    AUTHOR = {Arthur, James},
     TITLE = {A stable trace formula. {II}. {G}lobal descent},
   JOURNAL = {Invent. Math.},
  FJOURNAL = {Inventiones Mathematicae},
    VOLUME = {143},
      YEAR = {2001},
    NUMBER = {1},
     PAGES = {157--220},
      ISSN = {0020-9910,1432-1297},
   MRCLASS = {11F72 (22E45)},
  MRNUMBER = {1802795},
MRREVIEWER = {Volker\ J.\ Heiermann},
       DOI = {10.1007/s002220000107},
       URL = {https://doi.org/10.1007/s002220000107},
}

@article {ArthurSTF3,
    AUTHOR = {Arthur, James},
     TITLE = {A stable trace formula. {III}. {P}roof of the main theorems},
   JOURNAL = {Ann. of Math. (2)},
  FJOURNAL = {Annals of Mathematics. Second Series},
    VOLUME = {158},
      YEAR = {2003},
    NUMBER = {3},
     PAGES = {769--873},
      ISSN = {0003-486X,1939-8980},
   MRCLASS = {11F72 (22E55)},
  MRNUMBER = {2031854},
MRREVIEWER = {Volker\ J.\ Heiermann},
       DOI = {10.4007/annals.2003.158.769},
       URL = {https://doi.org/10.4007/annals.2003.158.769},
}

@article {LabesseStableTwisted,
    AUTHOR = {Labesse, J.-P.},
     TITLE = {Stable twisted trace formula: elliptic terms},
   JOURNAL = {J. Inst. Math. Jussieu},
  FJOURNAL = {Journal of the Institute of Mathematics of Jussieu. JIMJ.
              Journal de l'Institut de Math\'{e}matiques de Jussieu},
    VOLUME = {3},
      YEAR = {2004},
    NUMBER = {4},
     PAGES = {473--530},
      ISSN = {1474-7480,1475-3030},
   MRCLASS = {11F72 (11R34 11R39)},
  MRNUMBER = {2094449},
MRREVIEWER = {Pierre-Henri\ Chaudouard},
       DOI = {10.1017/S1474748004000143},
       URL = {https://doi.org/10.1017/S1474748004000143},
}

@article {KisinIntegralModels,
 	AUTHOR = {Kisin, Mark},
 	TITLE = {Integral models for {S}himura varieties of abelian type},
 	JOURNAL = {J. Amer. Math. Soc.},
 	FJOURNAL = {Journal of the American Mathematical Society},
 	VOLUME = {23},
 	YEAR = {2010},
 	NUMBER = {4},
 	PAGES = {967--1012},
 	ISSN = {0894-0347},
 	MRCLASS = {11G18 (14G35)},
 	MRNUMBER = {2669706},
 	MRREVIEWER = {Jeffrey D. Achter},
 	DOI = {10.1090/S0894-0347-10-00667-3},
 	URL = {http://dx.doi.org/10.1090/S0894-0347-10-00667-3},
 }

@article{Zhu,
    author = {Zhu, Xinwen},
    title = {Tame categorical local {L}anglands correspondence
},
    NOTE = {\url{https://arxiv.org/abs/2504.07482}},
}

@article{Yang-Zhu,
    author = {Yang, Xiangqian and Zhu, Xinwen},
    title = {On the generic part of the cohomology of {S}himura varieties of abelian type
},
    NOTE = {\url{https://arxiv.org/abs/2505.04329}},
}

@article{Koshikawa,
    AUTHOR = {Teruhisa Koshikawa}, 
    TITLE  = {On the generic part of the cohomology of local and global {S}himura varieties}, 
    NOTE = {\url{https://arxiv.org/abs/2106.10602}}, 
}

@article {10author,
    AUTHOR = {Allen, Patrick B. and Calegari, Frank and Caraiani, Ana and
              Gee, Toby and Helm, David and Le Hung, Bao V. and Newton,
              James and Scholze, Peter and Taylor, Richard and Thorne, Jack
              A.},
     TITLE = {Potential automorphy over {CM} fields},
   JOURNAL = {Ann. of Math. (2)},
  FJOURNAL = {Annals of Mathematics. Second Series},
    VOLUME = {197},
      YEAR = {2023},
    NUMBER = {3},
     PAGES = {897--1113},
      ISSN = {0003-486X,1939-8980},
   MRCLASS = {11F80 (11F55 11F75 11G18)},
  MRNUMBER = {4564261},
MRREVIEWER = {Lue\ Pan},
       DOI = {10.4007/annals.2023.197.3.2},
       URL = {https://doi.org/10.4007/annals.2023.197.3.2},
}

@article {Fargues-conj,
    AUTHOR = {Fargues, Laurent},
     TITLE = {Geometrization of the local {L}anglands correspondence: an
              overview},
   JOURNAL = {J. Math. Sci. Univ. Tokyo},
  FJOURNAL = {The University of Tokyo. Journal of Mathematical Sciences},
    VOLUME = {32},
      YEAR = {2025},
    NUMBER = {2},
     PAGES = {157--240},
      ISSN = {1340-5705},
   MRCLASS = {22E57 (11F77 11S37 14G45)},
  MRNUMBER = {4973974},
}

@incollection {ConradReductive,
    AUTHOR = {Conrad, Brian},
     TITLE = {Reductive group schemes},
 BOOKTITLE = {Autour des sch\'emas en groupes. {V}ol. {I}},
    SERIES = {Panor. Synth\`eses},
    VOLUME = {42/43},
     PAGES = {93--444},
 PUBLISHER = {Soc. Math. France, Paris},
      YEAR = {2014},
      ISBN = {978-2-85629-794-0},
   MRCLASS = {14L15},
  MRNUMBER = {3362641},
}

@article {XuLifting,
    AUTHOR = {Xu, Bin},
     TITLE = {On a lifting problem of {L}-packets},
   JOURNAL = {Compos. Math.},
  FJOURNAL = {Compositio Mathematica},
    VOLUME = {152},
      YEAR = {2016},
    NUMBER = {9},
     PAGES = {1800--1850},
      ISSN = {0010-437X},
   MRCLASS = {22E50 (11F70 20G25)},
  MRNUMBER = {3568940},
MRREVIEWER = {Ramin Takloo-Bighash},
       DOI = {10.1112/S0010437X16007545},
       URL = {https://doi.org/10.1112/S0010437X16007545},
}

@incollection {Clozel,
 	AUTHOR = {Clozel, Laurent},
 	TITLE = {Motifs et formes automorphes: applications du principe de
 	fonctorialit\'e},
 	BOOKTITLE = {Automorphic forms, {S}himura varieties, and {$L$}-functions,
 	{V}ol.\ {I} ({A}nn {A}rbor, {MI}, 1988)},
 	SERIES = {Perspect. Math.},
 	VOLUME = {10},
 	PAGES = {77--159},
 	PUBLISHER = {Academic Press},
 	ADDRESS = {Boston, MA},
 	YEAR = {1990},
 	MRCLASS = {11F70 (11F75 11F80 11R39 22E55)},
 	MRNUMBER = {1044819 (91k:11042)},
 	MRREVIEWER = {Ernst-Wilhelm Zink},
 	}

\end{document}